%% file: operads_paper-arxiv.tex
\documentclass{article} 
\usepackage{latexsym}
\usepackage{amsfonts}
\usepackage[bbgreekl]{mathbbol}
\usepackage{amssymb}
\usepackage{amsmath}
\usepackage{amsthm}
\usepackage{thmtools}
\usepackage{amscd}
\usepackage{longtable}
\usepackage{array}
\usepackage{graphicx}
\usepackage{eucal}
\usepackage{pstricks}
\usepackage{pst-grad}
\usepackage{pst-arrow}
\usepackage{pstricks-add}
\usepackage{pst-node}
\usepackage{pst-plot}
\definecolor{antiquewhite}{rgb}{0.98, 0.92, 0.84}
\definecolor{myblue}{HTML}{15399A}
\definecolor{myred}{HTML}{B31333}
\definecolor{mygreen}{HTML}{118A4F}
\definecolor{myorange}{HTML}{DE6C06}
\definecolor{myyellow}{HTML}{E3AD30}
\definecolor{myblack}{HTML}{000000}
\definecolor{cblue}{HTML}{15399A}
\definecolor{corange}{HTML}{DE6C06}
\definecolor{myviolet}{HTML}{8601AF}
\usepackage[
style=numeric,
backend=biber,
citestyle=numeric,
doi=false,
isbn=false,
url=false,
maxcitenames=4, mincitenames=4,
maxbibnames=99, minbibnames=99
]{biblatex}
\usepackage{geometry}
\usepackage{multicol}
\usepackage{etoolbox}
\usepackage[all]{xy}
\SelectTips{cm}{11}
\UseTips
\usepackage{tikz}
\usetikzlibrary{decorations.markings,decorations.pathmorphing, arrows, calc}
\usepackage{tikz-cd}
\tikzset{mm/.style={execute at begin node=$\displaystyle, execute at end node=$}}
\tikzset{curve/.style={settings={#1},to path={(\tikztostart)
    .. controls ($(\tikztostart)!\pv{pos}!(\tikztotarget)!\pv{height}!270:(\tikztotarget)$)
    and ($(\tikztostart)!1-\pv{pos}!(\tikztotarget)!\pv{height}!270:(\tikztotarget)$)
    .. (\tikztotarget)\tikztonodes}},
    settings/.code={\tikzset{quiver/.cd,#1}
        \def\pv##1{\pgfkeysvalueof{/tikz/quiver/##1}}},
    quiver/.cd,pos/.initial=0.35,height/.initial=0}
\tikzset{tikzob/.style={commutative diagrams/every diagram, every cell}}
\tikzset{tikzar/.style={commutative diagrams/.cd, every arrow, every label, font={\small}}}
\tikzset{tikzsquiggle/.style={decorate, decoration={
    snake,
    segment length=8pt,
    amplitude=.9pt,post=lineto,
    post length=2pt}}}
\tikzset{cross line/.style={preaction={draw=white, -, line width=6pt}}}
\usepackage{pdfsync}
\usepackage{relsize}
\usepackage[unicode=true, pdfusetitle,
 bookmarks=true,bookmarksnumbered=false,
 breaklinks=false,
 backref=false,
 colorlinks=true,
 citecolor=black,
 urlcolor=blue!78!red,
 final
]{hyperref}
\usepackage[capitalise]{cleveref}

\newcommand{\newrefformat}[2]{}

\newcommand{\mb}{\mathbf}

\renewcommand{\hat}{\widehat}

\newcommand{\N}{\mathbb{N}}
\newcommand{\m}[1]{\mathcal{#1}}
\newcommand{\id}{\textrm{id}}
\newcommand{\un}{\underline}
\renewcommand{\SS}{\mathcal{S}}

\newcommand{\EL}{E\Lambda}
\newcommand{\ELn}{E\Lambda(\underline{n})}

\newcommand{\lmc}{\Lambda\mbox{-}\mb{MonCat}}
\newcommand{\wlmc}{\textrm{Wk}\mbox{-}\lmc}

\newcommand{\cat}{\ensuremath{\mb{Cat}}}

\newcolumntype{L}{>{$}l<{$}}
\newcolumntype{C}{>{$}c<{$}}
\newcolumntype{R}{>{$}r<{$}}

\newcommand{\cn}{\colon}

\newcommand{\trans}[2]{( #1 \, \, #2 )}
\newcommand{\coeq}[4]{#1(#4) \otimes_{#3(#4)} #2^{#4}}
\newcommand{\coequ}[4]{#1(#4) \otimes_{#3(#4)} #2_{#4}}
\newcommand{\coeqb}[3]{#1 \otimes_{#3} #2}
\newcommand{\coeqsig}[3]{#1(#3) \otimes_{\Sigma_#3} #2^#3}
\newcommand{\lcoll}{\Lambda\mbox{-}\mb{Coll}}

\crefname{lem}{Lemma}{Lemmas}
\crefname{thm}{Theorem}{Theorems}
\crefname{Defi}{Definition}{Definitions}
\crefname{nota}{Notation}{Notations}
\crefname{construction}{Construction}{Constructions}
\crefname{prop}{Proposition}{Propositions}
\crefname{rem}{Remark}{Remarks}
\crefname{remark}{Remark}{Remarks}
\crefname{cor}{Corollary}{Corollaries}
\crefname{scholium}{Scholium}{Scholia}
\crefname{figure}{Figure}{Figures}
\crefname{equation}{Equation}{Equations}
\crefname{eq}{Equation}{Equations}
\crefname{eqn}{Equation}{Equations}
\crefname{part}{Part}{Parts}
\crefname{example}{Example}{Examples}
\crefname{nonex}{Non-example}{Non-examples}
\crefname{term}{Terminology}{Terminologies}
\crefname{ax}{Axiom}{Axioms}

\newenvironment{eqn}{\begin{equation}}{\end{equation}}
\newenvironment{eq*}{\begin{equation*}}{\end{equation*}}
\newenvironment{eqn*}{\begin{equation*}}{\end{equation*}}
\tikzset{->-/.style={decoration={markings,mark=at position #1 with {\arrow{>}}},postaction={decorate}}} 

\pgfdeclarelayer{bg}
\pgfsetlayers{bg,main}

\newtheorem{thm}{Theorem}[section]
\newtheorem{prop}[thm]{Proposition}
\newtheorem{lem}[thm]{Lemma}
\newtheorem{cor}[thm]{Corollary}

\newtheoremstyle{example}{\topsep}{\topsep}%
     {}
     {}
     {\bfseries}
     {.}
     {2pt}
     {\thmname{#1}\thmnumber{ #2}\thmnote{ #3}}
  \theoremstyle{example}
  \newtheorem{conv}[thm]{Convention}
  \newtheorem{nota}[thm]{Notation}
  \newtheorem{example}[thm]{Example}
  \newtheorem{nonex}[thm]{Non-example}
  \newtheorem{Defi}[thm]{Definition}
  \newtheorem{rem}[thm]{Remark}
  \newtheorem{remark}[thm]{Remark}   
  \newtheorem{term}[thm]{Terminology}   
    
\usepackage[
color=orange!80, 
bordercolor=black,
textwidth=3cm,
textsize=small
,colorinlistoftodos]
{todonotes}

   \title{Operads and equivariance}

\author{
  Alexander S. Corner\\
  \texttt{alex.corner@shu.ac.uk}
  \and
  Nick Gurski\\
  \texttt{nick.gurski@case.edu}
}
\date{}

\newcounter{artpart}
\renewcommand{\theartpart}{\Roman{artpart}}
\newcommand{\artpart}[1]{%
  \refstepcounter{artpart}%
  \addtocontents{toc}{\smallskip}%
   \addtocontents{toc}{
    \protect\contentsline{section}{
       \protect\numberline{}{\large Part \theartpart: #1}
     }{}{}
  }
  \section*{Part \theartpart: #1}}

\begin{document}

\maketitle

\abstract{Operads were originally defined by May to have right actions of the symmetric groups, but later formulations have also used no groups actions at all or group actions by such families as the braid groups. We call such families  action operads, as they are the algebraic objects that encode parametrized group actions on operads. In Part I of this paper, we study the basic algebra of action operads $\Lambda$ and the $\Lambda$-operads they act upon. In Part II, we study $\Lambda$-operads in the 2-category of small categories.} 

\tableofcontents

\input{v2p1}
\input{v2p2}

\input{v2p3}




\printbibliography
\end{document}

%% file: v2p1.tex
\section{Introduction}

Operads are by now an established tool in fields ranging from algebra \cite{loday-vallette} and combinatorics \cite{mendez} to homotopy theory \cite{fie-br} and mathematical physics \cite{mss-op}.
First explicitly defined by May in \cite{maygeom}, following work of Whitehead \cite{whitehead}, Stasheff \cite{stasheff}, and Boardman-Vogt \cite{bv}, operads are one of many formalisms\footnote{Others include monads \cite{et-monads}, algebraic or Lawvere theories \cite{lawvere-thesis}, and PROPs \cite{mac_prop}.} used to give a presentation for different types of algebraic structures.
An operad $P$ consists of objects $P(n)$, indexed by natural numbers, together with composition operations
\[
\mu \colon P(n) \times P(k_1) \times \cdots P(k_n) \to P(k_1 + \cdots + k_n)
\]
and a unit element $\id \in P(1)$.
The prototypical operad is the endomorphism operad $\mathcal{E}_X$ on a set $X$ (see \cref{ex:endo}): the set $\mathcal{E}_X(n)$ consists of all functions $f \colon X^n \to X$, the composition operation is $n$-ary composition of functions
\begin{align*}
\mu(g; f_1, \ldots, f_n)\big(x_{1,1}, \ldots, x_{1, k_1}, x_{2,1}, \ldots, x_{n, k_n}\big) \qquad \\ \qquad = g \Big( f_1(x_{1,1}, \ldots, x_{1,k_1}), \ldots, f_n(x_{n,1}, \ldots, x_{n, k_n}) \Big),
\end{align*}
and the identity element is the identity function $1_X \colon X \to X$; the operad axioms were chosen to reflect the associativity and unit relations satisfied by the above structure.

May's definition also included symmetric group actions $P(n) \times \Sigma_n \to P(n)$, modeling the operation of permuting input variables; these operads are now often called \emph{symmetric operads} when the equivariance is being stressed.
Fiedorowicz later generalized these symmetric group actions to braid group actions in \cite{fie-br}, where he studied operations on double loop spaces. 
Wahl \cite{wahl-thesis} generalized these ideas further to ribbon braids and other families of groups in order to study spectra arising from stable mapping class groups.
Each of these authors leveraged different group actions on the operads involved in order to produce additional structure on the algebras over those operads.
This paper, an updated version of the preprints \cite{cg-preprint,g-borel}, seeks to give a treatment of the fundamental algebraic structure present in the families of groups $\Lambda = \{ \Lambda(n) \}_{n \in \mathbb{N}}$ --- such as the symmetric groups, braid groups, and ribbon braid groups --- arising in operad theory, here called \emph{action operads}.

\subsection*{Action operads}

Suppose that $\{ \Lambda(n) \}$ is a $\mathbb{N}$-indexed family of groups, acting on an operad $P$ via $P(n) \times \Lambda(n) \to P(n)$.
It is natural to impose some compatiblity between these group actions and operadic composition, and this compatibility requires that the groups $\Lambda(n)$ themselves form an operad.
We see from the example of the symmetric groups $\Sigma_n$ and May's symmetric operads that the operad consisting of the $\Lambda(n)$'s is not required to be an operad internal to the category of groups: operadic composition is not a group homomorphism, but rather satisfies a twisted version of the homomorphism axiom.
These considerations lead to the definition of an action operad in \cref{Defi:aop}, consisting of 
\begin{itemize}
\item an operad $\Lambda$ of sets, with each $\Lambda(n)$ equipped with a group structure; and
\item functions $\pi_n \colon \Lambda(n) \to \Sigma_n$ that are simultaneously group homomorphisms and assemble into a map of operads $\Lambda \to \Sigma$;
\end{itemize}
satisfying one additional axiom relating the operad and group structures.

As the work of May, Fiedorowicz, and Wahl shows, varying the group actions on an operad $P$ can lead to different structure on the algebras over $P$.
A given operad $P$ may have many different action operads $\Lambda$ acting upon it, with the simplest example being that any symmetric operad is also an operad in the non-symmetric sense, meaning that we can instead consider the group actions $P(n) \times T(n) \to P(n)$ where each $T(n)$ is the trivial group.
Many of our later results, particularly in \cref{sec:cart,sec:pscomm}, take into account both the operad $P$ and the action operad $\Lambda$ acting upon it.
Much of the operads literature has focused on the symmetric case, both because of its historical significance and its seeming universality.
This universality is best explained by the existence of a symmetrization functor (\cref{Defi:symmetrization}) that preserves algebra structures (\cref{cor:pi-star,cor:sym-preserve-alg}).
Unfortunately, symmetrization does not preserve all properties that one might desire on an operad, and \cref{rem:symm-and-contract} demonstrates this by examining how symmetrization interacts with contractability in the context of braided operads.
We take this example as evidence that one should be flexible with the equivariance conditions imposed on an operad, and choose the combination of an action operad $\Lambda$ acting on an operad $P$ that best balances whatever properties are desired.
To that end, we thoroughly investigate the basic algebra of action operads from both the elementary and categorical perspectives.

\subsection*{What has changed?}

As noted above, this paper synthesizes previous work of the authors \cite{cg-preprint,g-borel} that was, until now, available only in preprint form.
Reading those preprints in chronological order will differ significantly from reading the present paper: the earlier \cite{cg-preprint} contained most of the material from Part II of this paper, while the later \cite{g-borel} contained most of the material that now comprises Part I.
In each of these Parts, we have refined, expanded, reorganized, and sometimes removed material from those preprints, motivated by questions or comments from other researchers or, in one case, a later application of our work.
These references to our preprints was one of the motivating factors for finally publishing this material.
In particular, Yau \cite{yau_infinity_2021} has generalized much of our theory to an infinity-categorical context.
Our work on pseudo-commutative structures was also used in \cite{guillou_symmetric,guillou_multiplicative} in the construction of equivariant multiplicative $K$-theory machines.
Those authors identified a missing axiom as detailed in \cref{rem:updated}, the omission of which resulted in the error described in \cref{rem:EB-fail}; we have since removed the erroneous results.

\subsection*{Overview and main results}

The remainder of this preamble contains a short section of notation and conventions that we use throughout.

Part I studies action operads in generality. 
\begin{itemize}
\item \cref{sec:back-op} reviews the basic theory of operads, in their symmetric, plain (or non-symmetric), and braided variants.
\item \cref{sec:aop} provides the definition of an action operad, as well as some basic examples. Much of this section is devoted to the elementary algebra of action operads, and \cref{thm:charAOp} characterizes action operads in terms of block sum and duplication operations instead of operadic composition. 
\item \cref{sec:aop-examples} contains many of our key examples of action operads, together with a host of non-examples. 
\item The short \cref{sec:extension} situates action operads in relation to operads in the category $\mb{Grp}$ of groups; we construct kernels and images of maps of actions operads, and show in \cref{cor:extension} that every action operad is either an operad in $\mb{Grp}$ or an extension of the symmetric groups by an operad in $\mb{Grp}$. 
\item \cref{sec:pres-aop} is devoted to showing that the category of action operads is locally finitely presentable, and using that structure to establish a free action operad functor. We use the resulting adjunction between action operads and collections to define presentations for action operads, and we explicitly compute a presentation for the symmetric groups as an action operad. 
\item \cref{sec:forward-op} gives the definition of a $\Lambda$-operad for $\Lambda$ an action operad, as well as algebras over a $\Lambda$-operad. The main technical result of this section is a change-of-action-operad adjunction $f_{!} \dashv f^*$ associated to any map of action operads $f \colon \Lambda \to \Lambda'$, and we use this to define and then study the symmetrization functor. 
\item In \cref{sec:sub} we express $\Lambda$-operads as monoids under a substitution product (\cref{thm:operad=monoid}) on the category of $\Lambda$-collections. We do so in a coend-forward fashion rather than by elementary calculation, leveraging the internal hom-functor that is adjoint to the substitution product.
\end{itemize}

Part II specializes the theory to operads in the category or 2-category $\mb{Cat}$. 
\begin{itemize}
\item We begin with two background sections: \cref{sec:backgr-cat} covers the constructions we will need related to group actions and limits/2-limits, while 
\cref{sec:2monads} recalls the basic theory of 2-monads.
\item \cref{sec:op-to-2monad} interprets the algebras over a $\Lambda$-operad in $\mb{Cat}$ from the 2-monadic point of view. The most important construction in this section is that of the operad $\EL$ in \cref{cor:elambda_lambdaop}, and the subsequent definition of $\Lambda$-monoidal categories in \cref{Defi:lmc}. We finish this section by computing the free $\Lambda$-monoidal category on a category $X$ in \cref{prop:hom-set-lemma}.
\item \cref{sec:coherence} addresses two kinds of coherence theorems. The first is the abstract coherence theorem for the 2-monad induced by $\EL$, and is presented in \cref{cor:coherence-for-ulP}. The second is a standard strictification-type theorem, and it appears as \cref{thm:wlmc-to-lmc}.
\item \cref{sec:cart} studies the interaction between the group actions $P(n) \times \Lambda(n) \to P(n)$ and whether the 2-monad induced by $P$ is 2-cartesian. It is well-known in the categorical literature that every non-symmetric operad on $\mb{Sets}$ induces a cartesian monad, but that not every symmetric operad does. We characterize those symmetric operads in $\mb{Cat}$ that induce 2-cartesian 2-monads in \cref{cor:cart_cor} quite simply: they are the operads for which all the symmetric group actions are free. We extend this to other $\Lambda$ in \cref{thm:cart_thm}, and in particular show that the 2-monads induced by $\EL$ for any action operad $\Lambda$ are 2-cartesian in \cref{cor:EL-2cart}.
\item \cref{sec:club} begins by reviewing Kelly's notion of club, and then goes on to show that every action operad induces a club. In \cref{thm:club=operad} we characterize which clubs arise from action operads, and then compare Kelly's presentations of clubs with our presentations of action operads in \cref{thm:pres1}. This theorem shows how the form of the definition for a type of monoidal structure (eg, symmetric monoidal or braided monoidal) can be easily translated into a presentation for the action operad $\Lambda$ such that the type of monoidal categories in question are precisely the $\EL$-algebras.
\item \cref{sec:exex-cactus} gives an example of the theory in the previous section. We explain how the calculations with coboundary categories in \cite{hk-cobound} can be interpretted as computing a presentation for the action operad of cactus groups.
\item \cref{sec:pscomm} studies when a $\Lambda$-operad $P$ admits a pseudo-commutative structure. This additional structure equips the 2-category of algebras over $P$ with an additional tensor product and internal hom, making it closed monoidal in a 2-dimensional sense. We define a pseudo-commutative structure for a $\Lambda$-operad $P$ in \cref{Defi:ps-comm_operad}, and prove in \cref{thm:pscomm} that it induces a pseudo-commutative structure on the induced 2-monad. As a consequence, every contractible symmetric operad obtains a symmetric pseudo-commutative structure (\cref{cor:contractplussym-to-psc}).
\end{itemize}

\subsection*{Acknowledgements}
This research was supported by EPSRC Early Career Fellowship 134023 and LMS Research Reboot 42031.
The authors would also like to thank Nathaniel Arkor,  Daniel Graves, and Ang\'elica Osorno.

\section{Notation and Conventions}\label{sec:notation}

\begin{nota}[(Symmetric groups)]\label{nota:symm_sigma}
We denote the symmetric group on the symbols $1, 2, \ldots, n$ by $\Sigma_n$. Elements of a symmetric group are usually denoted by lowercase Greek letters or written in cycle notation.
\end{nota}

\begin{nota}[(Braid groups)]\label[nota]{nota:braid}
We denote the braid group on $n$ strands by $B_n$.
\end{nota}

\begin{nota}[(Identity elements)]\label[nota]{nota:e_identity}
The symbol $e$ will generically represent an identity element in a group. If we are considering a set of groups $\{ \Lambda(n) \}_{n \in \N}$ indexed by the natural numbers, then $e_{n}$ is the identity element in $\Lambda(n)$. We will often drop the subscripts and just write $e$ when the index $n$ in $\Lambda(n)$ is either clear from context or unimportant to the argument at hand.
\end{nota}

\begin{conv}[(Identity morphisms)]\label[conv]{conv:1_id}
We generically write an identity morphism $A \to A$ as either $1$ or $1_A$.
\end{conv}

\begin{nota}[(Group actions)]\label[nota]{nota:g-action}
For a group $G$, a right $G$-action on a set $X$ will be denoted $(x,g) \mapsto x \cdot g$ or $(x,g) \mapsto x g$. 
Similar notation will be used for left actions, and for multiplication in a group.
\end{nota}

\begin{conv}[(Indexed objects)]\label[conv]{conv:indexed}
We generically write $\{ \Lambda(n) \}_{n \in \N}$ for a $\N$-indexed family of objects $\Lambda(n)$. We will occasionally write $\Lambda_n$ in place of $\Lambda(n)$, especially in diagrams or when the objects $\Lambda_n$ have been independently defined, as in \cref{nota:symm_sigma,nota:braid}.
\end{conv}

\begin{conv}[(Products and quotients)]\label[conv]{conv:coeq}
We will often be interested in elements of a product of the form
$
A \times B(1) \times \cdots \times B(n) \times C
$ (or similar, for example without $A$ or $C$). We write elements of this set as $(a; b_{1}, \ldots, b_{n}; c)$, where $b_i \in B(i)$. 
In the case that we need to consider equivalence classes of such elements, these classes will be written as $[a; b_{1}, \ldots, b_{n}; c]$. 
The most common situation in which we consider such equivalence classes is that of a coequalizer of left and right group actions in the following sense. 
A coequalizer of maps
    \[
        \xy
            (0,0)*+{A \times G \times B}="00";
            (30,0)*+{A \times B}="10";
            (60,0)*+{\coeqb{A}{B}{G}}="20";
            {\ar@<1ex>^{1 \times \lambda} "00" ; "10"};
            {\ar@<-1ex>_{\rho \times 1} "00" ; "10"};
            {\ar^{\varepsilon} "10" ; "20"};
        \endxy
    \]
will be written as $\coeqb{A}{B}{G}$, where $\rho$ is a right action of $G$ on $A$ and $\lambda$ is a left action of $G$ on $B$. 
This notation is meant to emphasize the analogy with tensor products of $R$-modules, even when the monoidal structure involved is cartesian.
It also differentiates these coequalizers from pullbacks.
\end{conv}

\begin{conv}[(Tilde for maps respecting equivariance)]\label[conv]{conv:equiv-maps}
Suppose that $\coeqb{A}{B}{G}$ is a coequalizer as in \cref{conv:coeq}.
By definition, maps $f \colon \coeqb{A}{B}{G} \to X$ are in bijection with maps $A \times B \to X$ that coequalize $1 \times \lambda$ and $\rho \times 1$.
Given such a map $f$, we will always denote the corresponding map $A \times B \to X$ as $\tilde{f}$.
\end{conv}

\begin{conv}[(Pullbacks)]\label[conv]{conv:pb}
The pullback of the diagram
    \[
        \xy
            (10,0)*+{X}="00";
            (0,-10)*+{Y}="10";
            (10,-10)*+{A}="20";
            {\ar^{f} "00" ; "20"};
            {\ar_{g} "10" ; "20"};
        \endxy
    \]
will be written as $X \times_A Y$. 
\end{conv}

\begin{Defi}[(Underlying permutation)]\label[Defi]{Defi:underlying-perm}
Suppose that $f \colon G \to \Sigma_n$ is a given group homomorphism, and $x \in G$. The \emph{underlying permutation} of $x$ is the element $f(x) \in \Sigma_n$. If there is likely to be some confusion as to which homomorphism $f$ is being used, we will call $f(x)$ the \emph{underlying permutation with respect to $f$}.
\end{Defi}

\begin{nota}[(Applying underlying permutations)]\label[nota]{nota:perm_shorthand}
Throughout we will be using maps $\pi_n \colon O(n) \rightarrow \Sigma_n$, where $O(n)$ is the set of $n$-ary operations of an operad $O$ and $\Sigma_n$ is the symmetric group on $n$ elements.  
For any $\sigma \in O(n)$, we will write $\sigma(i)$ for $\pi_n(\sigma)(i)$, the image of $i$ with respect to the underlying permutation of $\sigma$; the notation $\sigma^{-1}(i)$ will be used for the inverse image of $i$ with respect to the underlying permutation of $\sigma$.
\end{nota}

\begin{rem}[(Left action of symmetric groups on tuples)]\label{rem:Sn-tuples}
The most common group action we will encounter is the left action of the symmetric group $\Sigma_n$ on a set of the form $X^n$.
We write this action as $\sigma \cdot (x_1, \ldots, x_n)$, and emphasize that it is given by the formula
\[
\sigma \cdot (x_1, \ldots, x_n) = (x_{\sigma^{-1}(1)}, \ldots, x_{\sigma^{-1}(n)}).
\]
\end{rem}

\begin{Defi}[(Block sum)]\label[Defi]{Defi:beta-s}
Let $k_1, \ldots, k_n$ be natural numbers and suppose that $\sigma_i \in \Sigma_{k_i}$ are permutations. The \emph{block sum} of $\sigma_1, \ldots, \sigma_n$, written 
\[
\beta( \sigma_1, \ldots, \sigma_n ),
\]
is the permutation in $\Sigma_K$, where $K = \sum_{i=1}^n k_i$, given as described below. 
For $1 \leq j \leq K$, define $c$ to be the unique integer such that
\[
k_1 + \cdots + k_c < j \leq k_1 + \cdots + k_c +k_{c+1}.
\]
Define
\[
\beta( \sigma_1, \ldots, \sigma_n )(j) = k_1 + \cdots + k_c + \sigma_{c+1}\big( j - (\sum_{i=1}^c k_i) \big).
\]
\end{Defi}

\begin{rem}\label{rem:beta-s}
The formula above expresses the idea that $\beta( \sigma_1, \ldots, \sigma_n )$ permutes the first $k_1$ elements using $\sigma_1$, the next $k_2$ elements using $\sigma_2$, and so on.
\end{rem}

\begin{Defi}[(Duplication)]\label[Defi]{Defi:delta-s}
Let $k_1, \ldots, k_n$ be natural numbers, and suppose that $\sigma \in \Sigma_n$ is a permutation. The \emph{duplication} of $\sigma$ with respect to $k_1, \ldots, k_n$, written
\[
\delta_{n; k_1, \ldots, k_n}(\sigma),
\]
is the permutation in $\Sigma_K$, where $K = \sum_{i=1}^n k_i$, given as described below. 
For $1 \leq j \leq K$, define $c$ to be the unique integer such that
\[
k_1 + \cdots + k_c < j \leq k_1 + \cdots + k_c +k_{c+1}.
\]
Define
\[
\delta_{n; k_1, \ldots, k_n}(\sigma)(j) = \big( \sum_{\sigma(k_i) < \sigma(k_{c+1})} k_i\big) + j - \big( \sum_{i=1}^c k_i \big).
\]
\end{Defi}

\begin{rem}\label{rem:delta-s}
The formula above for $\delta_{n; k_1, \ldots, k_n}(\sigma)$ is best explained by drawing the graph of $\sigma$ as follows. The function $\sigma$ can be represented by drawing two rows of $n$ dots each, and connecting dot $i$ in the top row to dot $\sigma(i)$ in the bottom row. Then $\delta_{n; k_1, \ldots, k_n}(\sigma)$ is obtained by 
\begin{itemize}
\item replacing dot $i$ in the top row with $k_i$ dots,
\item replacing dot $\sigma(i)$ in the bottom row with $k_i$ dots, and
\item connecting these two sets of $k_i$ dots in the unique way that preserves order.
\end{itemize}
Thus we see that the $i$th entry for $\sigma$ is duplicated $k_i$ times in $\delta_{n; k_1, \ldots, k_n}(\sigma)$.
\end{rem}

\begin{remark}\label{rem:perm_matrices}
Permutations, as elements of $\Sigma_n$, can be considered as permutation \emph{matrices} with exactly one $1$ in each row and column. E.g., the permutation $(1 \,\, 3 \,\, 2) \in \Sigma_3$ can be considered as a matrix which permutes three elements $\begin{bmatrix} a & b & c \end{bmatrix}$ upon pre-multiplication:
  \[
  \begin{bmatrix}
  0 & 1 & 0 \\
  0 & 0 & 1 \\
  1 & 0 & 0
  \end{bmatrix}
  \begin{bmatrix}
  a \\ b \\ c
  \end{bmatrix}
  =
  \begin{bmatrix}
  b \\ c \\ a
  \end{bmatrix}.
  \]
Then the block sum $\beta$ (\cref{Defi:beta-s}) corresponds to the process of taking the block diagonal matrix of the original permutation matrices. So given elements $\trans{1}{2} \in \Sigma_2$, $e_1 \in \Sigma_1$, and $(1 \,\, 2 \,\, 3) \in \Sigma_3$, then
  \[
  \beta(\trans{1}{2},e_1,(1 \,\, 2 \,\, 3)) =
  \begin{bmatrix}
  \begin{bmatrix}
  0 & 1 \\
  1 & 0
  \end{bmatrix} & 0 & 0 \\
  0 & \begin{bmatrix} 1 \end{bmatrix} & 0 \\
  0 & 0 &   \begin{bmatrix}
  0 & 0 & 1 \\
  1 & 0 & 0 \\
  0 & 1 & 0
  \end{bmatrix}
  \end{bmatrix}
  =
  \begin{bmatrix}
  0 & 1 & 0 & 0 & 0 & 0 \\
  1 & 0 & 0 & 0 & 0 & 0 \\
  0 & 0 & 1 & 0 & 0 & 0 \\
  0 & 0 & 0 & 0 & 0 & 1 \\
  0 & 0 & 0 & 1 & 0 & 0 \\
  0 & 0 & 0 & 0 & 1 & 0 \\
  \end{bmatrix}
  \]
and is the permutation $\trans{1}{2}(3)(4 \,\, 5 \,\, 6)$ in cycle notation.

Similarly, we can describe the duplication $\delta$ (\cref{Defi:delta-s}) as an operation on permutation matrices. For $\sigma \in \Sigma_n$, $\delta_{n;k_1,\ldots,k_n}(\sigma)$ takes a block diagonal of identity matrices $I_{k_1}$, $\ldots$, $I_{k_n}$ (which corresponds to $\beta(e_{k_1},\ldots,e_{k_n}) \in \Sigma_{k_1+\ldots+k_n}$), and permutes these according to the effect of the permutation $\sigma$. For example, given $\sigma = (1 \,\, 2 \,\, 3)$, then
  \[
    \delta_{3;2,1,3}(\sigma) =
    \sigma
    \ast
    \begin{bmatrix}
    I_2 & 0 & 0 \\
    0 & I_1 & 0 \\
    0 & 0 & I_3
    \end{bmatrix}
    =
      \begin{bmatrix}
      0 & 0 & 1 \\
      1 & 0 & 0 \\
      0 & 1 & 0
      \end{bmatrix}
    \ast
    \begin{bmatrix}
    I_2 & 0 & 0 \\
    0 & I_1 & 0 \\
    0 & 0 & I_3
    \end{bmatrix}
    =
    \begin{bmatrix}
    0 & 0 & I_3 \\
    I_2 & 0 & 0 \\
    0 & I_1 & 0
    \end{bmatrix}.
  \]
We make use of a similar interpretation of signed permutations and block diagonal matrices in a counterexample given in \cref{nonex:counterex2}.
\end{remark}

\begin{conv}[(Superscripts)]\label[conv]{conv:superscripts}
We generically use superscripts, when needed, to distinguish between operations of the same type associated to different structures. As an example, a monoid homomorphism $f \colon A \to B$ would have axioms written as
\begin{align*}
f(x \cdot^A y) & = f(x) \cdot^B f(y), \\
f(1^A) & = 1^B.
\end{align*}
\end{conv}

\artpart{Operads and Action Operads}

\section{Background: Operads}\label{sec:back-op}

This section will collect the basic background information on operads that we will later generalize in
\cref{sec:forward-op}. We begin with the most common type of operad, a symmetric operad, before defining two more types of operad: plain and braided. 

\begin{Defi}[(Symmetric operad)]\label[Defi]{Defi:sym-op}
A \textit{symmetric operad} $O$ (in the category of sets) consists of
\begin{itemize}
\item a set, $O(n)$, for each natural number $n$,
\item for each $n$, a right $\Sigma_{n}$-action on $O(n)$,
\item an element $\id \in O(1)$, and
\item functions
  \[
    \mu \colon  O(n) \times O(k_{1}) \times \cdots \times O(k_{n}) \rightarrow O(k_{1} + \cdots + k_{n}),
  \]
\end{itemize}
satisfying the following three axioms.
\begin{enumerate}
\item The element $\id \in O(1)$ is a two-sided unit for $\mu$, meaning that
  \begin{align*}
    \mu(\id;x) &= x,\\
    \mu(x;\id,\ldots,\id) &= x
  \end{align*}
for any $x \in O(n)$.
\item The functions $\mu$ are associative, meaning that the diagram below commutes.
 \[
    \xy
      (0,0)*+{\scriptstyle O(n) \times \left(\prod_{i=1}^n O(k_i)\right) \times \left(\prod_{i=1}^n\prod_{j=1}^{k_i} O(l_{i,j})\right)}="a";
      (65,0)*+{\scriptstyle O(n) \times \prod_{i=1}^n \left(O(k_i) \times \prod_{j=1}^{k_i} O(l_{i,j})\right)}="b";
      (65,-20)*+{\scriptstyle O(n) \times \prod_{i=1}^n O\left(\sum_{j=1}^{k_i} l_{i,j}\right)}="c";
      (65,-40)*+{\scriptstyle O\left(\sum_{i=1}^n \sum_{j=1}^{k_i} l_{i,j}\right)}="d";
      (0,-40)*+{\scriptstyle O\left(\sum_{i=1}^n k_i\right) \times \prod_{i=1}^n \prod_{j=1}^{k_i} O(l_{i,j})}="e";
      {\ar^{\cong} "a" ; "b"};
      {\ar^{1 \times \prod \mu} "b" ; "c"};
      {\ar^{\mu} "c" ; "d"};
      {\ar_{\mu \times 1} "a" ; "e"};
      {\ar_{\mu} "e" ; "d"};
    \endxy
  \]

\item The functions $\mu$ are equivariant with respect to the symmetric group actions, meaning that two equations hold.
\begin{itemize}
\item[3.1] Suppose that $x \in O(n)$, $y_i \in O(k_i)$ for $i = 1, \ldots, n$, and $\tau_i \in \Sigma_{k_i}$ for  $i = 1, \ldots, n$. Then the first equivariance axiom is the requirement that
\[
\mu(x;y_1 \cdot \tau_1,\ldots,y_n \cdot \tau_n) = \mu(x;y_1,\ldots,y_n)\cdot \beta(\tau_1,\ldots,\tau_n)
\]
holds, where $\beta$ is the function from \cref{Defi:beta-s}.
\end{itemize}
\begin{itemize}
\item[3.2] Suppose that  $x \in O(n)$, $y_i \in O(k_i)$ for $i = 1, \ldots, n$, and $\sigma \in \Sigma_{n}$. 
Then the second equivariance axiom is the requirement that
\[
 \mu(x \cdot \sigma; y_1, \ldots, y_n) = \mu\left(x;y_{\sigma^{-1}(1)},\ldots,y_{\sigma^{-1}(n)}\right)\cdot \delta_{n; k_1, \ldots, k_n}(\sigma)
\]
holds, where $\delta_{n; k_1, \ldots, k_n}$ is the function from \cref{Defi:delta-s}.
\end{itemize}
\end{enumerate}
\end{Defi}

\begin{term}[(Operadic multiplication, composition)]\label[term]{term:operadic-mult}
The functions $\mu$ in \cref{Defi:sym-op} are called \emph{operadic multiplication} or \emph{operadic composition} maps.
\end{term}

\begin{rem}\label{rem:op-def}
One is intended to think that $x \in O(n)$ is a function with $n$ inputs and a single output, as below.
  \begin{center}
  \begin{tikzpicture}
    \draw (0,0.9) -- (1,0.9);
    \draw (0,0.5) -- (1,0.5);
    \node [anchor=center] () at (0.5,0) {$\vdots$\strut};
    \draw (0,-0.5) -- (1,-0.5);
    \draw (0,-0.9) -- (1,-0.9);
    \draw (1,1) -- (1,-1) -- (3,0) -- cycle;
    \draw (3,0) -- (4,0);
    \node [anchor=center] () at (1.75,0) {$x$\strut};
  \end{tikzpicture}
  \end{center}
Operadic composition is then a generalization of function composition, with the pictorial representation below being $\mu(x; y_{1}, y_{2})$ for $\mu \colon O(2) \times O(2) \times O(3) \rightarrow O(5)$.
  \begin{center}
  \begin{tikzpicture}
    \draw (-2,0.8) -- (-1,0.8);
    \draw (-2,0.4) -- (-1,0.4);
    \draw (-1,1) -- (-1,0.2) -- (0,0.6) -- cycle;
    \node [anchor=center] () at (-0.7,0.6) {$y_1$\strut};    
    \draw (-2,-0.9) -- (-1,-0.9);
    \draw (-2,-0.3) -- (-1,-0.3);
    \draw (-1,-1) -- (-1,-0.2) -- (0,-0.6) -- cycle;
    \draw (-2,-0.6) -- (-1,-0.6);
    \node [anchor=center] () at (-0.7,-0.6) {$y_2$\strut};
    \draw (0,0.6) -- (1,0.6);
    \draw (0,-0.6) -- (1,-0.6);
    \draw (1,1) -- (1,-1) -- (3,0) -- cycle;
    \draw (3,0) -- (4,0);
    \node [anchor=center] () at (1.75,0) {$x$\strut};
  \end{tikzpicture}
  \end{center}
  \end{rem}

\begin{term}[($n$-ary operations)]\label[term]{term:nary-ops}
The set $O(n)$ in \cref{Defi:sym-op} is called the set of \emph{$n$-ary operations} of $O$.
\end{term}

\begin{rem}\label{rem:nary-ops-V}
One can change from operads in $\mb{Sets}$ to operads in another (symmetric) monoidal category $\mathcal{V}$ by requiring each $O(n)$ to be an object of $\mathcal{V}$ and replacing all instances of cartesian product with the appropriate tensor product in $\mathcal{V}$. One would also the element $\id \in O(1)$ with a map $I \rightarrow O(1)$ from the unit object of $\mathcal{V}$ to $O(1)$. In the case of symmetric operads, one would also express the right group actions as homomorphisms
\[
\Sigma_n^{op} \to \mathcal{V}\big( O(n), O(n) \big).
\]
If $O$ is an operad in a category other than $\mb{Sets}$, then we would call $O(n)$ the \emph{object} of $n$-ary operations.
\end{rem}

Here are two important examples of symmetric operads.

\begin{example}[(Symmetric operad of symmetric groups)]\label[example]{ex:Sigma}
The canonical example of \emph{a} symmetric operad is \emph{the} symmetric operad and we write this as $\Sigma$. The set $\Sigma(n)$ is the set of elements of the symmetric group $\Sigma_{n}$, and the group action is just multiplication on the right. The identity element $\id \in \Sigma(1)$ is just the identity permutation on a one-element set. Operadic composition in $\Sigma$ will then be given by a function
  \[
    \Sigma(n) \times \Sigma(k_{1}) \times \cdots \times \Sigma(k_{n}) \rightarrow \Sigma(k_{1} + \cdots + k_{n})
  \]
that takes permutations $\sigma \in \Sigma_{n}, \tau_{i} \in \Sigma_{k_{i}}$ and produces the following permutation in $\Sigma_{k_{1} + \cdots + k_{n}}$:
  \[
    \mu(\sigma; \tau_{1}, \ldots, \tau_{n}) = \delta_{n; k_1, \ldots, k_n}(\sigma) \cdot \beta(\tau_1,\ldots,\tau_n),
  \]
with $\beta$ and $\delta$ as in \cref{Defi:beta-s,Defi:delta-s}.

Below we have drawn the permutation for the composition
  \[
    \mu \colon \Sigma(3) \times \Sigma(2) \times \Sigma(4) \times \Sigma(3) \rightarrow \Sigma(9)
  \]
evaluated on the element $\left( (1 \,\, 2 \,\, 3); \trans{1}{2},\trans{1}{2}\trans{3}{4},\trans{1}{3} \right)$, in terms of $\beta$ and $\delta$. We expand on this in \cref{thm:charAOp}.
  \begin{center}
  \begin{tikzpicture}[scale=0.8]
  \draw (1,0) -- (2,-1) -- (6,-2.5);
  \draw (2,0) -- (1,-1) -- (5,-2.5);
  \draw (4,0) -- (5,-1) -- (8,-2.5);
  \draw (5,0) -- (4,-1) -- (7,-2.5);
  \draw (6,0) -- (7,-1) -- (10,-2.5);
  \draw (7,0) -- (6,-1) -- (9,-2.5);
  \draw (9,0) -- (11,-1) -- (4,-2.5);
  \draw (10,0) -- (10,-1) -- (3,-2.5);
  \draw (11,0) -- (9,-1) -- (2,-2.5);
  \node at (13.3,-0.5) {$\beta\big( \trans{1}{2},\trans{1}{2}\trans{3}{4}, \trans{1}{3} \big)$};
  \node at (13.3,-2) {$\delta_{3;2,4,3}\big( (1 \,\, 2 \,\, 3) \big)$};
  \end{tikzpicture}
  \end{center}
\end{example}

\begin{example}[(Endomorphism operad)]\label[example]{ex:endo}
Let $X$ be a set. The \emph{endomorphism operad} of $X$, denoted $\mathcal{E}_X$, consists of 
\begin{itemize}
\item the sets
\[
\mathcal{E}_X(n) = \mb{Sets}(X^n, X),
\]
\item the right group actions $\mathcal{E}_X(n) \times \Sigma_n \to \mathcal{E}_X(n)$ given by
\[
(f \cdot \sigma)(x_1, \ldots, x_n) = f( x_{\sigma^{-1}(1)}, \ldots, x_{\sigma^{-1}(n)}),
\]
\item the element $\id \in \mathcal{E}_X(1)$ being the identity function $1 \colon X \to X$, and
\item operadic multiplication given by
\[
\mu(g; f_1, \ldots, f_n) = g \circ (f_1 \times \cdots \times f_n).
\]
\end{itemize}
We leave verification of the axioms to the reader.
\end{example}

\begin{rem}[(Algebras and endomorphism operads)]
The intuition in \cref{rem:op-def} is connected with \cref{ex:endo} through the concept of an algebra, see \cref{sec:forward-op}.
\end{rem}

One can also drop the symmetric group actions entirely to obtain the notion of a non-symmetric or plain operad.

\begin{Defi}[(Non-symmetric operad)]\label[Defi]{Defi:non-sym-op}
A \emph{non-symmetric operad} $O$ consists of 
\begin{itemize}
\item a set, $O(n)$, for each natural number $n$,
\item an element $\id \in O(1)$, and
\item functions
  \[
    \mu \colon  O(n) \times O(k_{1}) \times \cdots \times O(k_{n}) \rightarrow O(k_{1} + \cdots + k_{n}),
  \]
\end{itemize}
satisfying axioms 1 and 2 from \cref{Defi:sym-op}.
\end{Defi}

\begin{rem}[(Underlying collections)]\label{rem:V-and-coll}
 Every symmetric operad has an underlying \textit{symmetric collection} that consists of the natural number-indexed set $\{ O(n) \}_{n \in \N}$ together with symmetric group actions, but without a chosen identity element or composition maps. The category of symmetric collections is a presheaf category, and we will equip it with a monoidal structure in which monoids are precisely operads in \cref{thm:operad=monoid}. A similar construction, but without reference to group actions, shows that every non-symmetric operad has an underlying (non-symmetric) collection which is now merely a $\N$-indexed collection of sets.
\end{rem}

\begin{example}[(Trimble's Operad $E$)]\label[example]{ex:non-sym}
In order to define weak $n$-categories through iterated enrichment, 
Trimble constructed an operad $E$ as follows:
    \begin{itemize}
        \item for $n \geq 0$, $E(n)$ is the space of continuous endpoint-preserving maps
            \[
                [0,1] \rightarrow [0,n],
            \]
        \item the identity element $1 \in E(1)$ is the identity map
            \[
                [0,1] \rightarrow [0,1],
            \]
        \item composition is described by substitution and reparameterisation.
    \end{itemize}
More details about Trimble's operad $E$ can be found in \cite[Definition $T_R$]{leinster-survey} and \cite[Example 1.7]{cg-comparison}, with variants considered in \cite{cg-cobordism}.
This is a non-symmetric operad in $\mb{Top}$ that, to the authors' knowledge, does not admit the structure of a symmetric operad.
\end{example}

In the original topological applications \cite{maygeom}, symmetric operads were the central figures. 
In \cite{fie-br} Fiedorowicz studied braided operads, in which the braid groups take the place of the symmetric groups. We sketch that definition below.

\begin{Defi}[(Braided operad, sketch)]\label[Defi]{Defi:broperad}
A \textit{braided operad} consists of
  \begin{itemize}
    \item a non-symmetric operad $O$ and
    \item for each $n$, a right action of the $n$th braid group $B_{n}$ on $O(n)$,
  \end{itemize}
such that the operadic multiplication functions $\mu$ are equivariant with respect to the braid group actions, meaning that two equations hold.
\begin{itemize}
\item[1] Suppose that $x \in O(n)$, $y_i \in O(k_i)$ for $i = 1, \ldots, n$, and $\tau_i \in B_{k_i}$ for  $i = 1, \ldots, n$. Then the first equivariance axiom is the requirement that
\[
\mu(x;y_1 \cdot \tau_1,\ldots,y_n \cdot \tau_n) = \mu(x;y_1,\ldots,y_n)\cdot \beta(\tau_1,\ldots,\tau_n)
\]
holds.
\item[2] Suppose that  $x \in O(n)$, $y_i \in O(k_i)$ for $i = 1, \ldots, n$, and $\sigma \in B_{n}$. 
Then the second equivariance axiom is the requirement that
\[
 \mu(x \cdot \sigma; y_1, \ldots, y_n) = \mu\left(x;y_{\sigma^{-1}(1)},\ldots,y_{\sigma^{-1}(n)}\right)\cdot \delta_{n; k_1, \ldots, k_n}(\sigma)
\]
holds.
\end{itemize}
\end{Defi}

\begin{rem}[(Block sum and duplication for braid groups)]\label{rem:br-op-needed}
The above sketch omits the definitions of $\beta, \delta$ for braids. 
Formulas for these can be found in \cite[Examples~5.1.11,~5.1.13]{yau_infinity_2021}, although the geometric interpretations are simple: $\beta$ takes the disjoint union of braids, and $\delta_{n; k_1, \ldots, k_n}(\tau)$ is obtained by replacing the $i$th strand of $\tau$ by $k_i$ parallel strands.
These operations are sometimes referred to as `cabling' operations for braids, as described in, for example, \cite{doucot_local_2025}.
\end{rem}

\begin{example}[(Braided operads)]\label[example]{ex:braided-op}
\begin{enumerate}
\item Let $C_2(n)$ be the $n$th space of the little 2-disks operad, and $\widetilde{C_2(n)}$ it universal cover. Then Example 3.1 of \cite{fie-br} shows that there is a braided operad structure on the spaces $\widetilde{C_2(n)}$.
\item The braid groups themselves define a braided operad, obtained by applying \cref{prop:gisgop}.
\end{enumerate}
\end{example}

We conclude this section by defining various categories of operads in $\mb{Sets}$, although the reader can generalize these to categories of operads in any symmetric monoidal category. We focus on the case of symmetric operads, and explain after how to modify the definitions for non-symmetric or braided operads.

\begin{Defi}[(Map of symmetric operads)]\label[Defi]{Defi:sym_op_map}
Let $O, O'$ be symmetric operads in $\mb{Sets}$. Then a \textit{map of symmetric operads} (or just \emph{operad map} for short, when it is clear that the intent is to respect the symmetric group actions) $f \colon O \rightarrow O'$ consists of functions $f_{n} \colon O(n) \rightarrow O'(n)$ for each natural number such that the following axioms hold for all $x \in O(n), y_i \in O(k_i), \sigma \in \Sigma_n$.

  \begin{align*}
    f\left(\id_O\right) &= \id_{O'}\\
    f\left(\mu^{O}(x;y_1,\ldots,y_n)\right) &= \mu^{O'}\left(f(x);f(y_1),\ldots,f(y_n)\right)\\
    f(x \cdot \sigma) & = f(x) \cdot \sigma
  \end{align*}
\end{Defi}

The next proposition states that symmetric operads and their maps form a category. We leave the proof to the reader.

\begin{prop}\label[prop]{prop:cat-of-sym-op}
There is a category with 
\begin{itemize}
\item objects the symmetric operads $O$ in $\mb{Sets}$, 
\item morphisms the maps of symmetric operads between them,
\item identities $1_O \colon O \to O$ given by
\[
(1_O)_n = 1_{O(n)} \colon O(n) \to O(n),
\]
and
\item composition given by
\[
(g \circ f)_n = g_n \circ f_n.
\]
\end{itemize}
\end{prop}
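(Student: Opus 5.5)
The plan is to check the category axioms directly. There are three things to verify: identities and composites are maps of symmetric operads, composition is associative, and identities are units. All of this is done levelwise in $\mb{Sets}$.

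First, the identity $1_O$ is a map of symmetric operads. Each $(1_O)_n$ is the identity function, so the three equations of \cref{Defi:sym_op_map} hold tautologically:
\[
1(\id_O)=\id_O, \qquad 1\big(\mu^O(x;y_1,\ldots,y_n)\big)=\mu^O(x;y_1,\ldots,y_n), \qquad 1(x\cdot\sigma)=x\cdot\sigma .
\]

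Second, for maps $f \colon O \to O'$ and $g \colon O' \to O''$, the composite $g\circ f$ with components $g_n\circ f_n$ is again a map of symmetric operads. Apply the axioms for $f$ and then for $g$:
\[
g_n f_n(\id_O)=g_n(\id_{O'})=\id_{O''},
\]
\[
g f\big(\mu^O(x;y_1,\ldots,y_n)\big)=g\big(\mu^{O'}(f x;f y_1,\ldots,f y_n)\big)=\mu^{O''}(gfx;gfy_1,\ldots,gfy_n).
\]
For equivariance, $gf(x\cdot\sigma)=g(f(x)\cdot\sigma)=gf(x)\cdot\sigma$. The only point needing care is arity bookkeeping: in the multiplicativity equation the outer map is the component at arity $k_1+\cdots+k_n$, and the inner maps are the components at arities $n$ and $k_i$. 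This is compatible because composition is defined arity by arity.

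Third, associativity and unitality. These hold because composition and identities are defined componentwise, and at each arity they are ordinary composition and identities of functions in $\mb{Sets}$. So $(h\circ g)\circ f$ and $h\circ(g\circ f)$ agree componentwise, and similarly $1_{O'}\circ f=f=f\circ 1_O$. Since a map of operads is determined by its components, these componentwise equalities are equalities of morphisms.

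There is no real obstacle here. The only step that needs any attention is the second one, namely keeping the arities straight when checking that composites preserve $\mu$.
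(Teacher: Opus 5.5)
Your proposal is correct and is exactly the routine levelwise verification that the paper leaves to the reader; the paper gives no proof beyond that. One cosmetic slip: since $\id_O \in O(1)$, the unit check should read $g_1 f_1(\id_O) = g_1(\id_{O'}) = \id_{O''}$, with components at arity $1$ rather than arity $n$.
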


\begin{nota}[(The category of symmetric operads)]\label{nota:cat-of-sym-op}
The category in \cref{prop:cat-of-sym-op} is called the \emph{category of symmetric operads (in $\mb{Sets}$)}, and is denoted $\Sigma\mbox{-}\mb{Op}$.
\end{nota}

\begin{rem}[(The category of non-symmetric operads)]\label{rem:cat-of-nonsym-op}
Omitting symmetries entirely, we can also form the category of non-symmetric operads (in $\mb{Sets}$), denoted $\mb{Op}$. The objects are non-symmetric operads (\cref{Defi:non-sym-op}) and the morphisms have the same data as maps of symmetric operads (\cref{Defi:sym_op_map}) but only satisfy the first two axioms as there is no group action to preserve. Composition and identities are defined exactly as for symmetric operads.
\end{rem}

\begin{rem}[(The category of braided operads)]\label{rem:cat-of-br-op}
Replacing symmetries with braids, we can form the category of braided operads (in $\mb{Sets}$), denoted $B\mbox{-}\mb{Op}$. The objects are braided operads (\cref{Defi:broperad}). The morphisms have the same data as maps of symmetric operads (\cref{Defi:sym_op_map}) and satisfy identical looking axioms so long as the equivariance axiom is interpretted using braids rather than symmetries. Composition and identities are defined exactly as for symmetric operads.
\end{rem}

\section{Action Operads}\label{sec:aop}

The axioms for both symmetric and braided operads use the following features.
\begin{enumerate}
\item For each $n$, we have a group $\Lambda_n$ acting on the set $O(n)$ of $n$-ary operations of the operad. Each such group is equipped with a homomorphism $\pi_n \colon \Lambda_n \to \Sigma_n$, so that every element of $\Lambda_n$ has an underlying permutation.
\item The first equivariance axiom requires the additional data of a family of functions
\[
\beta \colon \Lambda_{k_1} \times \cdots \times \Lambda_{k_n} \to \Lambda_{k_1 + \cdots + k_n}.
\]
In order for this to be a well-defined function, the right group action axioms force these functions to be group homomorphisms.
\item The second equivariance axiom requires the additional data of a family of functions
\[
\delta_{n; k_1, \ldots, k_n} \colon \Lambda_{k_1} \times \cdots \times \Lambda_{k_n} \to \Lambda_{k_1 + \cdots + k_n}.
\]
These functions are not forced to be group homomorphisms, but do satisfy some additional axioms.
\end{enumerate}
In this section, we define \emph{action operads} in \cref{Defi:aop} in order to present a unified treatment of a family of groups satisfying the conditions above. 
In \cref{sec:forward-op}, we define for each action operad $\Lambda$ a notion of $\Lambda$-operad; symmetric operads will arise when $\Lambda = \Sigma$, non-symmetric operads will arise when $\Lambda$ is the action operad of trivial groups, and braided operads will arise when $\Lambda = B$.
Our definition of an action operad will not mention $\beta$ or $\delta$, but will instead use a single axiom relating the group structure, operadic multiplication, and underlying permutations. 
The main result of this section is \cref{thm:charAOp} in which we prove that action operads can be described entirely in terms of the functions $\beta, \delta$ as above.
We will give two examples of action operads (the symmetric groups and the trivial groups) in this section, and postpone the rest to \cref{sec:aop-examples}.

\begin{Defi}[(Action operad)]\label{Defi:aop}
An \textit{action operad} $(\Lambda, \pi)$ consists of
\begin{itemize}
\item an operad $\Lambda = \{ \Lambda(n) \}$ in the category of sets such that each $\Lambda(n)$ is equipped with the structure of a group and
\item a map $\pi \colon \Lambda \rightarrow \Sigma$ which is simultaneously a map of operads and a group homomorphism $\pi_{n} \colon \Lambda(n) \rightarrow \Sigma_{n}$ for each $n$
\end{itemize}
such that one additional axiom holds. Write
  \[
    \mu \colon  \Lambda(n) \times \Lambda(k_{1}) \times \cdots \times \Lambda(k_{n}) \rightarrow \Lambda(k_{1} + \cdots + k_{n})
  \]
for the multiplication in the operad $\Lambda$. Let 
\begin{align*}
(g; f_1, \ldots, f_n) & \in \Lambda(n) \times \Lambda(k_{1}) \times \cdots \times \Lambda(k_{n}), \\
(g'; f_1', \ldots f_n') & \in \Lambda(n) \times \Lambda(k_{g^{-1}(1)}) \times \cdots \times \Lambda(k_{g^{-1}(n)}).
\end{align*}
We require that
  \begin{eqn}\label{eqn:ao_axiom}
    \mu\left(g'; f_1', \ldots f_n'\right)  \mu\left(g; f_1, \ldots, f_n\right) = \mu\left(g'g; f_{g(1)}'f_{1}, \ldots, f_{g(n)}'f_{n}\right)
  \end{eqn}
in the group $\Lambda(k_{1} + \cdots + k_{n})$.
\end{Defi}

\begin{nota}\label{nota:suppress-pi}
We write an action $(\Lambda, \pi)$ as merely $\Lambda$.
The maps $\pi$ will be left implicit in the notation, as we will not have reason to study the case of a single operad $\Lambda$ equipped with two different action operad structures via $\pi$ and $\pi'$.
\end{nota}

\begin{rem}\label{rem:similar-defs}
Our definition of an action operad is the same as the \emph{operads from families of groups} appearing in Section 1.2 Wahl's thesis \cite{wahl-thesis}, but without the condition that each $\pi_{n}$ is surjective. It is also the same as the \emph{group operads} appearing in work of Zhang \cite{zhang-grp}, although we prove later (see \cref{lem:calclem}) that Zhang's condition of $e_{1} \in \Lambda(1)$ being the identity element follows from the rest of the axioms.
\end{rem}

We now give the two examples of action operads that have already appeared in this paper: the symmetric groups and the trivial groups.

\begin{example}[(Action operad of symmetric groups)]\label{example:aop-sym}
The symmetric operad $\Sigma$ has a canonical action operad structure. It is given by taking $\pi$ to be the identity map, and is the terminal object in the category of action operads (\cref{nota:cat_aop}).
\end{example}

\begin{example}[(Action operad of trivial groups)]\label{example:aop-triv}
The terminal operad $T$ in the category of sets has a unique action operad structure. Since $T(n)$ is a singleton for each $n$, the group structure is unique, as is the map $\pi$. The single action operad axiom is then automatic as both sides of \cref{eqn:ao_axiom} are the identity. This is the initial object in the category of action operads.
\end{example}

\begin{rem}
The final axiom is best explained using the operad $\Sigma$ of symmetric groups. Reading symmetric group elements as permutations from top to bottom, below is a pictorial representation of the final axiom for the map $\mu \colon \Sigma_{3} \times \Sigma_{2} \times \Sigma_{2} \times \Sigma_{2} \rightarrow \Sigma_{6}.$
  \begin{center}
  \begin{tikzpicture}[scale=0.5]
    \draw (1,0) -- (2,-1) -- (8,-2) -- (8,-3) -- (5,-4);
    \draw (2,0) -- (1,-1) -- (7,-2) -- (7,-3) -- (4,-4);
    \draw (4,0) -- (4,-1) -- (1,-2) -- (2,-3) -- (2,-4);
    \draw (5,0) -- (5,-1) -- (2,-2) -- (1,-3) -- (1,-4);
    \draw (7,0) -- (8,-1) -- (5,-2) -- (4,-3) -- (7,-4);
    \draw (8,0) -- (7,-1) -- (4,-2) -- (5,-3) -- (8,-4);
    \node at (9.5,-2) {$=$};
    \draw (11,0) -- (12,-1) -- (12,-2) -- (18,-3) -- (15,-4);
    \draw (12,0) -- (11,-1) -- (11,-2) -- (17,-3) -- (14,-4);
    \draw (14,0) -- (14,-1) -- (15,-2) -- (12,-3) -- (12,-4);
    \draw (15,0) -- (15,-1) -- (14,-2) -- (11,-3) -- (11,-4);
    \draw (17,0) -- (18,-1) -- (17,-2) -- (14,-3) -- (17,-4);
    \draw (18,0) -- (17,-1) -- (18,-2) -- (15,-3) -- (18,-4);
    \node at (4.5,-4.5) {$\scriptstyle \mu\big((23);(12),(12), e_2\big) \ \cdot \ \mu\big((132); (12), e_2, (12)\big)$};
    \node at (14.5,-4.5) {$\scriptstyle \mu\big((23)\cdot (132); e_2 \cdot (12), (12) \cdot e_2, (12) \cdot (12)\big)$};
  \end{tikzpicture}
  \end{center}
\end{rem}

Action operads are themselves the objects of a category, $\mb{AOp}$. The morphisms of this category are defined below.
\begin{Defi}[(Map of action operads)]\label{Defi:mapaop}
A \textit{map of action operads} $f \colon  \Lambda \rightarrow \Lambda'$ consists of a map $f \colon \Lambda \rightarrow \Lambda'$ of the underlying operads such that
  \begin{enumerate}
    \item $\pi^{\Lambda'} \circ f = \pi^{\Lambda}$ (i.e., $f$ is a map of operads over $\Sigma$) and
    \item each $f_{n} \colon \Lambda(n) \rightarrow \Lambda'(n)$ is a group homomorphism.
  \end{enumerate}
\end{Defi}

\begin{prop}\label{prop:cat-of-aop}
There is a category with 
\begin{itemize}
\item objects the action operads $O$ in $\mb{Sets}$, 
\item morphisms as defined in \cref{Defi:mapaop},
\item identities $1_\Lambda \colon \Lambda \to \Lambda$ given by the identity morphism of $\Lambda$ as an operad,
and
\item composition given by composition of maps of operads.
\end{itemize}
\end{prop}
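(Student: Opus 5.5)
The plan is to inherit the category structure from the category of non-symmetric operads in $\mb{Sets}$, written $\mb{Op}$ in \cref{rem:cat-of-nonsym-op}. Its well-definedness is the same elementary verification as \cref{prop:cat-of-sym-op}, with the equivariance condition dropped. Since identities and composition in the proposed category are by definition those of $\mb{Op}$, the unit and associativity laws hold automatically once they hold there, and in $\mb{Op}$ they reduce to levelwise identities and composites of functions. So the only real work is closure: identities must be maps of action operads, and composites of maps of action operads must again be maps of action operads.

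For identities, $1_\Lambda$ is levelwise the identity function. This is a map of underlying operads, it is a group homomorphism at each level, and it clearly satisfies $\pi^\Lambda \circ 1_\Lambda = \pi^\Lambda$.

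For composites, take $f \colon \Lambda \to \Lambda'$ and $g \colon \Lambda' \to \Lambda''$ as in \cref{Defi:mapaop}. The composite $g \circ f$ is a map of operads because such maps are closed under composition: it preserves $\id$ and commutes with $\mu$ by two applications of the axioms. Each component $(g \circ f)_n = g_n \circ f_n$ is a composite of group homomorphisms, hence a group homomorphism. Finally, the condition over $\Sigma$ follows from
\[
\pi^{\Lambda''} \circ g \circ f = \pi^{\Lambda'} \circ f = \pi^{\Lambda}.
\]

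None of these steps is a genuine obstacle. The one point to note is that the action operad axiom \cref{eqn:ao_axiom} is a condition on objects only, so it imposes no additional requirement on morphisms and never needs to be checked for identities or composites.
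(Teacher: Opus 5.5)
Your proposal is correct and is the routine verification the paper has in mind; the paper states this proposition without proof, just as it leaves \cref{prop:cat-of-sym-op} to the reader. You check that levelwise identities and composites of maps of action operads are again operad maps, group homomorphisms and maps over $\Sigma$, and the unit and associativity laws then follow from composition of functions.
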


\begin{nota}[(The category of action operads)]\label{nota:cat_aop}
The category in \cref{prop:cat-of-aop} is called the \emph{category of action operads (in $\mb{Sets}$)}, and is denoted $\mb{AOp}$.
\end{nota}

\begin{prop}\label{prop:pi-in-aop}
Let $(\Lambda, \pi)$ be an action operad. The map $\pi \colon \Lambda \rightarrow \Sigma$ is a map of action operads.
\end{prop}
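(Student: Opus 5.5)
We need to check three things: that $\pi$ is a map of operads, that each $\pi_n$ is a group homomorphism, and that $\pi$ is compatible with the structure maps to $\Sigma$. The codomain is $\Sigma$ with its action operad structure from \cref{example:aop-sym}, whose structure map is $\pi^{\Sigma} = 1_\Sigma$. The plan is to check the two conditions of \cref{Defi:mapaop} directly against \cref{Defi:aop}.

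First, $\pi$ is a map of underlying operads $\Lambda \to \Sigma$. This is part of the data of an action operad, since \cref{Defi:aop} requires $\pi$ to be a map of operads. Second, each $\pi_n \colon \Lambda(n) \to \Sigma_n$ is a group homomorphism, which is again stipulated directly in \cref{Defi:aop}. Third, condition 1 of \cref{Defi:mapaop} asks that $\pi^{\Sigma} \circ \pi = \pi^{\Lambda}$. Because $\pi^{\Sigma}$ is the identity, this reads $\pi = \pi$, which holds trivially.

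The only point needing care is confirming that $(\Sigma, 1_\Sigma)$ really is an action operad, which \cref{example:aop-sym} asserts. Doing so amounts to checking \cref{eqn:ao_axiom} for $\Sigma$ using the formula $\mu(\sigma;\tau_1,\ldots,\tau_n) = \delta(\sigma)\beta(\tau_1,\ldots,\tau_n)$. The needed identities are that $\beta$ is a homomorphism and that $\beta(\tau_1,\ldots,\tau_n)\,\delta(\sigma) = \delta(\sigma)\,\beta(\tau_{\sigma^{-1}(1)},\ldots,\tau_{\sigma^{-1}(n)})$, with the appropriate arities. This is a routine check on permutations, and we may take it as given from the cited example. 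There is no genuine obstacle: the proposition is essentially immediate from the definitions.
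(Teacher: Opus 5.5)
Your argument is correct and is exactly the definitional check the paper has in mind (it states this proposition without proof): being a map of operads and levelwise group homomorphisms is part of the data in \cref{Defi:aop}, and condition 1 of \cref{Defi:mapaop} is trivial because $\pi^{\Sigma} = 1_{\Sigma}$. One harmless slip in your aside, which the argument never uses: the commutation identity should read $\beta(\tau_1,\ldots,\tau_n)\,\delta(\sigma) = \delta(\sigma)\,\beta(\tau_{\sigma(1)},\ldots,\tau_{\sigma(n)})$ (compare Axiom \eqref{eq8} of \cref{thm:charAOp}); with $\sigma^{-1}$ in place of $\sigma$ the arities do not match unless $\sigma^2 = e$.
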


We now study some of the structure on the groups $\Lambda(n)$ for small values of $n$. Recall from \cref{nota:e_identity} that we write $e_{n}$ for the identity element in the group $\Lambda(n)$.
Many of our proofs rely on the following version of the Eckmann-Hilton argument \cite{eh}.

\begin{prop}[Eckmann-Hilton argument]\label{prop:EH}
Let $G$ be a group with identity element $e$, and suppose $\varphi \colon G \times G \to G$ is a function. If $\varphi$ is a homomorphism, meaning that
\[
\varphi(g',h') \cdot \varphi(g,h) = \varphi(g' \cdot g, h' \cdot h),
\]
and $\varphi(g,e) = g = \varphi(e,g)$ for all elements $g \in G$, then 
\[
\varphi(g, h) = g \cdot h
\]
and $G$ is abelian.
\end{prop}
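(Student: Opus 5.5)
The plan is to use the interchange law twice: once with one argument at the identity to show that $\varphi(g,h)$ factors as a product, and once with the roles swapped to show that the factors commute. The key observation is that any element can be padded with the identity $e$ in one slot without changing it, either as $g = g \cdot e$ or as $h = e \cdot h$. After that, every step is a direct application of the unit hypotheses $\varphi(g,e) = g = \varphi(e,g)$.

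First, write $(g,h) = (g \cdot e, e \cdot h)$. The homomorphism condition, applied with $g' = g$, $h' = e$ and second pair $(e,h)$, gives
\[
\varphi(g,h) = \varphi(g \cdot e, e \cdot h) = \varphi(g,e) \cdot \varphi(e,h) = g \cdot h .
\]
Second, write $(g,h) = (e \cdot g, h \cdot e)$. The same condition, now with first pair $(e,h)$ and second pair $(g,e)$, gives
\[
\varphi(g,h) = \varphi(e \cdot g, h \cdot e) = \varphi(e,h) \cdot \varphi(g,e) = h \cdot g .
\]
Comparing the two computations gives $g \cdot h = \varphi(g,h) = h \cdot g$ for all $g,h \in G$. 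This proves both conclusions: $\varphi$ is the group multiplication, and $G$ is abelian.

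The only point needing care is the order convention in the displayed homomorphism axiom. There, the primed pair is written on the left of the product, matching the convention of \cref{eqn:ao_axiom}. I would line up the factors explicitly so that each of the two instantiations really yields the stated order of $g$ and $h$. Apart from this bookkeeping there is no real obstacle; the argument is the classical Eckmann–Hilton argument \cite{eh} specialised to a single group.
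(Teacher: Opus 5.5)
Your argument is correct. Both instantiations of the interchange law are lined up in the right order, giving $\varphi(g,h)=g\cdot h$ and $\varphi(g,h)=h\cdot g$, so both conclusions follow at once. The paper gives no proof of its own here and simply cites the classical Eckmann--Hilton argument \cite{eh}; your proof is exactly that standard argument, and since it never uses inverses it works verbatim for any monoid.
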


\begin{lem}\label{lem:calclem}
Let $\Lambda$ be an action operad.
\begin{enumerate}
\item In $\Lambda(1)$, the identity element for the group structure, $e_1$, is equal to the identity element for the operad structure, $\id$.
\item The equation
  \[
    \mu(e_{n}; e_{i_{1}}, \ldots, e_{i_{n}}) = e_{I}
  \]
holds for any natural numbers $n, i_{j}, I = \sum_{j=1}^n i_{j}$.
\item The group $\Lambda(1)$ is abelian.
\end{enumerate}
\end{lem}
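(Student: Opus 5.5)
The plan is to extract everything from the action operad axiom \cref{eqn:ao_axiom}, specialised to small arities, and combine it with the operad unit axioms. We first treat part (1) and then part (3) via \cref{prop:EH}. Part (2) is handled independently by a cancellation argument.

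\textbf{Part (1).} Take $n=1$ in \cref{eqn:ao_axiom}. Since $\pi_1$ lands in the trivial group $\Sigma_1$, every $g \in \Lambda(1)$ has $g(1)=1$. The axiom therefore reads
\[
\mu(g';f')\,\mu(g;f) = \mu(g'g; f'f)
\]
for all $g,g',f,f' \in \Lambda(1)$, so $\varphi = \mu \colon \Lambda(1)\times\Lambda(1)\to\Lambda(1)$ is a homomorphism in the sense of \cref{prop:EH}. The operad unit axioms give $\mu(\id;x)=x=\mu(x;\id)$, but \cref{prop:EH} requires units equal to the group identity $e_1$, so we first show $\id = e_1$. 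Write $u=\mu(e_1;e_1)$. Setting all entries equal to $e_1$ in the homomorphism identity gives $u\cdot u = u$, hence $u = e_1$. On the other hand, $u = \mu(e_1;e_1)$, and applying the homomorphism identity to
\[
\mu(\id;e_1)\,\mu(e_1;\id) = \mu(\id\, e_1; e_1 \id) = \mu(\id;\id) = \id
\]
while the left side is $e_1 \cdot e_1 = e_1$ by the unit axioms gives $\id = e_1$. This step is the main subtlety: one must avoid assuming the two units agree, and must instead force agreement through the homomorphism property.

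\textbf{Part (3).} With $\id = e_1$, the map $\varphi=\mu$ satisfies $\varphi(g,e_1)=g=\varphi(e_1,g)$, so \cref{prop:EH} applies and shows that $\Lambda(1)$ is abelian.

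\textbf{Part (2).} Apply \cref{eqn:ao_axiom} with $g=g'=e_n$ and all $f_j=f_j'=e_{i_j}$. Since $\pi_n$ is a homomorphism, $e_n$ has identity underlying permutation, so the indexing matches and the axiom yields
\[
\mu(e_n;e_{i_1},\ldots,e_{i_n})^2 = \mu(e_n; e_{i_1},\ldots,e_{i_n}).
\]
An idempotent in a group is the identity, so $\mu(e_n;e_{i_1},\ldots,e_{i_n}) = e_I$.
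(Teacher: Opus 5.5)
Your proof is correct and is essentially the paper's: part (1) combines the $n=1$ case of the action operad axiom with the operad unit axioms, and your computation $e_1 = \mu(\id;e_1)\,\mu(e_1;\id) = \mu(\id;\id) = \id$ is a cancellation-free variant of the paper's $\id\cdot e_1 = \id\cdot\id$ (your preliminary step showing $\mu(e_1;e_1)=e_1$ is never used and can be deleted). Parts (2) and (3) are exactly the paper's idempotent and Eckmann--Hilton arguments.
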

\begin{proof}
For the first claim, we will prove that $\id \cdot e_1 = \id \cdot \id$, so $e_1 = \id$ by cancellation.
Note that since the only element of $\Sigma_1$ is the identity permutation, the action operad axiom \cref{eqn:ao_axiom} is
\[
\mu(g';f')\cdot \mu(g;f) = \mu(g'g; f'f)
\]
when $g, g' \in \Lambda(1)$.
Thus we obtain 
\begin{align*}
\id \cdot e_1 & = \mu(\id; \id) \cdot \mu(\id; e_{1}) \\
& = \mu(\id \cdot \id; \id \cdot e_{1}) \\
    &= \mu(\id \cdot \id; \id) \\
    &= \id \cdot \id
\end{align*}
using that $\id$ is the identity element for operadic multiplication, the $n=1$ action operad axiom explained above, that $e_1$ is the identity for group multiplication, and that $\id$ is the identity for operadic multiplication again.
Therefore $\id \cdot e_1 = \id \cdot \id$ as desired, and $e_1 = \id$.

For the second claim, we write $\mu(e_{n}; e_{i_{1}}, \ldots, e_{i_{n}})$ as $\mu(e; \underline{e})$, and consider the square of this element. We find that
  \begin{align*}
    \mu(e; \underline{e}) \cdot \mu(e; \underline{e}) & = \mu(e \cdot e; \underline{e} \cdot \underline{e}) \\
    &= \mu(e; \underline{e}),
  \end{align*}
where the first equality follows from the last action operad axiom together with the fact that $e$ gets mapped to the identity permutation; here $\underline{e} \cdot \underline{e}$ is the sequence $e_{i_{1}} \cdot e_{i_{1}}, \ldots, e_{i_{n}} \cdot e_{i_{n}}$. Thus $\mu(e; \underline{e})$ is an idempotent element of the group $\Lambda(I)$, so must be the identity element $e_{I}$.

For the final claim, note that the specific operadic multiplication map $\mu \colon \Lambda(1) \times \Lambda(1) \rightarrow \Lambda(1)$ is a group homomorphism following from the action operad axioms, and $\id = e_{1}$ is a two-sided unit, so \cref{prop:EH} shows that $\mu$ is actually group multiplication and that $\Lambda(1)$ is abelian.
\end{proof}

\begin{lem}\label{lem:e0-unit}
Let $\Lambda$ be an action operad, and $g_i \in \Lambda(k_i)$ for $i=2, \ldots, n$. Then
\[
\mu(e_n; e_0, g_2, \ldots, g_n) = \mu(e_{n-1}; g_2, \ldots, g_n).
\]
Similarly, $\mu(e_n; h_1, \ldots, h_{n-1}, e_0) = \mu(e_{n-1}; h_1, \ldots, h_{n-1})$ for any $h_i \in \Lambda(k_i)$ for $i=1, \ldots, n-1$.
\end{lem}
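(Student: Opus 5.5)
The plan is to reduce the statement to the case $n=2$ using operadic associativity. In that case \cref{lem:calclem} does all the work.

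First, by \cref{lem:calclem}(2) we have $e_n = \mu(e_2; e_1, e_{n-1})$. Substituting this into $\mu(e_n; e_0, g_2, \ldots, g_n)$ and applying associativity of $\mu$ (the first input of $e_2$ receives $e_1$, which in turn receives $e_0$; the second receives $e_{n-1}$, which receives $g_2, \ldots, g_n$), we get
\[
\mu(e_n; e_0, g_2, \ldots, g_n) = \mu\big(e_2; \mu(e_1; e_0), \mu(e_{n-1}; g_2, \ldots, g_n)\big) = \mu(e_2; e_0, h),
\]
where $h = \mu(e_{n-1}; g_2, \ldots, g_n)$. Here $\mu(e_1;e_0) = e_0$ because $e_1 = \id$ by \cref{lem:calclem}(1). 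So it suffices to prove $\mu(e_2; e_0, h) = h$ for every $h \in \Lambda(m)$.

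Second, for this reduced claim I use associativity again, together with the unit axiom $h = \mu(\id; h)$:
\[
\mu(e_2; e_0, h) = \mu\big(e_2; \mu(e_0;\,), \mu(\id; h)\big) = \mu\big(\mu(e_2; e_0, \id); h\big).
\]
Here $\mu(e_0;\,)$ denotes the nullary composite $\Lambda(0) \to \Lambda(0)$. I take it to be the identity, as in the standard reading of the associativity and unit axioms with empty families of inputs. This bookkeeping with zero-ary inputs is the step I expect to need the most care, since one must make sure the instance of associativity used is legitimate when $k_1 = 0$.

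Now $\mu(e_2; e_0, \id) = \mu(e_2; e_0, e_1)$, and this equals $e_{0+1} = e_1$ by \cref{lem:calclem}(2). Hence $\mu(e_2; e_0, h) = \mu(e_1; h) = \mu(\id; h) = h$, which proves the first equation. The second equation is proved symmetrically: write $e_n = \mu(e_2; e_{n-1}, e_1)$ and use $\mu(e_2; e_1, e_0) = e_1$.
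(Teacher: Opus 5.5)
Your proof is correct and follows the paper's argument exactly: you reduce to $n=2$ via $e_n=\mu(e_2;e_1,e_{n-1})$ and associativity, then show $\mu(e_2;e_0,h)=h$ by rewriting it as $\mu(\mu(e_2;e_0,e_1);h)$ and using $\mu(e_2;e_0,e_1)=e_1$ from \cref{lem:calclem}. The nullary composite $\mu(e_0;\,)=e_0$ that you flag needs no extra convention, since it is just the right unit axiom $\mu(x;\id,\ldots,\id)=x$ in the case $n=0$.
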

\begin{proof}
We will only check the first claim, as the second follows by analogous calculations.
The equalities
\begin{align*}
\mu(e_n; e_0, g_2, \ldots, g_n) & = \mu \big( \mu(e_2; e_1, e_{n-1}); e_0, g_2, \ldots, g_n \big) \\
& = \mu\big( e_2; \mu(e_1; e_0), \mu(e_{n-1}; g_2, \ldots, g_n) \big) \\
& = \mu\big( e_2; e_0, \mu(e_{n-1}; g_2, \ldots, g_n) \big)
\end{align*}
follow from the second part of \cref{lem:calclem}, operadic associativity, and the first part of \cref{lem:calclem}, respectively.
Therefore the first equality in the lemma follows from the special case when $n=2$ and the equality
\begin{equation}\label{eq:n=2-e0}
\mu(e_2; e_0, g) = g,
\end{equation}
by subsituting $g = \mu(e_{n-1}; g_2, \ldots, g_n)$.
In order to prove \cref{eq:n=2-e0}, we use the same methods as above to obtain
  \begin{align*}
   g & = \mu(e_1; g) \\
    &= \mu(\mu(e_2; e_0, e_1); g) \\
    & = \mu(\mu(e_2; e_0, e_1); \mu(e_1; g)) \\
    & = \mu(e_2; e_0, g).
  \end{align*}
This calculation verifies \cref{eq:n=2-e0}, and so completes the proof of the first equality in the statement of the lemma.
\end{proof}

\begin{cor}\label{cor:G0abel}
Let $\Lambda$ be an action operad. For any $g, h \in \Lambda(0)$, the equation
\[
g \cdot h = \mu(e_2; g,h)
\]
holds. As a consequence, $\Lambda(0)$ is abelian.
\end{cor}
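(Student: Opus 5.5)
We apply the Eckmann--Hilton argument (\cref{prop:EH}) to the function
\[
\varphi \colon \Lambda(0) \times \Lambda(0) \to \Lambda(0), \qquad \varphi(g,h) = \mu(e_2; g, h).
\]
The conclusion of \cref{prop:EH} then gives exactly the two claims: $\varphi(g,h) = g \cdot h$ and $\Lambda(0)$ is abelian. So it suffices to check the two hypotheses of \cref{prop:EH}.

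For the homomorphism property, we use the action operad axiom \cref{eqn:ao_axiom} with $n=2$, $k_1 = k_2 = 0$, and $g = g' = e_2$. Since $\pi_2(e_2)$ is the identity permutation, the reindexing $f'_{g(i)}$ is trivial, and the axiom reads
\[
\mu(e_2; g', h') \cdot \mu(e_2; g, h) = \mu(e_2 e_2; g'g, h'h) = \mu(e_2; g'g, h'h).
\]
This is exactly $\varphi(g',h')\varphi(g,h) = \varphi(g'g, h'h)$.

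For the unit property, we need $\varphi(g, e_0) = g = \varphi(e_0, g)$. This is the $n=2$ special case of \cref{lem:e0-unit} with $k_2 = 0$: it gives $\mu(e_2; e_0, g) = \mu(e_1; g)$. The right-hand side equals $g$ because $e_1 = \id$ by \cref{lem:calclem} and $\id$ is an operadic unit; equivalently, this is \cref{eq:n=2-e0}. The symmetric equation $\mu(e_2; g, e_0) = g$ follows in the same way from the second statement of \cref{lem:e0-unit}.

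With both hypotheses verified, \cref{prop:EH} yields the corollary. There is no real obstacle here. The only care needed is confirming that \cref{lem:e0-unit} applies when the remaining arity is $0$, and that the permutation twist in \cref{eqn:ao_axiom} vanishes because $\pi_2(e_2) = e$.
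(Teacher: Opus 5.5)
Your proposal is correct and follows the paper's proof essentially verbatim: the homomorphism property of $\mu(e_2;-,-)$ comes from \cref{eqn:ao_axiom} with $g=g'=e_2$, the unit condition comes from \cref{lem:e0-unit}, and \cref{prop:EH} then gives both claims.
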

\begin{proof}
The function $\Lambda(0) \times \Lambda(0) \to \Lambda(0)$ given by
\[
g, h \mapsto \mu(e_2; g, h)
\]
is a group homomorphism by the action operad axiom \cref{eqn:ao_axiom} as we verify below.
\[
\mu(e_2; g', h') \cdot \mu(e_2; g, h) = \mu(e_2 \cdot e_2; g' \cdot g, h' \cdot h) = \mu(e_2; g' \cdot g, h' \cdot h)
\]
In order to apply \cref{prop:EH} and conclude that $g \cdot h = \mu(e_2; g, h)$, we must verify that 
\[
\mu(e_2; e_0, g) = g = \mu(e_2; g, e_0)
\]
for all $g \in \Lambda(0)$, but this follows immediately from \cref{lem:e0-unit}.
Thus the function $\mu(e_2; -,-)$ satisfies the hypotheses in \cref{prop:EH}.
  Therefore $g \cdot h = \mu(e_2; g,h)$ and $\Lambda(0)$ is abelian.
\end{proof}

The symmetric operad structure on the symmetric groups in \cref{ex:Sigma} was constructed using the functions $\beta, \delta$ from \cref{Defi:beta-s} and \cref{Defi:delta-s}, respectively.
We are now ready to show that any action operad can be described in this way, as promised in the introductory remarks to this section.

\begin{thm}\label{thm:charAOp}
An action operad $\Lambda$ determines, and is uniquely determined by, the following: 
\begin{itemize}
\item groups $\Lambda(n)$ together with group homomorphisms $\pi_{n} \colon \Lambda(n) \rightarrow \Sigma_{n}$,
\item a group homomorphism
  \[
    \Lambda(k_{1}) \times \cdots \times \Lambda(k_{n}) \stackrel{\beta}{\longrightarrow} \Lambda(k_{1} + \cdots + k_{n}),
  \]
for each $n > 0$ and $k_{1}, \ldots, k_{n}$, and
\item a function of sets
  \[
    \Lambda(n) \stackrel{\delta_{n; k_{1}, \ldots, k_{n}}}{\longrightarrow} \Lambda(k_{1} + \cdots + k_{n})
  \]
for each $n, k_{1}, \ldots, k_{n}$,
\end{itemize}
subject to the axioms below. In what we write below, we use the following notational conventions.
\begin{itemize}
\item The symbols $f,g,h$, with or without subscripts, always refer to an element of some group $\Lambda(n)$.
\item The symbols $j,k,m,n,p$ are all natural numbers, and $i$ is a natural number between 1 and $n$.
\end{itemize}
Axioms:
\begin{enumerate}
\item\label{eq1} The homomorphisms $\beta$ are natural with respect to the maps $\pi_{n}$, where $K = k_{1} + \cdots + k_{n}$.
  \[
    \xy
      (0,0)*+{\Lambda(k_{1}) \times \cdots \times \Lambda(k_{n}) } ="00";
      (0,-15)*+{\Sigma_{k_{1}} \times \cdots \times \Sigma_{k_{n}}  } ="01";
      (40,0)*+{\Lambda(K) } ="20";
      (40,-15)*+{\Sigma_{K} } ="21";
      {\ar^{\beta} "00" ; "20"};
      {\ar^{\pi} "20" ; "21"};
      {\ar_{\pi_1 \times \cdots \times \pi_n} "00" ; "01"};
      {\ar_{\beta} "01" ; "21"};
    \endxy
  \]

\item\label{eq2} The homomorphism $\beta \colon \Lambda(k) \rightarrow \Lambda(k)$ is the identity.
\item\label{eq3} The homomorphisms $\beta$ are associative in the sense that the equation
\[
  \beta(\underline{h_1},\ldots,\underline{h_n}) = \beta(\beta(\underline{h_1}),\ldots,\beta(\underline{h_n}))
\]
holds, where $\underline{h_i} = h_{i1},\ldots,h_{ij_i}$.
\item\label{eq4} The functions $\delta_{n; k_{1}, \ldots, k_{n}}$ are natural with respect to the maps $\pi_{n}$, where $K = k_1 + \cdots + k_n$.
  \[
    \xy
      (0,0)*+{\Lambda(n)} ="00";
      (40,0)*+{\Lambda(k_{1} + \cdots + k_{n}) } ="20";
      (0,-15)*+{\Sigma_{n}  } ="01";
      (40,-15)*+{\Sigma_{k_{1} + \cdots + k_{n}} } ="21";
      {\ar^{\delta} "00" ; "20"};
      {\ar^{\pi} "20" ; "21"};
      {\ar_{\pi} "00" ; "01"};
      {\ar_{\delta} "01" ; "21"};
    \endxy
  \]

\item\label{eq5} The function $\delta_{n; 1, \ldots, 1} \colon \Lambda(n) \rightarrow \Lambda(n)$ is the identity. The function $\delta_{n;k_1,\ldots,k_n} \colon \Lambda(n) \to \Lambda(k_1 + \cdots + k_n)$ maps $e_n$ to $e_{k_1 + \cdots + k_n}$
\item\label{eq6} The equation $\delta_{n; k_1, \ldots, k_n}(g) \delta_{n; j_1, \ldots, j_n}(h) = \delta_{n; j_1,\ldots,j_n}(gh)$ holds when

  \[
    k_{i} = j_{h^{-1}(i)}.
  \]
\item\label{eq7} The functions $\delta$ are associative in the sense that the equation
  \[
    \delta_{m_1 + \cdots + m_n; \underline{p_1},\ldots,\underline{p_n}}\left( \delta_{n; m_{1}, \ldots, m_{n}}(g) \right) = \delta_{n; P_{1}, \ldots, P_{n}}(g)
  \]
holds, where $P_{i} = p_{i1} + \cdots + p_{im_{i}}$ and $\underline{p_i} = p_{i1}, \ldots, p_{im_i}$. We note that when $m_i = 0$, then the list $\underline{p_i}$ is empty and $P_i$ is defined to be $0$.
\item\label{eq8} The equation
  \[
    \delta_{n;k_1,\ldots,k_n}(g) \beta(h_{1}, \ldots, h_{n}) = \beta(h_{g^{-1}(1)}, \ldots,  h_{g^{-1}(n)}) \delta_{n;k_{g^{-1}(1)},\ldots,k_{g^{-1}(n)}}(g)
  \]
holds, where $h_{i} \in \Lambda(k_{i})$.
\item\label{eq9} The equation
  \[
    \beta(\delta_{1}(g_{1}), \ldots, \delta_{n}(g_{n})) = \delta_{c}(\beta(g_{1}, \ldots, g_{n}))
  \]
holds, where $\delta_{i}(g_{i})$ is shorthand for $\delta_{k_{i}; m_{i1}, \ldots, m_{ik_{i}}}(g_{i})$ and $\delta_{c}$ is shorthand for
  \[
    \delta_{k_{1}+\cdots + k_{n}; m_{11}, m_{12}, \ldots, m_{1k_{1}}, m_{21}, \ldots, m_{nk_{n}}}.
  \]
\end{enumerate}
\end{thm}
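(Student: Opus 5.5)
Two constructions need to be given and shown mutually inverse. Starting from an action operad $\Lambda$, I will define
\[
\beta(h_1,\ldots,h_n) = \mu(e_n; h_1, \ldots, h_n), \qquad \delta_{n;k_1,\ldots,k_n}(g) = \mu(g; e_{k_1}, \ldots, e_{k_n}).
\]
Conversely, from data $(\Lambda(n), \pi_n, \beta, \delta)$ satisfying the axioms I will define
\[
\mu(g; f_1, \ldots, f_n) = \delta_{n;k_1,\ldots,k_n}(g)\,\beta(f_1,\ldots,f_n),
\]
mirroring \cref{ex:Sigma}. The unit will be $\id = e_1$.

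\emph{Action operad to data.} The homomorphism property of $\beta$ comes from \eqref{eqn:ao_axiom} with $g=g'=e_n$. Naturality with respect to $\pi$ (axioms 1 and 4) holds because $\pi$ is an operad map and $\pi$ on $\Sigma$ is already given by $\beta,\delta$. Axioms 2 and 5 follow from the unit law, using $e_1=\id$ from \cref{lem:calclem}, together with $\mu(e;\underline e)=e$. Axiom 6 is \eqref{eqn:ao_axiom} with all $f$'s identities, so the compatibility condition $k_i=j_{h^{-1}(i)}$ is exactly the indexing in \cref{Defi:aop}. Axiom 8 comes from computing $\mu(g;h_1,\ldots,h_n)$ in two ways with \eqref{eqn:ao_axiom}: first as $\mu(g;\underline e)\mu(e;\underline h)$, and second as $\mu(e;h_{g^{-1}(i)})\mu(g;\underline e)$. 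The same decomposition gives the key identity $\mu(g;\underline f)=\delta(g)\beta(\underline f)$. Axioms 3, 7 and 9 are instances of operadic associativity with identities inserted in the appropriate slots, again using \cref{lem:calclem}. Cases where some $k_i=0$ may need \cref{lem:e0-unit}.

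\emph{Data to action operad.} I first check the unit laws. For $\mu(e_1;f)=\delta_{1;k}(e_1)\beta(f)=f$ I use axioms 5 and 2. For $\mu(g;e_1,\ldots,e_1)=\delta_{n;1,\ldots,1}(g)\beta(e_1,\ldots,e_1)=g$ I use axioms 5 and 2/3, since $\beta$ is a homomorphism and therefore sends identities to identities. For \eqref{eqn:ao_axiom}, I expand
\[
\delta(g')\beta(\underline{f'})\,\delta(g)\beta(\underline f).
\]
Axiom 8 moves $\beta(\underline{f'})$ past $\delta(g)$, after which axiom 6 combines the two $\delta$'s and the homomorphism property of $\beta$ combines the two $\beta$'s. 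The condition that $\pi$ is an operad map and group homomorphism follows from axioms 1 and 4 and the formula in \cref{ex:Sigma}.

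\emph{Associativity.} The main obstacle is associativity of $\mu$. Expand
\[
\mu\bigl(\mu(g;\underline f);\underline{\underline h}\bigr) = \delta_c\bigl(\delta(g)\beta(\underline f)\bigr)\,\beta(\underline{\underline h}).
\]
Writing $\delta_c$ of a product requires axiom 6 in reverse, which gives $\delta_{c'}(\delta(g))\,\delta_c(\beta(\underline f))$ for a suitably permuted index list. Axiom 7 then turns the first factor into $\delta_{n;P_i}(g)$. Axiom 9 turns the second factor into $\beta(\delta(f_i))$, and axiom 3 regroups $\beta(\underline{\underline h})$ into $\beta(\beta(\underline{h_1}),\ldots)$. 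Combining these with the homomorphism property of $\beta$ yields $\delta(g)\,\beta\bigl(\delta(f_i)\beta(\underline{h_i})\bigr)$, which is the other side. The delicate point is the index bookkeeping in axiom 6: I must check that the lists $k_i$ and $j_{h^{-1}(i)}$ match when $h=\beta(\underline f)$. This uses axiom 1 to know that $\pi(\beta(\underline f))$ is a block permutation.

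Finally, the two constructions are inverse. One direction is the identity $\mu(g;\underline f)=\delta(g)\beta(\underline f)$ established above. For the other, $\mu(e;\underline h)=\delta(e)\beta(\underline h)=\beta(\underline h)$ and $\mu(g;\underline e)=\delta(g)$, using axiom 5 and that $\beta$ preserves identities.
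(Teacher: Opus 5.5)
Your proposal is correct and is essentially the paper's proof. It uses the same formulas $\beta(\underline{h})=\mu(e_n;\underline{h})$, $\delta(g)=\mu(g;\underline{e})$ and $\mu(g;\underline{f})=\delta(g)\beta(\underline{f})$, handles associativity with Axioms 6, 7, 9 and 3 (with the block-permutation bookkeeping from Axiom 1), and gets the action operad axiom from Axioms 6 and 8; you also spell out the mutual-inverse check, which the paper leaves implicit.

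One remark: your two-way computation of $\mu(g;\underline{h})$ yields Axiom 8 with $\delta_{n;k_1,\ldots,k_n}(g)$ on both sides. That is the correct form: the index list $k_{g^{-1}(1)},\ldots,k_{g^{-1}(n)}$ on the right of the displayed statement is a typo, as the paper's own uses of Axiom 8 (e.g.\ in \cref{lem:calclem2}) confirm.
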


\begin{proof}
Let $\Lambda$ be an action operad, and define 
\begin{align*}\label{eqn:bd-from-aop}
\beta(g_1, \ldots, g_n) & = \mu(e_n; g_1, \ldots, g_n),\\
\delta_{n; k_1, \ldots, k_n}(g) & = \mu(g; e_{k_1}, \ldots, e_{k_n}).
\end{align*}
Since $\pi \colon \Lambda \rightarrow \Sigma$ is an operad map, Axioms \eqref{eq1} and \eqref{eq4} hold by the definition of the operad structure on $\Sigma$ in \cref{ex:Sigma}. 
Since $\Lambda$ is an operad of sets, Axioms \eqref{eq2} and \eqref{eq5} follow from the operad unit axioms and the first part of \cref{lem:calclem}, and Axioms \eqref{eq3}, \eqref{eq7}, and \eqref{eq9} follow from the operad associativity axiom and the second part of \cref{lem:calclem}. Axioms \eqref{eq6} and \eqref{eq8} are special cases of the additional action operad axiom, as is the fact that $\beta$ is a group homomorphism.

Conversely, given the data above, we need only define the operad multiplication, verify the operad unit and multiplication axioms, and finally check the action operad axiom. Multiplication is given by
  \begin{equation}\label{eqn:mu-from-betadelta}
    \mu(g; h_{1}, \ldots, h_{n}) = \delta_{n; k_{1}, \ldots, k_{n}}(g) \beta(h_{1}, \ldots, h_{n})
  \end{equation}
where $h_{i} \in \Lambda(k_{i})$. The identity element $\id$ for the operad structure is $e_1 \in \Lambda(1)$.

We now verify the operad unit axioms. Let $g, h \in \Lambda(n)$. Then
  \begin{align*}
    \mu(e_1; g) &= \delta(e_1)\beta(g) \\
    &= e_1 \cdot g \\
    &= g, \\
    \mu(h; e_1, \ldots, e_1) &= \delta_{n; 1, \ldots, 1}(h)\beta(e_1, \ldots, e_1) \\
    &= h \cdot e_n \\
    &= h
  \end{align*}
by Axioms \eqref{eq2} and \eqref{eq5}, together with the fact that $\beta$ is a group homomorphism.
Thus $e_1$ satisfies the identity axioms for operadic multiplication.

For the operad associativity axiom, let
\begin{itemize}
\item $f \in \Lambda(m),$
\item $g_{i} \in \Lambda(n_{i})$ for $i=1, \ldots, m$, and
\item $h_{ij} \in \Lambda(p_{i,j})$ for $i=1, \ldots, m$ and $j=1, \ldots, n_{i}$.
\end{itemize}
Further, let $P_{i} = p_{i1} + \cdots + p_{in_{i}}$ and $\underline{h_i}$ denote the list $h_{i1}, h_{i2}, \ldots, h_{in_{i}}$. We must then show that
  \[
    \mu\big( f; \mu(g_{1}; \underline{h_1}), \ldots, \mu(g_{m}; \underline{h_m}) \big) = \mu\big( \mu(f; g_{1}, \ldots, g_{m}); \underline{h_1}, \ldots, \underline{h_m} \big).
  \]
By definition, the left side of this equation is
  \[
    \delta_{m; P_{1}, \ldots, P_{m}}(f) \beta\big( \mu(g_{1}; \underline{h_1}), \ldots, \mu(g_{m}; \underline{h_m}) \big),
  \]
and
  \[
    \mu\left(g_{i}; \underline{h_i}\right) = \delta_{n_{i}; p_{i1}, \ldots, p_{in_{i}}}(g_{i})\beta\left(h_{i1}, \ldots, h_{in_{i}}\right).
  \]
From this point, we suppress subscripts on the $\delta$'s.
Since $\beta$ is a group homomorphism, we can then rewrite the left side as
  \[
    \delta(f)\beta\big(\delta(g_{1}), \ldots, \delta(g_{m})\big)\beta\big(\beta(\underline{h_1}), \ldots, \beta(\underline{h_m})\big)
  \]
where we have suppressed the subscripts on the $\delta$'s. By Axiom \eqref{eq3},
  \[
    \beta\big(\beta(\underline{h_1}), \ldots, \beta(\underline{h_m})\big) = \beta\left(\underline{h_1},\ldots,\underline{h_m}\right).
  \]
Furthermore, Axiom \eqref{eq9} above shows that
  \[
    \beta\big(\delta(g_{1}), \ldots, \delta(g_{m})\big) = \delta\big(\beta(g_{1}, \ldots, g_{m})\big).
  \]
Thus we have shown that the left side of the operad associativity axiom is equal to
  \[
    \delta(f)\delta\big(\beta(g_{1}, \ldots, g_{m})\big)\beta\left(\underline{h_1},\ldots,\underline{h_m}\right).
  \]
Now the right side is
  \[
    \mu\big( \mu (f; g_{1}, \ldots, g_{m}); \underline{h_1}, \ldots, \underline{h_m} \big),
  \]
which is by definition
  \[
    \delta\big(\mu (f; g_{1}, \ldots, g_{m})\big)\beta\left(\underline{h_1}, \ldots, \underline{h_m}\right).
  \]
Cancelling the $\beta\left(\underline{h_1}, \ldots, \underline{h_m}\right)$ terms, verifying the operad associativity axiom reduces to showing
\begin{eqn}\label{eqn:opass}
\delta(f)\delta\big(\beta(g_{1}, \ldots, g_{m})\big) = \delta\big(\mu (f; g_{1}, \ldots, g_{m})\big).
\end{eqn}
By the definition of $\mu$,
  \[
    \delta\big(\mu (f; g_{1}, \ldots, g_{m})\big) = \delta\big(\delta(f)\beta(g_{1}, \ldots, g_{m}) \big)
  \]
which is itself equal to
\begin{eqn}\label{eqn:opass2}
\delta\big(\delta(f)\big) \delta\big(\beta(g_{1}, \ldots, g_{m})\big)
\end{eqn}\noindent
by Axiom \eqref{eq6} above.

Now the $\delta(f)$ on the left side of \cref{eqn:opass} uses $\delta_{n; P_{1}, \ldots, P_{n}}$, while the $\delta(\delta(f))$ in \cref{eqn:opass2} is actually
  \[
    \delta_{m_1 + \cdots + m_{n}; q_{ij}}(\delta_{n; m_{1}, \ldots, m_{n}} (f))
  \]
where the $q_{ij}$ are defined, by Axiom \eqref{eq6}, to be given by
  \[
    q_{ij} = p_{i,g_{i}^{-1}(j)}
  \]
using the compatibility of $\beta$ and $\pi$ in Axiom \eqref{eq1}. By Axiom \eqref{eq7}, this composite of $\delta$'s  is then $\delta_{n; Q_{1}, \ldots, Q_{n}}$ where $Q_{i} = q_{i1} + \cdots + q_{im_{i}}$. But by the definition of the $q_{ij}$, we immediately see that $Q_{i} = P_{i}$, so the $\delta(f)$ in \cref{eqn:opass} is equal to the $\delta(\delta(f))$ appearing in \cref{eqn:opass2}, concluding the proof of the operad associativity axiom.

Writing $\mu(g;\underline{h}) = \mu\left(g; h_{1}, \ldots, h_{n}\right)= $ and $\mu(g';\underline{h'}) = \mu\left(g'; h_{1}', \ldots, h_{n}'\right)$, the action operad axiom is now the calculation below, and uses Axioms \eqref{eq4} and \eqref{eq8}.
\begin{small}
  \begin{align*}
    \mu(g;\underline{h})\mu(g';\underline{h'}) &= \delta\left(g\right) \beta\left(h_{1}, \ldots, h_{n}\right) \delta\left(g'\right) \beta\left(h_{1}', \ldots, h_{n}'\right) \\
    &= \delta\left(g\right) \delta\left(g'\right) \beta\left(h_{g'(1)}, \ldots, h_{g'(n)}\right)  \beta\left(h_{1}', \ldots, h_{n}'\right) \\
    &= \delta\left(gg'\right) \beta\left(h_{g'(1)}h_{1}', \ldots, h_{g'(n)}h_{n}'\right) \\
    &= \mu\left(gg'; h_{g'(1)}h_{1}', \ldots, h_{g'(n)}h_{n}'\right)
  \end{align*}
\end{small}
\end{proof}

\begin{prop}[Corollary 2.17,  \cite{zhang-grp}]\label{prop:surjortriv} Let $\Lambda$ be an action operad. Then the homomorphisms $\pi_n \colon \Lambda(n) \rightarrow \Sigma_n$ are either all surjective or all the zero map.
\end{prop}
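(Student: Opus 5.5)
The plan is to study the images $\operatorname{im}(\pi_n) \subseteq \Sigma_n$ directly, using only that $\pi$ is simultaneously an operad map and a family of group homomorphisms (\cref{Defi:aop}). Since $\Sigma_0$ and $\Sigma_1$ are trivial, the dichotomy is only meaningful for $n \geq 2$; there "the zero map" means the trivial homomorphism. I will show that if some $\pi_n$ with $n \geq 2$ is nontrivial, then every $\pi_m$ is surjective. The contrapositive then says that if any $\pi_m$ ($m \geq 2$) fails to be surjective, all $\pi_n$ are trivial.

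First I record two closure properties of the images. Each $\operatorname{im}(\pi_n)$ is a subgroup of $\Sigma_n$. The family of images is also closed under the operadic composition of $\Sigma$, because $\mu^{\Sigma}(\pi g; \pi h_1, \ldots, \pi h_n) = \pi \mu^{\Lambda}(g; h_1, \ldots, h_n)$. In particular, the identities $e_k = \pi(e_k)$ lie in the images for every $k$, including $k=0$.

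Step one is to get the transposition $\trans{1}{2}$ into $\operatorname{im}(\pi_2)$. Suppose $\sigma = \pi_n(g) \neq e$ for some $n \geq 2$. Then $\sigma$ has an inversion, meaning a pair $i<j$ with $\sigma(i) > \sigma(j)$. Set $k_i = k_j = 1$ and $k_l = 0$ for all other $l$, and consider
\[
\pi\big(\mu(g; e_{k_1}, \ldots, e_{k_n})\big) = \delta_{n; k_1, \ldots, k_n}(\sigma) \in \Sigma_2 .
\]
By the description of duplication in \cref{rem:delta-s}, this deletes every strand of $\sigma$ except strands $i$ and $j$, and keeps their relative order in the bottom row. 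Since the pair is inverted, the result is $\trans{1}{2}$. Checking this against the explicit formula in \cref{Defi:delta-s} is the only genuinely computational point, and it is a short check: the two surviving inputs $1,2$ go to positions determined by comparing $\sigma(i)$ with $\sigma(j)$.

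Step two is to generate all of $\Sigma_m$ for each $m \geq 2$. For $1 \leq r < m$, the adjacent transposition $\trans{r}{r+1}$ equals the block sum $\beta(e_1, \ldots, \trans{1}{2}, \ldots, e_1)$. This is $\mu^{\Sigma}(e_{m-1}; e_1, \ldots, \trans{1}{2}, \ldots, e_1)$, and by the closure property it lies in $\operatorname{im}(\pi_m)$. The adjacent transpositions generate $\Sigma_m$ and $\operatorname{im}(\pi_m)$ is a subgroup, so $\pi_m$ is surjective. I expect the inversion/duplication step to be the main point needing care; the rest is formal.
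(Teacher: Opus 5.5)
Your proof is correct and uses the same two ingredients as the paper. First, you apply duplication $\delta_{n;0,\ldots,1,\ldots,1,\ldots,0}$ at an inversion of $\sigma$ to put $\trans{1}{2}$ in $\operatorname{im}\pi_2$; then you use block sums $\beta(e_1,\ldots,\trans{1}{2},\ldots,e_1)$ to reach every adjacent transposition. The only difference is organizational: the paper splits into the cases ``$\pi_2$ surjective'' and ``$\pi_2$ zero'', while you run the same argument as the single implication ``some $\pi_n$ nontrivial $\Rightarrow$ $\trans{1}{2}\in\operatorname{im}\pi_2$ $\Rightarrow$ every $\pi_m$ surjective.''
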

\begin{proof}
We will prove each case separately. The two cases coincide for $n = 0, 1$ as both $\Sigma_0, \Sigma_1$ are the trivial group and therefore any homomorphism with one of them as its codomain is both surjective and the zero map. Since $\Sigma_2$ only has one non-identity element, any homomorphism $G \rightarrow \Sigma_2$ must necessarily be surjective or the zero map.

Suppose that $\pi_2 \colon \Lambda(2) \rightarrow \Sigma_2$ is surjective, so there exists $g \in \Lambda(2)$ such that $\pi_2(g) = \trans{1}{2}$. Let $n>2$. Since $\Sigma_n$ is generated by the adjacent transpositions $\trans{a}{a+1}$, we will show that each such element is in the image of $\pi_n$. Write $\underline{x}^i$ for the $i$-tuple $x, x, \ldots, x$. Then $\trans{a}{a+1} = \beta(\underline{e_1}^{a-1} ,\trans{1}{2},\underline{e_1}^{n-a-1})$ in $\Sigma$, so
  \begin{align*}
      \trans{a}{a+1} &= \beta(\underline{e_1}^{a-1},\trans{1}{2},\underline{e_1}^{n-a-1}) \\ &= \beta\big(\underline{\pi_1(e_1)}^{a-1},\pi_2(g),\underline{\pi_1(e_1)}^{n-a-1}\big) \\
      &= \pi_n\big(\beta(\underline{e_1}^{a-1},g,\underline{e_1}^{n-a-1})\big)
  \end{align*}
by Axiom \eqref{eq1} of \ref{thm:charAOp}.
Thus $\pi_n$ is surjective for all $n>2$ if $\pi_2$ is surjective.

Now we will consider the case where $\pi_2$ is the zero map.
Suppose that there exists $g \in \Lambda(n)$ such that $\pi_n(g) = \sigma \neq e_n$ in $\Sigma_n$.
Then we can find $1 \leq i < j \leq n$ such that $\sigma(j) < \sigma(i)$.
Consider the element 
\[
h = \delta_{n; \underline{0}^{i-1}, 1, \underline{0}^{j-i-1}, 1, \underline{0}^{n-j}}(g) \in \Lambda(2).
\]
By the assumption that $\pi_2$ is the zero map, we must have that $\pi_2(h) = e_2$, but by Axiom \eqref{eq4} of \ref{thm:charAOp} we also compute
\[
\pi_2(h) = \delta_{n; \underline{0}^{i-1}, 1, \underline{0}^{j-i-1}, 1, \underline{0}^{n-j}}\big( \pi_n(g) \big) =  \delta_{n; \underline{0}^{i-1}, 1, \underline{0}^{j-i-1}, 1, \underline{0}^{n-j}}\big( \sigma \big).
\]
The element $\delta_{n; \underline{0}^{i-1}, 1, \underline{0}^{j-i-1}, 1, \underline{0}^{n-j}}\big( \sigma \big)$ is equal to $\trans{1}{2}$ by the choice of $i, j$ and \cref{Defi:delta-s}.
These two computations of $\pi_2(h)$ are in contradiction, so there must be no such $g \in \Lambda(n)$.
Thus if $\pi_2$ is the zero map, so is $\pi_n$ for all $n>2$.
\end{proof}

\section{Examples}\label{sec:aop-examples}

In this section, we expand our collection of examples  and non-examples of action operads. 
In all but one case, \cref{ex:abgp-aop}, the examples we provide have appeared elsewhere.
The non-examples we provide were largely sourced from questions received after preliminary talks on this research by the authors. 

\begin{example}[(Action operad of braid groups)]\label{example:aop-braid}
One can form an operad $B$ where $B(n)$ is the underlying set of the $n$th braid group, $B_{n}$.
We define the operad structure using the functions $\beta, \delta$ from \cref{rem:br-op-needed}.
Yau checks that these groups and functions satisfy the axioms of an action operad in \cite[Prop 5.2.5]{yau_infinity_2021}, but we note that each of the nine axioms in \cref{thm:charAOp} follows immediately by using the geometric definitions of $\beta, \delta$.
\end{example}

\begin{example}[(Action operad of ribbon braid groups)]\label{example:aop-rbraid}
For each $n \in \mathbb{N}$, the \emph{ribbon braid group} $RB_{n}$ is defined to be the semidirect product $\mathbb{Z}^n \rtimes B_n$, where the action of $B_n$ on $\mathbb{Z}^n$ is given, using the underlying permutation of a braid $\gamma$, by the formula
\[
\gamma \cdot (a_1, \ldots, a_n) = (a_{\gamma^{-1}(1)}, \ldots, a_{\gamma^{-1}(n)}).
\]
Alternatively, $RB_n$ can be described as the group of isotopy classes of braids equipped with a framing \cite{KS-framed,KS-framed2,MT-framed}. In \cite{KS-framed,KS-framed2} the action above is described without inverses, but we choose to present it this way for consistency with \Cref{Defi:broperad}. A purely algebraic presentation of $RB_n$ is given by generators
    \[
        \sigma_1, \ldots, \sigma_{n-1}, t_1, \ldots, t_n
    \]
where 
\begin{itemize}
\item the $\sigma_i$ are the usual braid generators and satisfy the relations of the braid group, and 
\item the $t_i$ are the \emph{full twists} and satisfy the additional equations
\begin{align*}
t_i t_j & = t_j t_i, \\
\sigma_i t_j & = t_{\sigma_i^{-1}(j)} \sigma_i
\end{align*}
for all $i, j$.
\end{itemize}
In particular, for any given braid $\sigma \in B_n$, the relations imply that $\sigma t_i = t_{\sigma^{-1}(i)} \sigma$.

The action operad structure is much like that for the braid groups in \cref{rem:br-op-needed} and its properties are investigated in \cite{wahl-thesis}. Each ribbon braid has an underlying permutation given by its underlying braid and the operation $\beta$ is the disjoint union of ribbon braids in the same way as for braids. In particular we can describe the twist of each ribbon as $t_i = \beta(e_{i-1},t,e_{n-i})$ where $t \in RB_1$ is the twist of the single ribbon. The `cabling' operation $\delta$ for ribbon braids is slightly more intricate, however, than that for braids. For a ribbon braid without any twists this is simple enough to describe and is the same as the standard cabling operation: the function $\delta_{n;k_1,\ldots,k_n}$ replaces the first ribbon with $k_1$ ribbons, and so on, before these are braided in blocks according to the underlying braid. If twists are involved, then we must take care to describe this carefully. First we describe the `Garside half-twist' \cite[Section 2]{garside}:
    \[
        \gamma_n = (\sigma_1 \sigma_2 \cdots \sigma_n)(\sigma_1 \sigma_2 \cdots \sigma_{n-1})\cdots(\sigma_1 \sigma_2)(\sigma_1).
    \]
This is the unique positive (meaning it can be written as a product of only $\sigma_i$) braid where any two strands of the braid cross exactly once. Its underlying permutation is the order-reversing permutation, meaning that the underlying permutation of its square (the full Garside twist) is the identity. Similarly the underlying permutation of each ribbon twist $t_i \in RB_n$ is simply the identity $e_n$. An important fact about the full Garside twist $\gamma_n^2$ is that it commutes with any other braid on $n$ strings, i.e., $\gamma_n^2 \sigma = \sigma \gamma_n^2$ for any $\sigma \in B_n$.

Now an arbitrary element in $RB_n$ can be described using the description of the elements as framed braids \cite{KS-framed}. Each element can be uniquely expressed in the form
    \[
        t_1^{m_1}\cdots t_n^{m_n}\sigma
    \]
where $m_i \in \mathbb{Z}$ and $\sigma \in B_n$; we interpret this as $t_i^{m_i}$ describing the number of full twists in ribbon $i$, after braiding the ribbons according to $\sigma$. For the full ribbon cabling operation $\delta$, we describe a general form:
    \[
        \delta^{RB}_{n;k_1\cdots k_n}(t_1^{m_1} \cdots t_n^{m_n}\sigma) = \beta(\underline{t^{m_1}},\ldots,\underline{t^{m_n}})\beta(\gamma_{k_1}^{2m_1},\ldots,\gamma_{k_n}^{2m_n})\delta^B_{n;k_1,\ldots,k_n}(\sigma).
    \]
In this expression, $\underline{t^{m_i}}$ represents $t^{m_i}$ repeated $k_i$ times.

Abstractly, when a twist is `duplicated' each new ribbon gains a full twist and then all of the ribbons in that block are braided via a full Garside twist. Physically, this corresponds to putting a full twist in a large wide ribbon before cutting it into $k$ new ribbons whilst preserving the endpoints, before shuffling the twists on each new ribbon to the top of the ribbon braid. A similar interpretation with many examples is described in \cite[Chapter 6]{yau_infinity_2021}. We do not check the axioms in detail here as they are largely similar to the braid groups or use the commutativity relations given above for the $t_i$'s and the Garside twists.
\end{example}

\begin{example}[(Action operad of cactus groups)]\label{ex:cactus-aop}
The operad of $n$-fruit cactus groups defined by Henriques and Kamnitzer in \cite{hk-cobound} has an action operad structure that we will discuss in \cref{sec:exex-cactus}.
\end{example}

\begin{example}[(Action operad from an abelian group)]\label{ex:abgp-aop}
Every abelian group $A$ gives rise to an action operad $A^{\bullet}$ as follows. The group $A^{\bullet}(n)$ is the direct sum of $n$ copies of $A$, $A^{n}$. The identity element is required to be $e \in A^{1}$, and the multiplication is defined by
  \[
    \mu((a_{1}, \ldots, a_{n}); \underline{b_1}, \ldots, \underline{b_n}) = (a_{1}+\underline{b_{1}}, a_{2} + \underline{b_{2}}, \ldots, a_{n} + \underline{b_{n}})
  \]
where $\underline{b_{i}}$ is the string $b_{i1}, \ldots, b_{ik_{i}}$, and $a_{i} + \underline{b_{i}}$ is
  \[
    a_{i} + b_{i1}, a_{i} + b_{i2}, \ldots, a_{i} + b_{ik_{i}}.
  \]
The map $\pi_n \colon A^\bullet(n) \rightarrow \Sigma_n$ is the zero map. 
\end{example}

The characterisation of action operads in terms of maps $\pi$, $\beta$, and $\delta$ as in the above \cref{thm:charAOp} allows us to more easily check for counterexamples, as we show below. Some of these, such as the cyclic groups, reflexive groups, and hyperoctahedral groups, do however form crossed simplicial groups which are discussed in relation to action operads in \cref{rem:crossed}.

\begin{nonex}[(Subgroups of symmetric groups)]\label{nonex:counterex1}
By \cref{prop:surjortriv}, the only action operad $\pi \colon \Lambda \to \Sigma$ for which the homomorphisms $\pi_n$ are injective but not surjective is the action operad of trivial groups. 
Thus there is no family of proper, nontrivial subgroups of the symmetric groups that admits an action operad structure.
In particular, the families of cyclic groups $\{ C_n \}$, reflexive groups $\Lambda(n) = C_2$ of \cite{Kra87}, and alternating groups $\{ A_n \}$ do not admit action operad structures.
\end{nonex}

\begin{nonex}[(Hyperoctahedral groups)]\label{nonex:counterex2}
In Example 2.28 of \cite{zhang-grp}, Zhang describes one way in which the sequence of hyperoctahedral groups $H_n = C_2 \wr \Sigma_n$ do not form an action operad. We clarify that counterexample here, while also describing another. The group $H_n$ can be described in many ways, but we will use the description of them as signed permutation matrices, i.e., invertible $n \times n$-matrices whose entries consist of $-1$, $0$, or $1$ and in which each row and column has exactly one non-zero entry, similar to the permutation matrices in \Cref{rem:perm_matrices}.

In order to describe the hyperoctahedral groups as an action operad, we could use \cref{thm:charAOp} and define maps $\pi$, $\beta$, and $\delta$. The obvious map $\pi_n \colon H_n \rightarrow \Sigma_n$ takes the `absolute value' of a signed permutation matrix, giving back the underlying permutation. We then define $\beta$ to be the block sum of signed permutation matrices in much the same way as for the symmetric groups.

For the maps $\delta$ there seem to be two sensible options to try. The first captures Zhang's counterexample by first taking $r_n$ to be the order-reversing signed permutation matrix where all entries are $-1$, i.e., the $n \times n$ matrix with $-1$ in every entry of the anti-diagonal. Then we define $\delta_{n;k_1,\ldots,k_n}(\sigma)$ to be the block sum $\beta(r_{k_1},\ldots,r_{k_n})$ acted on by the product of $r_n$ and $\sigma$. For example,
  \begin{align*}
    \delta_{3;2,1,3}\left(
    \begin{bmatrix}
      -1 & 0 & 0 \\
      0 & 0 & -1 \\
      0 & 1 & 0
      \end{bmatrix}
      \right)
      &=
      \left(
      r_3
      \begin{bmatrix}
      -1 & 0 & 0 \\
      0 & 0 & -1 \\
      0 & 1 & 0
      \end{bmatrix}
      \right)
      \ast
      \begin{bmatrix}
      r_2 & 0 & 0 \\
      0 & r_1 & 0 \\
      0 & 0 & r_3
      \end{bmatrix}\\
&=
    \begin{bmatrix}
      0 & -1 & 0 \\
      0 & 0 & 1 \\
      1 & 0 & 0
      \end{bmatrix}
      \ast
      \begin{bmatrix}
      r_2 & 0 & 0 \\
      0 & r_1 & 0 \\
      0 & 0 & r_3
      \end{bmatrix}\\
      &= \begin{bmatrix}
      0 & -r_1 & 0\\
      0 & 0 & r_3 \\
      r_2 & 0 & 0
      \end{bmatrix}.
    \end{align*}
This choice gives $\delta_{1;n}([-1]) = r_n$ as in \cite{zhang-grp}. Taking $\sigma = (\trans{2}{3};-1,1,-1)$, as above, we can show that Axiom \eqref{eq8} fails in \cref{thm:charAOp}.
The left-hand side of the axiom would be $\delta_{1;3}([-1])\beta(\sigma) = r_n \cdot \sigma = ((1 \,\, 3 \,\, 2); 1, -1, 1)$.
However, the right-hand side of the axiom would be $\beta(\sigma)\delta_{1;3}([-1]) = \sigma \cdot r_n = ((1 \,\, 2 \,\, 3);1,-1,1)$.
Clearly defining $\delta_{n;k_1,\ldots,k_n}(\sigma) = (\sigma \cdot r_n) \ast \beta(r_{k_1},\ldots,r_{k_n})$ would run into the same problem.

An alternative way of defining $\delta$ is to take $\delta_{n;k_1,\ldots,k_n}(\sigma) = \sigma \ast \beta(e_{k_1},\ldots,e_{k_n})$, without involving the order-reversing permutation $r_n$, having the effect of making $\delta_{1;n}([-1]) = -I_n$. This then does satisfy Axiom \eqref{eq8}, but fails Axiom \eqref{eq6} instead; working through the following counterexample shows this to be the case:
  \begin{align*}
    \delta_{3;2,1,3}\left(
    \begin{bmatrix}
    -1 & 0 & 0 \\
    0 & 0 & 1 \\
    0 & -1 & 0
    \end{bmatrix}
    \right)
    &\delta_{3;3,1,2}\left(
    \begin{bmatrix}
    0 & 0 & 1 \\
    0 & -1 & 0 \\
    -1 & 0 & 0
    \end{bmatrix}
    \right)\\
    &\neq
    \delta_{3;3,1,2}\left(
    \begin{bmatrix}
    -1 & 0 & 0 \\
    0 & 0 & 1 \\
    0 & -1 & 0
    \end{bmatrix}
    \begin{bmatrix}
    0 & 0 & 1 \\
    0 & -1 & 0 \\
    -1 & 0 & 0
    \end{bmatrix}
    \right).
  \end{align*}

While hyperoctahedral groups do not form an action operad in the sense of \cref{Defi:aop}, this family of groups does satisfy the conditions of what is in \cite[Section 3.2]{accg-actionclones} called a \emph{general} action operad. 
\end{nonex}

\begin{rem}[(Crossed Simplicial Groups)]\label{rem:crossed}
The crossed simplicial groups of both Krasauskas \cite{Kra87} and Fiedoriwicz and Loday \cite{FL91} are related to action operads in the following way. 
On objects, we define a functor $C \colon \mathbf{AOp} \rightarrow \mathbf{CSGrp}$ from the category of action operads to the category of crossed simplicial groups by defining $C(\Lambda)(n) = \Lambda_{n+1}$. We must check that the family of groups $\{\Lambda_n\}_{n \in \mathbb{N}}$ forms a simplcial set. The face and degeneracy maps are defined using the operadic composition of $\Lambda$ as in \cite[Construction~1.1]{Kra96} and \cite[Section~2]{ber-simplicial}. We can interpret these maps in terms of the map $\delta$ of \Cref{thm:charAOp}, where we consider $g \in \Lambda_n$ to act via $\pi_n(g) \in \Sigma_n$ on indices $\{0,\ldots,n-1\}$, as for the standard indexing for simplicial sets, instead of $\{1,\ldots,n\}$ :
    \begin{itemize}
        \item The face maps $d_{n;i} \colon \Lambda_{n+1} \rightarrow \Lambda_n$, $i \in \{0,\ldots,n\}$, are defined as
            \[
                d_{n;i}(g) = \delta^i_{n+1;\underline{1},0,\underline{1}}(g)
            \]
        where $0$ is in position $g^{-1}(i)$.
        \item The degeneracy map $s_{n;i} \colon \Lambda_{n+1} \rightarrow \Lambda_{n+2}$, $i \in \{0,\ldots,n\}$, are defined as
            \[
                s_{n;i}(g) = \delta^i_{n+1;\underline{1},2,\underline{1}(g)}
            \]
        where $2$ is in position $g^{-1}(i)$.
    \end{itemize}
Checking the simplicial identities amounts to repeated use of Axiom \eqref{eq7}. For example, taking $i \leq j$, we require that $d_jd_i = d_id_{j+1}$. In terms of our maps this is the requirement that
    \[
        \delta^j_{n;\underline{1},0,\underline{1}}(\delta^i_{n;\underline{1},0,\underline{1}}(g)) = \delta^i_{n;\underline{1},0,\underline{1}}(\delta^{j+1}_{n;\underline{1},0,\underline{1}}(g)).
    \]
In terms of Axiom \eqref{eq7} this is found to be the case by taking $m_{g^{-1}(i)}=0$, $p_{g^{-1}(i)} = \emptyset$, and $P_{g^{-1}(i)}=0$ on the left hand side and $m_{{g^{-1}(j+1)}} = 0$, $p_{g^{-1}(j+1)} = \emptyset$, and $P_{g^{-1}(j+1)}=0$ on the right hand side. For example, the left-most function is being interpreted here as $\delta^j_{n;\underline{1},\emptyset,\underline{1},0,\underline{1}}$. Since $i \leq j$, both of these are equal to the same element
    \[
        \delta_{n+1;\underline{1},0,\underline{1},0,\underline{1}}(g),
    \]
where each $0$ is in position $g^{-1}(i) = (\delta_{n;\underline{1},0,\underline{1}}^j(g))^{-1}(i)$ and $g^{-1}(j+1) = (\delta_{n;\underline{1},0,\underline{1}}^{i})(g)^{-1}(j)$, from which the identity follows. Similar arguments gives the remaining simplicial identities, again using Axiom \eqref{eq7} and Axiom \eqref{eq5}.

Following \cite[Definition~1.3]{Kra87} or \cite[Proposition~1.7]{FL91}, it remains to check the following axioms:
    \begin{itemize}
        \item the maps $\pi^{C(\Lambda)}_n = \pi^\Lambda_{n+1} \colon \Lambda_{n+1} \rightarrow \Sigma_{n+1}$ constitute a simplicial map,
        \item each $\pi^{C(\Lambda)}_n$ is a homomorphism,
        \item $d_{n;i}(gh) = d_{n;i}(g)d_{n;g^{-1}(i)}(h)$, 
        \item $s_{n;i}(gh) = s_{n;i}(g)d_{n;g^{-1}(i)}(h)$.
    \end{itemize}
The latter two axioms follow immediately from Axiom \eqref{eq6}, the second is immediate, while the first requires the commutativity of the following diagrams, where in the right-hand diagram $\delta_i$ and $\delta_{g^{-1}(i)}$ are the face maps in the simplicial category $\Delta$.
        \[
            \xy
                (0,0)*+{[n+1]}="00";
                (20,0)*+{[n]}="10";
                (0,-15)*+{[n+1]}="01";
                (20,-15)*+{[n]}="11";
                {\ar^{\sigma_{g^{-1}(i)}} "00" ; "10"};
                {\ar^{g} "10" ; "11"};
                {\ar_{s_{i}(g)} "00" ;  "01"};
                {\ar_{\sigma_{i}} "01" ; "11"};
                (50,0)*+{[n-1]}="00";
                (70,0)*+{[n]}="10";
                (50,-15)*+{[n-1]}="01";
                (70,-15)*+{[n]}="11";
                {\ar^{\delta_{g^{-1}(i)}} "00" ; "10"};
                {\ar^{g} "10" ; "11"};
                {\ar_{d_{i}(g)} "00" ;  "01"};
                {\ar_{\delta_{i}} "01" ; "11"};
            \endxy
        \]
Both of these axioms follow automatically from Axiom \eqref{eq4} and the fact that $\Sigma$ is a crossed simplicial group and hence already satisfies these axioms. The case for the degeneracy maps is shown below, with the argument for the face maps being almost identical.
    \begin{align*}
        \sigma_i(\pi_{n+2}(s_i^\Lambda(g))) &= \sigma_i(\pi_{n+2}(\delta^{i,\Lambda}_{n+1;\underline{1},2,\underline{1}}(g))) \\
        &= \sigma_i(\delta^{i,\Sigma}_{n+1;\underline{1},2,\underline{1}}(\pi_{n+1}(g))) \\
        &= \sigma_i(s_i^{\Sigma}(\pi_{n+1}(g))) \\
        &= \pi_{n+1}(g)(\sigma_{\pi_{n+1}(g)^{-1}(i)})
    \end{align*}

On morphisms, for a map of action operads $f \colon \Lambda \rightarrow \Lambda'$, we define $C(f)_n = f_{n+1}$. From the axioms in \Cref{Defi:mapaop} we automatically have that $\pi^{\Lambda'}_n \circ f_n = \pi^{\Lambda}_n$ for each $n$, $C(f)$ is a simplicial map since $f$ is a map of operads, and each $f_n$ is a homomorphism. Functoriality follows directly.

This functor is neither faithful nor conservative due to the removal of the group $\Lambda_0$. 
Zhang \cite[Section 2.4]{zhang-grp} also describes this connection with action operads in some detail, while Yoshida \cite{yoshida2018group} connects action operads with the related notion of crossed interval groups \cite{bm-crossed}. Semidetnov's \emph{operadic} crossed simplicial groups likely correspond to action operads where $\Lambda_0$ is the trivial group \cite[Definition 2.4]{semidetnov}.
\end{rem}

\begin{rem}[(Cloning Systems)]\label{rem:cloning}
Another algebraic structure that has a close relationship with action operads is that of a cloning system \cite{zaremsky2017user}. A cloning system consists of an $\mathbb{N}$-indexed family of groups $(G_n)_{n \in \mathbb{N}}$ with various structure maps, including directed system morphisms $G_m \rightarrow G_n$ for each $m \leq n$, representation maps $G_n \rightarrow \Sigma_n$ for each $n$, and cloning maps $G_n \rightarrow G_{n+1}$ for each $n$ which closely mirror the duplication maps $\delta$ in \cref{Defi:aop}.

Witzel and Zerensky \cite{wz-cloning} define these in the context of studying generalized Thompson groups for families of groups, while Thumann \cite{thumann2017operad} also considers Thompson-like groups arising from the fundamental groups of categories associated to braided operads. These two approaches to studying Thompson groups are refined and related in \cite{accg-actionclones}, wherein it is shown that action operads directly correspond to restricted operadic cloning systems and restricted cloning PROs.
\end{rem}

\section{Action Operads as Extensions}\label{sec:extension}

In this short section, we situate action operads between operads in the category of groups and symmetric operads.
We prove two main results.
In \cref{prop:Z}, we prove that operads in the category of groups are precisely the same as action operads for which the homomorphisms $\pi_n$ are all the zero map; these are called ``non-crossed group operads'' in \cite{zhang-grp}.
We then turn to studying kernels, images, and short exact sequences of action operads.
We finally prove, in \cref{cor:extension}, that every action operad with surjective $\pi_n$'s can be expressed as an extension of the action operad $\Sigma$ by an operad in the category of groups.

\begin{rem}[(Operads in the category of groups)]\label{rem:op-in-grp}
The category $\mb{Grp}$ of groups and group homomorphisms is symmetric monoidal using the cartesian product of groups.
Thus we can form the category of operads in the category of groups, denoted $\mb{Op}(\mb{Grp})$, as in \cref{rem:V-and-coll}.
The objects of this category are operads $P$ in $\mb{Sets}$ with the additional data of a group structure on each $P(n)$ such that operadic multiplication is a group homomorphism and $\id = e_1$ in $P(1)$; morphisms $f \colon P \to Q$ are those maps of operads such that $f_n \colon P(n) \to Q(n)$ is a group homomorphism for each $n$.
\end{rem}

\begin{prop}\label[prop]{prop:Z-obj}
Let $P$ be an operad in $\mb{Grp}$. Then there is an action operad, denoted $Z(P)$, with
\begin{itemize}
\item $Z(P)(n) = P(n)$,
\item the same operadic multiplication as $P$, and
\item each $\pi_n \colon P(n) \to \Sigma_n$ the zero map.
\end{itemize}
Furthermore, if $\Lambda$ is an action operad for which each $\pi_n$ is the zero map, then the groups $\Lambda(n)$ define an operad in $\mb{Grp}$ using the operadic multiplication of $\Lambda$.
\end{prop}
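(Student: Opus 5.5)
Both directions come down to comparing the action operad axiom \cref{eqn:ao_axiom} with the statement that operadic multiplication is a group homomorphism. The key observation is that when every $\pi_n$ is the zero map, each $g \in \Lambda(n)$ has identity underlying permutation. The reindexing in \cref{eqn:ao_axiom} then disappears: $g(i) = i$ and $k_{g^{-1}(i)} = k_i$. So for $(g;f_1,\ldots,f_n)$ and $(g';f'_1,\ldots,f'_n)$ in the same product $\Lambda(n)\times\Lambda(k_1)\times\cdots\times\Lambda(k_n)$, the axiom reads
\[
\mu(g';f'_1,\ldots,f'_n)\,\mu(g;f_1,\ldots,f_n) = \mu(g'g; f'_1f_1,\ldots,f'_nf_n).
\]
This says exactly that $\mu$ is a homomorphism from the product group $\Lambda(n)\times\prod_i\Lambda(k_i)$ to $\Lambda(\sum k_i)$.

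For the first claim, start from an operad $P$ in $\mb{Grp}$ and define $Z(P)$ as in the statement. The underlying operad of sets is that of $P$, and each $P(n)$ already carries a group structure. The zero maps $\pi_n$ are group homomorphisms. They also form a map of operads to $\Sigma$:
\begin{itemize}
\item the unit is sent to the identity of $\Sigma_1$;
\item compatibility with composition holds because $\mu^\Sigma(e_n;e_{k_1},\ldots,e_{k_n}) = \delta(e_n)\beta(e_{k_1},\ldots,e_{k_n}) = e_K$, since $\delta$ and $\beta$ on identity permutations give the identity permutation (directly from \cref{Defi:beta-s,Defi:delta-s}).
\end{itemize}
The action operad axiom is then exactly the displayed equation above, which holds because operadic multiplication in $P$ is a group homomorphism with respect to the product group structure. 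I do not need the requirement $\id = e_1$ here.

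For the converse, let $\Lambda$ be an action operad with all $\pi_n$ zero. By the observation above, \cref{eqn:ao_axiom} says precisely that each $\mu$ is a group homomorphism out of the product group. The remaining condition in \cref{rem:op-in-grp}, that $\id = e_1$, is the first part of \cref{lem:calclem}. The operad unit and associativity axioms hold because $\Lambda$ is already an operad in $\mb{Sets}$. So the groups $\Lambda(n)$ with the operadic multiplication of $\Lambda$ form an operad in $\mb{Grp}$.

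There is no real obstacle. The only step needing care is the index bookkeeping in \cref{eqn:ao_axiom}: one must check that with trivial underlying permutations the tuple $(f'_1,\ldots,f'_n)$ ranges over all of $\prod\Lambda(k_i)$ and that the pairing $f'_{g(i)}f_i$ becomes the componentwise product $f'_if_i$.
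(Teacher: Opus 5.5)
Your proposal is correct and follows the paper's argument, which simply observes that, via the action operad axiom \cref{eqn:ao_axiom}, operadic multiplication is a group homomorphism exactly when the underlying permutations are trivial. You also supply details the paper leaves implicit: that the zero maps assemble into an operad map to $\Sigma$, and that $\id = e_1$ follows from \cref{lem:calclem}.
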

\begin{proof}
It is easy to verify, using \cref{eqn:ao_axiom} of \cref{Defi:aop}, that the operadic multiplication $\mu$ of an action operad is a group homomorphism if and only if $\pi_{n}$ is zero for all $n$.
\end{proof}

\begin{prop}\label{prop:Z}
The assignment on objects $P \mapsto Z(P)$ extends to a functor 
\[
Z \colon  \mb{Op}(\mb{Grp}) \rightarrow \mb{AOp}.
\]
This functor is full, faithful, and its image at the level of objects is precisely the collection of action operads $\Lambda$ for which each $\pi_n$ is the zero map.
\end{prop}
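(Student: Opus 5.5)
We define $Z$ on morphisms by $Z(f) = f$: a map $f \colon P \to Q$ in $\mb{Op}(\mb{Grp})$ is a map of underlying operads that is levelwise a group homomorphism. Since both $Z(P)$ and $Z(Q)$ have zero maps $\pi_n$, the condition $\pi^{Z(Q)} \circ f = \pi^{Z(P)}$ of \cref{Defi:mapaop} holds trivially. Hence $Z(f)$ is a map of action operads. Identities and composition are preserved because $Z$ is the identity on underlying data and both categories compose levelwise (\cref{prop:cat-of-aop}). Faithfulness is immediate for the same reason: $Z$ does not change the underlying functions $f_n$.

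For fullness, take a map of action operads $g \colon Z(P) \to Z(Q)$. By \cref{Defi:mapaop}, $g$ is a map of underlying operads and each $g_n$ is a group homomorphism. These are exactly the conditions for a morphism $P \to Q$ in $\mb{Op}(\mb{Grp})$ as described in \cref{rem:op-in-grp}. Since $Z$ acts as the identity on this data, $g = Z(g)$.

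For the image on objects, every $Z(P)$ has all $\pi_n$ zero by construction. Conversely, let $\Lambda$ be an action operad with all $\pi_n$ zero. The second part of \cref{prop:Z-obj} gives an operad $P$ in $\mb{Grp}$ with the same groups and the same operadic multiplication. The condition $\id = e_1$ required in \cref{rem:op-in-grp} holds by \cref{lem:calclem}(1). Then $Z(P)$ and $\Lambda$ agree on groups, multiplication, unit, and $\pi$, so $\Lambda = Z(P)$.

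The only substantive input is the equivalence in \cref{prop:Z-obj}. With trivial underlying permutations, \cref{eqn:ao_axiom} reads $\mu(g';\underline{f'})\mu(g;\underline{f}) = \mu(g'g;\underline{f'f})$, which says precisely that $\mu$ is a homomorphism on the product group. I expect no real obstacle beyond this; the rest is bookkeeping.
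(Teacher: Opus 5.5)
Your proposal is correct and follows essentially the same route as the paper: $Z$ is the identity on underlying data, and the $\pi$-compatibility condition of \cref{Defi:mapaop} is vacuous for zero maps. The remaining conditions on a map of action operads coincide with those of a morphism in $\mb{Op}(\mb{Grp})$, which gives functoriality, fullness and faithfulness at once. You also treat the image on objects explicitly, via the second part of \cref{prop:Z-obj} together with \cref{lem:calclem} to secure $\id = e_1$; the paper's proof leaves this step implicit.
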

\begin{proof}
Let $f \colon P \to Q$ be a morphism in $\mb{Op}(\mb{Grp})$, meaning that $f$ consists of a family of group homomorphisms $f_n \colon P(n) \to Q(n)$ that define a map of operads.
Define $Z(f)_n = f_n$. We must check that these functions define a map of action operads; functoriality will follow immediately, as composition and identities in both $\mb{Op}(\mb{Grp})$ and $\mb{AOp}$ are given levelwise.
Since each action operad $Z(P)$ has $\pi_n$ the zero map for all $n$, the first numbered axiom in \cref{Defi:mapaop} is satisfied trivially.
The second numbered axiom follows from the definition of a morphism in $\mb{Op}(\mb{Grp})$.
This completes the proof that $Z(f)$ is a map of action operads, and the same reasoning shows that every map of action operads $g \colon Z(P) \to Z(Q)$ is $Z(g')$ for a unique $g' \colon P \to Q$ in $\mb{Op}(\mb{Grp})$, thus $Z$ is full and faithful.
\end{proof}

\begin{prop}\label[prop]{prop:im-and-ker-aop}
Let $f \colon \Lambda \to \Lambda'$ be a map of action operads.
\begin{enumerate}
\item The groups
  \[
    \mathrm{Ker}\,f_n = \{x \in \Lambda(n)~\colon~f(x) = e_{n} \}
  \]
form an action operad for which the inclusion $\mathrm{Ker}\,f \hookrightarrow \Lambda$ is a map of action operads.
\item The groups
  \[
    \mathrm{Im}\,f_n = \{f(x)~\colon~x \in \Lambda(n)\}
  \]
form an action operad for which the inclusion $\mathrm{Im}\,\pi \hookrightarrow \Lambda'$ is a map of action operads.
\end{enumerate}
\end{prop}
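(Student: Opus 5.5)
The plan is to show in each case that the given subsets of the groups form a sub-operad of the underlying operad which is also levelwise a subgroup. Once that is done, everything else is inherited by restriction. The structure map $\pi$ is obtained by restricting $\pi^\Lambda$ (respectively $\pi^{\Lambda'}$). The action operad axiom \cref{eqn:ao_axiom} then holds automatically, since it is an equation in the ambient groups $\Lambda(k_1+\cdots+k_n)$ (respectively $\Lambda'(k_1+\cdots+k_n)$) between elements that already lie in the subsets. The inclusion is then a levelwise group homomorphism, a map of operads, and commutes with $\pi$ by construction, so it is a map of action operads in the sense of \cref{Defi:mapaop}. (In the second part, the inclusion meant is $\mathrm{Im}\,f \hookrightarrow \Lambda'$.)

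For the kernel, each $\mathrm{Ker}\,f_n$ is a subgroup because $f_n$ is a homomorphism. The operad unit lies in $\mathrm{Ker}\,f_1$: by the first part of \cref{lem:calclem} we have $\id = e_1$, and $f_1(e_1) = e_1$. For closure under operadic multiplication, take $x \in \mathrm{Ker}\,f_n$ and $y_i \in \mathrm{Ker}\,f_{k_i}$. Since $f$ is a map of operads,
\[
f\big(\mu(x;y_1,\ldots,y_n)\big) = \mu\big(f(x);f(y_1),\ldots,f(y_n)\big) = \mu(e_n;e_{k_1},\ldots,e_{k_n}),
\]
and this equals $e_{k_1+\cdots+k_n}$ by the second part of \cref{lem:calclem}. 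The map $\pi^\Lambda$ restricted to the kernel is still an operad map and a homomorphism. (In fact it is the zero map, since $\pi^\Lambda(x) = \pi^{\Lambda'}(f(x)) = e$; so by \cref{prop:Z-obj} the kernel is even an operad in $\mb{Grp}$, though this is not needed.)

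For the image, each $\mathrm{Im}\,f_n$ is a subgroup because $f_n$ is a homomorphism. The unit $\id = f(\id)$ lies in $\mathrm{Im}\,f_1$. For closure, note that
\[
\mu\big(f(x);f(y_1),\ldots,f(y_n)\big) = f\big(\mu(x;y_1,\ldots,y_n)\big)
\]
lies in the image. We restrict $\pi^{\Lambda'}$ to the image.

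There is no real obstacle here. The only step needing an earlier result is the kernel's closure under $\mu$, which relies on \cref{lem:calclem}. The point to be careful about is that $\mu$ is not a group homomorphism, so closure cannot be deduced from general subgroup arguments. Instead it comes from $f$ being an operad map together with $\mu(e;\underline{e}) = e$.
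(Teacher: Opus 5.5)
Your proposal is correct and follows essentially the same route as the paper: restrict $\pi$, use \cref{lem:calclem} for the unit and for closure of the kernel under $\mu$, use that $f$ is an operad map for closure of the image, and inherit the action operad axiom from the ambient groups. The only cosmetic difference is that you place the unit in $\mathrm{Im}\,f_1$ via $f(\id)=\id$ rather than via $\id=e_1$, which is equally valid.
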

\begin{proof}
For the first part, we start by defining $\pi_n^{\mathrm{Ker}\,f} \colon \mathrm{Ker}\,f_n \to \Sigma_n$ as the composite group homomorphism
\[
\mathrm{Ker}\,f_n \hookrightarrow \Lambda(n) \stackrel{\pi_n^{\Lambda}}{\to} \Sigma_n.
\]
Since $\pi^{\Lambda} = \pi^{\Lambda'} \circ f$, the composites $\pi_n^{\mathrm{Ker}\,f}$ are all the zero map.
Next we verify that the subgroups $\mathrm{Ker}\,f_n$ are closed under operadic multiplication. 
Let $y \in \mathrm{Ker}\,f_n$ and $x_i \in \mathrm{Ker}\,f_{k_i}$ for $i=1, \ldots, n$.
Then
\begin{align*}
f \big( \mu(y; x_1, \ldots, x_n) \big) & = \mu\big( f(y); f(x_1), \ldots, f(x_n) \big) \\
& = \mu(e_n; e_{k_1}, \ldots, e_{k_n} ) \\ 
& = e_{k_1 + \cdots + k_n}
\end{align*}
by the assumption that $f$ is a map of operads, that $y$ and each $x_i$ is in the kernel, and \cref{lem:calclem}, showing that the kernel subgroups are closed under operadic multiplication.
The operadic identity $\id \in \Lambda(1)$ is an element of $\mathrm{Ker}\,f_1$ because it is equal to $e_1$ by \cref{lem:calclem}.
Thus the groups $\mathrm{Ker}\,f_n$ form a sub-operad of $\Lambda$, and the action operad axiom \cref{eqn:ao_axiom} of \cref{Defi:aop} for $\mathrm{Ker}\,f$ follows immediately from the same axiom for $\Lambda$.
This completes the proof of the first claim, and in fact shows, via \cref{prop:Z-obj}, that these groups constitute an operad in $\mb{Grp}$.

For the second part, we start by defining $\pi_n^{\mathrm{Im}\,f} \colon \mathrm{Im}\,f_n \to \Sigma_n$ as the composite group homomorphism
\[
\mathrm{Im}\,f_n \hookrightarrow \Lambda'(n) \stackrel{\pi_n^{\Lambda'}}{\to} \Sigma_n.
\]
These subgroups are closed under operadic multiplication in $\Lambda'$ using that $f$ is a map of action operads.
The operadic identity $\id \in \Lambda'(1)$ is an element of $\mathrm{Im}\,f_1$ because it is equal to $e_1$ by \cref{lem:calclem}.
This completes the proof that the groups $\mathrm{Im}\,f_n$ form a sub-operad of $\Lambda'$, and the action operad axiom \cref{eqn:ao_axiom} of \cref{Defi:aop} for $\mathrm{Im}\,f$ follows immediately from the same axiom for $\Lambda'$, finishing the proof of the second claim.
\end{proof}

\begin{example}[(Action operads of pure braids, pure ribbon braids)]\label[example]{ex:pure-braids}
The $n$th pure braid group, $PB_n$, is defined as the kernel of the homomorphism $\pi_n \colon B_n \to \Sigma_n$, or equivalently as the subgroup of the $n$th braid group consisting of those braids with underlying permutation the identity.
\cref{prop:im-and-ker-aop} gives a simple proof that the pure braid groups form an operad in the category of groups.
Similarly, the pure ribbon braid group, $PRB_n$, is defined as the kernel of $\pi_n \colon RB_n \to \Sigma_n$, and these groups also constitute an operad in the category of groups.

\end{example}

\begin{rem}[(Kernels and images of $\pi$)]\label{rem:im-and-ker-Sigma}
We note that if $(\Lambda, \pi)$ is an action operad, then we can apply the results of \cref{prop:im-and-ker-aop} to $\pi$ by \cref{prop:pi-in-aop}.
The action operad $\mathrm{Ker}\,\pi_n$ will then be an operad in groups, and the action operad $\mathrm{Im}\,\pi_n$ will be a sub-action operad of $\Sigma$.
By \cref{prop:surjortriv}, this means that the action operad $\mathrm{Im}\,\pi_n$ is either $\Sigma$ or the trivial action operad $T$ (\cref{example:aop-triv}).
\end{rem}

\begin{Defi}\label[Defi]{Defi:ses-aop}
A \emph{short exact sequence of action operads} consists of action operads $\Lambda_1, \Lambda_2, \Lambda_3$ and maps of action operads $f \colon \Lambda_1 \to \Lambda_2$, $g \colon \Lambda_2 \to \Lambda_3$ such that
\begin{itemize}
\item the action operad $\mathrm{Ker}\,f$ is the trivial action operad $T$,
\item the action operad $\mathrm{Im}\,f$ is the action operad $\mathrm{Ker}\,g$, and
\item the action operad $\mathrm{Im}\,g$ is the action operad $\Lambda_3$.
\end{itemize}
We denote such a short exact sequence as
\[
T \to \Lambda_1 \stackrel{f}{\to} \Lambda_2 \stackrel{g}{\to} \Lambda_3 \to T,
\]
and we say that a short exact sequence exhibits \emph{$\Lambda_2$ as an extension of $\Lambda_3$ by $\Lambda_1$}.
\end{Defi}

The following corollary puts \cref{rem:im-and-ker-Sigma} into the language of short exact sequences and extensions.

\begin{cor}\label[cor]{cor:extension}
Let $(\Lambda, \pi)$ be an action operad and assume that every homomorphism $\pi_n \colon \Lambda(n) \to \Sigma_n$ is surjective.
Then there is a short exact sequence of action operads
  \[
    T \rightarrow \mathrm{Ker}\,\pi \hookrightarrow \Lambda \stackrel{\pi}{\longrightarrow} \Sigma \rightarrow T.
  \]
  In particular, every action operad $\Lambda$ is either an operad in groups or an extension of $\Sigma$ by an operad in groups.
\end{cor}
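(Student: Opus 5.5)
The plan is to verify the three conditions of \cref{Defi:ses-aop} directly for the maps $\mathrm{Ker}\,\pi \hookrightarrow \Lambda$ and $\pi \colon \Lambda \to \Sigma$, then use \cref{prop:surjortriv} for the final dichotomy.

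First, $\pi \colon \Lambda \to \Sigma$ is a map of action operads by \cref{prop:pi-in-aop}. Applying \cref{prop:im-and-ker-aop} to $\pi$ gives the action operad $\mathrm{Ker}\,\pi$ and shows that the inclusion $\iota \colon \mathrm{Ker}\,\pi \hookrightarrow \Lambda$ is a map of action operads. We then check the conditions in turn.
\begin{enumerate}
\item Since $\iota$ is levelwise injective, $\mathrm{Ker}\,\iota_n$ is the trivial group for every $n$, so $\mathrm{Ker}\,\iota = T$.
\item The image of $\iota$ is, levelwise, the subgroup $\mathrm{Ker}\,\pi_n \subseteq \Lambda(n)$. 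Its sub-operad structure and its map to $\Sigma$ (the zero map) are both inherited from $\Lambda$, so $\mathrm{Im}\,\iota = \mathrm{Ker}\,\pi$ as action operads, and not merely as families of groups.
\item By the surjectivity hypothesis, $\mathrm{Im}\,\pi_n = \Sigma_n$ for each $n$, with operad structure inherited from $\Sigma$, so $\mathrm{Im}\,\pi = \Sigma$.
\end{enumerate}
Note that $\mathrm{Ker}\,\pi$ is an operad in $\mb{Grp}$: its maps to $\Sigma$ are zero, so \cref{prop:Z-obj} and \cref{prop:Z} apply. This is also already remarked in the proof of \cref{prop:im-and-ker-aop}.

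For the ``in particular'' clause, \cref{prop:surjortriv} says the $\pi_n$ are either all surjective or all zero. In the surjective case, the short exact sequence above exhibits $\Lambda$ as an extension of $\Sigma$ by the operad in groups $\mathrm{Ker}\,\pi$. In the zero case, \cref{prop:Z-obj} shows that $\Lambda$ is itself an operad in $\mb{Grp}$, namely $\Lambda = Z(\Lambda)$.

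There is no real obstacle. The only point needing care is that the equalities in \cref{Defi:ses-aop} are equalities of action operads, which means sub-action operads of a common ambient action operad. Each equality therefore has to be checked on the underlying subgroups together with the inherited multiplication and $\pi$, and in every case this is immediate from the constructions in \cref{prop:im-and-ker-aop}.
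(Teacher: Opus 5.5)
Your proposal is correct and follows the paper's reasoning. The paper gives no separate proof; it treats the corollary as a restatement of \cref{rem:im-and-ker-Sigma}, which applies \cref{prop:im-and-ker-aop} to $\pi$ via \cref{prop:pi-in-aop} and then uses \cref{prop:surjortriv} for the dichotomy. You have simply written out those checks against \cref{Defi:ses-aop} explicitly.
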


\section{Presentations for Action Operads}\label{sec:pres-aop}

This section details how to provide presentations for action operads using the theory of locally finitely presentable (lfp) categories.
We refer the reader to \cite{ar} for a full treatment of lfp categories.
Our treatment here diverges slightly from how one might give presentations for symmetric operads because it is necessary to build in the underlying permutation map $\pi \colon \Lambda \to \Sigma$ from the beginning.
In the theory of plain operads, the starting point is a collection: sets $\{ P(n) \}$ indexed by the natural numbers.
In the theory of symmetric operads, these are enhanced to symmetric collections: sets $\{ P(n) \}$ indexed by the natural numbers, together with a right action $P(n) \times \Sigma_n \to P(n)$.
Our analogue of collections (see \cref{Defi:coll-over-SS}) are now sets $P(n)$, indexed by natural numbers, equipped with functions $\pi_n \colon P(n) \to \Sigma_n$.
Thus the natural notion of the arity of an element in an action operad is not a natural number $n$, but rather a pair $(n, \sigma)$ where $\sigma \in \Sigma_n$.

\begin{Defi}[(Collections over $\SS$)]\label[Defi]{Defi:coll-over-SS}
  Let $\SS$ be the set that is the disjoint union of the underlying sets of all the symmetric groups. Then $\mb{Sets}/\SS$ is the slice category over $\SS$ with objects $(X,f)$ where $X$ is a set and $f \colon X \rightarrow \SS$ and morphisms $(X_{1}, f_{1}) \rightarrow (X_{2}, f_{2})$ are those functions $g \colon X_{1} \rightarrow X_{2}$ such that $f_{1} = f_{2}g$. We call an object $(X,f)$ a \textit{collection over $\SS$}, and say that an element $x \in X$ has \emph{underlying permutation $\sigma$} if $f(x) = \sigma$.
\end{Defi}

\begin{nota}\label[nota]{nota:Xofsigma}
If $(X, f)$ is a collection over $\SS$, we write $X(\sigma)$ for $f^{-1}(\sigma)$. In other words, $X(\sigma)$ is the set of elements of $X$ with underlying permutation $\sigma \in \Sigma_n \subseteq \SS$.
\end{nota}

\begin{thm}\label[thm]{thm:aop-lfp}
The category $\mb{AOp}$ of action operads is a variety of $\SS$-sorted finitary algebras, and therefore is a finitary monadic category over $\mb{Sets}/\SS$. In particular, $\mb{AOp}$ is locally finitely presentable.
\end{thm}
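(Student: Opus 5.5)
The plan is to write down an explicit $\SS$-sorted finitary signature together with a set of equations whose models are exactly action operads; the remaining claims then follow from standard facts. Take sorts indexed by $\SS$, so that a model $X$ is an $\SS$-indexed family of sets $X(\sigma)$, i.e.\ an object of $\mb{Sets}/\SS$ via \cref{nota:Xofsigma}. Setting $\Lambda(n)=\coprod_{\sigma\in\Sigma_n}X(\sigma)$, the function $\pi_n$ is recorded by which sort an element lies in. The operations will be:
\begin{itemize}
\item a constant $e_n$ of sort $e\in\Sigma_n$;
\item binary multiplications $m_{\sigma,\tau}\colon X(\sigma)\times X(\tau)\to X(\sigma\tau)$;
\item unary inverses $X(\sigma)\to X(\sigma^{-1})$;
\item operadic compositions $\mu_{\sigma;\tau_1,\ldots,\tau_n}\colon X(\sigma)\times X(\tau_1)\times\cdots\times X(\tau_n)\to X(\mu^\Sigma(\sigma;\tau_1,\ldots,\tau_n))$;
\item a constant $\id$ of sort $e_1$.
\end{itemize}
Every operation has finite arity, with sorted output determined by the output of the corresponding operation in $\Sigma$. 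This typing is exactly what encodes that $\pi$ is a group homomorphism and an operad map.

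The equations are the group axioms (associativity, unit and inverse, each instantiated at every choice of sorts), the operad unit and associativity axioms at all sorts, and \cref{eqn:ao_axiom} at all sorts. All of these are equations between finitary terms with well-typed sorts, because both sides of each axiom have the same underlying permutation, since $\Sigma$ itself satisfies them. Next I check that models correspond exactly to action operads. An action operad gives a model by sorting each $\Lambda(n)$ along $\pi_n$. Conversely, a model yields groups $\Lambda(n)$, homomorphisms $\pi_n$ to $\Sigma_n$, and an operad structure with $\pi$ an operad map satisfying the action operad axiom. Homomorphisms of models are sort-preserving maps commuting with all operations, which is precisely \cref{Defi:mapaop}. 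Hence $\mb{AOp}$ is isomorphic to the variety.

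Finally I invoke the standard results (e.g.\ from \cite{ar}): a variety of many-sorted finitary algebras is monadic over the category of sorted sets via a finitary monad, and is locally finitely presentable. Sorted sets over the sort-set $\SS$ are equivalent to $\mb{Sets}/\SS$. The main care point, rather than a real obstacle, is the typing bookkeeping. Inverses must be included as operations, or equivalently one can note that groups are equationally definable, so that the variety is closed under products and so on. I must also ensure that the operadic composition is split by sort rather than by arity alone, so that all operations are genuinely finitary and the forgetful functor lands in $\mb{Sets}/\SS$ rather than in $\N$-indexed sets.
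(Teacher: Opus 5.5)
Your proposal is correct and follows essentially the same route as the paper: the same $\SS$-sorted signature (group multiplication, units, inverses, and operadic composition, each typed by the corresponding operation in $\Sigma$), the same group, operad and action-operad equations, an identification of models and their homomorphisms with $\mb{AOp}$, and an appeal to \cite{ar}. The only cosmetic difference is that you add a separate constant for the operadic identity, whereas the paper reuses the group unit $u_1$ by the first part of \cref{lem:calclem}; your choice is harmless, since that lemma shows the extra constant is redundant.
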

\begin{proof}
In order to prove that $\mb{AOp}$ is a variety of $\SS$-sorted finitary algebras (henceforth shortened to \emph{$\mb{AOp}$ is a variety}), we must define a set $\mathcal{O}$ of operation symbols and a set of equations $E$ such that action operads are the 
\begin{itemize}
\item collections $(X, f)$ over $\SS$, 
\item equipped with functions
\[
X(\theta) \colon X(\sigma_1) \times \cdots \times X(\sigma_n) \to X(\sigma)
\]
for each operation symbol $\theta \in \mathcal{O}$ of type $\theta \colon \sigma_1, \ldots, \sigma_n \to \sigma$,
\item satisfying the equations in $E$.
\end{itemize}
The set $\mathcal{O}$ of operation symbols is defined to have the elements given below.
\begin{enumerate}
\item For each $\sigma, \tau \in \Sigma_n$, we define an operation symbol
\[
\star[\sigma, \tau] \colon \sigma, \tau \to \sigma \tau,
\]
where the target $\sigma \tau$ is the product of these permutations in $\Sigma_n$.
\item For each natural number $n$, we define an operation symbol
\[
U_n \colon \to e_n,
\]
where the source is the empty list of permutations and the target is the identity element $e_n \in \Sigma_n$.
\item For each $\sigma \in \Sigma_n$, we define an operation symbol
\[
i[\sigma] \colon \sigma \to \sigma^{-1}.
\]
\item Let $\mu$ denote the operadic multiplication in the operad of symmetric groups, $\Sigma$, from \cref{ex:Sigma}. For each $\sigma \in \Sigma_n$ and $\tau_i \in \Sigma_{k_i}$ for $i=1, \ldots, n$, we define an operation symbol $\theta[\sigma; \tau_i]$ of type
\[
\theta[\sigma; \tau_i] \colon \sigma, \tau_1, \ldots, \tau_n \to \mu(\sigma; \tau_1, \ldots, \tau_n).
\]
\end{enumerate}
The set $E$ of equations is defined to have the elements below.
\begin{enumerate}
\item We write $x \star y$ for $\star[\sigma, \tau](x,y)$, where $x$ is a variable of type $\sigma$ and $y$ is a variable of type $\tau$. For each triple $\rho, \sigma, \tau \in \Sigma_n$, there is an equation
\[
(x \star y) \star z = x \star (y \star z).
\]
\item An $\mathcal{O}$-algebra $X$ has, for each $n$, an element $u_n \in X(e_n)$ given by $U_n$. For each $\sigma \in \Sigma_n$, there are equations
\begin{align*}
u_n \star x & = x,\\
x \star u_n & = x.
\end{align*}
\item We write $x^{-1}$ for $i[\sigma](x)$, where $x$ is a variable of type $\sigma$. For each $\sigma \in \Sigma_n$, there are equations
\begin{align*}
x^{-1} \star x & = u_n,\\
x \star x^{-1} & = u_n.
\end{align*}
\item We write $\theta(x; y_1, \ldots, y_n)$ for $\theta[\sigma; \tau_i](x, y_1, \ldots, y_n)$, where $x$ is a variable of type $\sigma \in \Sigma_n$ and for each $i= 1, \ldots, n$ $y_i$ is a variable of type $\tau_i \in \Sigma_{k_i}$. Then for each 
\begin{itemize}
\item $\rho \in \Sigma_n$;
\item $\sigma_i \in \Sigma_{k_i}$, for $i= 1, \ldots, n$; and
\item $\tau_{i,j} \in \Sigma_{h_{i,j}}$, for $i=1, \ldots, n$ and $j=1, \ldots, k_i$;
\end{itemize}
there is an equation
\begin{multline*}
\theta\big( \theta(x; y_1, \ldots, y_n); z_{1,1}, \ldots, z_{1, k_1}, \ldots, z_{n, k_n} \big) = \\ \theta\big( x; \theta(y_1; z_{1,1}, \ldots, z_{1, k_1}), \ldots, \theta(y_n; z_{n, 1}, \ldots, z_{n, k_n}) \big).
\end{multline*}
\item For each $\sigma \in \Sigma_n$, there are equations
\begin{align*}
\theta(u_1; x)& = x, \\
\theta(x; u_1, \ldots, u_1) & = x.
\end{align*}
\item Let $x$ be a variable of type $\sigma \in \Sigma_i$, $y_i$ be a variable of type $\tau_i \in \Sigma_{k_i}$ for $i=1, \ldots, n$, $x'$ be a variable of type $\sigma' \in \Sigma_n$, and $y_i'$ be a variable of type $\tau_i' \in \Sigma_{\sigma^{-1}(i)}$. Then for each such choice of permutations, there is an equation
\[
\theta(x'; y_1', \ldots, y_n') \star \theta(x; y_1, \ldots, y_n) = \theta(x' \star x; y'_{\sigma(1)} \star y_1, \ldots, y'_{\sigma(n)} \star y_n).
\]
\end{enumerate}

The category of $\mathcal{O}$-algebras satisfying the equations in $E$ is isomorphic to $\mb{AOp}$ as follows. Given such an algebra $(X, f)$, define an action operad $(\Lambda^X, \pi)$ by defining 
\[
\Lambda^X(n) = \coprod_{\sigma \in \Sigma_n} X(\sigma)
\]
and defining $\pi_n \colon X(n) \to \Sigma_n$ to be $f$ restricted to $\Lambda^X(n) \subseteq X$. Each $X(n)$ is a group using $\star$ as its multiplication and $u_n$ as its identity element, and $\pi_n$ is a homomorphism since $\pi_n(x \star y) = \sigma \tau$ when $x \in X(\sigma), y \in X(\tau)$ by the definition of the source and target of $\star$. The operadic multiplication is given by the operation symbols $\theta[\sigma; \tau_i]$, and equations 4 and 5 in $E$ are the operadic associativity and unit axioms, using the first part of \cref{lem:calclem}. The additional action operad axiom is equation 6 in $E$. A morphism of $\mathcal{O}$-algebras is easily seen to define a map of action operads, and these assignments are an isomorphism between the category $\mb{AOp}$ and the category of $\mathcal{O}$-algebras satisfying the equations in $E$.
This completes the proof that $\mb{AOp}$ is the variety defined by $\mathcal{O}$ and $E$. It is therefore a finitary monadic category over $\mb{Sets}/\SS$ by \cite[Thm~3.18]{ar} and locally finitely presentable by \cite[Cor~3.7]{ar}.
\end{proof}

For our purposes, the most important consequence of \cref{thm:aop-lfp} is that we can freely generate an action operad from a collection over $\SS$, as stated below.

\begin{cor}\label[cor]{cor:free-aop-fun}
The underlying collection functor $U \colon \mb{AOp} \to  \mb{Sets}/\SS$ has a left adjoint $F \colon \mb{Sets}/\SS \rightarrow \mb{AOp}$, the free action operad functor.
\end{cor}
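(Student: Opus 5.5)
The corollary follows from \cref{thm:aop-lfp} together with the standard theory of varieties of many-sorted finitary algebras. The plan is to identify $U$ with the forgetful functor of the variety of $\mathcal{O}$-algebras satisfying $E$, and then build its left adjoint by the usual term-algebra construction. Equivalently, one can simply invoke the monadicity already recorded in \cref{thm:aop-lfp}: a monadic functor always has a left adjoint.

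\textbf{Step 1: $U$ is the forgetful functor of the variety.} Recall the isomorphism from the proof of \cref{thm:aop-lfp}. It sends an algebra $(X,f)$ to an action operad with $\Lambda^X(n)=\coprod_{\sigma\in\Sigma_n}X(\sigma)$, and conversely sends an action operad $\Lambda$ to the collection $\coprod_n\Lambda(n)$ over $\SS$ via $\pi$. Under this isomorphism, the forgetful functor from the variety to $\mb{Sets}/\SS$ is exactly the underlying collection functor $U$. Since the isomorphism is compatible with the forgetful functors, it suffices to produce a left adjoint for the forgetful functor of the variety.

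\textbf{Step 2: the free algebra.} I would cite \cite{ar}: a finitary monadic category over $\mb{Sets}/\SS$ comes with a left adjoint to its forgetful functor, namely the free-algebra part of the monad. For explicitness, the construction runs as follows.
\begin{itemize}
\item Given a collection $(Y,g)$, form the $\SS$-sorted set of $\mathcal{O}$-terms in the variables $Y$. A variable $y$ has sort $g(y)$, and the sort of a compound term is determined by the types of the operation symbols.
\item Quotient by the smallest $\SS$-sorted congruence generated by all substitution instances of the equations in $E$.
\item The result $FY$ is an $\mathcal{O}$-algebra satisfying $E$, and the inclusion of variables gives a map $Y\to UFY$ over $\SS$.
\end{itemize}

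\textbf{Step 3: the universal property.} A map $Y\to UA$ over $\SS$ extends uniquely to an $\mathcal{O}$-homomorphism on terms, by structural recursion. This extension factors through the congruence because $A$ satisfies the equations in $E$.

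The only point needing care is sorting. The congruence must be sort-preserving, and the sorts must be respected by the equations. The latter holds because $\pi$ in $\Sigma$ is compatible with the operations: for example, equation 6 in $E$ is well-typed by the action operad axiom in $\Sigma$ itself. Once this is checked, the remaining verifications are routine, so I expect no real obstacle and will lean on the citation to \cite{ar}.
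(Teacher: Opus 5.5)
Your proposal is correct and takes essentially the same route as the paper. There the corollary is read off directly from \cref{thm:aop-lfp}: the forgetful functor of a finitary monadic variety of $\SS$-sorted algebras has a left adjoint. Your identification of $U$ with that forgetful functor and your term-algebra construction simply unpack the result cited from \cite{ar}.
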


\begin{Defi}[(Presentation for action operads)]\label[Defi]{Defi:pres-aop}
  A \textit{presentation} for an action operad $\Lambda$ consists of
  \begin{itemize}
    \item a pair of collections over $\SS$ denoted $\mathbf{g}, \mathbf{r}$,
    \item a pair of maps $s_{1}, s_{2} \colon F\mathbf{r} \rightarrow F\mathbf{g}$ between the associated free action operads, and
    \item a map $p \colon F\mathbf{g} \rightarrow \Lambda$ of action operads exhibiting $\Lambda$ as the coequalizer of $s_{1},s_{2}$.
  \end{itemize}
\end{Defi}

\begin{example}[(The presentation for $\Sigma$)]\label[example]{ex:sigma-pres}
Here we explicitly give a presentation for the action operad of symmetric groups.
Recall that the symmetric group $\Sigma_n$ has a presentation, as a \emph{group}, with
\begin{itemize}
\item generators $\sigma_{1;n}, \ldots, \sigma_{n-1;n}$ and
\item relations
\begin{enumerate}
\item $\sigma_{i;n}^2 = e_n$ for all $i$,
\item $\sigma_{i;n} \sigma_{j;n} = \sigma_{j;n} \sigma_{i;n}$ for $i, j$ satisfying $|i-j| \geq 2$, and
\item $\sigma_{i;n} \sigma_{i+1;n} \sigma_{i;n} = \sigma_{i+1;n} \sigma_{i;n} \sigma_{i+1;n}$ for $1 \leq i < n-1$.
\end{enumerate}
\end{itemize}
Note the nonstandard naming of the generators as $\sigma_{i;n}$ instead of merely $\sigma_i$. 
We have included this additional information in our generators as it is necessary to distinguish between the generator of $\Sigma_2$ traditionally denoted $\sigma_1$ and the generator of $\Sigma_3$ traditionally denoted with the same notation, for example. 

Utilizing the operad structure, we notice that
\[
\sigma_{i;n} = \beta(e_{i-1}, \sigma_{1;2}, e_{n-i-1}).
\]
Furthermore, the second relation above is a consequence of this expression, the fact that $\beta$ is a group homomorphism, and the second part of \cref{lem:calclem}.
Thus as an action operad, $\Sigma$ is generated by the single element $\sigma_{1;2} \in \Sigma_2$.

We now define the relations for this presentation of $\Sigma$ as an action operad.
The first relation is 
\begin{equation}\label{eq:rel1}
\sigma^2 = e_2.
\end{equation}
The second relation is
\begin{equation}\label{eq:rel2}
\mu(\sigma; e_1, e_2) = \mu(e_2; e_1, \sigma) \cdot \mu(e_2; \sigma, e_1).
\end{equation}
We therefore claim that $\Sigma$ has a presentation given by
\begin{itemize}
\item $\mathbf{g} = \{ \sigma \}$, defined as a collection over $\SS$ by the function sending $\sigma$ to $(1 \ 2) \in \Sigma_2$;
\item $\mathbf{r} = \{ \rho_1, \rho_2 \}$, defined as a collection over $\SS$ by the function sending $\rho_1$ to $e_2 \in \Sigma_2$ and $\rho_2$ to $(1 \ 3 \ 2) \in \Sigma_3$;
\item $s_1 \colon F\mathbf{r} \to F\mathbf{g}$ defined uniquely by requiring
\begin{align*}
s_1(\rho_1) & = \sigma^2,\\
s_1(\rho_2) & = \mu(\sigma; e_1, e_2);
\end{align*}
\item and $s_2 \colon F\mathbf{r} \to F\mathbf{g}$ defined uniquely by requiring
\begin{align*}
s_2(\rho_1) & = e_2,\\
s_2(\rho_2) & = \mu(e_2; e_1, \sigma) \cdot \mu(e_2; \sigma, e_1).
\end{align*}
\end{itemize}
We note that both $s_1, s_2$ map $\rho_1$ to $(1 \ 2)(1 \ 2) = e_2 \in \Sigma_2$, and both $s_1, s_2$ map $\rho_2$ to $(1 \ 3 \ 2) = (2 \ 3)(1 \ 2)$, thus defining maps of collections over $\SS$.

In order to prove that the above is a presentation for $\Sigma$, we must define a map of action operads $t \colon F\mathbf{g} \to \Sigma$ that exhibits $\Sigma$ as the coequalizer of $s_1, s_2$.
Define $t$ by requiring $t(\sigma) = (1 \ 2)$.
The calculations above prove that $t \circ s_1 = t \circ s_2$, and now we must prove that $t$ is the universal map of action operads coequalizing $s_1, s_2$.
Let $\Lambda$ be an action operad, and $f \colon F\mathbf{g} \to \Lambda$ a map of action operads such that $f \circ s_1 = f \circ s_2$.
We construct a unique map of action operads $\tilde{f} \colon \Sigma \to \Lambda$ such that $f = \tilde{f} \circ t$.
If such an $\tilde{f}$ exists, it must map the transposition $\sigma_{1;2} = (1 \ 2) \in \Sigma_2$ to $f(\sigma)$.
Since each other generator (of $\Sigma_n$ as a group) $\sigma_{i;n}$ is the image
\begin{equation}\label{eq:sig-in}
\sigma_{i;n} = \beta(e_{i-1}, \sigma_{1;2}, e_{n-i-1}) = \mu(e_3; e_{i-1}, \sigma_{1;2}, e_{n-i-1})
\end{equation}
of $\sigma_{1;2}$ under an operadic multiplication, any map of action operads $\tilde{f}$ satisfying $f = \tilde{f} \circ t$ is unique if it exists, and $\tilde{f}_n$ must be 
defined on the generators of $\Sigma_n$ using \cref{eq:sig-in} by
\begin{align*}
\tilde{f}_n(\sigma_{i;n}) & = \tilde{f} \big( \mu(e_3; e_{i-1}, \sigma_{1;2}, e_{n-i-1}) \big) \\
& = \mu\big( \tilde{f}_n(e_3); \tilde{f}_n(e_{i-1}), \tilde{f}_n(\sigma_{1;2}), \tilde{f}_n(e_{n-i-1}) \big) \\
& =  \mu(e_3; e_{i-1}, f(\sigma), e_{n-i-1}).
\end{align*}
In order to show that the formula 
\[
\tilde{f}_n(\sigma_{i;n}) = \mu(e_3; e_{i-1}, f(\sigma), e_{n-i-1})
\]
defines a unique homomorphism $\tilde{f}_n \colon \Sigma_n \to \Lambda(n)$, we must check that it respects the relations in the presentation of $\Sigma_n$ given above.
We only check the third axiom, and only in the case $i=1, n=3$; the rest we leave as a simple exercise for the reader.
In order to verify that $\tilde{f}_3$ respects this relation, we must show that
\begin{equation}\label{eq:3term}
\tilde{f}_3( \sigma_{1;3}) \ \tilde{f}_3(\sigma_{2;3} ) \ \tilde{f}_3(\sigma_{1;3} ) = \tilde{f}_3(\sigma_{2;3} ) \ \tilde{f}_3(\sigma_{1;3} ) \ \tilde{f}_3( \sigma_{2;3}).
\end{equation}
By \cref{lem:e0-unit}, the left side of the above is 
\begin{align*}
\mu\big( e_2; f(\sigma), e_1 \big) \mu\big( e_2; e_1, f(\sigma) \big) \mu\big( e_2; f(\sigma), e_1 \big) & = f \big( \mu(e_2; \sigma, e_1)\mu(e_2;e_1,  \sigma)\mu(e_2; \sigma, e_1) \big).
\end{align*}
Since $f$ is a map of action operads and coequalizes $s_1, s_2$, we obtain
\begin{align*}
f \big( \mu(e_2; \sigma, e_1)\mu(e_2;e_1,  \sigma)\mu(e_2; \sigma, e_1) \big) & = f\big( \mu(e_2; \sigma, e_1) \mu(\sigma; e_1, e_2) \big)
\end{align*}
by the equality $fs_1(\rho_2) = fs_2(\rho_2)$.
Finally, the action operad axiom shows that
\[
f\big( \mu(e_2; \sigma, e_1) \mu(\sigma; e_1, e_2) \big) = f\big( \mu(\sigma; e_1, \sigma) \big).
\]
A similar argument shows that the right side of \cref{eq:3term} is equal to 
\[
f \big(\mu(e_2;e_1, \sigma) \mu(e_2; \sigma, e_1)\mu(e_2;e_1,  \sigma) \big),
\]
and once by coequalizing $s_1, s_2$ is therefore $f\big( \mu(\sigma; e_1, \sigma) \big)$.
We have now verified that $\tilde{f}_n$ respects the relations for the presentation of $\Sigma_n$, and therefore defines a unique group homomorphism $\tilde{f}_n \colon \Sigma_n \to \Lambda(n)$.

By \cref{thm:charAOp}, to show that the homomorphisms $\tilde{f}_n$ defined by
\[
\tilde{f}_n(\sigma_{i;n}) = \mu(e_3; e_{i-1}, f(\sigma), e_{n-i-1})
\]
give a map of action operads, it suffices to check that they commute with the operations $\delta, \beta$ and preserve underlying permutations.
We sketch this proof below, and leave the routine details to the reader.
\begin{itemize}
\item First, check that the equality
\[
\tilde{f}_N\big( \beta(\tau_1, \ldots, \tau_k) \big) = \beta\big( \tilde{f}_{n_1}(\tau_1), \ldots, \tilde{f}_{n_k}(\tau_k) \big)
\]
follows from the special case when all the $\tau_i$ are identity elements except one, and that $\tau_i$ is $\sigma$; this reduction uses that $\beta$ and the $\tilde{f}_{n_k}$'s are homomorphisms. Check that special case using Axiom \eqref{eq3}.
\item Second, check that the equality
\[
\tilde{f}_N\big( \delta_{n; k_1, \ldots, k_n}(\tau) \big) = \delta_{n; k_1, \ldots, k_n}\big( \tilde{f}_n(\tau) \big) 
\]
follows once it is verified in the special cases that $\tau = \sigma_{i;n}$ for some $i$; this reduction uses that $\tilde{f}_n$ is a homomorphism and Axiom \eqref{eq6}.
Then show, using Axiom \eqref{eq9}, that it suffices to check the case of $\sigma_{1;2}$ only.
\item Third, we check
\[
\tilde{f}_{i+j}\big( \delta_{2; i,j}(\sigma_{1;2}) \big) = \delta_{2; i,j}\big( \tilde{f}_2(\sigma_{1;2}) \big) 
\]
by induction. Fixing $i$ and inducting on $j$, we start on the right and compute
\begin{align*}
\delta_{2; i, j+1} \tilde{f}_2(\sigma_{1;2}) & = \mu\big( \tilde{f}_2(\sigma_{1;2}); e_i, e_{j+1} \big)  \\
& = \mu \big( f(\sigma); e_i, e_{j+1} \big) \\
& = \mu \Big( \mu\big( f(\sigma); e_1, e_2 \big); e_i, e_1, e_j \Big) \\
& = \mu \Big( f\big(\mu(\sigma; e_1, e_2) \big); e_i, e_1, e_j \Big) \\
& = \mu \Big( f\big( \mu(e_2; e_1, \sigma) \mu(e_2; \sigma, e_1) \big); e_i, e_1, e_j \Big) \\
& = \mu \Big( \mu\big(e_2; e_1, f(\sigma)\big) \mu\big(e_2; f(\sigma), e_1 \big); e_i, e_1, e_j \Big) \\
& = \mu \Big( \mu\big(e_2; e_1, f(\sigma)\big); e_1, e_i, e_j\Big)
\mu\Big( \mu\big(e_2; f(\sigma), e_1 \big); e_i, e_1, e_j \Big) \\
& = \mu \Big(  e_2; e_1, \mu \big( f(\sigma); e_i, e_j \big) \Big) \mu \Big(  e_2; \mu \big( f(\sigma); e_i, e_1 \big), e_j \Big) \\
& = \mu \Big(  e_2; e_1, \tilde{f}_{i+j}\big(\mu ( \sigma_{1;2}; e_i, e_j) \big) \Big) \mu \Big(  e_2; \tilde{f}_{i+1} \big(\mu (\sigma_{1;2}; e_i, e_1) \big), e_j \Big) \\
& = \tilde{f}_{i+j+1}\Big( \mu \big(  e_2; e_1, \mu ( \sigma_{1;2}; e_i, e_j) \big) \Big) \tilde{f}_{i+j+1}\Big(\mu \big(  e_2;  \mu (\sigma_{1;2}; e_i, e_1), e_j\big) \Big) \\
& = \tilde{f}_{i+j+1}\Big( \mu \big(  e_2; e_1, \mu ( \sigma_{1;2}; e_i, e_j) \big) \mu \big(  e_2;  \mu (\sigma_{1;2}; e_i, e_1), e_j\big) \Big)  \\
& = \tilde{f}_{i+j+1}\Big( \mu \big(  e_2; e_1, \mu ( t(\sigma); e_i, e_j) \big) \mu \big(  e_2;  \mu (t(\sigma); e_i, e_1), e_j\big) \Big)  \\
& = \tilde{f}_{i+j+1}\Big( \mu\big( \mu(e_2; e_1, t(\sigma))\mu(e_2; t(\sigma), e_1); e_i, e_1, e_j\big)\Big)  \\
& = \tilde{f}_{i+j+1}\Big( \mu\big( t(\mu(\sigma;e_1, e_2)); e_i, e_1, e_j\big)\Big)  \\
& = \tilde{f}_{i+j+1}\Big( \mu\big( \mu(\sigma_{1;2};e_1, e_2); e_i, e_1, e_j\big)\Big)  \\
& = \tilde{f}_{i+j+1}( \sigma_{1;2}; e_i, e_{j+1})  \\
& = \tilde{f}_{i+j+1}\delta_{2; i, j+1}(\sigma_{1;2}).  \\
\end{align*}
The equalities above are derived, in order, from the following:
\begin{enumerate}
\item the definition of $\delta$, 
\item the definition of $\tilde{f}_2$, 
\item operad associativity,
\item that $f$ is a map of action operads,
\item that $f$ coequalizes $s_1$ and $s_2$,
\item that $f$ is a map of action operads,
\item the action operad axiom \cref{eqn:ao_axiom},
\item operad associativity, 
\item induction on $j$,
\item that $\tilde{f}$ commutes with $\beta$,
\item that $\tilde{f}$ is a group homomorphism,
\item the definition $\sigma_{1;2} = t(\sigma)$,
\item the action operad axiom \cref{eqn:ao_axiom},
\item that $t$ is an action operad map coequalizing $s_1$ and $s_2$,
\item that $t$ is an action operad map and $\sigma_{1;2} = t(\sigma)$,
\item operad associativity, and
\item the definition of $\delta$.
\end{enumerate}
The argument for fixing $j$ and inducting on $i$ is similar.
\item Fourth, we check the base case for induction. When $i=j=0$, we note that $\delta_{2;0,0}(\sigma_{1;2}) = e_0$. 
Since $\tilde{f}_0$ is a group homomorphism, we therefore must check that the element
$\delta_{2;0,0}\big( \tilde{f}_2(\sigma_{1;2}) \big) = \delta_{2;0,0}\big( f(\sigma) \big)$
equals $e_0$. We do this by showing that $ \delta_{2;0,0}\big( f(\sigma) \big) = \mu\big(f(\sigma); e_0, e_0 \big)$ squares to itself as follows, using similar methods as above:
\begin{align*}
\mu\big(f(\sigma); e_0, e_0 \big) & = \mu\Big(\mu \big( f(\sigma);e_1, e_2\big); e_0, e_0, e_0 \Big) \\
& = \mu\Big(\mu \big(e_2;e_1, f(\sigma)\big)\mu \big(e_2; f(\sigma), e_1 \big); e_0, e_0, e_0 \Big) \\
& = \mu\Big(\mu \big(e_2;e_1, f(\sigma)\big); e_0, e_0, e_0 \Big)\mu\Big(\mu \big(e_2; f(\sigma), e_1 \big); e_0, e_0, e_0 \Big) \\
& = \mu\Big(e_2; e_0, \mu\big(f(\sigma); e_0,e_0\big) \Big)\mu\Big(e_2; \mu\big(f(\sigma); e_0,e_0\big),e_0 \Big) \\
& = \mu\big(f(\sigma); e_0,e_0\big)\mu\big(f(\sigma); e_0,e_0\big).
\end{align*}
This concludes the induction argument, and shows that the $\tilde{f}$ maps commute with the $\delta$'s. Therefore the $\tilde{f}$'s assemble to define a map of operads.
\item Finally, we must check that $\tilde{f}$ is a map of operads over $\Sigma$, meaning that $\pi \circ \tilde{f} = 1_{\Sigma}$.
Since every generator $\sigma_{i;n}$ is an operadic composition of $\sigma_{1;2}$ and identity elements, and $\tilde{f}$ preserves operadic composition and identities, it suffices to check the equality
\[
\pi \circ \tilde{f}(\sigma_{1;2}) = \sigma_{1;2}.
\]
By definition, $\tilde{f}(\sigma_{1;2}) = f(\sigma)$, and $f \colon F\mathbf{g} \to \Lambda$ is a map of action operads, so $\pi\big( f(\sigma) \big) = \sigma_{1;2}$ by the definition of $\mathbf{g}$ as a  collection over $\SS$. This step completes the proof that the homomorphisms $\tilde{f}_n$ define a map of action operads $\tilde{f} \colon \Sigma \to \Lambda$.
\end{itemize}
\end{example}

%% file: v2p2.tex
\section{\texorpdfstring{$\Lambda$}{L}-Operads and their Algebras}\label{sec:forward-op}

This section presents the definition of a $\Lambda$-operad (\cref{Defi:lamop}), where $\Lambda$ is an action operad.
This definition unifies the various types (non-symmetric, symmetric, and braided) of operads discussed in \cref{sec:back-op} under one umbrella term.
The different group actions arise from different choices of $\Lambda$.
We also define algebras over a $\Lambda$-operad in \cref{Defi:aop-alg}, and prove a change-of-action operad result in \cref{thm:pbaop}.

\begin{Defi}[($\Lambda$-operads)]\label{Defi:lamop}
  Let $\Lambda$ be an action operad. A \textit{$\Lambda$-operad} $P$ (in $\mb{Sets}$) consists of
    \begin{itemize}
      \item a non-symmetric operad $P$ in $\mb{Sets}$ and
      \item for each $n$, an action $P(n) \times \Lambda(n) \rightarrow P(n)$ of $\Lambda(n)$ on $P(n)$
    \end{itemize}
such that the following two equivariance axioms hold.
  \begin{itemize}
    \item For each $p \in P(n)$, $q_i \in P(k_i)$, and $g_i \in \Lambda(k_i)$ for $i=1, \ldots, n$:
  \[
    \mu^{P}(p; q_{1} \cdot g_{1}, \ldots, q_{n} \cdot g_{n}) = \mu^{P}(p; q_{1}, \ldots, y_{n}) \cdot \beta^{\Lambda}(g_{1}, \ldots, g_{n}).
  \]
    \item For each $p \in P(n)$, $g \in \Lambda(n)$, and $q_i \in \Lambda(k_i)$ for $i=1, \ldots, n$:
  \[
    \mu^{P}(p \cdot g; q_{1}, \ldots, q_{n}) = \mu^{P}\left(p; q_{g^{-1}(1)}, \ldots, q_{g^{-1}(n)}\right) \cdot \delta_{n;k_1,\ldots,k_n}^{\Lambda}(g).
  \]
  \end{itemize}
\end{Defi}

\begin{rem}\label{rem:single-axiom}
Using \cref{eqn:mu-from-betadelta} from the proof of \cref{thm:charAOp}, the two equivariance axioms in \cref{Defi:lamop} can be combined into the single equality
\[
    \mu\left(p; p_1, \ldots p_n\right)  \mu\left(\tau;\tau_1, \ldots, \tau_n\right) = \mu\left(p \cdot \tau; p_{\tau(1)}\cdot \tau_{1}, \ldots,  p_{\tau(n)}\cdot \tau_{n}\right).
\]
\end{rem}

\begin{rem}\label{oscg}
The operadic crossed simplicial groups of Semidetnov appear to correspond to action operads with trivial $\Lambda_0$, although they are defined using $\circ_i$ operations instead of an operadic composition $\mu$. Continuing the comparison, his $G_\ast$-shifted operads \cite[Definition 3.5]{semidetnov} satisfy two axioms that are the same as those for $\Lambda$-operads in \Cref{Defi:lamop}. 
\end{rem}

\begin{Defi}[(Map of $\Lambda$-operads)]\label{Defi:aop_map}
Let $P$ and $Q$ be $\Lambda$-operads. A \emph{map $f \colon P \rightarrow Q$ of $\Lambda$-operads} consists of an operad map (\cref{rem:cat-of-nonsym-op}) that is levelwise equivariant with respect to the $\Lambda(n)$-actions, i.e., for each $n \in \mathbb{N}$ the following diagram commutes.
  \[
    \xy
      (0,0)*+{P(n) \times \Lambda(n)}="00";
      (30,0)*+{Q(n) \times \Lambda(n)}="10";
      (30,-15)*+{Q(n)}="11";
      (0,-15)*+{P(n)}="01";
      {\ar^{f_n \times 1} "00";"10"};
      {\ar "10";"11"};
      {\ar "00";"01"};
      {\ar_{f_n} "01";"11"};
    \endxy
  \]
\end{Defi}

\begin{prop}\label{prop:cat-of-L-op}
There is a category with 
\begin{itemize}
\item objects the $\Lambda$-operads $P$ in $\mb{Sets}$, 
\item morphisms the maps of $\Lambda$-operads between them,
\item identities $1_P \colon P \to P$ given by
\[
(1_P)_n = 1_{P(n)} \colon P(n) \to P(n),
\]
and
\item composition given by
\[
(g \circ f)_n = g_n \circ f_n.
\]
\end{itemize}
\end{prop}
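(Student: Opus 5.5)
The plan is to verify directly that the stated data satisfy the category axioms. There are three things to check: composites and identities are again maps of $\Lambda$-operads, composition is associative, and the identities are units.

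First, the identity. The levelwise identities $(1_P)_n = 1_{P(n)}$ give the identity map of the underlying non-symmetric operad, which is a map in $\mb{Op}$ by \cref{rem:cat-of-nonsym-op}. Equivariance holds trivially, since $1_{P(n)}(p \cdot g) = p \cdot g = 1_{P(n)}(p) \cdot g$.

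Second, composites. Given maps of $\Lambda$-operads $f \colon P \to Q$ and $g \colon Q \to R$, the levelwise composite $g \circ f$ is a map of underlying non-symmetric operads, because $\mb{Op}$ is a category with levelwise composition (\cref{rem:cat-of-nonsym-op}). Concretely, it preserves $\id$ and commutes with $\mu$: apply the axioms for $f$ and then for $g$. For equivariance, I will paste the two commuting squares of \cref{Defi:aop_map}. For $p \in P(n)$ and $h \in \Lambda(n)$,
\[
g_n\big(f_n(p \cdot h)\big) = g_n\big(f_n(p) \cdot h\big) = g_n(f_n(p)) \cdot h .
\]

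Third, associativity and the unit laws. These hold levelwise because they hold for composition of functions in $\mb{Sets}$. Note that the $\Lambda$-operad structure imposes only conditions on the maps and adds no data, so nothing beyond these verifications is needed.

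I do not expect any real obstacle; the only point requiring care is confirming that equivariance is stable under composition, which is the pasting of squares above. Equivalently, one could observe that $\Lambda$-operads and their maps form a subcategory of $\mb{Op}$ whose morphisms are closed under composition and contain identities, and this gives the result immediately.
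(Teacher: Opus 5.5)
Your proof is correct and is exactly the routine levelwise verification that the paper leaves implicit (it states this proposition without proof, as it does for \cref{prop:cat-of-sym-op}). One small quibble with your closing remark: $\Lambda$-operads do not literally form a subcategory of $\mb{Op}$. The $\Lambda(n)$-actions are extra data on objects, so the forgetful functor $\Lambda\mbox{-}\mb{Op} \to \mb{Op}$ is faithful but not injective on objects; this does not affect your main argument.
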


\begin{nota}[(The category of $\Lambda$-operads)]\label{nota:cat-of-L-op}
The category in \cref{prop:cat-of-sym-op} is called the \emph{category of $\Lambda$-operads (in $\mb{Sets}$)}, and is denoted $\Lambda\mbox{-}\mb{Op}$.
\end{nota}

\begin{example}[(Non-symmetric, symmetric, and braided operads expressed as $\Lambda$-operads)]\label{ex:lop-exs}
We can express the non-symmetric, symmetric, and braided operads of \cref{sec:back-op} as the $\Lambda$-operads for the appropriate choice of $\Lambda$.
  \begin{enumerate}
    \item Let $T$ denote the terminal operad in $\mb{Sets}$ equipped with its unique action operad structure. Then a $T$-operad is just a non-symmetric operad in $\mb{Sets}$.
    \item Let $\Sigma$ denote the operad of symmetric groups with $\pi \colon \Sigma \rightarrow \Sigma$ the identity map. Then a $\Sigma$-operad is a symmetric operad in the category of sets.
    \item Let $B$ denote the operad of braid groups with $\pi_{n} \colon B_{n} \rightarrow \Sigma_{n}$ the canonical projection of a braid onto its underlying permutation. Then a $B$-operad is a braided operad in the sense of Fiedorowicz \cite{fie-br}.
  \end{enumerate}
\end{example}

A further example of a $\Lambda$-operad is given by the underlying operad, $\Lambda$, of $\Lambda$ itself.
\begin{prop}\label{prop:gisgop}
Let $\Lambda$ be an action operad. Then the operad $\Lambda$ is itself a $\Lambda$-operad.
\end{prop}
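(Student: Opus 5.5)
The plan is to let $\Lambda(n)$ act on itself by right multiplication, $p \cdot g = pg$ for $p, g \in \Lambda(n)$, and to take the underlying non-symmetric operad to be the operad $\Lambda$ itself. This is a right action simply because $\Lambda(n)$ is a group. What remains is to verify the two equivariance axioms of \cref{Defi:lamop}. The cleanest route is the single combined equation of \cref{rem:single-axiom}:
\[
\mu(p; p_1, \ldots, p_n)\, \mu(\tau; \tau_1, \ldots, \tau_n) = \mu\big(p\tau; p_{\tau(1)}\tau_1, \ldots, p_{\tau(n)}\tau_n\big).
\]
With the action given by group multiplication, this is literally the action operad axiom \cref{eqn:ao_axiom}. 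The substitution is $g' = p$, $f_i' = p_i$, $g = \tau$, $f_i = \tau_i$. The arities match because $p_i$ must lie in $\Lambda(k_{\tau^{-1}(i)})$ when $\tau_i \in \Lambda(k_i)$, which is exactly the arity condition imposed on $(g'; f_1', \ldots, f_n')$ in \cref{Defi:aop}.

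If one prefers not to rely on \cref{rem:single-axiom}, I would check the two axioms separately using the description $\mu(g; h_1, \ldots, h_n) = \delta(g)\beta(h_1, \ldots, h_n)$ from \cref{thm:charAOp}.

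For the first axiom, the right-hand side is $\mu(p; q_1, \ldots, q_n)\beta(g_1, \ldots, g_n)$. This equals $\mu(p; q_1, \ldots, q_n)\mu(e_n; g_1, \ldots, g_n)$, and by \cref{eqn:ao_axiom} with $g = e_n$ (whose underlying permutation is trivial) it becomes $\mu(p e_n; q_1 g_1, \ldots, q_n g_n)$, which is the left-hand side.

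For the second axiom, write $\delta(g) = \mu(g; e_{k_1}, \ldots, e_{k_n})$. The right-hand side is $\mu(p; q_{g^{-1}(1)}, \ldots, q_{g^{-1}(n)})\,\mu(g; e_{k_1}, \ldots, e_{k_n})$. By \cref{eqn:ao_axiom} this is $\mu(pg; q_{g^{-1}(g(1))}e_{k_1}, \ldots) = \mu(pg; q_1, \ldots, q_n)$, which is the left-hand side.

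The only real obstacle is bookkeeping of indices and arities. One has to confirm that the permuted list $q_{g^{-1}(i)}$ has exactly the arities $k_{g^{-1}(i)}$ demanded by \cref{eqn:ao_axiom}, and that the composite index $g^{-1}(g(i))$ collapses to $i$. Once these match, both axioms are immediate specializations of the action operad axiom.
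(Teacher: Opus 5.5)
Your proposal is correct and takes the same approach as the paper, which also lets $\Lambda(n)$ act on itself by right multiplication and notes that both equivariance axioms of \cref{Defi:lamop} are instances of the action operad axiom \cref{eqn:ao_axiom}. Your explicit substitutions spell out what the paper leaves implicit: $g = e_n$ for the first axiom, and $f_i = e_{k_i}$, $f_i' = q_{g^{-1}(i)}$ for the second, together with the arity checks.
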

\begin{proof}
The underlying operad $\Lambda$ is of course an operad in $\mb{Sets}$. The right group action $\Lambda(n) \times \Lambda(n) \rightarrow \Lambda(n)$ is given simply by the group multiplication in $\Lambda(n)$. The two equivariance axioms are then both instances of the action operad axiom of $\Lambda$.
\end{proof}

An operad is intended to be an abstract description of a certain type of algebraic structure, and the particular instances of that structure are the algebras for that operad. 
We give the general definition first in \cref{Defi:aop-alg}, and then recover algebras over non-symmetric, symmetric, and braided operads in \cref{ex:lop-alg-exs}.

\begin{rem}\label{rem:aop-alg-pre}
In preparation for the definition of an algebra over a $\Lambda$-operad, we make the following two remarks.
\begin{enumerate}
\item If $\Lambda$ is an action operad and $X$ is a set, then $\Lambda(n)$ acts on $X^n$ by 
\[
\Lambda(n) \times X^n \stackrel{\pi_n \times 1}{\to} \Sigma_n \times X^n \stackrel{\kappa_n}{\to} X^n,
\]
where $\kappa_n$ is defined by the formula
\[
\kappa_n(\sigma; x_1, \ldots, x_n) = (x_{\sigma^{-1}(1)}, \ldots, x_{\sigma^{-1}(n)}).
\]
Thus we would write
\[
g \cdot (x_1, \ldots, x_n) = (x_{g^{-1}(1)}, \ldots, x_{g^{-1}(n)})
\]
using \cref{nota:perm_shorthand}.
\item Following the previous item, we define $\coeq{P}{X}{\Lambda}{n}$ as in \cref{conv:coeq}. An algebra over $P$ will involve maps with source object $\coeq{P}{X}{\Lambda}{n}$, so we remind the reader of the tilde notation for maps respecting coequalizers, \cref{conv:equiv-maps}.
\end{enumerate}
\end{rem}

\begin{Defi}[($P$-algebras)]\label{Defi:aop-alg}
Let $\Lambda$ be an action operad, and $P$ be a $\Lambda$-operad. An \textit{algebra} for $P$, or \emph{$P$-algebra}, consists of a set $X$ together with maps 
\[
\alpha_{n} \colon \coeq{P}{X}{\Lambda}{n} \rightarrow X
\]
such that the maps $\tilde{\alpha}_{n}$ satisfy the following axioms.
\begin{enumerate}
\item The element $\id \in P(1)$ is a unit in the sense that
  \[
    \tilde{\alpha}_{1}(\id; x) = x
  \]
for all $x \in X$.
\item The maps $\tilde{\alpha}_{n}$ are associative in the sense that the following diagram commutes.
  \[
    \xy
      (0,0)*+{P(n) \times \prod_{i=1}^n \left(P(k_i) \times X^{k_i}\right)}="ul";
      (60,0)*+{P(n) \times X^{n}}="ur";
      (0,-12)*+{P(n) \times \left(\prod_{i=1}^n P(k_i)\right) \times \prod_{i=1}^n \left(X^{k_i}\right) }="ml";
      (0,-24)*+{P(\sum k_{i}) \times X^{\sum k_{i}}}="bl";
      (60,-24)*+{X}="br";
      {\ar^>>>>>>>>>>>>>>{1 \times \tilde{\alpha}_{k_{1}} \times \cdots \times \tilde{\alpha}_{k_{n}}} "ul"; "ur"};
      {\ar^{\tilde{\alpha}_{n}} "ur"; "br"};
      {\ar_{\cong} "ul"; "ml"};
      {\ar_{\mu \times 1} "ml"; "bl"};
      {\ar_{\tilde{\alpha}_{\sum k_{i}}} "bl"; "br"};
    \endxy
  \]
\end{enumerate}
\end{Defi}

\begin{rem}
It is worth reiterating that the equivariance required for a $P$-algebra is built into the definition above by requiring that the maps $\alpha_{n}$ be defined on coequalizers, even though the algebra axioms then only use the maps $\tilde{\alpha}_{n}$. Since every $\Lambda$-operad has an underlying non-symmetric operad (see \cref{thm:pbaop}, applied to the unique map $T \rightarrow \Lambda$), this reflects the fact that the algebras for the $\Lambda$-equivariant version are always algebras for the plain version, but not conversely.
\end{rem}

\begin{example}[(Algebras over non-symmetric, symmetric, and braided operads as $\Lambda$-operads)]\label{ex:lop-alg-exs}
We can recover standard notions of algebras over non-symmetric, symmetric, and braided operads as algebras over a $\Lambda$-operad.
  \begin{enumerate}
    \item For the action operad $T$ of trivial groups, a $T$-operad is a non-symmetric operad. The coequalizer $\coeq{P}{X}{T}{n}$ is isomorphic to $P(n) \times X^n$, so without loss of generality we can assume that $\tilde{\alpha}_n=\alpha_n$. This recovers the usual notion of an algebra over a non-symmetric operad, see \cite[Definition 1.20]{mss-op}.
    \item For the action operad $\Sigma$ of symmetric groups, a $\Sigma$-operad is a symmetric operad. \cref{Defi:aop-alg} is equivalent to May's original definition \cite[Definition 1.1]{maygeom} by \cref{rem:aop-alg-pre}.
    \item For the action operad $B$ of braid groups, a $B$-operad is a braided operad in the sense of Fiedorowicz \cite{fie-br}. Once again, \cref{Defi:aop-alg} is equivalent to Fiedorowicz's definition \cite[Definition 3.2]{fie-br} by \cref{rem:aop-alg-pre}.
  \end{enumerate}
\end{example}

\begin{Defi}[(Map of $P$-algebras)]\label{Defi:map-palg}
Let $P$ be a $\Lambda$-operad, and let $(X, \alpha)$ and $(Y, \beta)$ be $P$-algebras. Then a \emph{map of $P$-algebras} $f \colon  (X, \alpha) \rightarrow (Y, \beta)$ is a function $f \colon X \rightarrow Y$ such that the following diagram commutes for every $n$.
  \[
    \xy
      {\ar^{1 \times f^{n}} (0,0)*+{P(n) \times X^{n}}; (50,0)*+{P(n) \times Y^{n}} };
      {\ar^{\tilde{\beta}_{n}} (50,0)*+{P(n) \times Y^{n}}; (50,-15)*+{Y} };
      {\ar_{\tilde{\alpha}_{n}} (0,0)*+{P(n) \times X^{n}}; (0,-15)*+{X} };
      {\ar_{f} (0,-15)*+{X}; (50,-15)*+{Y} };
    \endxy
  \]
\end{Defi}

\begin{prop}\label{prop:cat-of-Palg}
Let $\Lambda$ be an action operad and $P$ be a $\Lambda$-operad.
There is a category with 
\begin{itemize}
\item objects the $P$-algebras $(X, \alpha)$, 
\item morphisms the maps of $P$-algebras between them,
\item identities $1_{(X, \alpha)} \colon (X, \alpha) \to (X, \alpha)$ given by the identities $1_X$, and
\item composition given by composition of the underlying functions.
\end{itemize}
\end{prop}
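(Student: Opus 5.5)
The statement to prove is \cref{prop:cat-of-Palg}: $P$-algebras and their maps form a category, with identities $1_X$ and composition of underlying functions. Since composition of functions is associative and unital, the only work is to show that the proposed identities and composites are again maps of $P$-algebras in the sense of \cref{Defi:map-palg}.

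For identities, take a $P$-algebra $(X,\alpha)$. The square of \cref{Defi:map-palg} for $f = 1_X$ has top edge $1 \times 1_X^n = 1_{P(n) \times X^n}$ and bottom edge $1_X$. Both vertical edges are $\tilde{\alpha}_n$, so the square commutes trivially for every $n$.

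For composites, suppose $f \colon (X,\alpha) \to (Y,\beta)$ and $g \colon (Y,\beta) \to (Z,\gamma)$ are maps of $P$-algebras. Write $\gamma$ for the structure maps of $Z$; this $\beta$ is the algebra structure on $Y$, not the block sum. Using $(g \circ f)^n = g^n \circ f^n$, the square for $g \circ f$ is obtained by pasting the square for $f$ beside the square for $g$ along their common edge $\tilde{\beta}_n \colon P(n) \times Y^n \to Y$. In symbols,
\[
\tilde{\gamma}_n \circ (1 \times (gf)^n) = \tilde{\gamma}_n \circ (1 \times g^n) \circ (1 \times f^n) = g \circ \tilde{\beta}_n \circ (1 \times f^n) = g \circ f \circ \tilde{\alpha}_n .
\]
Hence $g \circ f$ is a map of $P$-algebras.

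Associativity and unitality of composition are then inherited from $\mb{Sets}$, because composition and identities are computed on underlying functions. No real obstacle arises. The only point to note is that the definition of a map only involves the maps $\tilde{\alpha}_n$ on $P(n) \times X^n$, not the coequalizers, so no compatibility with the $\Lambda(n)$-actions needs to be checked separately.
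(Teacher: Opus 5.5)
Your proposal is correct. The paper states this proposition without proof, treating it as routine in the same way as \cref{prop:cat-of-sym-op}, and your argument (checking that $1_X$ and $g \circ f$ satisfy the square of \cref{Defi:map-palg}, then inheriting associativity and unitality from composition of functions) is exactly the verification being omitted, including your accurate remark that no separate $\Lambda(n)$-compatibility check is needed.
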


\begin{nota}[(The category of $P$-algebras)]\label{nota:cat-of-palg}
The category in \cref{prop:cat-of-Palg} is called the \emph{category of $P$-algebras (in $\mb{Sets}$)}, and is denoted $P\mbox{-}\mb{Alg}$.
\end{nota}

The final goal of this section is to explore the adjunctions induced by a map of action operads. 
Using these, we will recast the category of algebras over a $\Lambda$-operad $P$ using the endomorphism operad of \cref{ex:endo}.
We begin by proving a change-of-action operad result.

\begin{thm}\label{thm:pbaop}
Let $f \colon \Lambda \rightarrow \Lambda'$ be a map of action operads. 
\begin{enumerate}
\item The map $f$ induces a functor $f^{*} \colon \Lambda'\mbox{-}\mb{Op} \rightarrow \Lambda\mbox{-}\mb{Op}$ with the property that $(f^{*}Q)(n) = Q(n)$ for every $\Lambda'$-operad $Q$.
\item The map $f$ induces a functor $f_{!} \colon \Lambda\mbox{-}\mb{Op} \rightarrow \Lambda'\mbox{-}\mb{Op}$, where $(f_{!}P)(n)$ is defined by the coequalizer below,
    \[
        \xy
            (-10,0)*+{P(n) \times \Lambda(n) \times \Lambda'(n)}="00";
            (30,0)*+{P(n) \times \Lambda'(n)}="10";
            (60,0)*+{\coeqb{P(n)}{\Lambda'(n)}{\Lambda(n)}}="20";
            {\ar@<1ex>^(.52){1 \times (\star \circ f_n \times 1)} "00" ; "10"};
            {\ar@<-1ex>_(.52){\rho \times 1} "00" ; "10"};
            {\ar^(.45){\varepsilon} "10" ; "20"};
        \endxy
    \]
where $\star \colon \Lambda'(n) \times \Lambda'(n) \to \Lambda'(n)$ is group multiplication and $\rho \colon P(n) \times \Lambda(n) \to P(n)$ is the right action given by the $\Lambda$-operad structure. 
\item The functor $f_{!}$ is left adjoint to $f^*$.
\end{enumerate}
\end{thm}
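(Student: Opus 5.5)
For part (1), I define $f^*Q$ to have the same underlying non-symmetric operad as $Q$. The action of $\Lambda(n)$ on $Q(n)$ is obtained by restriction, $q\cdot g := q\cdot f_n(g)$. This is a right action because $f_n$ is a group homomorphism.

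The two equivariance axioms of \cref{Defi:lamop} then follow from those for $Q$, together with two facts:
\begin{itemize}
\item since $f$ is a map of operads, $f(\mu(e_n;g_1,\ldots,g_n))=\mu(e_n;f(g_1),\ldots,f(g_n))$, so $f$ commutes with $\beta$, and similarly with $\delta$ (using $f(e_k)=e_k$);
\item $\pi^{\Lambda'}\circ f=\pi^\Lambda$, so the permutation $g^{-1}$ used to reindex the $q_i$ agrees with that of $f(g)^{-1}$.
\end{itemize}
A map of $\Lambda'$-operads is a fortiori $\Lambda$-equivariant, so $f^*$ is the identity on morphisms and is clearly functorial.

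For part (2), $\Lambda'(n)$ acts on $\coeqb{P(n)}{\Lambda'(n)}{\Lambda(n)}$ on the right by multiplication in the second factor, $[p;h]\cdot h'=[p;hh']$. This is well defined because the relation only involves left multiplication on $\Lambda'(n)$.

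The identity is $[\id;e_1]$. Composition is forced by the combined equivariance axiom of \cref{rem:single-axiom}, and I would define it by
\[
\mu\big([p;h];[p_1;h_1],\ldots,[p_n;h_n]\big)=\big[\mu^P(p;p_{h(1)},\ldots,p_{h(n)});\ \mu^{\Lambda'}(e_n;h_{h(1)},\ldots,h_{h(n)})\,\delta(h)\big].
\]
Equivalently, since $\mu^{\Lambda'}(h;h_1,\ldots,h_n)=\delta(h)\beta(h_1,\ldots,h_n)=\beta(h_{h(1)},\ldots)\delta(h)$ by Axiom \eqref{eq8}, the second coordinate is $\mu^{\Lambda'}(h;h_1,\ldots,h_n)$. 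The cleaner definition is therefore
\[
[\mu^P(p;p_{h(1)},\ldots,p_{h(n)});\ \mu^{\Lambda'}(h;h_{1},\ldots,h_{n})],
\]
with the indexing adjusted so that arities match (I will fix the exact reindexing using $\pi(h)$).

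\textbf{Main obstacle.} The key difficulty is well-definedness with respect to the coequalizer relations, $[p\cdot g;h]=[p;f(g)h]$ in the top slot and in each input slot. For the top slot, I will use the second $\Lambda$-equivariance axiom for $P$ to move $g$ out of $\mu^P$ as $\delta^\Lambda(g)$. I then push $\delta^\Lambda(g)$ through $f$ and combine it with $\mu^{\Lambda'}(h;\ldots)$ using the action operad axiom \cref{eqn:ao_axiom} in $\Lambda'$, with $\pi f=\pi$ ensuring the reindexings agree. Input slots are handled in the same way using $\beta$ and the first axiom.

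Once composition is well defined, associativity, unitality and $\Lambda'$-equivariance can be checked on representatives. There they reduce to the same axioms for $P$ and $\Lambda'$, together with \cref{eqn:ao_axiom} and \cref{lem:calclem}. On morphisms, $f_!(\phi)[p;h]=[\phi(p);h]$.

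For part (3), the unit $\eta_P\colon P\to f^*f_!P$ is $p\mapsto[p;e_n]$. It is $\Lambda$-equivariant because $[p\cdot g;e]=[p;f(g)]=[p;e]\cdot f(g)$, and it is an operad map by the composition formula, using $\mu(e_n;e,\ldots,e)=e$ from \cref{lem:calclem}.

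The counit $\varepsilon_Q\colon f_!f^*Q\to Q$ is $[q;h]\mapsto q\cdot h$. It is well defined since $(q\cdot f(g))\cdot h=q\cdot(f(g)h)$, and it is an operad map by the combined equivariance axiom in $Q$.

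Rather than verifying the triangle identities, I would prove the hom-set bijection directly. A $\Lambda$-map $\phi\colon P\to f^*Q$ extends uniquely to $[p;h]\mapsto\phi(p)\cdot h$, and this extension is $\Lambda'$-equivariant and multiplicative. Naturality of the bijection is immediate.
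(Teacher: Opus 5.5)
Your overall route is the paper's: $f^*$ by restriction along $f$, $f_!P(n)$ as the displayed coequalizer with $\Lambda'(n)$ acting on the second factor, unit $p\mapsto[p;e_n]$ and counit $[q;h]\mapsto q\cdot h$. Checking the hom-set bijection directly instead of the triangle identities is only a repackaging. The problem is the one explicit formula you commit to, the composition on $f_!P$: the reindexing must use $h^{-1}$, not $h$, i.e.
\[
m\big([p;h];[p_1;h_1],\ldots,[p_n;h_n]\big)=\big[\mu^P(p;p_{h^{-1}(1)},\ldots,p_{h^{-1}(n)});\ \mu^{\Lambda'}(h;h_1,\ldots,h_n)\big].
\]
Your use of Axiom~\eqref{eq8} has the same inversion error. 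It gives $\delta(h)\beta(h_1,\ldots,h_n)=\beta(h_{h^{-1}(1)},\ldots,h_{h^{-1}(n)})\delta(h)$, so your first displayed second coordinate is not $\mu^{\Lambda'}(h;h_1,\ldots,h_n)$ in general. You flag the indexing as provisional, but the criterion you give for fixing it (matching arities) cannot decide between $h$ and $h^{-1}$, since both choices have total arity $\sum k_i$. What decides it is your own heuristic: $[p;h]$ stands for $p\cdot h$ (its image under $\varepsilon$), and \cref{rem:single-axiom} rewrites $\mu(p\cdot h;p_1\cdot h_1,\ldots,p_n\cdot h_n)$ as $\mu(p;p_{h^{-1}(1)},\ldots,p_{h^{-1}(n)})\cdot\mu(h;h_1,\ldots,h_n)$.

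With your indexing, the well-definedness check you single out as the main obstacle genuinely fails. Read indices through $\pi$, and apply your formula to the two representatives $[p\cdot g;h]=[p;f(g)h]$ of the top slot. After moving $\delta(g)$ into the second coordinate, they produce respectively
\[
\big[\mu^P(p;p_{hg^{-1}(1)},\ldots,p_{hg^{-1}(n)});\ \delta_{n;k_{h(1)},\ldots,k_{h(n)}}(f(g))\,\mu^{\Lambda'}(h;\underline{h_i})\big]
\]
and
\[
\big[\mu^P(p;p_{gh(1)},\ldots,p_{gh(n)});\ \delta_{n;k_{h^{-1}(1)},\ldots,k_{h^{-1}(n)}}(f(g))\,\mu^{\Lambda'}(h;\underline{h_i})\big].
\]
These differ in general. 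A concrete instance is $f=1_\Sigma$, $P=\Sigma$, $p=e_3$, $p_i=h_i=e_{k_i}$, $h=(1\,2\,3)$, $g=h^{-1}$, $(k_1,k_2,k_3)=(2,1,0)$, where $\delta_{3;1,0,2}(h^{-1})\neq\delta_{3;0,2,1}(h^{-1})=e_3$. With $h^{-1}$, both representatives become $\big[\mu^P(p;p_{h^{-1}g^{-1}(i)});\ \delta_{n;k_{h^{-1}(1)},\ldots,k_{h^{-1}(n)}}(f(g))\,\mu^{\Lambda'}(h;\underline{h_i})\big]$, which is the paper's computation. The same inversion is what makes $\varepsilon_Q$ and your extension $[p;h]\mapsto\phi(p)\cdot h$ multiplicative. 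In the associativity check, the needed identity is that the underlying permutation of $\mu^{\Lambda'}(h;h_1,\ldots,h_n)$ reindexes the concatenated inputs blockwise by the $h_i$ and then by $h$ on blocks. This uses that $\pi^{\Lambda'}$ is a map of operads, not \cref{eqn:ao_axiom}. With these corrections your outline goes through as in the paper.
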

\begin{proof}
The right action of $\Lambda(n)$ on $(f^{*}Q)(n) = Q(n)$ is given as the composite
\[
Q(n) \times \Lambda(n) \stackrel{1 \times f_n}{\to} Q(n) \times \Lambda'(n) \to Q(n),
\]
where the second map is the action given by the $\Lambda'$-operad structure on $Q$.
This group action, together with the operadic multiplication maps for $Q$ as a $\Lambda'$-operad, give $f^*Q$ a $\Lambda$-operad structure.
Given a map $h \colon P \to Q$ of $\Lambda'$-operads, we must check that the maps $h_n \colon P(n) \to Q(n)$ also constitute a map of $\Lambda$-operads $f^*P \to f^*Q$.
The functions $h_n$ give a map of underlying operads by definition, so we need only verify the equivariance with respect to the $\Lambda(n)$-actions.
This equivariance diagram commutes by the functoriality of products and the definition of $h$ as a map of $\Lambda'$-operads.
It is then straightforward to check the functoriality of these assignments, finishing the proof of the first claim.

For the second claim, we first observe that elements of this coequalizer are equivalence classes $[p,\tau]$ where $p \in P(n)$, $\tau \in \Lambda'(n)$, and the relation is given by $[p\cdot \sigma, \tau] = [p, f(\sigma)\tau]$ for $\sigma \in \Lambda(n)$. Then $(f_{!}P)(n)$ inherits a right $\Lambda'(n)$-action by multiplication in the second coordinate.
The coequalizer displayed in the proposition statement is easily seen to be reflexive, with a common section given by mapping a pair $(p, \tau) \in P(n) \times \Lambda'(n)$ to $(p, e_n, \tau) \in P(n) \times \Lambda(n) \times \Lambda'(n)$. Since the product of reflexive coequalizers is again a coequalizer, we define operadic multiplication
\[
m \colon f_{!}P(n) \times f_{!}P(k_1) \times \cdots \times f_{!}P(k_n) \to f_{!}P\left( \sum k_i\right)
\]
to be uniquely determined by the universal property of its source as the coequalizer of a pair of maps 
\begin{equation}\label{eqn:coeq-m}
\big( P(n) \Lambda(n) \Lambda'(n) \big) \times \prod_{i=1}^n \big( P(k_i) \Lambda(n) \Lambda'(k_i) \big) \rightrightarrows \big( P(n) \times \Lambda'(n) \big) \times \prod_{i=1}^n \big( P(k_i) \times \Lambda'(k_i) \big).
\end{equation}
Define a function
\[
\tilde{m} \colon \big( P(n) \times \Lambda'(n) \big) \times \prod_{i=1}^n \big( P(k_i) \times \Lambda'(k_i) \big) \to f_{!}P\left( \sum k_i\right)
\]
by 
\[
\tilde{m}\big( (p,\tau); (p_1, \tau_1), \ldots, (p_n, \tau_n) \big) = \left[ \mu^P(p; p_{\tau^{-1}(1)}, \ldots, p_{\tau^{-1}(n)}), \mu^{\Lambda'}(\tau; \tau_1, \ldots, \tau_n) \right].
\]
The function $\tilde{m}$ will induce the operadic multiplication on $f_{!}P$ once we verify that it coequalizes the two maps in \cref{eqn:coeq-m}.
In order to do so, we must check that
\[
\tilde{m}\big( (p\cdot \sigma,\tau); (p_1\cdot\sigma_1, \tau_1), \ldots, (p_n\cdot\sigma_n, \tau_n) \big) = \tilde{m}\big( (p,f(\sigma)\tau); (p_1, f(\sigma_1)\tau_1), \ldots, (p_n, f(\sigma_n)\tau_n) \big).
\]
By definition the left side is
\begin{equation}\label{eqn:lhs-fshriek}
\left[ \mu^{P}(p\cdot\sigma; p_{\tau^{-1}(1)}\cdot\sigma_{\tau^{-1}(1)}, \ldots, p_{\tau^{-1}(n)}\cdot\sigma_{\tau^{-1}(n)}),\  \mu^{\Lambda'}(\tau; \tau_1, \ldots, \tau_n)            \right],
\end{equation}
while the right side is
\begin{equation}\label{eqn:rhs-fshriek}
\left[\mu^P(p; p_{(f(\sigma)\tau)^{-1}(1)}, \ldots, p_{(f(\sigma)\tau)^{-1}(n)}), \ \mu^{\Lambda'}\big( f(\sigma)\tau; f(\sigma_1)\tau_1, \ldots, f(\sigma_n)\tau_n\big)          \right].
\end{equation}
By the action operad axioms and the fact that $f$ is a map of action operads, we have the equalities
\begin{align*}
\mu\big( f(\sigma)\tau; f(\sigma_1)\tau_1, \ldots, f(\sigma_n)\tau_n\big) &= \mu\big( f(\sigma); f(\sigma_{\tau^{-1}(1)}), \ldots, f(\sigma_{\tau^{-1}(n)})\big)\mu\big( \tau; \tau_1, \ldots, \tau_n\big) \\
& = f\left(\mu\big( \sigma; \sigma_{\tau^{-1}(1)}, \ldots, \sigma_{\tau^{-1}(n)}\big) \right)\mu\big( \tau; \tau_1, \ldots, \tau_n\big).
\end{align*}
By the equality $[p\cdot \sigma, \tau] = [p, f(\sigma)\tau]$, we therefore conclude that \cref{eqn:rhs-fshriek} equals
\[
\left[\mu^P(p; p_{(f(\sigma)\tau)^{-1}(1)}, \ldots, p_{(f(\sigma)\tau)^{-1}(n)}) \cdot \mu\big( \sigma; \sigma_{\tau^{-1}(1)}, \ldots, \sigma_{\tau^{-1}(n)}\big), \mu\big( \tau; \tau_1, \ldots, \tau_n\big) \right].
\]
By the single-equality version of the $\Lambda$-operad equivariance axioms from \cref{rem:single-axiom}, we obtain that the first coordinate can be rewritten as
\begin{align*}
\mu^P(p; p_{(f(\sigma)\tau)^{-1}(1)}, \ldots, p_{(f(\sigma)\tau)^{-1}(n)}) \cdot \mu\big( \sigma; \sigma_{\tau^{-1}(1)}, \ldots, \sigma_{\tau^{-1}(n)}\big) = \\
\mu^P(p\cdot \sigma; p_{(f(\sigma)\tau)^{-1}\big( \sigma(1) \big)}\cdot \sigma_{\tau^{-1}(1)}, \ldots, p_{(f(\sigma)\tau)^{-1}\big( \sigma(n) \big)}\cdot \sigma_{\tau^{-1}(n)}). 
\end{align*}
The indices on the terms $p_{(f(\sigma)\tau)^{-1}\big( \sigma(1) \big)}$ can be simplified using that $f$ is a map of action operads, and hence preserves underlying permutations. Thus 
\begin{align*}
(f(\sigma)\tau)^{-1}\big( \sigma(i) \big) & = \tau^{-1}\left( f(\sigma)^{-1}\left( \sigma(i) \right) \right) \\
& = \tau^{-1}(i)
\end{align*}
because the underlying permutation of $\sigma$ is equal to that of $f(\sigma)$. After substituting these simplifications into the above, we obtain \cref{eqn:lhs-fshriek}, completing the proof that $\tilde{m}$ coequalizes the two maps in \cref{eqn:coeq-m} and therefore induces a unique operadic multiplication on $f_{!}P$. By definition, the operadic multiplication map $m \colon f_{!}P(n) \times f_{!}P(k_1) \times \cdots \times f_{!}P(k_n) \to f_{!}P\left( \sum k_i\right)$ is therefore defined to be
\begin{equation}\label{eqn:defn-m}
m\left( [p,\tau]; [p_1, \tau_1], \ldots, [p_n, \tau_n]\right) = \left[ \mu^P(p; p_{\tau^{-1}(1)}, \ldots, p_{\tau^{-1}(n)}), \mu^{\Lambda'}(\tau; \tau_1, \ldots, \tau_n) \right].
\end{equation}

The unit for $f_{!}P$ is $[\id, e_1]$ where $\id \in P(1)$ is the  unit for $P$ and $e_1 \in \Lambda'(1)$ is the unit for $\Lambda'$. It is straightforward to show that $[\id, e_1]$ acts as a unit for $f_{!}P$ using the unit axioms for $P$ and $\Lambda'$, and we leave these calculations to the reader.

Next we check the associativity axiom for $f_{!}P$.
We do so by checking that the function
\[
\hat{m} \colon \big( P(n) \times \Lambda'(n) \big) \times \prod_{i=1}^n \big( P(k_i) \times \Lambda'(k_i) \big) \to P\left( \sum k_i\right) \times \Lambda'\left(\sum k_i\right)
\]
given by the formula
\[
\hat{m}\left(  (p,\tau); (p_1, \tau_1), \ldots, (p_n, \tau_n)             \right)  = \left( \mu^P(p; p_{\tau^{-1}(1)}, \ldots, p_{\tau^{-1}(n)}), \mu^{\Lambda'}(\tau; \tau_1, \ldots, \tau_n) \right)
\]
satisfies associativity with respect to operadic composition.
The actual operadic composition is then obtained from $\hat{m}$ by passing to equivalence classes via the coequalizer in \cref{eqn:coeq-m}, so associativity of $\hat{m}$ will ensure the associativity of $m$.
First note that the second coordinate of the operadic composite $\hat{m}\left(  (p,\tau); (p_1, \tau_1), \ldots, (p_n, \tau_n)             \right)$ is $\mu^{\Lambda'}(\tau; \tau_1, \ldots, \tau_n)$, so associativity in the second coordinate follows immediately from associativity of $\Lambda'$. 

In order to check associativity in the first coordinate, we introduce the following notation.
We write a list $a_1, \ldots, a_n$ as $\underline{a_i}$. If $\sigma \in \Sigma_n$, we write $\sigma \bullet_i \underline{a_i}$ for the list $a_{\sigma^{-1}(1)}, \ldots, a_{\sigma^{-1}(n)}$. Then the first coordinate of
\[
\hat{m}\left( (p,\tau); \underline{ \hat{m}\left( (p_i, \tau_i); \underline{ (p_{i,j}, \tau_{i,j})} \right) } \right)
\]
is $\mu\left( p; \tau \bullet_i \underline{ \mu(p_i; \tau_i \bullet_j \underline{p_{i,j}}) } \right)$. 
By the associativity of $P$, the first coordinate is therefore equal to
\begin{equation}\label{eqn:firstcoord}
\mu\left( \mu(p; \tau \bullet_i \underline{p_i}); \tau \bullet_i \underline{\tau_i \bullet_j \underline{p_{i,j}}} \right).
\end{equation}
On the other hand, the first coordinate of
\[
\hat{m}\left( \hat{m}\left( (p,\tau); \underline{ (p_i, \tau_i) }\right); \underline{ (p_{i,j}, \tau_{i,j}) } \right)
\]
is $\mu\left( \mu(p; \tau \bullet_i \underline{p_i}); \mu(\tau; \underline{\tau_i}) \bullet_{i,j} \underline{p_{i,j}} \right)$ where $\underline{p_{i,j}}$ is ordered lexicographically and $\bullet_{i,j}$ means that $\mu(\tau; \underline{\tau_i})$ acts upon this list. By \cref{eqn:mu-from-betadelta} from the proof of \cref{thm:charAOp}, we have
\begin{align*}
\mu(\tau; \underline{\tau_i}) \bullet_{i,j} \underline{p_{i,j}} & = \delta(\tau)\beta(\underline{\tau_i}) \bullet_{i,j} \underline{p_{i,j}} \\
& = \delta(\tau) \bullet_{i,j} \underline{ \tau_i \bullet_j \underline{p_{i,j}}} \\
& = \tau \bullet_i \underline{\tau_i \bullet_j \underline{p_{i,j}}}.
\end{align*}
Therefore the first coordinate of $\hat{m}\left( \hat{m}\left( (p,\tau); \underline{ (p_i, \tau_i) }\right); \underline{ (p_{i,j}, \tau_{i,j}) } \right)$ is
\begin{align*}
\mu\left( \mu(p; \tau \bullet_i \underline{p_i}); \mu(\tau; \underline{\tau_i}) \bullet_{i,j} \underline{p_{i,j}} \right) = \mu\left( \mu(p; \tau \bullet_i \underline{p_i}); \tau \bullet_i \underline{\tau_i \bullet_j \underline{p_{i,j}}} \right),
\end{align*}
matching \cref{eqn:firstcoord} and verifying operadic associativity.

Finally we check the equivariance axiom in the form of \cref{rem:single-axiom}. This axiom is the equality
\[
m\left( [p,\tau]; \underline{[p_i, \tau_i]}\right)\cdot \mu(\omega; \underline{\omega_i}) = 
m\left( [p,\tau]\cdot \omega; [p_{\omega(1)}, \tau_{\omega(1)}]\cdot \omega_1, \ldots, [p_{\omega(n)}, \tau_{\omega(n)}]\cdot \omega_n \right).
\]
Using the formula for $m$, it is straightforward to verify that both sides are equal to
\[
\left[ \mu(p; \tau \bullet_i \underline{p_i}), \mu(\tau \omega; \tau_{\omega(1)}\omega_1, \ldots, \tau_{\omega(n)}\omega_n) \right].
\]
Thus the equivariance axiom for a $\Lambda'$-operad holds for $f_{!}P$, and completes the proof that $f_{!}P$ is a $\Lambda'$-operad.

Next we turn to the functoriality of $f_{!}$. Let $g \colon P \to Q$ be a map of $\Lambda$-operads. Define $f_{!}g \colon f_{!}P \to f_{!}Q$ by the formula
\begin{equation}\label{eqn:fshriek-g}
(f_{!}g)_n\left( [p, \tau] \right) = [g(p), \tau],
\end{equation}
for $[p, \tau] \in f_{!}P(n)$.
Since $g$ is a map of $\Lambda$-operads, it preserves the identity elements, so
\[
(f_{!}g)\left( [\id, e_1] \right) = [g(\id), e_1] = [\id, e_1],
\]
showing that $f_{!}g$ also preserves identity elements. It is clear that $f_{!}g$ is levelwise equivariant with respect to the $\Lambda'(n)$-actions, so we need only check that it preserves operadic multiplication. We leave this calculation to the reader, as it is a straightforward application of the fact that $g$ is a map of $\Lambda$-operads so preserves operadic multiplication. This completes the definition of $f_{!}$ on morphisms. It is immediate that $f_{!}$ preserves composition and identity maps, and is thus a functor 
\[ f_{!} \colon \Lambda\mbox{-}\mb{Op} \rightarrow \Lambda'\mbox{-}\mb{Op}\]
as desired, completing the proof of the second claim in the statement.

We now prove the third claim, that $f_{!}$ is left adjoint to $f^*$.
The unit of this adjuntion has a component at the $\Lambda$-operad $P$ given by a map $\eta^P \colon P \to f^*f_{!}P$, and is defined by the formula
\[
\eta^{P}_n(p) = [p, e_n]
\]
for all $p \in P(n)$. The map $\eta^P$ preserves the operadic identity by definition, and it preserves operadic multiplication by a simple application of \cref{eqn:defn-m} defining operadic multiplication in $f_{!}P$. Furthermore, $\eta^P$ preserves the right $\Lambda$-actions because
\begin{align*}
\eta^{P}(p\cdot \tau) & = [p \cdot \tau, e_n] \\
& = [p, f(\tau)e_n] \\
& = [p, e_n] \cdot \tau \\
& = \eta^P(p) \cdot \tau.
\end{align*}
Thus $\eta^P$ is a map of $\Lambda$-operads. It is then simple to check that these components define a natural transformation $1 \Rightarrow f^* f_{!}$.

The counit of this adjunction has a component at the $\Lambda'$-operad $Q$ given by a map $\varepsilon^Q \colon f_{!}f^*Q \to Q$, and is defined by the formula
\[
\varepsilon^Q_n\left( [q,\tau] \right) = q \cdot \tau.
\]
Since $f_{!}f^*Q(n)$ is defined to be $f^*Q(n) \otimes_{\Lambda(n)} \Lambda'(n)$ and the action of $\Lambda(n)$ on  $f^*Q(n)$ is given by $q \cdot_{f^*Q} \sigma = q \cdot_{Q} f(\sigma)$, the equivalence relation defining $f_!f^*Q(n)$ is generated by the equalities
\[
[q \cdot_{Q} f(\sigma), \tau] = [q, f(\sigma)\tau]
\]
for all $\sigma \in \Lambda(n)$. In particular, the formula for $\varepsilon^Q_n$ is well-defined on equivalence classes. It is obvious that $\varepsilon^Q$ preserves the operadic identity and the right $\Lambda'(n)$-actions, so we only need to show that it preserves operadic multiplication. We therefore compute that
\begin{align*}
\varepsilon^Q \left( m\left( [q, \tau]; \underline{ [q_i, \tau_i] } \right) \right) & = \varepsilon^Q\left( [\mu(q; \tau \bullet_i \underline{q_i}), \mu(\tau; \underline{\tau_i}) ] \right) \\
& = \mu(q; \tau \bullet_i \underline{q_i}) \cdot \mu(\tau; \underline{\tau_i}) \\
& = \mu(q \cdot \tau; \underline{q_i \cdot \tau_i}) \\
& = m\left( \varepsilon^Q[q,\tau]; \underline{\varepsilon^Q[q_i, \tau_i]}\right)
\end{align*}
by the definition of $m$, the definition of $\varepsilon^Q$, the equation in \cref{rem:single-axiom}, and the definition of $\varepsilon^Q$ again.
Thus $\varepsilon^Q$ preserves operadic multiplication, and is a map of $\Lambda'$-operads. As with $\eta$, naturality in $Q$ is simple to check.

For a $\Lambda$-operad $P$, the composite $\varepsilon^{f_{!}P} \circ f_{!}\eta^{P}$ is given by
\begin{align*}
\varepsilon^{f_{!}P} \circ f_{!}\eta^{P}[p, \tau] & = \varepsilon^{f_{!}P}\left[ [p,e_n], \tau\right] \\
& = [p, e_n \tau]\\
& = [p, \tau],
\end{align*}
so is the identity. Likewise, for a $\Lambda'$-operad $Q$ the composite $f^*\varepsilon^Q \circ \eta^{f^*Q}$ is given by
\begin{align*}
f^*\varepsilon^Q \circ \eta^{f^*Q}(q) & = f^*\varepsilon^Q[q,e_n] \\
& = q \cdot e_n \\
& = q,
\end{align*}
so is also the identity. These two calculations verify the triangle identities, and therefore prove that $f_{!} \dashv f^*$.
\end{proof}

We will examine the adjunction from \cref{thm:pbaop} further in the case that the map $f$ is the underlying permutation map $\pi \colon \Lambda \to \Sigma$ for an action operad $(\Lambda, \pi)$. 

\begin{Defi}\label{Defi:symmetrization}
Let $(\Lambda, \pi)$ be an action operad. The left adjoint $\pi_{!}$ from \cref{thm:pbaop} is called the \emph{symmetrization functor}.
\end{Defi}

\begin{prop}\label{prop:unit-pi-star}
Let $(\Lambda, \pi)$ be an action operad. The unit of the adjunction $\pi_{!} \dashv \pi^*$ from \cref{thm:pbaop} is an isomorphism if and only if $\Lambda \cong \Sigma$ via $\pi$.
\end{prop}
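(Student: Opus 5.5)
The plan is to identify the unit explicitly and then test it on a well-chosen $\Lambda$-operad. By the proof of \cref{thm:pbaop}, the unit at a $\Lambda$-operad $P$ is $\eta^P_n(p) = [p, e_n]$, mapping $P(n)$ into $\coeqb{P(n)}{\Sigma_n}{\Lambda(n)}$. The equivalence relation is $[p\cdot g, \tau] = [p, \pi(g)\tau]$. I read ``the unit is an isomorphism'' as ``$\eta^P$ is an isomorphism for every $\Lambda$-operad $P$.''

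For the ``if'' direction, suppose $\pi_n$ is an isomorphism for every $n$. I will write the inverse of $\eta^P_n$ as $[p,\tau] \mapsto p \cdot \pi_n^{-1}(\tau)$.
\begin{itemize}
\item This map is well defined, because $p\cdot g \cdot \pi^{-1}(\tau) = p \cdot \pi^{-1}(\pi(g)\tau)$.
\item Composing with $\eta^P_n$ gives $p \mapsto p\cdot e_n = p$.
\item Composing the other way gives $[p\cdot \pi^{-1}(\tau), e_n] = [p, \tau]$, by the defining relation.
\end{itemize}
So each $\eta^P_n$ is a bijection. Since $\eta^P$ is already a map of $\Lambda$-operads, it is an isomorphism.

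For the ``only if'' direction, I will evaluate the unit at the $\Lambda$-operad $\Lambda$ itself, using \cref{prop:gisgop}. Then $\coeqb{\Lambda(n)}{\Sigma_n}{\Lambda(n)}$ is isomorphic to $\Sigma_n$ via $[g,\tau] \mapsto \pi(g)\tau$. To see this:
\begin{itemize}
\item the formula is well defined, because $\pi(gh)\tau = \pi(g)\pi(h)\tau$;
\item it is surjective, via $[e_n,\tau]$;
\item it is injective, because $[g,\tau] = [e_n, \pi(g)\tau]$.
\end{itemize}
Under this identification, $\eta^\Lambda_n$ becomes $g \mapsto \pi_n(g)$. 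Hence if $\eta^\Lambda$ is an isomorphism, every $\pi_n$ is a bijective homomorphism. Then $\pi$ is a map of action operads (\cref{prop:pi-in-aop}) that is levelwise bijective. Its inverse automatically preserves the group structure, the operadic multiplication, the unit and the maps to $\Sigma$, so $\Lambda \cong \Sigma$ via $\pi$.

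The main obstacle is interpretive rather than computational. If $\Lambda \cong \Sigma$ is meant abstractly, one must still conclude that the isomorphism is $\pi$ itself. The computation above gives this directly, since it shows that $\pi$ is the bijection. Everything else consists of routine checks on the coequalizer relation.
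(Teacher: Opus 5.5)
Your proposal is correct and follows the paper's proof: necessity comes from evaluating the unit at the $\Lambda$-operad $\Lambda$, where $\pi^*\pi_{!}\Lambda(n)\cong\Sigma_n$ and $\eta^{\Lambda}_n$ becomes $\pi_n$. For sufficiency you write out the inverse $[p,\tau]\mapsto p\cdot\pi_n^{-1}(\tau)$, which the paper leaves as ``obvious'' from the formulas for $f_{!}$ and $f^*$.
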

\begin{proof}
The component $\eta^{P} \colon P \to \pi^* \pi_{!} P$ of the unit at a $\Lambda$-operad $P$ is defined by the formula
\[
\eta^{P}_n(p) = [p, e_n].
\]
Taking $\Lambda$ as a $\Lambda$-operad, we see that $\pi^*\pi_{!}\Lambda$ can be chosen to be $\Sigma_n$, in which case the map $\eta^{\Lambda}_n$ is just $\pi_n$.
The unit $\eta$ can only be an isomorphism if each $\eta^P$ is, so it is a necessary condition that $\Lambda \cong \Sigma$ via $\pi$.
Sufficiency of this condition is obvious by the formulas defining $f^*, f_{!}$ applied to an isomorphism $f$, completing the proof.
\end{proof}

\begin{prop}\label{prop:counit-pi-star}
Let $(\Lambda, \pi)$ be an action operad. The counit of the adjunction $\pi_{!} \dashv \pi^*$ from \cref{thm:pbaop} is an isomorphism if and only if $\pi \colon \Lambda \to \Sigma$ is surjective.
\end{prop}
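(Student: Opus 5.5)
The plan is to study the counit $\varepsilon^Q \colon \pi_{!}\pi^*Q \to Q$ at a $\Sigma$-operad $Q$, given by $\varepsilon^Q_n([q,\tau]) = q \cdot \tau$, and to determine when each $\varepsilon^Q_n$ is a bijection. A map of $\Sigma$-operads that is levelwise bijective is an isomorphism, since its levelwise inverse automatically preserves the identity, the multiplication and the actions. So the problem reduces to the level of sets: $\pi_{!}\pi^*Q(n) = Q(n) \otimes_{\Lambda(n)} \Sigma_n$, where $\Lambda(n)$ acts on $Q(n)$ through $\pi_n$ and on $\Sigma_n$ by left multiplication through $\pi_n$.

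For sufficiency, assume every $\pi_n$ is surjective. Surjectivity of $\varepsilon^Q_n$ holds for any $\Lambda$, since $q = \varepsilon^Q_n[q,e_n]$. For injectivity, take any class $[q,\tau]$. Choose $g \in \Lambda(n)$ with $\pi_n(g) = \tau$. Then
\[
[q,\tau] = [q, \pi(g) e_n] = [q\cdot \pi(g), e_n] = [q\cdot\tau, e_n].
\]
Every class therefore has a representative of the form $[q',e_n]$, and $\varepsilon^Q_n[q',e_n] = q'$, so $\varepsilon^Q_n$ is injective. Equivalently, when $\pi_n$ is surjective, $\Sigma_n$ is the free $\Sigma_n$-set on one orbit generated by $e_n$, and the tensor product collapses to $Q(n)$.

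For necessity, assume the counit is an isomorphism and suppose $\pi$ is not surjective. By \cref{prop:surjortriv}, every $\pi_n$ is then the zero map. The first $n$ with $\Sigma_n$ nontrivial is $n = 2$, and there $\pi_2$ is zero. With $\Lambda(2)$ acting trivially, the coequalizer relation $[q\cdot\pi(g),\tau] = [q,\pi(g)\tau]$ becomes trivial, so $\pi_!\pi^*Q(2) = Q(2) \times \Sigma_2$. Now take $Q = \Sigma$ itself, regarded as a $\Sigma$-operad via \cref{prop:gisgop}. Then $\pi_!\pi^*\Sigma(2)$ has $4$ elements while $\Sigma(2)$ has $2$, so $\varepsilon^\Sigma_2$ is not injective, a contradiction.

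The only delicate point is this necessity step: it relies on \cref{prop:surjortriv} to reduce "not surjective" to "zero map", and then on exhibiting one explicit $Q$ with nonempty $Q(2)$ for which the trivially-acting quotient is visibly too large. The terminal $\Sigma$-operad would also work, giving $2$ elements against $1$.
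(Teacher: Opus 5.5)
Your proof is correct and follows essentially the same route as the paper. Surjectivity of $\varepsilon^Q_n$ is automatic; injectivity in the surjective case comes from lifting a permutation to $\Lambda(n)$ and moving it across the tensor, where you normalize each class to $[q\cdot\tau, e_n]$ and the paper compares two classes directly; and the converse uses \cref{prop:surjortriv} to reduce to $\pi$ being zero, so that $\pi_!\pi^*Q(n)\cong Q(n)\times\Sigma_n$. Your explicit witness $Q=\Sigma$ (or the terminal operad) at $n=2$ is a small improvement, since the paper's assertion that the counit ``is not injective'' only holds for some $Q$ with $Q(n)$ nonempty for an $n\geq 2$.
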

\begin{proof}
For a symmetric operad $Q$, the counit $\varepsilon^Q \colon \pi_{!}\pi^*Q \to Q$ is defined on the $n$-ary operations by
\[
\varepsilon^Q_n\left( [q,\tau] \right) = q \cdot \tau,
\]
where $q \in Q(n)$ and $\tau \in \Sigma_n$. This function is surjective for any action operad $\Lambda$ and any $\Lambda$-operad $Q$, so we only check injectivity. 

When $\pi \colon \Lambda \to \Sigma$ is not surjective, it is the zero map by \cref{prop:surjortriv}.
The equivalence relation defining $\pi_{!}\pi^*Q(n)$ is $[q \cdot_{\pi^*Q} \lambda, \tau] = [q, \pi(\lambda)\tau]$ for $q \in Q(n)$, $\lambda \in \Lambda(n)$, and $\tau \in \Sigma_n$, but $\pi(\lambda) = e_n \in \Sigma_n$. Furthermore, the action $q \cdot_{\pi^*Q} \lambda$ is computed in $\pi^*Q(n)$, so is actually 
\[
q \cdot_{\pi^*Q} \lambda = q \cdot_{Q} \pi(\lambda) = q.
\]
These calculations show that the equivalence relation degenerates to $[q,\tau] = [q, \tau]$, so $\pi_{!}\pi^*Q(n) \cong Q(n) \times \Sigma_n$, and the counit map is just given by the group action of $\Sigma_n$ on $Q(n)$, so in particular is not injective.

Now assume that $\pi \colon \Lambda \to \Sigma$ is surjective, and assume that $\varepsilon^Q_n\left( [q_1,\tau_1] \right) = \varepsilon^Q_n\left( [q_2,\tau_2] \right)$, or equivalently that
\[
q_1 \cdot \tau_1 = q_2 \cdot \tau_2.
\]
We must check that $[q_1, \tau_1] = [q_2, \tau_2]$. Then for any $\lambda \in \Lambda(n)$ such that $\pi(\lambda) = \tau_1 \tau_2^{-1}$, we have
\begin{align*}
[q_1, \tau_1] & = [q_1, \pi(\lambda)\pi(\lambda^{-1})\tau_1] \\
& = [q_1 \pi(\lambda), \pi(\lambda^{-1})\tau_1] \\
& = [q_1\tau_1\tau_2^{-1}, \tau_2\tau_1^{-1}\tau_1] \\
& = [q_2, \tau_2],
\end{align*}
proving that $\varepsilon^Q_n$ is injective and completing the proof that if $\pi$ is surjective then $\varepsilon$ is an isomorphism.
\end{proof}

The following statement appears in \cite{maygeom} as Lemma 1.4.
\begin{lem}\label{lem:alg=map}
Let $P$ be a symmetric operad and $X$ be a set. $P$-algebra structures on $X$, given by $\{ \alpha_n \colon  \coeq{P}{X}{\Sigma}{n} \rightarrow X\}$ as in \cref{Defi:aop-alg}, are in bijection with maps of symmetric operads $\alpha \cn P \to \mathcal{E}_X$.
\end{lem}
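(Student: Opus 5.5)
The bijection is obtained by currying. Given maps $\alpha_n \colon \coeq{P}{X}{\Sigma}{n} \to X$, I will form the adjoint functions $\alpha \colon P(n) \to \mathcal{E}_X(n) = \mb{Sets}(X^n, X)$ by $\alpha(p)(x_1, \ldots, x_n) = \tilde{\alpha}_n(p; x_1, \ldots, x_n)$. Conversely, a family of functions $\alpha_n \colon P(n) \to \mathcal{E}_X(n)$ uncurries to $\tilde{\alpha}_n \colon P(n) \times X^n \to X$. These two assignments are clearly mutually inverse at the level of families of functions. The proof therefore reduces to checking that the $P$-algebra axioms together with the coequalizer condition correspond exactly to the three axioms of \cref{Defi:sym_op_map}.

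\textbf{Unit.} The unit axiom $\tilde{\alpha}_1(\id; x) = x$ says precisely that $\alpha(\id) = 1_X$, which is the identity of $\mathcal{E}_X$.

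\textbf{Multiplication.} Chasing an element $(p; p_1, \vec{x}_1, \ldots, p_n, \vec{x}_n)$ around the associativity diagram of \cref{Defi:aop-alg}, the top-right route gives
\[
\alpha(p)\big( \alpha(p_1)(\vec{x}_1), \ldots, \alpha(p_n)(\vec{x}_n) \big) = \big(\alpha(p) \circ (\alpha(p_1) \times \cdots \times \alpha(p_n))\big)(\vec{x}_1, \ldots, \vec{x}_n).
\]
The bottom-left route gives $\alpha(\mu(p; p_1, \ldots, p_n))(\vec{x}_1, \ldots, \vec{x}_n)$. By the formula for $\mu$ in \cref{ex:endo}, commutativity of the diagram for all inputs is exactly the statement $\alpha(\mu(p; \underline{p_i})) = \mu(\alpha(p); \underline{\alpha(p_i)})$.

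\textbf{Equivariance.} This is the step requiring the most care, since one must match the conventions. The condition that $\tilde{\alpha}_n$ factors through $\coeq{P}{X}{\Sigma}{n}$ says $\tilde{\alpha}_n(p \cdot \sigma; \vec{x}) = \tilde{\alpha}_n(p; \sigma \cdot \vec{x})$. Using the left action of \cref{rem:aop-alg-pre}, the right-hand side is $\alpha(p)(x_{\sigma^{-1}(1)}, \ldots, x_{\sigma^{-1}(n)})$. Comparing with the right action on $\mathcal{E}_X(n)$ in \cref{ex:endo}, this is $(\alpha(p) \cdot \sigma)(\vec{x})$. Hence the coequalizer condition is equivalent to $\alpha(p \cdot \sigma) = \alpha(p) \cdot \sigma$.

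Because each condition is an equivalence, the currying bijection restricts to a bijection between $P$-algebra structures on $X$ and maps of symmetric operads $P \to \mathcal{E}_X$, which proves \cref{lem:alg=map}. The only genuine obstacle is the index bookkeeping in the equivariance step, to confirm that the left action on $X^n$ and the right action on $\mathcal{E}_X(n)$ use the same $\sigma^{-1}$ convention, which they do by construction. No use is made of the $\Sigma$-equivariance axioms of $\mathcal{E}_X$ itself, so we need not verify those here.
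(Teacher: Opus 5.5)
Your proposal is correct and matches the paper's proof: curry $\tilde{\alpha}_n$ through the adjunction $- \times X^n \dashv [X^n, -]$, then match the algebra unit and associativity axioms and the coequalizer condition with the three axioms for a map of symmetric operads. Your explicit check that the left action on $X^n$ and the right action on $\mathcal{E}_X(n)$ both use $\sigma^{-1}$ is a detail the paper leaves implicit.
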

\begin{proof}
A function $\tilde{\alpha}_n \colon P(n) \times X^n \to X$ corresponds, under the hom-tensor adjunction $- \times X^n \dashv [X^n, -]$, to a function $\hat{\alpha}_n \colon P(n) \to \mathcal{E}_X(n)$. 
The unit axiom in \cref{Defi:sym_op_map} for the function $\hat{\alpha}_1$ corresponds to the first axiom in \cref{Defi:aop-alg} for $P$ as a nonsymmetric operad.
The associativity axiom in \cref{Defi:sym_op_map} for the functions $\hat{\alpha}_n$ corresponds to the second axiom in \cref{Defi:aop-alg} for $P$ as a nonsymmetric operad.
The function $\hat{\alpha}_n$ commutes with the right $\Sigma_n$-actions if and only if the function $\tilde{\alpha}_n$ is obtained as in \cref{conv:equiv-maps} from a function $\alpha_n \colon \coeq{P}{X}{\Sigma}{n} \rightarrow X$, completing the proof of the bijection.
\end{proof}

\begin{cor}\label{cor:pi-star}
Let $\Lambda$ be an action operad with underlying permutation map $\pi \colon \Lambda \to \Sigma$. 
 For any $\Lambda$-operad $P$ and any set $X$, $P$-algebra structures on $X$ are in bijection with 
\begin{itemize}
\item maps of $\Lambda$-operads $\alpha \colon P \to \pi^* \mathcal{E}_X$ or
\item maps of symmetric operads $\alpha' \colon \pi_{!}P \to  \mathcal{E}_X$.
\end{itemize}
\end{cor}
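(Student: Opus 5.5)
The plan is to prove the second bijection directly from \cref{thm:pbaop} and \cref{lem:alg=map}, and to prove the first by repeating the argument of \cref{lem:alg=map} with $\Lambda$ in place of $\Sigma$. The symmetric endomorphism operad $\mathcal{E}_X$ of \cref{ex:endo} becomes a $\Lambda$-operad $\pi^*\mathcal{E}_X$ through the first part of \cref{thm:pbaop}. Its $n$-ary operations are still $\mathcal{E}_X(n)$, and $g \in \Lambda(n)$ acts by $(\phi \cdot g)(x_1,\ldots,x_n) = \phi(x_{g^{-1}(1)},\ldots,x_{g^{-1}(n)})$, using \cref{nota:perm_shorthand}.

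For the first bijection, I take a family of functions $\tilde{\alpha}_n \colon P(n)\times X^n \to X$ and pass to the adjoint functions $\hat{\alpha}_n \colon P(n) \to \mathcal{E}_X(n)$ under $-\times X^n \dashv [X^n,-]$. The unit and associativity axioms of \cref{Defi:aop-alg} translate into $\hat{\alpha}$ being a map of underlying non-symmetric operads, exactly as in the proof of \cref{lem:alg=map}. That translation does not use any group action, so I will simply cite it. The remaining step is equivariance. The maps $\tilde{\alpha}_n$ coequalize the two actions, so that $\tilde{\alpha}_n(p\cdot g; \underline{x}) = \tilde{\alpha}_n(p; g\cdot \underline{x})$ with $g$ acting on $X^n$ as in \cref{rem:aop-alg-pre}. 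Unwinding the definitions, this says exactly $\hat{\alpha}_n(p\cdot g) = \hat{\alpha}_n(p)\cdot g$ in $\pi^*\mathcal{E}_X(n)$. This is the condition in \cref{Defi:aop_map}.

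For the second bijection, I apply the adjunction $\pi_! \dashv \pi^*$ from the third part of \cref{thm:pbaop}. It gives a natural bijection between maps of $\Lambda$-operads $P \to \pi^*\mathcal{E}_X$ and maps of symmetric operads $\pi_!P \to \mathcal{E}_X$. Composing this with the first bijection finishes the proof. As a check, one can also get the second bijection from \cref{lem:alg=map} applied to $\pi_!P$. In that route, $\pi_!P$-algebra structures on $X$ are identified with $P$-algebra structures via $[p,\tau]\otimes \underline{x} \mapsto p\otimes \tau\cdot\underline{x}$, which gives an isomorphism $\coeq{\pi_!P}{X}{\Sigma}{n} \cong \coeq{P}{X}{\Lambda}{n}$.

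The main obstacle is making sure the conventions match in the equivariance step. The right action of $\Lambda(n)$ on $\mathcal{E}_X(n)$ obtained through $\pi$ has to agree with the left action on $X^n$ used in the coequalizer, including the placement of inverses. I will verify this directly from \cref{rem:Sn-tuples} and \cref{ex:endo}. Everything else is a citation.
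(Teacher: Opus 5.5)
Your proposal is correct and follows essentially the same route as the paper. The first bijection comes from currying $\tilde{\alpha}_n$ to $\hat{\alpha}_n \colon P(n) \to \pi^*\mathcal{E}_X(n)$, with the coequalizing condition becoming $\Lambda(n)$-equivariance and the unit and associativity axioms handled as in \cref{lem:alg=map}; the second follows from the adjunction $\pi_! \dashv \pi^*$ of \cref{thm:pbaop}, and your extra check via $\pi_!P(n) \otimes_{\Sigma_n} X^n \cong P(n) \otimes_{\Lambda(n)} X^n$ is essentially \cref{cor:sym-preserve-alg}.
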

\begin{proof}
By the definition of the $\Lambda(n)$-actions on $P(n)$ and $X^n$, the maps \[\coeq{P}{X}{\Lambda}{n} \rightarrow X\] defining a $P$-algebra structure in \cref{Defi:aop-alg} are in bijection with 
$\Lambda(n)$-equivariant maps $P(n) \to \pi^*\mathcal{E}_X(n)$. The correspondence between the axioms in \cref{Defi:aop-alg} and the map of $\Lambda$-operad axioms (\cref{Defi:aop_map}) follows just as in the proof of \cref{lem:alg=map}. The equivalence between the $\Lambda$- and symmetric operads versions follows immediately from adjointness, \cref{thm:pbaop}.
\end{proof}

\begin{Defi}[(Endofunctor induced by a $\Lambda$-operad)]\label{Defi:und-P}
Let $P$ be a $\Lambda$-operad. Then $P$ induces an endofunctor of $\mb{Sets}$, denoted $\underline{P}$, by the following formula.
  \[
	 \underline{P}(X) = \coprod_n \coeq{P}{X}{\Lambda}{n}
  \]
\end{Defi}

We now have the following proposition; its proof is standard (see the discussion in Construction 2.4 of \cite{maygeom}), and we leave it to the reader.

\begin{prop}\label{prop:op=monad1}  Let $P$ be a $\Lambda$-operad.
  \begin{enumerate}
    \item The $\Lambda$-operad structure on $P$ induces a monad structure on $\underline{P}$ via the operadic multiplication and operadic identities for $P$. We denote this monad $(\underline{P}, \mu, \id)$, or just $\underline{P}$ when $\mu, \id$ are understood.
    \item The category of algebras for the $\Lambda$-operad $P$ is isomorphic to the category of algebras for the monad $(\underline{P}, \mu, \id)$.
  \end{enumerate}
\end{prop}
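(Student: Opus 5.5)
We first make the monad structure explicit and then compare algebras. Elements of $\underline{P}(X)$ are classes $[p; x_1, \ldots, x_n]$ with $p \in P(n)$, subject to $[p\cdot g; \underline{x}] = [p; g\cdot \underline{x}]$ for $g \in \Lambda(n)$. The unit $\eta_X \colon X \to \underline{P}(X)$ is $x \mapsto [\id; x]$.

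For the multiplication $\underline{P}\,\underline{P}(X) \to \underline{P}(X)$, a representative of an element of the source is a tuple $(p; [q_1;\underline{x}_1], \ldots, [q_n;\underline{x}_n])$. We send it to
\[
[\mu^P(p; q_1, \ldots, q_n); \underline{x}_1, \ldots, \underline{x}_n].
\]
Since each factor $\coeq{P}{X}{\Lambda}{k}$ is a reflexive coequalizer, and finite products of reflexive coequalizers in $\mb{Sets}$ are coequalizers (as already used in \cref{thm:pbaop}), it suffices to check that this formula is invariant under two kinds of moves. The first is changing a representative $q_i \cdot g_i$ against $g_i \cdot \underline{x}_i$. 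This is handled by the first equivariance axiom of \cref{Defi:lamop}, together with the fact that $\beta(g_1,\ldots,g_n)$ acts on the concatenated tuple by acting blockwise; that fact holds by Axiom \eqref{eq1} of \cref{thm:charAOp} and \cref{Defi:beta-s}. The second is the outer relation $(p\cdot g; \underline{y}) \sim (p; g\cdot \underline{y})$. Here the second equivariance axiom rewrites $\mu(p\cdot g; \underline{q})$ as $\mu(p; q_{g^{-1}(1)},\ldots)\cdot \delta(g)$. By Axiom \eqref{eq4} and \cref{rem:delta-s}, acting with $\delta(g)$ permutes the blocks $\underline{x}_i$ exactly as $g$ permutes the $n$ entries. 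The monad unit and associativity laws then follow directly from the operad unit and associativity axioms applied to representatives.

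For the second part, let $(X,\alpha)$ be a $P$-algebra as in \cref{Defi:aop-alg}. Assembling the maps $\alpha_n$ gives $a \colon \underline{P}(X) \to X$. The monad unit law $a\circ\eta_X = 1$ is exactly axiom 1 of \cref{Defi:aop-alg}. The monad associativity law $a \circ \underline{P}a = a\circ \mu_X$ becomes, after evaluation on representatives, exactly the associativity diagram in \cref{Defi:aop-alg}. Conversely, a monad algebra $a$ restricts along the coproduct inclusions to maps $\alpha_n$ on the coequalizers, and the same two identities give the operadic algebra axioms. Both descriptions of morphisms amount to the same commuting squares on representatives $P(n)\times X^n$. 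So the assignments are mutually inverse and identity on underlying sets and functions, which yields an isomorphism of categories.

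The main obstacle is the well-definedness of the multiplication under the outer relation. There one must verify carefully that the permutation $\pi(\delta_{n;k_1,\ldots,k_n}(g)) = \delta(\pi(g))$, acting via $\kappa$, takes the concatenation of $\underline{x}_{g^{-1}(1)},\ldots,\underline{x}_{g^{-1}(n)}$ back to the concatenation of $\underline{x}_1,\ldots,\underline{x}_n$, with the correct inverse conventions. To do this we would argue using the dot picture of \cref{rem:delta-s}, reducing to the corresponding fact for symmetric operads that underlies May's Construction 2.4.
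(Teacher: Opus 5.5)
Your proposal is correct and is exactly the standard argument the paper defers to (May's Construction 2.4): define $\eta$ and $\mu$ on representatives, check invariance under the inner and outer relations using the two equivariance axioms of \cref{Defi:lamop}, and read off the monad and algebra laws from the operad laws. One small correction to your last paragraph: the identity actually needed is $\delta_{n;k_1,\ldots,k_n}(g)\cdot(\underline{x}_1,\ldots,\underline{x}_n)=(\underline{x}_{g^{-1}(1)},\ldots,\underline{x}_{g^{-1}(n)})$, so $\delta(g)$ carries the unpermuted concatenation to the permuted one, as your earlier sentence correctly says, not the other way around.
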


In the case that we take the operad $P$ to also be $\Lambda$, we do not get algebras more interesting than monoids.
\begin{prop}\label{prop:Lalg=monoid}
Let $\Lambda$ be an action operad. The category of algebras for $\Lambda$ taken as a $\Lambda$-operad, $\Lambda\mbox{-}\mb{Alg}$, is isomorphic to the category of monoids.
\end{prop}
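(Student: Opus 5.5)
The plan is to show directly that a $\Lambda$-algebra structure on a set $X$ is the same thing as a monoid structure on $X$, using \cref{cor:pi-star} to pass to symmetrization. By \cref{cor:pi-star}, $\Lambda$-algebra structures on $X$ are in bijection with maps of symmetric operads $\pi_{!}\Lambda \to \mathcal{E}_X$. So the first step is to compute $\pi_{!}\Lambda$, where $\Lambda$ is regarded as a $\Lambda$-operad via \cref{prop:gisgop}. The coequalizer $\Lambda(n) \otimes_{\Lambda(n)} \Sigma_n$, with $\Lambda(n)$ acting on itself by right multiplication, is isomorphic to $\pi^*(\mathrm{Im}\,\pi)(n)$. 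Sending $[g,\tau] \mapsto \pi(g)\tau$ is well defined, and it is a bijection onto $\pi(\Lambda(n))\Sigma_n = \Sigma_n$, since $[g,\tau]=[e_n,\pi(g)\tau]$. This is the observation already used in \cref{prop:unit-pi-star}. Hence $\pi_{!}\Lambda \cong \Sigma$ as symmetric operads, which is exactly the statement $\pi_{!}\Lambda=\Sigma$ asserted there. Reading off \eqref{eqn:defn-m}, the operadic composition on $\pi_{!}\Lambda$ transports under this bijection to that of $\Sigma$, because $\pi$ is a map of operads.

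It therefore suffices to show that $\Sigma$-algebras, that is, maps of symmetric operads $\Sigma \to \mathcal{E}_X$, are the same as monoids. Given a monoid $X$, define $\tilde\alpha_n(\sigma; x_1,\ldots,x_n)$ to be the ordered product $x_{\sigma^{-1}(1)}\cdots x_{\sigma^{-1}(n)}$, with the exponent chosen to match the left action of \cref{rem:Sn-tuples}. I will check three things:
\begin{itemize}
\item equivariance, which amounts to the coequalizer condition;
\item the unit axiom, which is trivial;
\item associativity, which reduces to the formula $\mu(\sigma;\tau_i) = \delta(\sigma)\beta(\tau_1,\ldots,\tau_n)$ from \cref{ex:Sigma} together with generalized associativity of the monoid product.
\end{itemize}

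Conversely, given a $\Sigma$-algebra, set $xy = \tilde\alpha_2(e_2;x,y)$ and let the unit be $\tilde\alpha_0(e_0)$. Associativity and unitality of this product follow from the operad associativity axiom in \cref{Defi:aop-alg}, applied to $\mu(e_2;e_2,e_1)=e_3=\mu(e_2;e_1,e_2)$ and to $\mu(e_2;e_0,e_1)=e_1$, together with the unit axiom. Equivariance then forces $\tilde\alpha_n(\sigma;\underline{x})=\tilde\alpha_n(e_n;\sigma\cdot\underline{x})$, and by induction using $e_n=\mu(e_2;e_{n-1},e_1)$ this equals the iterated product. So the two constructions are mutually inverse.

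Finally I will check morphisms: a function commutes with all the $\tilde\alpha_n$ if and only if it preserves the binary product and the unit, because every $\tilde\alpha_n$ is generated by $\tilde\alpha_2(e_2;-)$, $\tilde\alpha_0$ and equivariance. This yields an isomorphism of categories. I expect the only step needing real care to be the bookkeeping in the associativity check for the monoid-to-algebra direction, specifically matching the permutation conventions of $\delta$ with the indexing in $\sigma\cdot\underline{x}$. I would handle it by reducing, via equivariance, to the case where all permutations are identities, so that the check becomes plain associativity of iterated products.
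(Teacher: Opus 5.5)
Your argument is correct, but it follows a different route from the paper. The paper never passes through symmetrization. It writes down an isomorphism of monads $\underline{\Lambda} \cong \underline{T}$, where $\underline{T}$ is the free monoid monad, with components $[g; x_1, \ldots, x_n] \mapsto (x_{g^{-1}(1)}, \ldots, x_{g^{-1}(n)})$; this is the identification $\Lambda(n) \otimes_{\Lambda(n)} X^n \cong X^n$. It then reads off the isomorphism of algebra categories, leaving compatibility with the monad structures to the reader.

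You instead use the analogous identification $\Lambda(n) \otimes_{\Lambda(n)} \Sigma_n \cong \Sigma_n$ to get $\pi_{!}\Lambda \cong \Sigma$. You then reduce via \cref{cor:pi-star} to the classical case of $\Sigma$, and identify $\Sigma$-algebras with monoids by hand. The paper's route is more uniform and proves the stronger statement of a monad isomorphism, so morphisms come for free. Yours is more explicit about the monoid structure and actually carries out the associativity bookkeeping that the paper suppresses, at the cost of an extra reduction step. That step is not really needed: your converse argument works verbatim for $\Lambda$-algebras. In the monoid-to-algebra direction, associativity for $\Lambda$ reduces to the $\Sigma$ case because $\Lambda(n)$ acts on $X^n$ through the operad map $\pi$.

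Two small points.
\begin{itemize}
\item Your intermediate claim that the coequalizer is $\pi^*(\mathrm{Im}\,\pi)(n)$ is false when $\pi$ is the zero map: then $\mathrm{Im}\,\pi(n)$ is trivial, but the coequalizer is still $\Sigma_n$. What you need, and what your formula $[g,\tau] \mapsto \pi(g)\tau$ correctly establishes, is that the coequalizer is $\Sigma_n$ for every $\Lambda$.
\item \cref{cor:pi-star} is a bijection of structures on a fixed set, so for an isomorphism of categories you must also match morphisms. Either note that your morphism argument applies directly to $\Lambda$-algebras, since each $\tilde\alpha_n$ is again determined by $\tilde\alpha_2(e_2;-)$, $\tilde\alpha_0$ and equivariance. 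Or cite \cref{cor:sym-preserve-alg}, which gives a monad isomorphism $\underline{\Lambda} \cong \underline{\pi_{!}\Lambda}$.
\end{itemize}
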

\begin{proof}
The category of monoids is $\underline{T}\mbox{-}\mb{Alg}$ where $T$ is the terminal action operad, so we will produce an isomorphism of monads $R \colon \underline{T} \cong \underline{\Lambda}$.
For a set $X$, 
\[
\underline{T}(X) = \coprod_n \coeq{T}{X}{T}{n} \cong \coprod_n X^n, 
\]
while 
\[
\underline{\Lambda}(X) = \coprod_n \coeq{\Lambda}{X}{\Lambda}{n}.
\]
The elements of the coequalizer $\coeq{\Lambda}{X}{\Lambda}{n}$ are equivalence classes $[g; x_1, \ldots, x_n]$ under the equivalence relation
	\[
		(gh; x_1, \ldots, x_n) \sim \left(g; x_{h^{-1}(1)}, \ldots, x_{h^{-1}(n)}\right).
	\]
The functions $R_{X;n} \colon \coeq{\Lambda}{X}{\Lambda}{n} \to X^n$ defined by
\[
R_{X;n}\big( \ [g; x_1, \ldots, x_n] \ \big) = \big( x_{g^{-1}(1)}, \ldots, x_{g^{-1}(n)}\big)
\]
are bijections, and are easily seen to be natural in $X$. 
Define $R_X = \coprod_n R_{X;n}$.
We leave it to the reader that these components also commute with the multiplication and unit of the monads $\underline{T}, \underline{\Lambda}$, so produce the desired isomorphism of monads.
The isomorphism of monads $R$ then induces an isomorphism between categories of algebras, proving the desired claim.
\end{proof}

The monad-theoretic incarnation of \cref{cor:pi-star} is then the following.

\begin{cor}\label{cor:sym-preserve-alg}
Let $(\Lambda, \pi)$ be an action operad.
For any $\Lambda$-operad P, there exists a natural isomorphism of monads between $\underline{P}$ and $\underline{\pi_{!}P}$. In particular, these monads (and hence operads) have isomorphic categories of algebras.
\end{cor}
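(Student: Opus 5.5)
The plan is to construct the isomorphism of monads directly, levelwise in the arity $n$, and then read off the statement about algebras. For a set $X$,
\[
\underline{\pi_{!}P}(X) = \coprod_n \big(P(n)\otimes_{\Lambda(n)}\Sigma_n\big)\otimes_{\Sigma_n} X^n,
\qquad
\underline{P}(X) = \coprod_n P(n)\otimes_{\Lambda(n)} X^n .
\]
Here the $\Lambda(n)$-action on $X^n$ factors through $\pi_n$, as in \cref{rem:aop-alg-pre}. Since $\Sigma_n$ acts on $X^n$ on the left, the usual associativity of balanced products gives a canonical bijection
\[
\theta_{X;n} \colon \big(P(n)\otimes_{\Lambda(n)}\Sigma_n\big)\otimes_{\Sigma_n} X^n \to P(n)\otimes_{\Lambda(n)} X^n,\qquad [[p,\tau];\underline{x}]\mapsto [p;\tau\cdot\underline{x}],
\]
with inverse $[p;\underline{x}]\mapsto[[p,e_n];\underline{x}]$.

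The first step is to check that $\theta_{X;n}$ is well defined. For the $\Lambda(n)$-relation, $[p\cdot\sigma,\tau]=[p,\pi(\sigma)\tau]$ is sent to $[p\cdot\sigma;\tau\underline{x}] = [p;\pi(\sigma)\tau\underline{x}]$, so both representatives have the same image. The $\Sigma_n$-relation holds for the same reason. The inverse is well defined because $[[p\cdot\sigma,e],\underline{x}] = [[p,\pi(\sigma)],\underline{x}] = [[p,e],\pi(\sigma)\underline{x}]$. The two composites are identities, since $[[p,\tau];\underline{x}] = [[p,e];\tau\underline{x}]$. Naturality in $X$ is clear. Setting $\theta_X=\coprod_n\theta_{X;n}$ gives a natural isomorphism $\underline{\pi_{!}P}\cong\underline{P}$ of endofunctors.

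The second step is to show that $\theta$ is a monad map. Compatibility with the units is immediate: the unit of $\underline{\pi_{!}P}$ is $x\mapsto[[\id,e_1];x]$, and this is sent to $[\id;x]$. The main obstacle is compatibility with the multiplications, which comes down to a calculation with representatives. A typical element of $\underline{\pi_{!}P}\,\underline{\pi_{!}P}(X)$ is $[[p,\tau];[[p_1,\tau_1];\underline{x}_1],\ldots,[[p_n,\tau_n];\underline{x}_n]]$. Using the isomorphism already proved, I would first normalise every representative to have all $\tau$'s equal to $e$. The multiplication of $\underline{\pi_{!}P}$ is built from the formula \cref{eqn:defn-m} for $m$. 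With all $\tau=e$, that formula gives $[\mu^P(p;p_1,\ldots,p_n),\mu^{\Sigma}(e;e,\ldots,e)]=[\mu^P(p;\underline{p_i}),e]$ by \cref{lem:calclem}. The resulting element is then $[[\mu^P(p;\underline{p_i}),e];\underline{x}_1,\ldots,\underline{x}_n]$, and $\theta$ sends it to $[\mu^P(p;\underline{p_i});\underline{x}_1,\ldots,\underline{x}_n]$. This is exactly the multiplication of $\underline{P}$ applied to $\theta\underline{\pi_{!}P}\theta$ of the same element, or equivalently to $\underline{P}\theta\circ\theta$.

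The normalisation is legitimate because the multiplications are well defined on equivalence classes: this is the content of \cref{prop:op=monad1}. Hence checking on normalised representatives suffices, and this sidesteps the permutation bookkeeping.

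Finally, an isomorphism of monads induces an isomorphism of their categories of algebras, and \cref{prop:op=monad1} identifies these with the categories of algebras over the operads $P$ and $\pi_{!}P$. This is consistent with \cref{cor:pi-star}.
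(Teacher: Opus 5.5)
Your argument is correct and is the paper's: $\theta_{X;n}$ is exactly the canonical comparison between $\big(P(n)\otimes_{\Lambda(n)}\Sigma_n\big)\otimes_{\Sigma_n}X^n$ and $P(n)\otimes_{\Lambda(n)}X^n$, which the paper obtains by noting that both objects satisfy the same universal property. Your check of the unit and multiplication on normalised representatives supplies the monad compatibility that the paper leaves implicit. The one thing the paper's universal-property phrasing buys is that it transfers verbatim to $\Lambda$-operads in $\mb{Cat}$, where the corollary is later used in the proof of \cref{thm:cart_thm} and where morphisms of such coequalizers have no simple description by representatives; your elementwise version is tailored to $\mb{Sets}$, which is the setting of the statement.
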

\begin{proof}
The isomorphism is induced by the universal property of the coequalizer by noting that $\coeq{P}{X}{\Lambda}{n}$, the $n$th summand in $\underline{P}(X)$, and $\big(\coequ{P}{\Sigma}{\Lambda}{n}\big)\otimes_{\Sigma_n} X^n$, the $n$th summand in $\underline{\pi_{!}P}(X)$, both satisfy the same universal property.
\end{proof}

\section{The Substitution Product}\label{sec:sub}

In this section, we will show that $\Lambda$-operads are the monoids in the category of $\Lambda$-collections equipped with an appropriate substitution product. 
Such a result is fairly standard (see \cite[Section 1.8]{mss-op}), and in both the symmetric and non-symmetric cases can easily be proven directly. 
Since we work with an arbitrary action operad, however, it will be more economical to take the abstract approach using coends and Day convolution.

\begin{Defi}[($\Lambda$-collections)]\label{Defi:BLambda}
Let $\Lambda$ be an action operad.
\begin{enumerate}
\item The category $B\Lambda$ has 
\begin{itemize}
\item objects natural numbers $n \in \N$, and
\item morphism sets $B\Lambda(m,n)$ empty when $m\neq n$
\[
B\Lambda(n,n) = \Lambda(n),
\]
with composition given by group multiplication and identities given by the elements $e_n$.
\end{itemize}
\item The category $\Lambda\mb{\mbox{-}Coll}$ of \emph{$\Lambda$-collections} is the presheaf category
  \[
[B\Lambda^{\textrm{op}}, \mb{Sets}].
  \]
\end{enumerate}
\end{Defi}

\begin{rem}
The definition of $\Lambda\mb{\mbox{-}Coll}$ does not require that $\Lambda$ be an action operad, only that one has a natural number-indexed set of groups. 
\end{rem}

\begin{Defi}[(The substitution product $\circ$)]\label{Defi:sub-prod}
Let $\Lambda$ be an action operad, and let $X, Y$ be $\Lambda$-collections. We define the $\Lambda$-collection $X \circ Y$ by
  \[
    X \circ Y (k) = \left(\left( \coprod_{k_{1} + \cdots + k_{r} = k} X(r) \times Y(k_{1}) \times \cdots \times Y(k_{r}) \right) \times \Lambda(k)\right) / \sim
  \]
where the equivalence relation is generated by the following.
\begin{enumerate}
\item For $x \in X(r)$, $h \in \Lambda(r)$, $y_i \in Y(k_i)$ for $i = 1, \ldots, r$, and $g \in \Lambda(k)$, we have
\[
 \left(xh; y_{1}, \ldots, y_{r}; g\right) \sim \left(x; y_{h^{-1}(1)}, \ldots, y_{h^{-1}(r)}; \delta_{r; k_1, \ldots, k_r}(h)g\right).
\]
\item For $x \in X(r)$, $y_{i} \in Y(k_{i})$ for $i = 1, \ldots, r$, $g_{i} \in \Lambda(k_{i})$  for $i = 1, \ldots, r$, and $g \in \Lambda(k)$, we have
\[
 \left(x; y_{1}g_{1}, \ldots, y_{r}g_{r}; g\right) \sim \left(x; y_{1}, \ldots, y_{r}; \beta(g_{1}, \ldots, g_{r})g\right).
\]
\end{enumerate}
The group $\Lambda(k)$ acts on the right by multiplication,
\[
[x; y_1, \ldots, y_r; g] \cdot h = [x; y_1, \ldots, y_r; gh], 
\]
and this action is compatible with the equivalence relation above, so defines $X \circ Y$ as a presheaf on $B \Lambda$.
\end{Defi}

We will now develop the tools to prove that the category $\Lambda\mb{\mbox{-}Coll}$ has a monoidal structure given by $\circ$, and that operads are the monoids with respect to this monoidal structure.
We provide the statement here.

\begin{thm}\label{thm:operad=monoid}
Let $\Lambda$ be an action operad.
  \begin{enumerate}
    \item The category $\Lambda\mb{\mbox{-}Coll}$ has a monoidal structure with tensor product given by $\circ$ and unit given by the collection $I$ with $I(n) = \emptyset$ when $n \neq 1$ and $I(1) = \Lambda(1)$.
    \item The category $\mb{Mon}(\Lambda\mb{\mbox{-}Coll})$ of monoids in $\Lambda\mb{\mbox{-}Coll}$ is equivalent to the category of $\Lambda$-operads.
  \end{enumerate}
\end{thm}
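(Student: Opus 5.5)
The plan follows the coend-forward strategy announced at the start of this section. We express $\circ$ as a composite of two functors, each built by Day convolution or left Kan extension, so that associativity and unitality are inherited from universal properties rather than checked on representatives.

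\textbf{Step 1: the convolution structure.} Equip $B\Lambda$ with the strict monoidal structure $\oplus$ given on objects by $m \oplus n = m+n$ and on morphisms by $\beta$. This is a functor because $\beta$ is a homomorphism (\cref{thm:charAOp}). It is strictly associative and unital by Axioms \eqref{eq2} and \eqref{eq3}, together with the second part of \cref{lem:calclem}. Day convolution then gives a monoidal structure on $\Lambda\mb{\mbox{-}Coll}$, and $Y^{\otimes r}(k)$ is the coend
\[
\int^{k_1,\ldots,k_r} \Lambda(k)\times Y(k_1)\times\cdots\times Y(k_r),
\]
which precisely realises relation 2 of \cref{Defi:sub-prod}.

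\textbf{Step 2: $\Lambda(r)$ acts on $Y^{\otimes r}$.} Using $\delta$, define a right action of $\Lambda(r)$ on $Y^{\otimes r}$ that permutes the factors by the underlying permutation and multiplies the $\Lambda(k)$ coordinate on the left by $\delta_{r;k_1,\ldots,k_r}(h)$. The action axioms follow from Axioms \eqref{eq5} and \eqref{eq6}, and compatibility with the coend relation follows from Axiom \eqref{eq8}. With this action we get
\[
X\circ Y = \int^{r} X(r)\times Y^{\otimes r},
\]
a coend over $B\Lambda$ that realises relation 1. Part 1 of the theorem then reduces to producing natural associator and unitor isomorphisms.

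\textbf{Step 3: the internal hom.} For associativity we use the adjunction $-\circ Y \dashv \{Y,-\}$, where
\[
\{Y,Z\}(r) = \Lambda\mb{\mbox{-}Coll}(Y^{\otimes r}, Z).
\]
Because $-\circ Y$ is then cocontinuous, and every presheaf is a colimit of representables, the associator $(X\circ Y)\circ Z \cong X\circ(Y\circ Z)$ reduces to the case $X = B\Lambda(-,r)$. In that case $X\circ Y \cong Y^{\otimes r}$, so the claim becomes
\[
Y^{\otimes r}\circ Z \cong (Y\circ Z)^{\otimes r},
\]
naturally and $\Lambda(r)$-equivariantly. This follows because $-\circ Z$ is strong monoidal for Day convolution, which in turn uses Axiom \eqref{eq9} (compatibility of $\beta$ with $\delta$) and Axiom \eqref{eq7} (associativity of $\delta$).

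\textbf{Main obstacle.} The hardest part is checking that this last isomorphism is $\Lambda(r)$-equivariant, and that the resulting associator satisfies the pentagon. For the pentagon, the approach is to note that all the isomorphisms are induced by the identity on representatives $[x;\underline{y};\underline{z};g]$, so coherence is automatic once well-definedness is established. The unit isomorphisms use $I = B\Lambda(-,1)$: the isomorphism $I\circ Y\cong Y$ is the Yoneda lemma combined with Axiom \eqref{eq2}, and $X\circ I\cong X$ uses Axiom \eqref{eq5}, which gives $\delta_{n;1,\ldots,1} = 1$, together with the fact that $\Lambda(1)^{\otimes n}$ is representable at $n$.

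\textbf{Part 2.} By the universal property of the coend, a map $m\colon P\circ P\to P$ corresponds to a family of maps $P(n)\times P(k_1)\times\cdots\times P(k_n)\to P(\sum k_i)$ satisfying the two equivariance axioms of \cref{Defi:lamop}. A map $I\to P$ corresponds to an element of $P(1)$, again by Yoneda. Under the associator and unitors of Part 1, which act as the identity on representatives, the monoid axioms translate exactly into operadic associativity and unitality. Maps of monoids correspond to equivariant operad maps. Hence the two categories are equivalent, indeed isomorphic.
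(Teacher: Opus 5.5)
Your proposal is correct and follows the paper's proof essentially step for step. The shared steps are: Day convolution on $B\Lambda$ via $\beta$; the $\delta$-action making $r \mapsto Y^{\star r}$ functorial on $B\Lambda$ (your formula gives a covariant, i.e.\ left, action, which is what the coend needs, even though you call it a right action); the coend formula $X \circ Y \cong \int^r X(r) \times Y^{\star r}$; the internal hom; the key isomorphism $Y^{\star r} \circ Z \cong (Y \circ Z)^{\star r}$; and coherence plus the monoid--operad correspondence obtained from universal properties and Yoneda.

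The only variation is that you extract the associator by restricting to representables $X = B\Lambda(-,r)$, whereas the paper goes through $[Y,[Z,W]] \cong [Y \circ Z, W]$ and Yoneda; this is the same argument in adjoint form. Your justification of the key isomorphism (strong monoidality of $- \circ Z$ for Day convolution via Axiom \eqref{eq9}, with $\Lambda(r)$-equivariance via Axiom \eqref{eq7}) is more precise than the paper's appeal to $X \mapsto X^{\star n}$ being a colimit preserved by $- \circ Y$.
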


While this theorem can be proven by direct calculation using the equivalence relation given above, such a proof is unenlightening. 
Furthermore, we want to consider $\Lambda$-operads in categories other than sets, so an element-wise proof might not apply. 
Instead we will develop general machinery that will apply to $\Lambda$-operads in any cocomplete symmetric monoidal category, by which we mean a category that is cocomplete, equipped with a symmetric monoidal structure, and the functors $X \otimes -, - \otimes X$ preserve colimits for every object $X$ (as is the case if the monoidal structure is closed). 
Our construction of the monoidal structure on the category of $\Lambda$-collections will require the Day convolution product \cite{day-thesis}, and we begin by proving that $B\Lambda$ has a monoidal structure.

\begin{prop}\label{prop:Gmonoidal}
The action operad structure of $\Lambda$ gives $B\Lambda$ a strict monoidal structure.
\end{prop}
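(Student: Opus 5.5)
The plan is to define the tensor product on objects as addition, $m \oplus n = m+n$, with unit object $0$, and on morphisms via block sum,
\[
g \oplus h = \beta(g,h) = \mu(e_2; g, h) \in \Lambda(m+n) \quad \text{for } g \in \Lambda(m),\ h \in \Lambda(n).
\]
Since $B\Lambda$ has only endomorphisms, this is the only sensible choice, and we just need to verify that it gives a strict monoidal category. All the required facts should already be contained in \cref{thm:charAOp} and \cref{lem:calclem,lem:e0-unit}.

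Functoriality of $\oplus$ means two things. First, $\beta(g'g, h'h) = \beta(g',h')\beta(g,h)$, which holds because $\beta$ is a group homomorphism by \cref{thm:charAOp}; alternatively it is the action operad axiom \cref{eqn:ao_axiom} with $g = g' = e_2$. Second, $\beta(e_m, e_n) = e_{m+n}$, which is the second part of \cref{lem:calclem}.

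Strict associativity on objects is trivial. On morphisms we must show $\beta(\beta(f,g),h) = \beta(f,\beta(g,h))$. Both sides should equal $\beta(f,g,h) = \mu(e_3; f,g,h)$ by Axiom \eqref{eq3} of \cref{thm:charAOp}, which requires using $\beta(h) = h$ from Axiom \eqref{eq2}. Written out via $\mu$, this is operad associativity together with $\mu(e_2; e_2, e_1) = e_3 = \mu(e_2; e_1, e_2)$ from \cref{lem:calclem}. Strict unitality on objects is $0+n = n = n+0$. On morphisms, the identity of the unit object $0$ is $e_0$, so we need $\beta(e_0, h) = h = \beta(h, e_0)$; this is exactly \cref{lem:e0-unit} in the case $n=2$, i.e. \cref{eq:n=2-e0} and its mirror.

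I expect no real obstacle; the only step needing care is the unit law on morphisms, where the identity arrow of $0$ is $e_0$ rather than an arbitrary element of $\Lambda(0)$. That is precisely why \cref{lem:e0-unit} was proved. I also note that we only need $\beta$ here, not $\delta$, since the symmetry-like structure is not being claimed.
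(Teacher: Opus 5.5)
Your proposal is correct and matches the paper's proof: addition on objects with unit $0$, $\beta$ on morphisms, functoriality from $\beta$ being a homomorphism by \cref{thm:charAOp}, associativity from Axiom \eqref{eq3}, and unitality on morphisms from \cref{lem:e0-unit}. You add a few details the paper leaves implicit, namely the use of Axiom \eqref{eq2} to rewrite $\beta(h)=h$ and the explicit check $\beta(e_m,e_n)=e_{m+n}$, but the argument is the same.
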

\begin{proof}
The tensor product on $B\Lambda$ is given by addition on objects, with unit object 0; we denote tensor product by $+$. 
On morphisms, $+$ must be given by a group homomorphism
  \[
    + \colon \Lambda(n) \times \Lambda(m) \rightarrow \Lambda(n+m),
  \]
 and is defined by the formula
  \[
    +(g,h) = \beta(g,h).
  \]
By \cref{thm:charAOp}, $\beta$ is a homomorphism as desired, and we now write $+(g,h)$ as $g+h$.

Addition of objects is strictly associative and unital.
Strict associativity at the level of morphisms follows from Axiom \eqref{eq3}, and strict unitality at the level of morphisms follows from Axiom \eqref{eq3} and \cref{lem:e0-unit}.
Thus $B\Lambda$ is a strict monoidal category as desired, completing the proof.
\end{proof}

Now that $B\Lambda$ has a monoidal structure, there is also a monoidal structure on the category of $B\Lambda$-collections using Day convolution, denoted $\star$. 

\begin{Defi}[(Day convolution, \cite{day-thesis})]\label{Defi:day-conv}
Given collections $X, Y$, their \emph{convolution product} $X \star Y$ is given by the coend formula
  \[
    X \star Y (k) = \int^{m,n \in B\Lambda} X(m) \times Y(n) \times B\Lambda(k, m+n).
  \]
  \end{Defi}
  
\begin{rem}\label{rem:DC-simple}
Given that $  B\Lambda(k, m+n)$ is empty unless $k=m+n$, the coend in \cref{Defi:day-conv} can be rewritten as
  \[
    X \star Y (k) = \int^{m+n=k} X(m) \times Y(n) \times \Lambda(k).
  \]
In this formulation, $\Lambda(m) \times \Lambda(n)$ acts on $X(m) \times Y(n)$ by the product of their separate actions, and acts on $\Lambda(k)$ by $(g, h) \cdot t = \beta(g,h) t$.
\end{rem}
  
\begin{rem}[($n$-fold Day convolution)]\label{rem:nfold-DC}
The $n$-fold Day convolution product of a $\Lambda$-collection $Y$ with itself is given by the following coend formula.
  \[
    Y^{\star n}(k) = \int^{k_{1}+ \cdots + k_{n}=k } Y(k_{1}) \times \cdots \times Y(k_{n}) \times \Lambda(k)
  \]
  \end{rem}
  
Computations with Day convolution will necessarily involve heavy use of the calculus of coends, and we refer the unfamiliar reader to \cite{maclane-catwork} or \cite{loregian}. 
Our goal is to express the substitution tensor product as a coend just as in \cite{kelly-op}, and to do that we need one final result about the Day convolution product.

\begin{lem}\label{lem:calclem2}
Let $\Lambda$ be an action operad, $Y$ be a $\Lambda$-collection, and $k$ be a fixed natural number. Then the assignment
  \[
    n \mapsto Y^{\star n}(k)
  \]
can be given the structure of a functor $B\Lambda \rightarrow \mb{Sets}$.
\end{lem}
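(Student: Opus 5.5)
We need to define, for each $g \in \Lambda(n)$, a function $Y^{\star n}(k) \to Y^{\star n}(k)$ and check functoriality. Since $B\Lambda$ has only endomorphisms, a functor $B\Lambda \to \mb{Sets}$ is the same as a family of left $\Lambda(n)$-actions on the sets $Y^{\star n}(k)$. Using the explicit description of \cref{rem:nfold-DC}, elements of $Y^{\star n}(k)$ are classes $[y_1, \ldots, y_n; t]$ with $y_i \in Y(k_i)$, $\sum k_i = k$, and $t \in \Lambda(k)$. They are subject to the relation
\[
[y_1 g_1, \ldots, y_n g_n; t] = [y_1, \ldots, y_n; \beta(g_1, \ldots, g_n) t],
\]
which is the $n$-fold analogue of \cref{rem:DC-simple}. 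I will define the action of $g \in \Lambda(n)$ by permuting the tensor factors and compensating with a duplication:
\[
g \cdot [y_1, \ldots, y_n; t] = \big[ y_{g^{-1}(1)}, \ldots, y_{g^{-1}(n)}; \delta_{n; k_1, \ldots, k_n}(g)\, t \big].
\]
This mirrors the first relation in \cref{Defi:sub-prod}, which is exactly what will later be needed to write $X \circ Y(k)$ as the coend $\int^n X(n) \times Y^{\star n}(k)$. The target arities are indeed $k_{g^{-1}(1)}, \ldots, k_{g^{-1}(n)}$. Axiom \eqref{eq4} ensures that $\delta_{n;k_1,\ldots,k_n}(g)$ has the permutation matching this reordering, though that is not strictly needed for well-definedness.

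\textbf{Step 1: well-definedness.} I must check that the formula respects the generating relation. Applying $g$ to the left side gives
\[
\big[ y_{g^{-1}(i)} g_{g^{-1}(i)} ; \delta_{n;k_1,\ldots,k_n}(g) t \big] = \big[ y_{g^{-1}(i)} ; \beta(g_{g^{-1}(1)}, \ldots, g_{g^{-1}(n)})\, \delta_{n;k_1,\ldots,k_n}(g)\, t \big].
\]
Applying $g$ to the right side gives $\big[ y_{g^{-1}(i)}; \delta_{n;k_1,\ldots,k_n}(g)\, \beta(g_1,\ldots,g_n)\, t\big]$. These agree by Axiom \eqref{eq8} of \cref{thm:charAOp}, with $h_i = g_i$. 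This bookkeeping is the step I expect to require the most care, namely keeping the indices $k_i$ versus $k_{g^{-1}(i)}$ straight.

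\textbf{Step 2: functoriality.} For $g, h \in \Lambda(n)$ I need $g \cdot (h \cdot z) = (gh) \cdot z$. Computing $g \cdot (h \cdot [y_i; t])$ first reorders by $h$ with $\delta_{n;k_1,\ldots,k_n}(h)$, then reorders by $g$ using the arities $j_i = k_{h^{-1}(i)}$. The result is
\[
\big[ y_{(gh)^{-1}(i)}; \delta_{n; k_{h^{-1}(1)}, \ldots, k_{h^{-1}(n)}}(g)\, \delta_{n;k_1,\ldots,k_n}(h)\, t \big].
\]
By Axiom \eqref{eq6} this equals $\big[ y_{(gh)^{-1}(i)}; \delta_{n;k_1,\ldots,k_n}(gh)\, t\big]$, as required. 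Since $B\Lambda(n,n) = \Lambda(n)$ composes by group multiplication, this is covariant functoriality. I will need to double-check that this is the correct variance under the paper's composition convention, and if not, use $g^{-1}$ instead. The identity $e_n$ acts trivially by the second part of Axiom \eqref{eq5}, which gives $\delta(e_n) = e_k$.

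\textbf{Step 3: compatibility with the right action.} Finally, I will note that this left action commutes with the right $\Lambda(k)$-action $[y; t] \cdot s = [y; ts]$. This is immediate because the new element acts by left multiplication on $t$. So $n \mapsto Y^{\star n}(k)$ is in fact functorial in $k$ as well, which is useful for the coend in \cref{thm:operad=monoid}.
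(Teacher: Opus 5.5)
Your proposal is correct and is essentially the paper's proof: the paper defines the same maps $f[k_1,\ldots,k_n](y_1,\ldots,y_n;g) = (y_{f^{-1}(1)},\ldots,y_{f^{-1}(n)};\delta_{n;k_1,\ldots,k_n}(f)g)$. It checks compatibility with the coend relation via Axiom \eqref{eq8}, in exactly the form $\delta_{n;k_1,\ldots,k_n}(f)\beta(\underline{g_i}) = \beta(\underline{g_{f^{-1}(i)}})\delta_{n;k_1,\ldots,k_n}(f)$ that you use, and derives $(f_2\bullet -)\circ(f_1\bullet -) = (f_2f_1)\bullet -$ from Axiom \eqref{eq6}. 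The only difference is cosmetic: the paper phrases well-definedness through the universal property of the coend rather than via explicit equivalence classes, and your variance is the correct one (a left action, i.e.\ a covariant functor), so no switch to $g^{-1}$ is needed.
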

\begin{proof}
Since the convolution product is given by a coend, it is the universal object with maps
  \[
    \theta_{k_1, \ldots, k_n; k} \colon Y(k_{1}) \times \cdots \times Y(k_{n}) \times \Lambda(k) \rightarrow Y^{\star n}(k),
  \]
for $k= k_1 + \cdots + k_n$, such that the following diagram commutes for every $g_{1} \in \Lambda(k_{1}), \ldots, g_{n} \in \Lambda(k_{n})$.
  \[
    \xy
      (0,0)*+{Y(k_1) \times \cdots \times Y(k_n)  \times \Lambda(k)}="a";
      (70,0)*+{Y(k_1) \times \cdots \times Y(k_n) \times \Lambda(k)}="b";
      (70,-20)*+{Y^{\star n}(k)}="c";
      (0,-20)*+{Y(k_1) \times \cdots \times Y(k_n)  \times \Lambda(k)}="d";
      {\ar^{(-\cdot g_1, \ldots, -\cdot g_n) \times 1} "a" ; "b"};
      {\ar_{1 \times \big( (g_1 + \cdots + g_n)\cdot -\big)} "a" ; "d"};
      {\ar^{\theta_{k_1, \ldots, k_n; k}} "b" ; "c"};
      {\ar_{\theta_{k_1, \ldots, k_n; k}} "d" ; "c"};
    \endxy
  \]

Let $f \in \Lambda(n)$, considered as a morphism $n \rightarrow n$ in $B\Lambda$. We induce a map 
\[
f \bullet - \colon Y^{\star n}(k) \rightarrow Y^{\star n}(k)
\]
using the universal property of the coend. For each $k$ and $k_1, \ldots, k_n$ such that $k = k_1 + \cdots + k_n$, define
\[
f[k_1, \ldots, k_n] \colon Y(k_1) \times \cdots \times Y(k_n) \times \Lambda(k) \to  Y(k_{f^{-1}(1)}) \times \cdots \times Y(k_{f^{-1}(n)}) \times \Lambda(k)
\]
by
\[
f[k_1, \ldots, k_n](y_1, \ldots, y_n; g) = \big( y_{f^{-1}(1)}, \ldots, y_{f^{-1}(n)}; \delta_{n; k_1, \ldots, k_n}(f) g\big).
\]
We now check that the following diagram commutes, where underlined elements represent lists of the indicated elements indexed from $1$ to $n$, e.g.,
  \[
      \beta(\underline{g_i}) = \beta(g_1,\cdots,g_n) = g_1 + \cdots + g_n.
  \]
  \[
    \xy
      (0,0)*+{\prod_{i=1}^n Y(k_i)  \times \Lambda(k)}="a";
      (50,0)*+{\prod_{i=1}^n Y(k_i) \times \Lambda(k)}="b";
      (80,-20)*+{\prod_{i=1}^n Y(k_{f^{-1}(i)}) \times \Lambda(k)}="c";
      (80,-50)*+{Y^{\star n}(k)}="d";
      (0,-30)*+{\prod_{i=1}^n Y(k_i)  \times \Lambda(k)}="e";
      (30,-50)*+{\prod_{i=1}^n Y(k_{f^{-1}(i)}) \times \Lambda(k)}="f";
      (30,-20)*+{\prod_{i=1}^n Y(k_{f^{-1}(i)}) \times \Lambda(k)}="g";
      (40,-10)*+{\raisebox{.5pt}{\textcircled{\raisebox{-.9pt} {1}}}}="1";
      (15,-25)*+{\raisebox{.5pt}{\textcircled{\raisebox{-.9pt} {2}}}}="2";
      (60,-35)*+{\raisebox{.5pt}{\textcircled{\raisebox{-.9pt} {3}}}}="3";
      %
      {\ar^{\left(\underline{-\cdot g_i}\right) \times 1} "a" ; "b"};
      {\ar^{f[k_1, \ldots, k_n]} "b" ; "c"};
      {\ar^{\theta_{\underline{k_{f^{-1}(i)}}; k}} "c" ; "d"};
      {\ar_{1 \times \left( \beta\left(\underline{g_i}\right)\cdot -\right)} "a" ; "e"};
      {\ar_{f[k_1, \ldots, k_n]} "e" ; "f"};
      {\ar_{\theta_{\underline{k_{f^{-1}(i)}}; k}} "f" ; "d"};
      {\ar^{f[k_1, \ldots, k_n]} "a" ; "g"};
      {\ar^{1 \times \left(\beta\left(\underline{g_{f^{-1}(i)}}\right) \cdot - \right)} "g" ; "f"};
      {\ar_{\left(\underline{- \cdot g_{f^{-1}(i)}} \right) \times 1} "g" ; "c"};
    \endxy
  \]
Square $\raisebox{.5pt}{\textcircled{\raisebox{-.9pt} {1}}}$ commutes by naturality of symmetries, square $\raisebox{.5pt}{\textcircled{\raisebox{-.9pt} {2}}}$ commutes by Axiom \eqref{eq8} from \cref{thm:charAOp}, and square $\raisebox{.5pt}{\textcircled{\raisebox{-.9pt} {3}}}$ commutes by the definition of the coend. 
Therefore by the universality property, there is a unique map $f \bullet - \colon Y^{\star n}(k) \rightarrow Y^{\star n}(k)$ such that
\begin{equation}\label{eq:fbullet}
\theta_{k_{f^{-1}(1)}, \ldots, k_{f^{-1}(n)}; k} \circ f[k_1, \ldots, k_n] = (f \bullet -) \circ \theta_{k_1, \ldots, k_n; k}
\end{equation}
for all $k$ and $k_1, \ldots, k_n$ such that $k = k_1 + \cdots + k_n$.
Given $f_1, f_2 \in \Lambda(n)$, we have
\begin{align*}
(f_2 \bullet -) \circ (f_1 \bullet -) \circ \theta_{\underline{k_i};k} &= (f_2 \bullet -) \circ \theta_{\underline{k_{f_1^{-1}(i)}};k} \circ f_1\left[\underline{k_i}\right] \\
&= \theta_{\underline{k_{f_1^{-1}(f_2^{-1}(i))}};k} \circ f_2\left[k_{f_1^{-1}(1)}, \ldots, k_{f_1^{-1}(n)}\right] \circ f_1\left[\underline{k_i}\right] \\
&= \theta_{\underline{k_{(f_2f_1)^{-1}(i)}};k} \circ (f_2f_1)\left[\underline{k_i}\right] \\
&= ((f_2f_1) \bullet -) \circ \theta_{\underline{k_i};k}
\end{align*}
by \cref{eq:fbullet} twice, the left action of $\Sigma_n$ on $n$-tuples as in \cref{rem:Sn-tuples}, and Axiom \eqref{eq6} from \cref{thm:charAOp}. By the universal property of the coend, we conclude that $(f_2 \bullet -) \circ (f_1 \bullet -) = \big( (f_2f_1) \bullet -\big)$, verifying functoriality and completing the proof.
\end{proof}

\begin{rem}[(Yoneda via coends)]\label{rem:yoneda-coend}
We make heavy use of the following consequence of the Yoneda lemma: given any functor $F \colon B\Lambda \rightarrow \mb{Sets}$ and a fixed object $a \in B\Lambda$, there is a natural isomorphism
  \[
    \int^{n \in B\Lambda} B\Lambda(n,a) \times F(n) \cong F(a)
  \]
given by sending the pair $(g, x)$, for $g \in B\Lambda(n,a)$ and $x \in F(n)$, to $F(g)(x)$.
There is a corresponding result for $F \colon B\Lambda^{\textrm{op}} \rightarrow \mb{Sets}$, using representables of the form $B\Lambda(a,n)$ instead.
\end{rem}

We are now ready for the abstract description of the substitution tensor product.

\begin{lem}\label{lem:star-of-circle}
  Let $X, Y$ be $\Lambda$-collections. Then there is a natural isomorphism
  \[
  X \circ Y \cong  \int^{n} X(n) \times Y^{\star n},
  \]
  induced by the colimit structures.
\end{lem}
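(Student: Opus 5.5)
The plan is to compute the right-hand side by expanding the coends and then compare it with the explicit quotient in \cref{Defi:sub-prod}. Here $n \mapsto Y^{\star n}(k)$ is the functor $B\Lambda \to \mb{Sets}$ of \cref{lem:calclem2}, and $X$ is contravariant on $B\Lambda$, so $\int^n X(n) \times Y^{\star n}(k)$ makes sense levelwise in $k$. By \cref{rem:nfold-DC}, each $Y^{\star n}(k)$ is itself a coend
\[
\int^{k_1 + \cdots + k_n = k} Y(k_1) \times \cdots \times Y(k_n) \times \Lambda(k).
\]
Since $X(n) \times -$ preserves colimits in $\mb{Sets}$, and coends commute with coends (Fubini), the right-hand side at level $k$ is a single quotient of
\[
\coprod_{n}\ \coprod_{k_1+\cdots+k_n = k} X(n) \times Y(k_1) \times \cdots \times Y(k_n) \times \Lambda(k).
\]
This is exactly the set underlying $X \circ Y(k)$ before quotienting.

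Next I identify the relations. The inner coend from Day convolution imposes
\[
(x; y_1 g_1, \ldots, y_n g_n; g) \sim (x; y_1, \ldots, y_n; \beta(g_1, \ldots, g_n) g).
\]
This is relation (2) of \cref{Defi:sub-prod}, using the description of the action in \cref{rem:DC-simple} and the diagram defining $\theta$ in the proof of \cref{lem:calclem2}. The outer coend over $n \in B\Lambda$ imposes $(x h, z) \sim (x, h \bullet z)$ for $h \in \Lambda(n)$ and $z \in Y^{\star n}(k)$. By \cref{eq:fbullet}, on a representative $z = \theta(y_1, \ldots, y_n; g)$ we have
\[
h \bullet z = \theta\big(y_{h^{-1}(1)}, \ldots, y_{h^{-1}(n)}; \delta_{n;k_1,\ldots,k_n}(h) g\big).
\]
So the outer relation is precisely relation (1) of \cref{Defi:sub-prod}. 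Consequently both sides are quotients of the same coproduct by the equivalence relation generated by the same two families of relations. The identity on representatives therefore induces a bijection $X \circ Y(k) \cong \int^n X(n) \times Y^{\star n}(k)$.

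It remains to check that this bijection is compatible with the right $\Lambda(k)$-actions and is natural in $X$ and $Y$. On both sides, $\Lambda(k)$ acts by right multiplication in the last coordinate: on the coend side this is the presheaf structure on $Y^{\star n}$ coming from $B\Lambda(k, m+n)$, as in \cref{rem:DC-simple}. Naturality holds because maps of collections act coordinatewise on representatives.

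The one step needing care is the commutation of the outer coend with the inner coends. In the outer relation, $h \bullet -$ is defined only on the quotient $Y^{\star n}(k)$, so it is not immediate that the combined quotient is generated by relations (1) and (2) applied to raw representatives. To handle this, I will use that every element of $Y^{\star n}(k)$ is of the form $\theta(\ldots)$, together with the formula for $h\bullet -$ on representatives from \cref{eq:fbullet}. An equivalence relation on the big coproduct generated by lifts of both families then has exactly the same quotient as the iterated quotient. This is the standard argument that a quotient of a quotient is a quotient by the combined relation. I will present it abstractly as an interchange of colimits, so that it applies in any cocomplete symmetric monoidal category, as the section intends.
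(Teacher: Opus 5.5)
Your proposal is correct and follows essentially the same route as the paper: expand $Y^{\star n}(k)$ via the $n$-fold Day convolution coend, move $X(n)\times -$ inside using that it preserves colimits, combine the coends into a single one, and recognize the result as the coequalizer defining $X \circ Y(k)$. You also match the outer coend relation explicitly with relation (1) via \cref{eq:fbullet}, and you give a quotient-of-a-quotient justification for combining the coends; plain Fubini does not literally apply there, since the inner index category varies with $n$. Both of these fill in details the paper leaves implicit.
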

\begin{proof}
The coend $\int^{n} X(n) \times Y^{\star n}(k)$ can be expanded as follows, using \cref{rem:nfold-DC}, the fact that $A \times -$ preserves colimits for any $A$, and the Fubini theorem for coends \cite[Theorem 1.3.1]{loregian}.
\begin{align*}
\int^{n} X(n) \times Y^{\star n}(k) & \cong \int^{n} X(n) \times \left( \int^{k_{1}+ \cdots + k_{n}=k } Y(k_{1}) \times \cdots \times Y(k_{n}) \times \Lambda(k) \right) \\
& \cong \int^{n, k_{1}+ \cdots + k_{n}=k} X(n) \times Y(k_{1}) \times \cdots \times Y(k_{n}) \times \Lambda(k).
\end{align*}
This final coend, when written out as a coequalizer, gives the formula in \cref{Defi:sub-prod}.
The two isomorphisms above are natural in both variables by the universal property of the colimits involved.
\end{proof}

\begin{cor}\label{cor:circ-adj}
Let $Y$ be a $\Lambda$-collection.
\begin{enumerate}
\item The functor $- \circ Y \colon \Lambda\mb{\mbox{-}Coll} \to \Lambda\mb{\mbox{-}Coll}$ has a right adjoint $[Y, -]$.
\item For any other $\Lambda$-collection $X$, there is a natural isomorphism
\[
X^{\star n} \circ Y \cong (X \circ Y)^{\star n},
\]
induced by the colimit structures.
\end{enumerate}
\end{cor}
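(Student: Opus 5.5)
For the first part, I will rely on \cref{lem:star-of-circle}. It says that $X \circ Y (k) \cong \int^{n} X(n) \times Y^{\star n}(k)$, where $Y^{\star n}(k)$ is viewed as a functor of $n$ on $B\Lambda$ by \cref{lem:calclem2}. Since $\Lambda\mb{\mbox{-}Coll}$ is a presheaf category, it is cartesian closed and cocomplete. The functor $X \mapsto \int^n X(n) \times Y^{\star n}$ is built from products with a fixed object and colimits, so it preserves all colimits. Because presheaf categories are locally presentable, the special adjoint functor theorem would already give a right adjoint. I would nonetheless write the adjoint down explicitly:
\[
[Y, Z](n) = \int_{k} \mb{Sets}\big( Y^{\star n}(k), Z(k) \big) = \Lambda\mb{\mbox{-}Coll}\big( Y^{\star n}, Z \big).
\]
Its right $\Lambda(n)$-action comes from precomposition with the left action $f \bullet -$ of \cref{lem:calclem2}; this is contravariant in $f$, hence a right action, so $[Y,Z]$ is a presheaf on $B\Lambda$. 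The bijection
\[
\Lambda\mb{\mbox{-}Coll}\Big(\int^n X(n) \times Y^{\star n}, Z\Big) \cong \int_n \mb{Sets}\big(X(n), \Lambda\mb{\mbox{-}Coll}(Y^{\star n}, Z)\big) \cong \Lambda\mb{\mbox{-}Coll}(X, [Y,Z])
\]
then follows from the standard coend calculus: hom out of a coend is an end, followed by currying. Naturality is automatic from these universal properties.

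For the second part, I would first compute $X^{\star n} \circ Y$ using \cref{lem:star-of-circle}, \cref{rem:nfold-DC}, and the fact that $-\circ Y$ preserves colimits by part (1):
\[
X^{\star n} \circ Y \cong \int^{m_1,\dots,m_n} X(m_1)\times\cdots\times X(m_n) \times \Big(\int^{p} B\Lambda(p, m_1+\cdots+m_n) \times Y^{\star p}\Big).
\]
The inner coend collapses by Yoneda (\cref{rem:yoneda-coend}) to $Y^{\star(m_1+\cdots+m_n)}$. Since Day convolution is strictly associative up to coherent isomorphism on the strict monoidal $B\Lambda$, this is $Y^{\star m_1} \star \cdots \star Y^{\star m_n}$. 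On the other side, Day convolution is cocontinuous in each variable, so
\[
(X\circ Y)^{\star n} \cong \Big(\int^{m} X(m) \times Y^{\star m}\Big)^{\star n} \cong \int^{m_1,\dots,m_n} \prod_i X(m_i) \times \big(Y^{\star m_1}\star\cdots\star Y^{\star m_n}\big).
\]
The two expressions then agree by Fubini.

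The main obstacle is the identification $Y^{\star(m_1+\cdots+m_n)} \cong Y^{\star m_1}\star\cdots\star Y^{\star m_n}$ as functors of $(m_1,\dots,m_n) \in B\Lambda^{n}$. The coend over $m_1,\ldots,m_n$ on the left uses the $\Lambda(m_1)\times\cdots\times\Lambda(m_n)$-action through $\beta$, namely the action on $Y^{\star\sum m_i}$ by $f_1+\cdots+f_n$ via \cref{lem:calclem2}. On the right, the action is the separate $\bullet$-actions on each factor. I would check that these match by evaluating on the generators $\theta$ from \cref{lem:calclem2}. The key input there is Axiom \eqref{eq9} of \cref{thm:charAOp}, which says $\delta(\beta(\underline{f_i})) = \beta(\underline{\delta(f_i)})$, together with the definition of the $f[k_1,\ldots,k_n]$. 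All the remaining steps are formal coend manipulations.
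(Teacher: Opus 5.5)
Your proposal is correct and follows the paper's route. Part (1) uses the same explicit internal hom $[Y,Z](n)=\lcoll(Y^{\star n},Z)$, with the action by precomposition along \cref{lem:calclem2} and the same hom-out-of-a-coend/currying chain. Part (2) rests on the paper's observation that $-\circ Y$ preserves the colimit presenting $X^{\star n}$; the paper gives this in one line, while you correctly isolate its only non-formal step, the equivariant identification $Y^{\star m_1}\star\cdots\star Y^{\star m_n}\cong Y^{\star(m_1+\cdots+m_n)}$. On the generators $\theta$, that identification reduces to Axiom \eqref{eq9} of \cref{thm:charAOp}, with Axiom \eqref{eq1} handling the reindexing of the $y$'s.
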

\begin{proof}
We define the $\Lambda$-collection $[Y, Z]$ by
\[
[Y,Z](k) = \Lambda\mb{\mbox{-}Coll}(Y^{\star k}, Z)
\]
on objects and \cref{lem:calclem2} on morphisms via precomposition. 
Then
\begin{align*}
\Lambda\mb{\mbox{-}Coll}\left( X \circ Y, Z \right) & \cong \Lambda\mb{\mbox{-}Coll}\left( \int^{n} X(n) \times Y^{\star n}, Z \right) \\
& \cong \int_{n} \Lambda\mb{\mbox{-}Coll}\big(  X(n) \times Y^{\star n}, Z \big) \\
& \cong \int_{n} \mb{Sets}\big(  X(n), \Lambda\mb{\mbox{-}Coll}(Y^{\star n}, Z) \big) \\
& \cong \Lambda\mb{\mbox{-}Coll}\big( X, [Y,Z] \big)
\end{align*}
by \cref{lem:star-of-circle}, the representable functor $\Lambda\mb{\mbox{-}Coll}\left( -, Z \right)$ mapping coends to ends, the copowering of collections over sets, and the identification of the set of natural transformations as an end. 
Each of these isomorphisms is visibly natural in all three variables, so  $[Y, -]$ is right adjoint to $- \circ Y$, completing the proof of the first claim.

The second claim follows immediately from the first, as $X \mapsto X^{\star n}$ is a colimit, hence preserved by $- \circ Y$. 
\end{proof}

\begin{lem}\label{lem:Istarn(k)}
Let $I$ be the $\Lambda$-collection defined by
\[
I(k) = \left\{ \begin{array}{lr}
\emptyset & k \neq 1, \\
\Lambda(1) & k=1.
\end{array} \right.
\]
Then $I^{\star n} (k)$ is empty unless $k=n$, and then is isomorphic to $\Lambda(n)$.
\end{lem}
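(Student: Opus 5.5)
We compute directly from the coend formula for the $n$-fold Day convolution in \cref{rem:nfold-DC}:
\[
I^{\star n}(k) = \int^{k_1 + \cdots + k_n = k} I(k_1) \times \cdots \times I(k_n) \times \Lambda(k).
\]
Writing this coend as a coequalizer, the underlying coproduct ranges over tuples $(k_1, \ldots, k_n)$ with $\sum k_i = k$. The summand indexed by $(k_1, \ldots, k_n)$ is empty unless every $k_i = 1$, since $I(k_i) = \emptyset$ otherwise.

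There are two cases.

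\emph{Case $k \neq n$.} No tuple of ones sums to $k$, so every summand is empty. The coequalizer is then empty, and $I^{\star n}(k) = \emptyset$.

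\emph{Case $k = n$.} Exactly one summand survives, namely $\Lambda(1)^n \times \Lambda(n)$, with the relation
\[
(g_1 \cdot h_1, \ldots, g_n \cdot h_n; t) \sim (g_1, \ldots, g_n; \beta(h_1, \ldots, h_n)\, t)
\]
for $h_i \in \Lambda(1)$. This is the quotient $\Lambda(1)^n \otimes_{\Lambda(1)^n} \Lambda(n)$, where $\Lambda(1)^n$ acts on $\Lambda(n)$ on the left via the homomorphism $\beta$.

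Since $\Lambda(1)^n$ is the free right $\Lambda(1)^n$-set on one generator, this quotient is isomorphic to $\Lambda(n)$. Explicitly, the isomorphism is
\[
[g_1, \ldots, g_n; t] \mapsto \beta(g_1, \ldots, g_n)\, t,
\]
with inverse $t \mapsto [e_1, \ldots, e_1; t]$. The only checks needed are the following.
\begin{itemize}
\item The forward map respects the relation. This holds because $\beta$ is a group homomorphism (\cref{thm:charAOp}).
\item The inverse is surjective onto equivalence classes. This uses $[g_1, \ldots, g_n; t] = [e_1, \ldots, e_1; \beta(\underline{g})\, t]$, which is an instance of the relation.
\item The composite starting from $\Lambda(n)$ is the identity. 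This uses $\beta(e_1, \ldots, e_1) = e_n$, which follows from \cref{lem:calclem}.
\end{itemize}

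Finally, the isomorphism is compatible with the right $\Lambda(n)$-action, since both sides act by right multiplication on the last coordinate. Thus $I^{\star n}(n) \cong \Lambda(n)$ as a right $\Lambda(n)$-set.

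The only point needing a moment's care is that the coend over $B\Lambda$ identifies only summands with the same tuple $(k_1, \ldots, k_n)$. This holds because $B\Lambda$ has no morphisms between distinct objects, so no further identifications arise.
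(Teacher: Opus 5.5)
Your proposal is correct and follows the paper's proof. Both reduce the coend to the single all-ones summand and identify the resulting coequalizer $\Lambda(1)^n \otimes_{\Lambda(1)^n} \Lambda(n)$ with $\Lambda(n)$ via $[g_1, \ldots, g_n; t] \mapsto \beta(g_1, \ldots, g_n)\, t$. You also spell out the well-definedness, inverse, and $\Lambda(n)$-equivariance checks that the paper leaves implicit.
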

\begin{proof}
By definition, we have
 \[
    I^{\star n}(k) = \int^{k_{1}+ \cdots + k_{n}=k } I(k_{1}) \times \cdots \times I(k_{n}) \times \Lambda(k).
  \]
The only non-empty terms appear when $k_1 = k_2 = \cdots = k_n = 1$, from which we derive $k=n$. The coend is therefore the coequalizer $\coeqb{\Lambda(1)^n}{\Lambda(n)}{\Lambda(1)^n}$, where $\Lambda(1)^n$ acts on itself by right multiplication and on $\Lambda(n)$ by
\[
(g_1, \ldots, g_n) \cdot h = \beta(g_1, \ldots, g_n)h.
\]
This coequalizer is again $\Lambda(n)$, via the map above, completing the proof.
\end{proof}

Finally we are in a position to prove \cref{thm:operad=monoid}. 

\begin{proof}[Proof of \cref{thm:operad=monoid}]
First we must show that $\Lambda\mbox{-}\mb{Coll}$ has a monoidal structure using $\circ$. To prove this, we must give the unit and associativity isomorphisms and then check the monoidal category axioms. Define the unit object to be $I = B\Lambda(-,1)$. Then for the left unit isomorphism, we find that
  \begin{align*}
    I \circ Y (k) &= \int^{n} B\Lambda(n,1) \times Y^{\star n}(k) \\
    &\cong Y^{\star 1}(k) \\
    &\cong Y(k),
  \end{align*}
where both isomorphisms are induced by the universal property of the coend. For the right unit isomorphism, we have that
  \begin{align*}
    X \circ I (k) &= \int^{n} X(n) \times I^{\star n}(k) \\
    &\cong X(k)
  \end{align*}
by \cref{lem:Istarn(k)}.

Next we turn to constructing the associativity isomorphisms.
We first compute that
\begin{align*}
\big[ Y, [Z,W] \big](k) & = \lcoll \big( Y^{\star k}, [Z, W] \big) \\
& \cong \lcoll ( Y^{\star k} \circ Z, W) \\
& \cong \lcoll \big( (Y \circ Z)^{\star k}, W \big) \\
& = \lcoll \big[ Y \circ Z, W \big] (k)
\end{align*}
by the definition of the internal hom from the first part of \cref{cor:circ-adj} and the preservation of colimits from the second part of \cref{cor:circ-adj}.
These isomorphisms are compatible with the right $\Lambda(k)$-actions, so constitute an isomorphism that we denote
\[
\overline{a} \colon \big[ Y, [Z,W] \big] \cong \big[ Y \circ Z, W \big].
\]
The associativity isomorphism is defined to be the one induced, by Yoneda, from the composite below, in which each unmarked isomorphism is obtained from an adjunction of the form $- \circ A \dashv [A, -]$.
\begin{align*}
\lcoll \big( (X \circ Y) \circ Z, W \big) & \cong \lcoll \big( X \circ Y, [Z, W] \big) \\
& \cong \lcoll \big( X, \big[ Y, [Z,W] \big] \big) \\
& \stackrel{\overline{a}}{\cong} \lcoll \big( X, \big[ Y \circ Z, W \big] \big) \\
& \cong \lcoll \big(X \circ (Y \circ Z), W \big)
\end{align*}

In order to finish the proof that $(\lcoll, \circ, I)$, with the unit and associativity isomorphisms above, is a monoidal category, we must check two axioms.
These axioms follow immediately from the fact that the unit and associativity isomorphisms were all induced by the universal property of the colimit constructing their domains.

Now we must show that monoids in $(\lcoll, \circ, I)$ are operads. By the Yoneda lemma, a map of $\Lambda$-collections $\eta \colon I \rightarrow X$ corresponds to an element $\id \in X(1)$ since $I = B\Lambda(-,1)$. A map $\mu \colon X \circ X \rightarrow X$ is given by, for each $k$, a $\Lambda(k)$-equivariant map $(X \circ X) (k) \rightarrow X(k)$. By the universal property of the coend, this is equivalent to giving maps
  \[
  \mu_{n; k_1, \ldots, k_n;k} \colon X(n) \times X(k_{1}) \times \cdots \times X(k_{n}) \times \Lambda(k) \to X(k)
  \]
that are compatible with the following group actions as specified.
\begin{itemize}
\item $\Lambda(n)$ acts on $X(n)$ on the right by the given action, and on $X(k_1) \times \cdots \times X(k_n) \times \Lambda(k)$ on the left by permutations and $\delta$. The map $\mu$ must coequalize these.
\item The group $\Lambda(k_i)$ acts on the factor $X(k_i)$ on the right by the given action, and on the left of $\Lambda(k)$ by group multiplication and $\beta$. The map $\mu$ must coequalize these.
\item $\Lambda(k)$ acts on the right of $\Lambda(k)$ by group multiplication, and on $X(k)$ on the right by the given action. The map $\mu$ must preserve this action.
\end{itemize}
Given such a monoid structure, we define the operadic multiplication on the $\Lambda$-collection $X$ by
\[
    \mu (x; y_{1}, \ldots, y_{n}) = \mu_{n; k_1, \ldots, k_n; k}(x; y_{1}, \ldots, y_{n}; e_{k}).
  \]
Conversely, given an operad $P$, we make the underlying $\Lambda$-collection into a monoid under $\circ$ by defining
\[
 \mu_{n; k_1, \ldots, k_n; k}(x; y_{1}, \ldots, y_{n}; g) =  \mu (x; y_{1}, \ldots, y_{n}) \cdot g.
\]
We leave checking the remaining details to the reader.
\end{proof}

%% file: v2p3.tex
\artpart{Operads in Categories}\label{part:op-in-cat}

\section{Background: Group Actions and 2-limits}\label{sec:backgr-cat}

We assume familiarity with basic $2$-category theory \cite{jy-2dim,KS}, but recall some 1- and 2-dimensional aspects of $\mb{Cat}$ itself here.

\begin{conv}[(Sets and discrete categories)]\label{conv:set-disc}
By abuse of notation, any set $S$ will be identified with the discrete, small category $dS$ with object set $S$.
In this way, we also view any action operad $\Lambda$ as an operad in $\mb{Cat}$, and we view any group $G$ as a discrete, strict monoidal category.
\end{conv}

\begin{conv}[(Group actions on categories)]\label{conv:group-act-cat}
A group action on a category is meant here in the strict sense, not in the up-to-isomorphism sense.
Thus if $G$ acts on $C$, the equations
\begin{align*}
g \cdot (h \cdot x) & = (gh) \cdot x,\\
e \cdot x & = x
\end{align*}
hold for all $x$, where $x$ is allowed to be either an object or morphism of $C$.
Furthermore, a group action on a category is functorial, so 
\begin{align*}
g \cdot \id_{c} & = \id_{g \cdot c},\\
(g \cdot p) \circ (g \cdot q) & = g \cdot (p \circ q)
\end{align*}
hold for all objects $c$ and composable pairs of morphisms $p, q$.
\end{conv}

\begin{Defi}[(Free actions)]\label{Defi:free-action}
Suppose that a group $G$ acts on a category $C$, and let $x$ denote either an object or a morphism of $C$.
We say that the action is \emph{free} if, for any $x$, $g \cdot x = x$ implies that $g$ is the identity element $e \in G$.
This is equivalent to the condition that, for all $x, y$ there exists at most one $g \in G$ such that $g \cdot x = y$.
\end{Defi}

\begin{lem}\label{lem:free-on-obj}
Let $G$ be a group, $C$ be a category, and suppose that $G$ acts on $C$. Then the action of $G$ on $C$ is free if and only if the action of $G$ on the set of objects of $C$ is free.
\end{lem}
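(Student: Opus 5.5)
The argument splits into the two directions of the biconditional. The forward direction is immediate. If the action is free on all of $C$, meaning on objects and morphisms together as in \cref{Defi:free-action}, then in particular $g \cdot c = c$ for an object $c$ forces $g = e$. So the action on the set of objects is free.

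For the converse, I assume the action on objects is free. Let $p \colon c \to d$ be a morphism of $C$ and suppose $g \cdot p = p$. The key point is that the action is by functors, as in \cref{conv:group-act-cat}. Hence $g \cdot p$ is a morphism from $g \cdot c$ to $g \cdot d$. Comparing sources and targets in the equality $g \cdot p = p$ then gives $g \cdot c = c$ (and also $g \cdot d = d$). Freeness on objects now yields $g = e$, so the action is free on morphisms as well.

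Finally, I would note that the reformulation in \cref{Defi:free-action} (for each $x, y$ there is at most one $g$ with $g \cdot x = y$) needs no separate check. If $g \cdot x = y = h \cdot x$, then $h^{-1}g \cdot x = x$ by the strict action axioms, so $h^{-1}g = e$ and $g = h$.

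There is no real obstacle here. The only point needing care is that the functoriality of the action, namely that $g \cdot -$ sends a morphism $c \to d$ to a morphism $g\cdot c \to g \cdot d$, is what lets the fixed-point condition on a morphism descend to its domain object.
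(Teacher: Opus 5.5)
Your proposal is correct and is essentially the paper's proof. The forward direction is immediate. For the converse, the paper also uses that the action is by functors: an element fixing a morphism $p \colon c \to d$ must fix its domain $c$, so freeness on objects forces $g = e$. The paper phrases this as a contrapositive rather than directly.
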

\begin{proof}
If the action of $G$ on $C$ is not free, then there is element $g \in G$ and either an object $c$ or a morphism $f$ such that $g \cdot c = c$ or $g \cdot f = f$, respectively. If there is such an object $c$, then the action of $G$ on the objects of $C$ is not free; if there is such an $f \colon c \to d$, then $g \cdot f = f$ implies that $g \cdot c = c$ and once again the action of $G$ on the objects of $C$ is not free. Finally, if the action of $G$ on $C$ is free, it is immediate that the action of $G$ on the objects of $C$ is free, completing the proof.
\end{proof}

\begin{lem}\label{lem:coeq-lem}
Let $G$ be a group, $C$ be a category, and $\mu \colon G \times C \to C$ be an action of $G$ on $C$. Suppose that the action of $G$ on $C$ is free.
\begin{enumerate}
\item Then there is a category $C/G$ with
\begin{itemize}
\item objects $[c]$, where $c$ is an object of $C$ and $[c]$ denotes its orbit under the $G$-action; and
\item morphisms $[p] \colon [c] \to [d]$, where $p \colon c_1 \to d_1$, $c_1 \in [c]$, $d_1 \in [d]$, and $[p]$ denotes the orbit of $p$ under the $G$-action.
\end{itemize}
\item The category $C/G$ is the coequalizer
    \[
        \xy
            (0,0)*+{G \times C}="00";
            (30,0)*+{C}="10";
            (60,0)*+{C/G}="20";
            {\ar@<1ex>^{\mu} "00" ; "10"};
            {\ar@<-1ex>_{\pi_2} "00" ; "10"};
            {\ar^{\varepsilon} "10" ; "20"};
        \endxy
    \]
where the top map is the action of $G$ on $C$, the bottom map is the projection onto $C$, and the coequalizing functor $\varepsilon$ is defined by sending an object or morphism to its orbit in $C/G$.
\end{enumerate}

\end{lem}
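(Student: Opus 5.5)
The plan is to build $C/G$ by hand, check that it really is a category, and then verify the universal property of the coequalizer directly. Freeness enters at exactly one point: it is what makes composition of orbits well defined.

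\emph{Composition and identities.} Let $[p]\colon[c]\to[d]$ and $[q]\colon[d]\to[e]$ be morphisms. Choose representatives $p\colon c_1\to d_1$ and $q\colon d_2\to e_2$. Since $d_1$ and $d_2$ lie in the same orbit, there is some $g$ with $g\cdot d_2=d_1$. Set $[q]\circ[p]=[(g\cdot q)\circ p]$.

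\emph{Well-definedness.} First, by freeness on objects (\cref{lem:free-on-obj}), the element $g$ is unique once the representatives are fixed. Second, suppose we change representatives to $p'=h\cdot p$ and $q'=k\cdot q$. The unique element $g'$ with $g'\cdot(k\cdot d_2)=h\cdot d_1$ is $hgk^{-1}$, again by freeness. Then
\[
(g'\cdot q')\circ p'=(hg\cdot q)\circ(h\cdot p)=h\cdot\big((g\cdot q)\circ p\big),
\]
using that the action is functorial (\cref{conv:group-act-cat}). So the orbit of the composite does not depend on the choices. Identities are $[\id_c]$, which is well defined because $g\cdot\id_c=\id_{g\cdot c}$. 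Associativity and the unit laws follow by choosing representatives that compose strictly in $C$, which is always possible by translating, and then applying $\varepsilon$.

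\emph{The functor $\varepsilon$.} It sends an object or morphism to its orbit. It is a functor by the same observation: composable morphisms in $C$ are already aligned representatives, so $\varepsilon(q\circ p)=[q]\circ[p]$ with $g=e$. It coequalizes the action and the projection, since $\varepsilon(g\cdot x)=\varepsilon(x)$.

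\emph{Universal property.} Let $F\colon C\to D$ be a functor with $F(g\cdot x)=F(x)$ for all $g$ and all objects and morphisms $x$. Define $\bar F[x]=F(x)$; this is well defined precisely by the invariance of $F$. For functoriality, $\bar F([q]\circ[p])=F(g\cdot q)\circ F(p)=F(q)\circ F(p)$. The factorization $\bar F\circ\varepsilon=F$ holds by construction, and $\bar F$ is unique because $\varepsilon$ is surjective on objects and on morphisms.

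The main obstacle is well-definedness of composition. This is the only place freeness is used: without it, different choices of $g$ aligning $d_1$ with $d_2$ could yield composites in different orbits, and the naive orbit category would fail to be the coequalizer.
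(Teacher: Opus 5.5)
Your proof is correct and follows the paper's argument. You use the same composition rule $[q]\circ[p]=[(g\cdot q)\circ p]$, with $g$ the unique element carrying $d_2$ to $d_1$, and the same induced functor $\bar F[x]=F(x)$, whose functoriality comes from $F(g\cdot q)=F(q)$. Your explicit check that the composite does not depend on the chosen representatives (via $g'=hgk^{-1}$) is a step the paper leaves implicit, and it lets you get associativity and the unit laws by aligning representatives rather than checking each axiom with explicit group elements as the paper does.
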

\begin{proof}
In order to prove part one of the lemma, we must define identities and composition, and check the axioms for a category. Define the identity morphism $\id_{[c]} \colon [c] \to [c]$ to be $[\id_{c}]$. For morphisms $[p] \colon [a] \to [b]$ and $[q] \colon [b] \to [c]$ represented by $p \colon a_1 \to b_1$ and $q \colon b_2 \to c_2$, let $g \in G$ be the unique element, since the action is free, such that
\[
g \cdot b_2 = b_1.
\]
Define $[q] \circ [p] = [g\cdot q \ \circ \ p]$. Now we check the axioms.
\begin{itemize}
\item For $[p] \colon [a] \to [b]$ represented by $p \colon a_1 \to b_1$, the composite $[p] \circ \id_{[a]}$ is $[g \cdot p \ \circ \ \id_{a}]$ where $g$ is the unique element such that $g \cdot a_1 = a$. Since $g \cdot p \ \circ \ \id_{a} = g \cdot p$, the composite  $[p] \circ \id_{[a]}$ equals $[g \cdot p] = [p]$ as desired.
\item For $[p] \colon [a] \to [b]$ represented by $p \colon a_1 \to b_1$, the composite $\id_{[b]} \circ [p]$ is $[g \cdot \id_{b} \ \circ \ p]$ where $g$ is the unique element such that $g \cdot b_1 = b$. Since the action of $G$ on $C$ is functorial, $g \cdot \id_{b} = \id_{g \cdot b} = \id_{b_1}$, so $[g \cdot \id_{b} \ \circ \ p] = [\id_{b_1} \circ p] = [p]$ as desired.
\item For $[p] \colon [a] \to [b]$ represented by $p \colon a_1 \to b_1$, $[q] \colon [b] \to [c]$ represented by $q \colon b_2 \to c_2$, and $[r] \colon [c] \to [d]$ represented by $r \colon c_3 \to d_3$, we compute
\[
[r] \circ \big([q] \circ [p]\big) = [h \cdot r \ \circ \ g \cdot q \ \circ \ p],
\]
where $g \cdot b_2 = b_1$ and $h \cdot c_3 = g \cdot c_2$, and
\[
\big([r] \circ [q]\big) \circ [p] = [g \cdot \big( j \cdot r \ \circ \ q \big) \ \circ p],
\]
where $g \cdot b_2 = b_1$ and $j \cdot c_3 = c_2$. By functoriality of the $G$-action, $g \cdot \big( j \cdot r \ \circ \ q \big) = gj \cdot r \ \circ g \cdot q$, so to prove associativity we need only check that $gj = h$. This follows from the assumption that the action is free together with the equations $h \cdot c_3 = g \cdot c_2$ and $j \cdot c_3 = c_2$.
\end{itemize}
Thus $C/G$ is a category.

For the second claim in the lemma, first note that $\varepsilon$, defined by $\varepsilon(x) = [x]$ for $x$ an object or morphism of $C$, is a functor. Furthermore, we see that 
\[
\varepsilon \pi_2(g, x) = \varepsilon(x) = [x] = [g \cdot x] = \varepsilon \mu(g,x),
\]
so $\varepsilon$ does coequalize $\mu$ and $\pi_2$. In order to check universality, let $F \colon C \to D$ be any other functor that coequalizes. We must check that there is a unique functor $\overline{F} \colon C/G \to D$ such that $\overline{F} \circ \varepsilon = F$. Any such $\overline{F}$ must be defined by $\overline{F}\big( [c] \big) = F(c)$ on objects, and since $F$ coequalizes $\mu, \pi_2$ this function is well-defined. The same argument applies to morphisms, so $\overline{F}\big( [p] \big) = F(p)$. As for objects, this function is well-defined, and also forces the uniqueness of $\overline{F}$. We need only check functoriality to finish the proof. By construction $\overline{F}$ preserves identity morphisms. For composition, we have
\begin{align*}
\overline{F} \left( [q] \circ [p] \right) & = \overline{F}\left( [g\cdot q \ \circ \ p] \right) \\
& = F(g\cdot q \ \circ \ p) \\
& = F(g \cdot q) \circ F(p) \\ 
& \stackrel{c}{=} F(q) \circ F(p) \\
& \overline{F}\left( [q] \right) \circ \overline{F}\left( [p] \right),
\end{align*}
where the equality labeled $c$ is a consequence of $F$ coequalizing $\mu, \pi_2$. This calculation shows that $\overline{F}$ is a functor, so $C/G$ is the coequalizer of $\mu, \pi_2$.
\end{proof}

\begin{rem}[(Free versus non-free actions)]\label{rem:obj-coeq}
We note that if the action of $G$ on $C$ is not free, then the coequalizer
    \[
        \xy
            (0,0)*+{G \times C}="00";
            (30,0)*+{C}="10";
            (60,0)*+{\textrm{coeq}(\mu, \pi_2)}="20";
            {\ar@<1ex>^{\mu} "00" ; "10"};
            {\ar@<-1ex>_{\pi_2} "00" ; "10"};
            {\ar^{\varepsilon} "10" ; "20"};
        \endxy
    \]
does not admit a simple description in general, although the set of objects of $\textrm{coeq}(\mu, \pi_2)$ is still given by the set of orbits of the action of $G$ on the objects of $C$ because $\textrm{ob} \colon \mb{Cat} \to \mb{Set}$ is a left adjoint. In particular, if $P$ is a $\Lambda$-operad in $\mb{Cat}$ and the action of $\Lambda(n)$ on $P(n)$ is not free, then the set of  objects of $\coeq{P}{A}{\Lambda}{n}$ is given by quotienting that set $\textrm{ob}P(n) \times \textrm{ob}A^n$ by the action of $\Lambda(n)$.
\end{rem}

\begin{Defi}[(2-limits)]\label{Defi:2limit}
Let $F \colon \mathbb{D} \rightarrow \mathcal{K}$ be a 2-functor. The (strict) 2-limit of $F$ consists of:
    \begin{itemize}
        \item an object $\lim F$ in $\mathcal{K}$,
        \item for each object $d \in \mathbb{D}$, a 1-cell $\pi_d \colon \lim F \rightarrow Fd$,
    \end{itemize}
such that:
    \begin{enumerate}
        \item For any 1-cell $f \colon d \rightarrow d'$ in $\mathbb{D}$, the following diagram commutes.
          \[
            \xy
              (0,0)*+{\lim F}="00";
              (20,10)*+{Fd}="10";
              (20,-10)*+{Fd'}="11";
              %
              {\ar^{\pi_d} "00" ; "10"};
              {\ar_{\pi_{d'}} "00" ; "11"};
              {\ar^{Ff} "10" ; "11"};
            \endxy
          \]
        \item For any 1-cells $f, g \colon d \rightarrow d'$ and any 2-cell $\alpha \colon f \Rightarrow g$ in $\mathbb{D}$,
            \[
                F\alpha \ast \id_{\pi_d} = \id_{\pi_{d'}}.
            \]
    \end{enumerate}
These data then satisfy the following universal properties:
    \begin{enumerate}
        \item For any object $X$ and 1-cells $\chi_d \colon X \rightarrow Fd$ satisfying
          \[
            \xy
              (0,0)*+{X}="00";
              (20,10)*+{Fd}="10";
              (20,-10)*+{Fd'}="11";
              %
              {\ar^{\chi_d} "00" ; "10"};
              {\ar_{\chi_{d'}} "00" ; "11"};
              {\ar^{Ff} "10" ; "11"};
            \endxy
          \]
          and the equations
           \[
                F\alpha \ast \id_{\chi_d} = \id_{\chi_{d'}}
            \]
            for all $\alpha \colon f \Rightarrow g$,   
            there exists a unique 1-cell $h \colon X \rightarrow \lim F$ in $\mathcal{K}$ such that $\pi_d \circ h = \chi_d$.
        \item For any 1-cells $h, k \colon X \rightarrow \lim F$ and 2-cells
          \[
            \xy
              (0,0)*+{X}="00";
              (20,0)*+{\lim F}="10";
              (20,-15)*+{Fd}="11";
              (0,-15)*+{\lim F}="01";
              {\ar^{h} "00" ; "10"};
              {\ar_{k} "00" ; "01"};
              {\ar^{\pi_d} "10" ; "11"};
              {\ar_{\pi_d} "01" ; "11"};
              {\ar@{=>}^{\varphi_c} (12,-5.5) ; (8,-9.5)};
            \endxy
          \]
        there exists a unique 2-cell $\gamma \colon h \rightarrow k$ such that
            \[
                \id_{\pi_d} \ast \gamma = \varphi_d.
            \]
    \end{enumerate}
\end{Defi}

\begin{rem}[(Limits versus 2-limits)]\label{rem:lim-v-2lim}
Let $C$ be a small category, and $F \colon C \to \mb{Cat}$ be a functor.
We can treat $C$ as a locally discrete 2-category, and then $F$ becomes a 2-functor.
The limit of $F$, as a functor, is then also the 2-limit of $F$, as a 2-functor by standard methods in enriched category theory \cite[Section 3.8]{Kelly2005}. In particular, every limit (or colimit) of such a diagram automatically inherits a 2-dimensional aspect to its universal property.
\end{rem}

\begin{conv}[(Naming 2-limits)]\label{conv:name-2lim}
For familiar limits such as products, terminal objects, pullbacks, and equalizers, we prepend 2- and write 2-products, 2-terminal objects, 2-pullbacks, and 2-equalizers instead. 
\end{conv}

\begin{Defi}[(Preservation of 2-limits)]\label{Defi:preserve-2lim}
Let $\mathcal{K}, \mathcal{L}$ be a 2-categories with all 2-limits of shape $\mathbb{D}$, and $F \colon \mathcal{K} \to \mathcal{L}$ a 2-functor between them. Then $F$ \emph{preserves 2-limits of shape $\mathbb{D}$} if, for every $P \colon \mathbb{D} \to \mathcal{K}$, the morphism
\[
F(\lim P) \to \lim FP
\]
induced by the universal property is an isomorphism.
\end{Defi}

\section{Background: $2$-monads and their Algebras}\label{sec:2monads}

To investigate operads in $\cat$ we will make use of $2$-monads and their algebras, specifically the notion of a pseudoalgebra for a $2$-monad. We recall the required definitions and theory related to $2$-monads here. For further reference, we refer the reader to \cite{BKP} and \cite{power-gen}.

\begin{Defi}[($2$-monad)]\label{Defi:2monad}
Let $\m{K}$ be a 2-category. A \emph{$2$-monad} on $\m{K}$ consists of
    \begin{itemize}
        \item a strict $2$-functor $T \colon \m{K} \rightarrow \m{K}$,
        \item a $2$-natural transformation $\mu \colon T^2 \Rightarrow T$,
        \item a $2$-natural transformation $\eta \colon \id_{\m{K}} \Rightarrow T$,
    \end{itemize}
satisfying the following axioms.
    \begin{itemize}
        \item The following diagram commutes.
        \[
            \xy
                (0,0)*+{T^3X}="00";
                (20,0)*+{T^2X}="10";
                (0,-15)*+{T^2X}="01";
                (20,-15)*+{TX}="11";
                {\ar^{T\mu_X} "00" ; "10"};
                {\ar^{\mu_x} "10" ; "11"};
                {\ar_{\mu_{TX}} "00" ;  "01"};
                {\ar_{\mu_X} "01" ; "11"};
            \endxy
        \]
        \item The following diagram commutes.
        \[
            \xy
                (0,0)*+{TX}="00";
                (20,0)*+{T^2X}="10";
                (40,0)*+{TX}="20";
                (20,-15)*+{TX}="11";
                {\ar^{\eta_{TX}} "00" ; "10"};
                {\ar_{T\eta_X} "20" ; "10"};
                {\ar|{\mu_X} "10" ; "11"};
                {\ar_{\id_{TX}} "00" ; "11"};
                {\ar^{\id{TX}} "20" ; "11"};
            \endxy
        \]
\end{itemize}
\end{Defi}

\begin{Defi}[(Pseudoalgebra, $2$-monad version)]\label{Defi:pseudoalgebra}
Let $T \colon \m{K} \rightarrow \m{K}$ be a $2$-monad. A $T$-\textit{pseudoalgebra} consists of an object $X$, a $1$-cell $\alpha \colon TX \rightarrow X$ in $\m{K}$, and invertible $2$-cells of $\m{K}$
    \[
        \xy
            (0,0)*+{T^2X}="00";
            (20,0)*+{TX}="10";
            (0,-15)*+{TX}="01";
            (20,-15)*+{X}="11";
            {\ar^{T\alpha} "00" ; "10"};
            {\ar^{\alpha} "10" ; "11"};
            {\ar_{\mu_X} "00" ;  "01"};
            {\ar_{\alpha} "01" ; "11"};
            {\ar@{=>}^{\Phi} (10,-5.5) ; (10,-9.5)};
            (40,0)*+{X}="20";
            (52.5,-15)*+{TX}="31";
            (72.5,-15)*+{X}="41";
            {\ar_{\eta_X} "20" ; "31"};
            {\ar_{\alpha} "31" ; "41"};
            {\ar@/^1.5pc/^{1_X} "20" ; "41"};
            {\ar@{=>}^{\Phi_{\eta}} (54.5,-5.5) ; (54.5,-9.5)};
        \endxy
    \]

satisfying the following axioms.
    \begin{itemize}
        \item The following equality of pasting diagrams holds.
    \[
        \xy
            (5,0)*+{T^3X}="t3xl";
            (29,0)*+{T^2X}="t2xl1";
            (5,-17.5)*+{T^2X}="t2xl2";
            (24,-35)*+{TX}="txl1";
            (48,-17.5)*+{TX}="txl2";
            (48,-35)*+{X}="xl";
            (24,-17.5)*+{T^2X}="t2xl3";
            {\ar^{T^2\alpha} "t3xl" ; "t2xl1"};
            {\ar^{T\alpha} "t2xl1" ; "txl2"};
            {\ar^{\alpha} "txl2" ; "xl"};
            {\ar_{\mu_{TX}} "t3xl" ; "t2xl2"};
            {\ar_{\mu_X} "t2xl2" ; "txl1"};
            {\ar_{\alpha} "txl1" ; "xl"};
            {\ar_{T\mu_X} "t3xl" ; "t2xl3"};
            {\ar^{T\alpha} "t2xl3" ; "txl2"};
            {\ar_{\mu_X} "t2xl3" ; "txl1"};
            {\ar@{=>}_{T\Phi} (26,-6) ; (26,-10)};
            {\ar@{=>}^{\Phi} (36,-24) ; (36,-28)};
            (64,0)*+{T^3X}="t3xr";
            (88,0)*+{T^2X}="t2xr1";
            (64,-17.5)*+{T^2X}="t2xr2";
            (83,-35)*+{TX}="txr1";
            (107,-17.5)*+{TX}="txr2";
            (107,-35)*+{X}="xr";
            (88,-17.5)*+{TX}="txr3";
            {\ar^{T^2\alpha} "t3xr" ; "t2xr1"};
            {\ar^{T\alpha} "t2xr1" ; "txr2"};
            {\ar^{\alpha} "txr2" ; "xr"};
            {\ar_{\mu_{TX}} "t3xr" ; "t2xr2"};
            {\ar_{\mu_X} "t2xr2" ; "txr1"};
            {\ar_{\alpha} "txr1" ; "xr"};
            {\ar_{T\alpha} "t2xr2" ; "txr3"};
            {\ar_{\alpha} "txr3" ; "xr"};
            {\ar_{\mu_X} "t2xr1" ; "txr3"};
            {\ar@{=>}_{\Phi} (98,-15) ; (98,-19)};
            {\ar@{=>}^{\Phi} (85,-24) ; (85,-28)};
            {\ar@{=} (54,-20) ; (56,-20)};
        \endxy
    \]

    \item The following pasting diagram is an identity.
    \[
        \xy
            (0,0)*+{TX}="txl1";
            (15,-15)*+{T^2X}="t2x";
            (15,-30)*+{TX}="txl2";
            (35,-15)*+{TX}="txl3";
            (35,-30)*+{X}="xl";
            {\ar@/^1.7pc/^{1_{TX}} "txl1" ; "txl3"};
            {\ar@/_1.7pc/_{1_{TX}} "txl1" ; "txl2"};
            {\ar_{T\eta_X} "txl1" ; "t2x"};
            {\ar^{T\alpha} "t2x" ; "txl3"};
            {\ar_{\mu_X} "t2x" ; "txl2"};
            {\ar_{\alpha} "txl2" ; "xl"};
            {\ar^{\alpha} "txl3" ; "xl"};
            {\ar@{=>}^{T\Phi_\eta} (17,-5.5) ; (17,-9.5)};
            {\ar@{=>}^{\Phi} (25,-20.5) ; (25,-24.5)};
        \endxy
    \]

    \end{itemize}
\end{Defi}

\begin{rem}[(Omitted third axiom)]\label{rem:third-axiom}
Power's definition of a pseudoalgebra includes a third axiom relating to the unit of the $2$-monad \cite[Definition 2.4, Axiom 2.1]{power-gen}. However, following an argument of Marmolejo \cite[Lemma 9.1]{marm-doct} this extra axiom is redundant and is omitted here.
\end{rem}

\begin{Defi}[(Strict algebra, $2$-monad version)]\label{Defi:strictalgebra}
Let $T \colon \m{K} \rightarrow \m{K}$ be a $2$-monad. A \textit{strict $T$-algebra} is a pseudoalgebra in which all of the isomorphisms $\Phi$ are identities.
\end{Defi}

\begin{Defi}[(Pseudomorphism, $2$-monad version)]\label{Defi:pseudomorphism}
Let $T$ be a $2$-monad and let $(X,\alpha,\Phi,\Phi_\eta)$, $(Y,\beta,\Psi,\Psi_\eta)$ be $T$-pseudoalgebras. A \textit{pseudomorphism} $(f, \bar{f})$ between these pseudoalgebras consists of a $1$-cell $f \colon X \rightarrow Y$ along with an invertible $2$-cell
    \[
        \xy
            (0,0)*+{TX}="00";
            (20,0)*+{TY}="10";
            (0,-15)*+{X}="01";
            (20,-15)*+{Y}="11";
            {\ar^{Tf} "00" ; "10"};
            {\ar^{\beta} "10" ; "11"};
            {\ar_{\alpha} "00" ; "01"};
            {\ar_{f} "01" ; "11"};
            {\ar@{=>}^{\bar{f}} (10,-5.5) ; (10,-9.5)};
        \endxy
    \]

satisfying the following axioms.
    \begin{itemize}
        \item The following equality of pasting diagrams holds.
                \[
        \xy
            (5,0)*+{T^2X}="t3xl";
            (29,0)*+{T^2Y}="t2xl1";
            (5,-17.5)*+{TX}="t2xl2";
            (24,-35)*+{TX}="txl1";
            (48,-17.5)*+{TY}="txl2";
            (48,-35)*+{Y}="xl";
            (24,-17.5)*+{TX}="t2xl3";
            {\ar^{T^2f} "t3xl" ; "t2xl1"};
            {\ar^{T\beta} "t2xl1" ; "txl2"};
            {\ar^{\beta} "txl2" ; "xl"};
            {\ar_{\mu_X} "t3xl" ; "t2xl2"};
            {\ar_{\alpha} "t2xl2" ; "txl1"};
            {\ar_{f} "txl1" ; "xl"};
            {\ar^{T\alpha} "t3xl" ; "t2xl3"};
            {\ar^{Tf} "t2xl3" ; "txl2"};
            {\ar_{\alpha} "t2xl3" ; "txl1"};
            {\ar@{=>}^{T\bar{f}} (24,-6) ; (24,-10)};
            {\ar@{=>}^{\bar{f}} (36,-24) ; (36,-28)};
            {\ar@{=>}^{\Phi} (13.5,-15.5) ; (13.5,-19.5)};
            (64,0)*+{T^2X}="t3xr";
            (88,0)*+{T^2Y}="t2xr1";
            (64,-17.5)*+{TX}="t2xr2";
            (83,-35)*+{TX}="txr1";
            (107,-17.5)*+{TY}="txr2";
            (107,-35)*+{Y}="xr";
            (88,-17.5)*+{TX}="txr3";
            {\ar^{T^2f} "t3xr" ; "t2xr1"};
            {\ar^{T\beta} "t2xr1" ; "txr2"};
            {\ar^{\beta} "txr2" ; "xr"};
            {\ar_{\mu_{X}} "t3xr" ; "t2xr2"};
            {\ar_{\alpha} "t2xr2" ; "txr1"};
            {\ar_{f} "txr1" ; "xr"};
            {\ar_{Tf} "t2xr2" ; "txr3"};
            {\ar_{\beta} "txr3" ; "xr"};
            {\ar_{\mu_Y} "t2xr1" ; "txr3"};
            {\ar@{=>}_{\Psi} (98,-15) ; (98,-19)};
            {\ar@{=>}^{\bar{f}} (85,-24) ; (85,-28)};
            {\ar@{=} (54,-20) ; (56,-20)};
        \endxy
    \]
    \item The following equality of pasting diagrams holds.
            \[
                        \xy
            (0,0)*+{X}="00";
            (20,0)*+{Y}="10";
            (0,-20)*+{TX}="01";
            (20,-20)*+{TY}="11";
            (10,-35)*+{X}="02";
            (30,-35)*+{Y}="12";
            {\ar^{f} "00" ; "10"};
            {\ar@/^1.5pc/^{1_Y} "10" ; "12"};
            {\ar_{\eta_X} "00" ; "01"};
            {\ar_{\eta_Y} "10" ; "11"};
            {\ar_{Tf} "01" ; "11"};
            {\ar_{\alpha} "01" ; "02"};
            {\ar_{f} "02" ; "12"};
            {\ar^{\beta} "11" ; "12"};
            {\ar@{=>}^{\bar{f}} (15,-25.5) ; (15,-29.5)};
            {\ar@{=>}^{\Psi_{\eta}} (25,-17) ; (25,-21)};
            (50,0)*+{X}="30";
            (70,0)*+{Y}="40";
            (50,-20)*+{TX}="31";
            (60,-35)*+{X}="32";
            (80,-35)*+{Y}="42";
            {\ar^{f} "30" ; "40"};
            {\ar_{\eta_X} "30" ; "31"};
            {\ar_{\alpha} "31" ; "32"};
            {\ar_{f} "32" ; "42"};
            {\ar@/^1.5pc/^{1_X} "30" ; "32"};
            {\ar@/^1.5pc/^{1_Y} "40" ; "42"};
            {\ar@{=>}^{\Phi_{\eta}} (55,-17) ; (55,-21)};
        \endxy
        \]
\end{itemize}
\end{Defi}

\begin{Defi}[(Strict morphism, $2$-monad verison)]\label{Defi:strictmorphism}
Let $T$ be a $2$-monad and let $(X,\alpha,\Phi,\Phi_\eta)$ and $(Y,\beta,\Psi,\Psi_\eta)$ be $T$-pseudoalgebras. A \textit{strict morphism} $(f, \bar{f})$ consists of a pseudomorphism in which $\bar{f}$ is an identity.
\end{Defi}

\begin{rem}
The strict algebras and strict morphisms are exactly the same as algebras and morphisms for the underlying monad on the underlying category of $\m{K}$.
\end{rem}

\begin{Defi}[($T$-transformation, $2$-monad version)]\label{Defi:Ttrans}
Let $(f, \overline{f}), (g, \overline{g}) \colon X \rightarrow Y$ be pseudomorphisms of $T$-algebras. A \textit{$T$-transformation} consists of a $2$-cell $\gamma \colon f \Rightarrow g$ such that the following equality of pasting diagrams holds.
    \[
        \xy
            (0,0)*+{TX}="00";
            (30,0)*+{TY}="10";
            (0,-20)*+{X}="01";
            (30,-20)*+{Y}="11";
            {\ar@/^1.5pc/^{Tf} "00" ; "10"};
            {\ar_{Tg} "00" ; "10"};
            {\ar^{\beta} "10" ; "11"};
            {\ar_{\alpha} "00" ; "01"};
            {\ar_{g} "01" ; "11"};
            {\ar@{=>}^{T \gamma} (13.5,5.5) ; (13.5,1.5)};
            {\ar@{=>}^{\overline{g}} (13.5,-8) ; (13.5,-12)};
            (60,0)*+{TX}="x00";
            (90,0)*+{TY}="x10";
            (60,-20)*+{X}="x01";
            (90,-20)*+{Y}="x11";
            {\ar^{Tf} "x00" ; "x10"};
            {\ar^{\beta} "x10" ; "x11"};
            {\ar_{\alpha} "x00" ; "x01"};
            {\ar^{f} "x01" ; "x11"};
            {\ar@/_1.5pc/_{g} "x01" ; "x11"};
            {\ar@{=>}^{\gamma} (75,-21.5) ; (75,-25.5)};
            {\ar@{=>}^{\overline{f}} (75,-8) ; (75,-12)};
            {\ar@{=} (42.75,-10) ; (46.75,-10)};
        \endxy
    \]
\end{Defi}

There are many different possible choices of 2-categories in which the objects are some kind of algebra over a $2$-monad $T$.
Here are the two that will be the most important for us.

\begin{Defi}[(2-categories of algebras, $2$-monad version)]\label{Defi:2cat-of-algs}
Let $T$ be a $2$-monad.
\begin{itemize}
\item The $2$-category $T\mbox{-}\mb{Alg}_{s}$ consists of strict $T$-algebras, strict morphisms, and $T$-transformations.
\item The $2$-category $\mb{Ps}\mbox{-}T\mbox{-}\mb{Alg}$ consists of $T$-pseudoalgebras, pseudomorphisms, and $T$-transformations.
\end{itemize}
\end{Defi}

\section{\texorpdfstring{$\Lambda$}{L}-Operads in Cat as $2$-monads}\label{sec:op-to-2monad}

This section begins our study of algebras over a $\Lambda$-operad $P$ in $\mb{Cat}$.
This theory blends together standard results in both $2$-monad theory \cite{lack-cod,BKP} and operad theory \cite{maygeom}.

\begin{rem}[($\Lambda$-operads in $\mb{Cat}$)]\label{rem:lop-in-cat}
Here we explicitly describe the structure of a $\Lambda$-operad $P$ in $\mb{Cat}$, following \cref{rem:nary-ops-V}.
A $\Lambda$-operad $P$ in $\mb{Cat}$ consists of
\begin{itemize}
\item a category, $P(n)$, for each natural number $n$,
\item for each $n$, a right $\Lambda(n)$-action on $P(n)$ as per \cref{conv:group-act-cat},
\item an object $\id \in P(1)$, and
\item functors
  \[
    \mu \colon  P(n) \times P(k_{1}) \times \cdots \times P(k_{n}) \rightarrow P(k_{1} + \cdots + k_{n}),
  \]
\end{itemize}
satisfying the first two axioms from \cref{Defi:sym-op} and the two equivariance axioms from \cref{Defi:lamop}.
\end{rem}

\begin{Defi}[(Pseudoalgebra, $\Lambda$-operad version)]\label{def:ps-alg-lop}
Let $P$ be a $\Lambda$-operad in $\mb{Cat}$. A \textit{pseudoalgebra} for $P$ consists of: 
    \begin{itemize}
        \item a category $X$,
        \item a family of functors
            \[
                \left(\alpha_n \colon \coeq{P}{X}{\Lambda}{n} \rightarrow X \right)_{n \in \mathbb{N}},
            \]
        \item for each $n, k_1, \ldots, k_n \in \mathbb{N}$, a natural isomorphism $\phi_{k_1, \ldots, k_n}$ (corresponding, via \cref{conv:equiv-maps,rem:lim-v-2lim}) to a natural isomorphism
            \[
                \xy
                    (0,0)*+{\scriptstyle P_n \times \prod_{i=1}^n \left(P_{k_i} \times X^{k_i}\right)}="00";
                    (0,-10)*+{\scriptstyle P_n \times \prod_{i=1}^n P_{k_i} \times X^{\Sigma k_i}}="01";
                    (0,-20)*+{\scriptstyle P_{\Sigma k_i} \times X^{\Sigma k_i}}="02";
                    (60,-20)*+{\scriptstyle X}="12";
                    (60,0)*+{\scriptstyle P_n \times X^n}="11";
                    {\ar_{} "00" ; "01"};
                    {\ar^{1 \times \prod \tilde{\alpha}_{k_i}} "00" ; "11"};
                    {\ar^{\tilde{\alpha}_n} "11" ; "12"};
                    {\ar_{\mu^P \times 1} "01" ; "02"};
                    {\ar_>>>>>>>>>>>>>>>>>>>{\tilde{\alpha}_{\Sigma k_i}} "02" ; "12"};
                    {\ar@{=>}^{\tilde{\phi}_{k_1, \ldots, k_n}} (30,-8) ; (30,-12)};
                \endxy
            \]

               \item and a natural isomorphism $\phi_{\eta}$ corresponding to a natural isomorphism
            \[
                \xy
                    (0,0)*+{X}="00";
                    (0,-15)*+{1 \times X}="x10";
                    (0,-30)*+{P(1) \times X}="10";
                    (30,-30)*+{X}="11";
                    {\ar_{\eta^P \times 1} "x10" ; "10"};
                    {\ar_{\tilde{\alpha}_1} "10" ; "11"};
                    {\ar^{1} "00" ; "11"};
                    {\ar_{\cong} "00" ; "x10"};
                    {\ar@{=>}^{\tilde{\phi}_\eta} (10,-18) ; (10,-22)};
                \endxy
            \]

    \end{itemize}
satisfying the following axioms.
    \begin{itemize}
        \item For all $n, k_i, m_{ij} \in \mathbb{N}$, the following equality of pasting diagrams holds.
            \[
                \xy
                    (0,0)*+{\scriptstyle P_n \times \prod_i\left(P_{k_i} \times \prod_j \left(P_{m_{ij}} \times X^{m_{ij}}\right)\right)}="00";
                    (60,0)*+{\scriptstyle P_n \times \prod_i \left(P_{k_i} \times X^{k_i}\right)}="10";
                    (0,-30)*+{\scriptstyle P_{\Sigma k_i} \times \prod_i\prod_j\left(P_{m_{ij}} \times X^{m_{ij}}\right)}="02";
                    (30,-50)*+{\scriptstyle P_{\Sigma\Sigma m_{ij}} \times X^{\Sigma \Sigma m_{ij}}}="04";
                    (80,-20)*+{\scriptstyle P_n \times X^n}="12";
                    (80,-50)*+{\scriptstyle X}="14";
                    {\ar^>>>>>>>>>>>>>>{1 \times \prod\left(1 \times \prod \tilde{\alpha}_{m_ij}\right)} "00" ; "10"};
                    {\ar^{1 \times \prod \tilde{\alpha}_{k_i}} "10" ; "12"};
                    {\ar^{\tilde{\alpha}_n} "12" ; "14"};
                    {\ar_{\mu^P \times 1} "00" ; "02"};
                    {\ar_{\mu^P \times 1} "02" ; "04"};
                    {\ar_{\tilde{\alpha}_{\Sigma\Sigma m_{ij}}} "04" ; "14"};
                    (30,-20)*+{\scriptstyle P_n \times \prod_i\left(P_{\Sigma m_{ij}} \times X^{\Sigma m_{ij}}\right)}="22";
                    {\ar^{\mu^P \times 1} "00" ; "22"};
                    {\ar^{1 \times \prod \tilde{\alpha}_{\Sigma m_{ij}}} "22" ; "12"};
                    {\ar^{\mu^P \times 1} "22" ; "04"};
                    (0,-70)*+{\scriptstyle P_n \times \prod_i\left(P_{k_i} \times \prod_j \left(P_{m_{ij}} \times X^{m_{ij}}\right)\right)}="b00";
                    (50,-70)*+{\scriptstyle P_n \times \prod_i \left(P_{k_i} \times X^{k_i}\right)}="b10";
                    (0,-100)*+{\scriptstyle P_{\Sigma k_i} \times \prod_i\prod_j\left(P_{m_{ij}} \times X^{m_{ij}}\right)}="b02";
                    (20,-120)*+{\scriptstyle P_{\Sigma\Sigma m_{ij}} \times X^{\Sigma \Sigma m_{ij}}}="b04";
                    (80,-90)*+{\scriptstyle P_n \times X^n}="b12";
                    (80,-120)*+{\scriptstyle X}="b14";
                    {\ar^>>>>>>>>>{1 \times \prod\left(1 \times \prod \tilde{\alpha}_{m_ij}\right)} "b00" ; "b10"};
                    {\ar^{1 \times \prod \tilde{\alpha}_{k_i}} "b10" ; "b12"};
                    {\ar^{\tilde{\alpha}_n} "b12" ; "b14"};
                    {\ar_{\mu^P \times 1} "b00" ; "b02"};
                    {\ar_{\mu^P \times 1} "b02" ; "b04"};
                    {\ar_{\tilde{\alpha}_{\Sigma\Sigma m_{ij}}} "b04" ; "b14"};
                    (50,-100)*+{\scriptstyle P_{\Sigma k_i} \times X^{\Sigma k_i}}="b22";
                    {\ar_{\mu^P \times 1} "b10" ; "b22"};
                    {\ar^>>>>>>>>>>>>>>>>{1 \times \prod\prod \tilde{\alpha}_{m_{ij}}} "b02" ; "b22"};
                    {\ar^{\tilde{\alpha}_{\Sigma k_i}} "b22" ; "b14"};
                    {\ar@{=>}^{1 \times \prod_i \tilde{\phi}_{m_{i1}, \ldots, m_{ik_{i}}}} (35,-8) ; (35,-12)};
                    {\ar@{=>}^{\tilde{\phi}_{\Sigma m_{1j}, \ldots, \Sigma m_{nj}}} (50,-33) ; (50,-37)};
                    {\ar@{=>}^{\tilde{\phi}_{k_1,\ldots,k_n}} (60,-92) ; (60,-96)};
                    {\ar@{=>}^{\tilde{\phi}_{m_{11}, \ldots, m_{nk_n}}} (30,-108) ; (30,-112)};
                    {\ar@{=} (45,-58) ; (45,-62)};
                \endxy
            \]
        \item Each pasting diagram of the following form is an identity.
            \[
                \xy
                    (0,0)*+{P_n \times X^n}="00";
                    (12,-12)*+{P_n \times (1 \times X)^n}="11";
                    (24,-24)*+{P_n \times (P_1 \times X)^n}="22";
                    (60,-24)*+{P_n \times X^n}="32";
                    (60,-48)*+{X}="34";
                    (24,-36)*+{P_n \times P_1^n \times X^n}="23";
                    (24,-48)*+{P_n \times X^n}="24";
                    {\ar@/^2.5pc/^{1} "00" ; "32"};
                    {\ar^{\tilde{\alpha}_n} "32" ; "34"};
                    {\ar^{\cong} "00" ; "11"};
                    {\ar^>>>{1 \times \left(\eta^P \times 1\right)^n} "11" ; "22"};
                    {\ar^>>>>>>{1 \times \tilde{\alpha}_1^n} "22" ; "32"};
                    {\ar@/_3pc/_{1} "00" ; "24"};
                    {\ar_{\cong} "22" ; "23"};
                    {\ar_{\mu^P \times 1} "23" ; "24"};
                    {\ar_{\tilde{\alpha}_n} "24" ; "34"};
                    {\ar@{=>}^{1 \times \tilde{\phi}_\eta^n} (32,-8) ; (32,-12)};
                    {\ar@{=>}^{\tilde{\phi}_{1,\ldots,1}} (40,-34) ; (40,-38)};
                \endxy
            \]
    \end{itemize}

\end{Defi}

\begin{Defi}[(Strict algebra, $\Lambda$-operad version)]\label{Defi:strictalgebra-lop}
Let $P$ be a $\Lambda$-operad. A \textit{strict algebra} for $P$ consists of a pseudoalgebra in which all of the isomorphisms $\phi$ are identities.
\end{Defi}

\begin{Defi}[(Pseudomorphism, $\Lambda$-operad version)]\label{Defi:pseudomorphism-lop}
Let $(X, \alpha_n,\phi,\phi_\eta)$ and $(Y, \beta_n,\psi,\psi_{\eta})$ be pseudoalgebras for a $\Lambda$-operad $P$. A $P$-\textit{pseudomorphism} consists of
    \begin{itemize}
        \item a functor $f \colon X \rightarrow Y$
        \item for each $n \in \mathbb{N}$, a natural isomorphism $f_n$ (corresponding, via \cref{conv:equiv-maps,rem:lim-v-2lim}) to a natural isomorphism
            \[
                \xy
                    (0,0)*+{P_n \times X^n}="00";
                    (20,0)*+{X}="10";
                    (0,-15)*+{P_n \times Y^n}="01";
                    (20,-15)*+{Y}="11";
                    {\ar^>>>>>{\tilde{\alpha}_n} "00" ; "10"};
                    {\ar^{f} "10" ; "11"};
                    {\ar_{1 \times f^n} "00" ; "01"};
                    {\ar_>>>>>{\tilde{\beta}_n} "01" ; "11"};
                    {\ar@{=>}^{\overline{f}_n} (10,-5.5) ; (10,-9.5)};
                \endxy
            \]

        \end{itemize}
satisfying the following axioms.
    \begin{itemize}
        \item The following equality of pasting diagrams holds.
            \[
                \xy
                    (0,0)*+{\scriptstyle P_n \times \prod_i (P_{k_i} \times X^{k_i})}="00";
                    (50,0)*+{\scriptstyle P_n \times \prod_i (P_{k_i} \times Y^{k_i})}="10";
                    (0,-25)*+{\scriptstyle P_{\Sigma k_i} \times X^{\Sigma k_i}}="01";
                    (50,-25)*+{\scriptstyle P_{\Sigma k_i} \times Y^{\Sigma k_i}}="11";
                    (75,-15)*{\scriptstyle P_n \times Y^n}="21";
                    (75,-40)*+{\scriptstyle Y}="22";
                    (25,-40)*+{\scriptstyle X}="02";
                    {\ar^{1 \times \prod(1 \times f^{k_i})} "00" ; "10"};
                    {\ar^{1 \times \prod \tilde{\beta}_{k_i}} "10" ; "21"};
                    {\ar_{\mu^P \times 1} "00" ; "01"};
                    {\ar_{\tilde{\alpha}_{\Sigma k_i}} "01" ; "02"};
                    {\ar_{f} "02" ; "22"};
                    {\ar^{1 \times f^{\Sigma k_i}} "01" ; "11"};
                    {\ar_{\tilde{\beta}_{\Sigma k_i}} "11" ; "22"};
                    {\ar_{\mu^P \times 1} "10" ; "11"};
                    {\ar^{\tilde{\beta}_n} "21" ; "22"};
                    {\ar@{=>}^{\overline{f}_n} (37.5,-30.5) ; (37.5,-34.5)};
                    {\ar@{=>}^{\tilde{\psi}_{k_1,\ldots,k_n}} (57.5,-16.5) ; (57.5,-20.5)};
                    (0,-55)*+{\scriptstyle P_n \times \prod_i (P_{k_i} \times X^{k_i})}="b00";
                    (50,-55)*+{\scriptstyle P_n \times \prod_i (P_{k_i} \times Y^{k_i})}="b10";
                    (0,-80)*+{\scriptstyle P_{\Sigma k_i} \times X^{\Sigma k_i}}="b01";
                    (25,-70)*+{\scriptstyle P_n \times X^n}="b11";
                    (75,-70)*{\scriptstyle P_n \times Y^n}="b21";
                    (75,-95)*+{\scriptstyle Y}="b22";
                    (25,-95)*+{\scriptstyle X}="b02";
                    {\ar^{1 \times \prod(1 \times f^{k_i})} "b00" ; "b10"};
                    {\ar^{1 \times \prod \tilde{\beta}_{k_i}} "b10" ; "b21"};
                    {\ar_{\mu^P \times 1} "b00" ; "b01"};
                    {\ar_{\tilde{\alpha}_{\Sigma k_i}} "b01" ; "b02"};
                    {\ar_{f} "b02" ; "b22"};
                    {\ar^{\tilde{\beta}_n} "b21" ; "b22"};
                    {\ar^{1 \times \prod \tilde{\alpha}_{k_i}} "b00" ; "b11"};
                    {\ar^{1 \times f^n} "b11" ; "b21"};
                    {\ar_{\tilde{\alpha}_n} "b11" ; "b02"};
                    {\ar@{=>}^{\overline{f}_n} (50,-80.5) ; (50,-84.5)};
                    {\ar@{=>}^{1 \times \prod\overline{f}_{k_i}} (37.5,-60.5) ; (37.5,-64.5)};
                    {\ar@{=>}^{\tilde{\phi}_{k_1,\ldots,k_n}} (9,-72) ; (9,-76)};
                    {\ar@{=} (37.5,-45.5) ; (37.5,-49.5)};
                \endxy
            \]
            \item The following equality of pasting diagrams holds.
                \[
                    \xy
                        (0,0)*+{X}="00";
                        (20,0)*+{Y}="10";
                        (0,-15)*+{1 \times X}="01";
                        (20,-15)*+{1 \times Y}="11";
                        (0,-30)*+{P_1 \times X}="02";
                        (20,-30)*+{P_1 \times Y}="12";
                        (20,-45)*+{X}="r02";
                        (40,-45)*+{Y}="r12";
                        {\ar^{f} "00" ; "10"};
                        {\ar@/^2pc/^{1} "10" ; "r12"};
                        {\ar_{\cong} "00" ; "01"};
                        {\ar_{\eta^P \times 1} "01" ; "02"};
                        {\ar_{\tilde{\alpha}_1} "02" ; "r02"};
                        {\ar^{1 \times f} "01" ; "11"};
                        {\ar^{1 \times f} "02" ; "12"};
                        {\ar^{\tilde{\beta}_1} "12" ; "r12"};
                        {\ar_{\cong} "10" ; "11"};
                        {\ar_{\eta^P \times 1} "11" ; "12"};
                        {\ar_{f} "r02" ; "r12"};
                        {\ar@{=>}^{\overline{f}_1} (20,-35.5) ; (20,-39.5)};
                        {\ar@{=>}^{\tilde{\psi}_{\eta}} (30,-20) ; (30,-24)};
                        (60,0)*+{X}="x00";
                        (80,0)*+{Y}="x10";
                        (60,-15)*+{1 \times X}="x01";
                        (60,-30)*+{P_1 \times X}="x02";
                        (80,-45)*+{X}="xr02";
                        (100,-45)*+{Y}="xr12";
                        {\ar^{f} "x00" ; "x10"};
                        {\ar@/^2pc/^{1} "x10" ; "xr12"};
                        {\ar_{\cong} "x00" ; "x01"};
                        {\ar_{\eta^P \times 1} "x01" ; "x02"};
                        {\ar_{\tilde{\alpha}_1} "x02" ; "xr02"};
                        {\ar_{f} "xr02" ; "xr12"};
                        {\ar@/^2pc/^{1} "x00" ; "xr02"};
                        {\ar@{=>}^{\tilde{\phi}_\eta} (70,-20) ; (70,-24)};
                        {\ar@{=} (45,-22.5) ; (49,-22.5)};
                    \endxy
                \]
    \end{itemize}
\end{Defi}

\begin{Defi}[(Strict morphism, $\Lambda$-operad version)]\label{Defi:strictmorphism-lop}
Let $(X, \alpha_n,\phi,\phi_\eta)$ and $(Y, \beta_n,\psi,\psi_{\eta})$ be pseudoalgebras for a $\Lambda$-operad $P$. A \textit{strict morphism} is a pseudomorphism in which all of the isomorphisms $\overline{f}_{n}$ are identities.
\end{Defi}

\begin{rem}
A strict algebra for a $\Lambda$-operad $P$ in $\mb{Cat}$ is precisely the same thing as an algebra for $P$ considered as an operad in the \textit{category} of small categories and functors. A strict morphism between strict algebras is then just a map of $P$-algebras in the standard sense. We could also consider the notion of a lax algebra for an operad, or a lax morphism of algebras, simply by considering natural transformations in place of isomorphisms in the definitions.
\end{rem}

\begin{rem}[(Equivariance axioms, or lack thereof)]\label{rem:eq-lack}
In the version of \cref{Defi:pseudomorphism-lop} that appeared in the original preprint \cite[Definition 2.4]{cg-preprint}, we did not state clearly that the isomorphisms $\overline{f}_n$ should satisfy an equivariance condition. This was highlighted in Remark 2.22 of Rubin's thesis \cite{rubin-thesis}. Similarly, this omission is also explicity stated as Definition 2.23 of \cite{guillou_symmetric}, as mentioned in \cite{guillou_multiplicative}. These equivariance axioms are a consequence of \cref{conv:equiv-maps,rem:lim-v-2lim}. In \cref{Defi:pseudomorphism-lop} we require the existence of natural isomorphisms $f_n$ in order to induce corresponding natural isomorphisms $\overline{f}_n$. That the $\overline{f}_n$ are induced by the $f_n$ corresponds to the fact that the $\overline{f}_n$ satisfy an equivariance condition, namely that for $(\sigma, g, x_1, \ldots, x_n) \in P(n) \times \Lambda(n) \times X^n$, we have
  \[
    \left(\overline{f}_n\right)_{\left(\sigma \cdot g, x_1, \ldots, x_n\right)} = \left(\overline{f}_n\right)_{\left(\sigma,x_{g^{-1}(1)},\ldots,x_{g^{-1}(n)}\right)}.
  \]
\end{rem}

\begin{Defi}[($P$-transformation, $\Lambda$-operad version)]\label{Defi:Ptrans}
Let $P$ be a $\Lambda$-operad and let $f, g \colon (X, \alpha, \phi, \phi_\eta) \rightarrow (Y, \beta, \psi, \psi_\eta)$ be pseudomorphisms of $P$-pseudoalgebras. A \textit{$P$-transformation} is then a natural transformation $\gamma \colon f \Rightarrow g$ such that the following equality of pasting diagrams holds, for all $n$.
    \[
        \xy
            (0,0)*+{P_n \times X^n}="00";
            (30,0)*+{P_n \times Y^n}="10";
            (0,-20)*+{X}="01";
            (30,-20)*+{Y}="11";
            {\ar@/^1.5pc/^{1 \times f^n} "00" ; "10"};
            {\ar_{1 \times g^n} "00" ; "10"};
            {\ar^{\tilde{\beta}_n} "10" ; "11"};
            {\ar_{\tilde{\alpha}_n} "00" ; "01"};
            {\ar_{g} "01" ; "11"};
            {\ar@{=>}^{1 \times \gamma^n} (13.5,5.5) ; (13.5,1.5)};
            {\ar@{=>}^{\overline{g}_n} (13.5,-8) ; (13.5,-12)};
            (60,0)*+{P_n \times X^n}="x00";
            (90,0)*+{P_n \times Y^n}="x10";
            (60,-20)*+{X}="x01";
            (90,-20)*+{Y}="x11";
            {\ar^{1 \times f^n} "x00" ; "x10"};
            {\ar^{\tilde{\beta}_n} "x10" ; "x11"};
            {\ar_{\tilde{\alpha}_n} "x00" ; "x01"};
            {\ar^{f} "x01" ; "x11"};
            {\ar@/_1.5pc/_{g} "x01" ; "x11"};
            {\ar@{=>}^{\gamma} (75,-21.5) ; (75,-25.5)};
            {\ar@{=>}^{\overline{f}_n} (75,-8) ; (75,-12)};
            {\ar@{=} (42.75,-10) ; (46.75,-10)};
        \endxy
    \]
\end{Defi}

We can form various $2$-categories using these cells.

\begin{Defi}[(2-categories of algebras, $\Lambda$-operad version)]\label{Defi:2cat-of-algs-lop}
Let $P$ be a $\Lambda$-operad.
\begin{itemize}
\item The $2$-category $P\mbox{-}\mb{Alg}_{s}$ consists of strict $P$-algebras, strict morphisms, and $P$-transformations.
\item The $2$-category $\mb{Ps}\mbox{-}P\mbox{-}\mb{Alg}$ consists of $P$-pseudoalgebras, pseudomorphisms, and $P$-transformations.
\end{itemize}
\end{Defi}

Our first main result in this section is the following, showing that one can consider algebras and higher cells, in either strict or pseudo strength, using either the operadic or $2$-monadic incarnation of a $\Lambda$-operad $P$. This theorem extends \cref{prop:op=monad1} to the 2-dimensional context.

\begin{thm}\label{thm:2monad=op}
Let $P$ be a $\Lambda$-operad in $\mb{Cat}$, and let $\underline{P}$ denote the monad on the category of small categories from \cref{Defi:und-P}.
\begin{itemize}
\item The monad $\underline{P}$ is the underlying monad of a 2-monad on the 2-category $\mb{Cat}$ that we also denote $\underline{P}$.
\item There is an isomorphism of $2$-categories
    \[
        P\mbox{-}\mb{Alg}_{s} \cong \underline{P}\mbox{-}\mb{Alg}_{s}.
    \]
\item There is an isomorphism of $2$-categories
    \[
        \mb{Ps}\mbox{-}P\mbox{-}\mb{Alg} \cong \mb{Ps}\mbox{-}\underline{P}\mbox{-}\mb{Alg}
    \]
    extending the one above.
\end{itemize}
\end{thm}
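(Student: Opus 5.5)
The plan is to first make $\underline{P}$ into a 2-functor. Because $\mb{Cat}$ is cartesian closed, the constructions $X \mapsto P(n) \times X^n$ and the coequalizer $\coeq{P}{X}{\Lambda}{n}$ are $\mb{Cat}$-enriched. On a natural transformation $\gamma \colon f \Rightarrow g$ we take $\underline{P}(\gamma)$ to be the transformation induced by $1 \times \gamma^n$ on each summand. This descends to the coequalizer by \cref{rem:lim-v-2lim}: coequalizers in $\mb{Cat}$ of this shape are 2-colimits, and $1\times\gamma^n$ is compatible with the two maps $P(n)\times\Lambda(n)\times X^n \rightrightarrows P(n)\times X^n$, since the $\Lambda(n)$-action on $X^n$ only permutes factors. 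Strict 2-functoriality then holds componentwise. The multiplication $\mu$ and unit $\eta$ of \cref{prop:op=monad1} are built from $\mu^P$ and $\id$ on the first factor together with the canonical rebracketing isomorphisms on the $X$ factors. Both are therefore 2-natural: the 2-naturality squares reduce, after precomposing with the coequalizer maps, to the evident identities $(\mu^P \times 1)\circ(1\times \prod(1\times\gamma^{k_i})) = (1\times\gamma^{\sum k_i})\circ(\mu^P\times 1)$. The monad axioms are those already verified in \cref{prop:op=monad1}, so the first bullet follows.

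For the strict isomorphism, we use that strict $\underline{P}$-algebras and strict morphisms are algebras and morphisms of the underlying monad. Hence on objects and 1-cells this is \cref{prop:op=monad1} applied to $\mb{Cat}$ in place of $\mb{Sets}$. The argument there goes through verbatim, using the universal property of $\underline{P}X = \coprod_n \coeq{P}{X}{\Lambda}{n}$ and \cref{conv:equiv-maps}. For 2-cells, a $\underline{P}$-transformation $\gamma$ is a 2-cell for which the pasting condition of \cref{Defi:Ttrans} holds on $\underline{P}X$. By the 2-dimensional universal property of the coproduct and coequalizers (\cref{rem:lim-v-2lim}), this is equivalent to the condition holding after restriction along each $P(n)\times X^n \to \underline{P}X$, which is exactly \cref{Defi:Ptrans} (with identity $\overline{f}_n,\overline{g}_n$).

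For the pseudo version, we translate each piece of data using the same 2-dimensional universal property. A 1-cell $\alpha\colon\underline{P}X\to X$ is a family of $\alpha_n$. The invertible 2-cell $\Phi$ lives on $\underline{P}^2X$. Expanding $\underline{P}^2X$ as a coproduct over $n,k_1,\ldots,k_n$ of coequalized pieces of $P(n)\times\prod_i(P(k_i)\times X^{k_i})$, using that products in $\mb{Cat}$ preserve these reflexive coequalizers and coproducts, the 2-cell $\Phi$ corresponds to the family $\phi_{k_1,\ldots,k_n}$, together with their equivariance compatibility. $\Phi_\eta$ corresponds to $\phi_\eta$, a pseudomorphism cell $\bar f$ corresponds to the family $\overline f_n$, and the pasting axioms on $\underline{P}^3X$ and $\underline{P}X$ correspond, summand by summand, to the axioms of \cref{def:ps-alg-lop} and \cref{Defi:pseudomorphism-lop}. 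Composition and identities match because they are computed by pasting, which commutes with restriction to summands. This gives an isomorphism of 2-categories restricting to the strict one.

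The main obstacle is the bookkeeping in identifying $\underline{P}^2X$ and $\underline{P}^3X$ as colimits over the indexed products, and in checking that whiskering by $\underline{P}\alpha$, $\mu_X$, and $\underline{P}\Phi$ corresponds to the operadic composites appearing in the pasting diagrams. I would handle this by proving once a lemma that $\underline{P}^2X \cong \coprod \big(P(n)\times\prod_i (P(k_i)\times X^{k_i})\big)$ modulo the $\Lambda(n)\times\prod\Lambda(k_i)$-actions, as a 2-colimit, naturally in $X$. Every remaining verification then reduces to restricting along the canonical maps out of $P(n)\times\prod_i(P(k_i)\times X^{k_i})$.
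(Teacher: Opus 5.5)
Your proposal is correct and follows essentially the same route as the paper, which builds mutually inverse 2-functors $R$ and $S$ by restricting along, or inducing out of, the maps $P(n)\times X^n \to \underline{P}(X)$; there the components of $\Phi$ at $[p;[q_1;\underline{x}_1],\ldots,[q_n;\underline{x}_n]]$ are taken to be those of $\phi_{|\underline{x}_1|,\ldots,|\underline{x}_n|}$, which is exactly your summand-by-summand translation via the 2-dimensional universal property of coproducts and coequalizers. The differences are minor: the paper obtains the strict isomorphism by inserting identities into the pseudo one instead of first invoking the 1-dimensional monad correspondence, and it leaves the 2-functoriality of $\underline{P}$ and your colimit description of $\underline{P}^2X$ implicit.
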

\begin{proof}
We begin by noting that we will suppress the difference between $2$-cells $\Gamma$ and the corresponding 2-cells $\tilde{\Gamma}$ by applying the 2-dimensional part of \cref{rem:lim-v-2lim} to \cref{conv:equiv-maps}. 
A proof of the first statement follows from our proof of the second by inserting identities where appropriate. Thus we begin by constructing a $2$-functor $R \colon \mb{Ps}\mbox{-}\underline{P}\mbox{-}\mb{Alg} \rightarrow \mb{Ps}\mbox{-}P\mbox{-}\mb{Alg}$. We map a $\underline{P}$-pseudoalgebra $(X,\alpha,\Phi,\Phi_\eta)$ to the following $P$-pseudoalgebra structure on the same category $X$. First we define the functor $\alpha_n$ to be the composite
    \[
        \xy
            (0,0)*+{\coeq{P}{X}{\Lambda}{n}}="00";
            (35,0)*+{\underline{P}(X)}="10";
            (55,0)*+{X.}="20";
            {\ar@{^{(}->} "00" ; "10"};
            {\ar^{\alpha} "10" ; "20"};
        \endxy
    \]
The isomorphisms $\phi_{k_1,\ldots,k_n}$ are defined using $\Phi$ as in the following diagram

    \[
        \xy
            (-10,0)*+{\scriptstyle P_n \times \prod_{i=1}^n\left(P_{k_i} \times X^{k_i}\right)}="00";
            (30,0)*+{\scriptstyle P_n \times \prod_i \left( P_{k_i} \times_{\Lambda_{k_i}} X^{k_i} \right)}="10";
            (60,0)*+{\scriptstyle P_n \times \underline{P}(X)^n}="20";
            (90,0)*+{\scriptstyle P_n \times X^n}="30";
            (-10,-20)*+{\scriptstyle P_n \times \prod_{i} P_{k_i} \times X^{\Sigma k_I}}="01";
            (-10,-40)*+{\scriptstyle P_{\Sigma k_i} \times X^{\Sigma k_{i}}}="02";
            (60,-10)*+{\scriptstyle P_n \times_{\Lambda_n} \underline{P}(X)^n}="21";
            (60,-20)*+{\scriptstyle \underline{P}^2(X)}="22";
            (90,-10)*+{\scriptstyle P_n \times_{\Lambda_n} X^n}="31";
            (90,-20)*+{\scriptstyle \underline{P}(X)}="32";
            (30,-40)*+{\scriptstyle P_{\Sigma k_i} \times_{\Lambda_{\Sigma k_i}} X^{\Sigma k_i}}="12";
            (60,-40)*+{\scriptstyle \underline{P}(X)}="23";
            (90,-40)*+{\scriptstyle X}="33";
            {\ar "00" ; "10"};
            {\ar "00" ; "01"};
            {\ar_{\mu^P \times 1} "01" ; "02"};
            {\ar@{^{(}->} "10" ; "20"};
            {\ar "20" ; "21"};
            {\ar^{1 \times \alpha^n} "20" ; "30"};
            {\ar "30" ; "31"};
            {\ar@{^{(}->} "21" ; "22"};
            {\ar^{\underline{P}\alpha} "22" ; "32"};
            {\ar@{^{(}->} "31" ; "32"};
            {\ar_{\mu_X} "22" ; "23"};
            {\ar_{\alpha} "23" ; "33"};
            {\ar^{\alpha} "32" ; "33"};
            {\ar "02" ; "12"};
            {\ar@{^{(}->} "12" ; "23"};
            {\ar@{=>}^{\Phi} (75,-28) ; (75,-32)};
        \endxy
    \]
\noindent whilst $\phi_\eta$ is defined to be $\Phi_{\eta}$, since the composition of $\alpha$ with the composite of the coequalizer and inclusion map from $P(1) \times X$ into $\underline{P}(X)$ is just $\tilde{\alpha_1}$. 
It is straightforward to verify the $P$-pseudoalgebra axioms from the $\underline{P}$-pseudoalgebra on components, and we leave that to the reader.

For a $1$-cell $(f,\overline{f}) \colon (X, \alpha) \rightarrow (Y, \beta)$, we send $f$ to itself whilst sending $\overline{f}$ to the obvious family of isomorphisms, as follows.
    \[
        \xy
            (-30,0)*+{P(n) \times X^n}="-10";
            (-30,-15)*+{P(n) \times Y^n}="-11";
            (0,0)*+{\coeq{P}{X}{\Lambda}{n}}="00";
            (30,0)*+{\underline{P}(X)}="10";
            (60,0)*+{X}="20";
            (0,-15)*+{\coeq{P}{Y}{\Lambda}{n}}="01";
            (30,-15)*+{\underline{P}(Y)}="11";
            (60,-15)*+{Y}="21";
            {\ar@{^{(}->} "00" ; "10"};
            {\ar^{\alpha} "10" ; "20"};
            {\ar_{1 \times f^n} "00" ; "01"};
            {\ar_{\underline{P}f} "10" ; "11"};
            {\ar^{f} "20" ; "21"};
            {\ar@{^{(}->} "01" ; "11"};
            {\ar_{\beta} "11" ; "21"};
            {\ar "-10" ; "00"};
            {\ar "-11" ; "01"};
            {\ar_{1 \times f^n} "-10" ; "-11"};
            {\ar@{=>}^{\overline{f}} (45,-5.5) ; (45,-9.5)};
        \endxy
    \]
\noindent It is easy to check that the above data satisfy the axioms for being a pseudomorphism of $P$-pseudoalgebras, following from the axioms for $(f,\overline{f})$ being a pseudomorphism of $\underline{P}$-pseudoalgebras. A $\underline{P}$-transformation $\gamma \colon (f, \bar{f}) \Rightarrow (g, \bar{g})$ immediately gives a $P$-transformation $\bar{\gamma}$ between the families of isomorphisms we previously defined, with the components of $\bar{\gamma}$ being precisely those of $\gamma$. It is then easily shown that $R$ is a $2$-functor.

For there to be an isomorphism of $2$-categories, we require an inverse to $R$, namely a $2$-functor $S \colon \mb{Ps}\mbox{-}P\mbox{-}\mb{Alg} \rightarrow \mb{Ps}\mbox{-}\underline{P}\mbox{-}\mb{Alg}$. Now assume that $(X, \alpha_n, \phi_{\underline{k}_i}, \phi_\eta)$ is a $P$-pseudoalgebra. We will give the same object $X$ a $\underline{P}$-pseudoalgebra structure. We can induce a functor $\alpha \colon \underline{P}(X) \rightarrow X$ by using the universal property of the coproduct.
    \[
        \xy
            (-30,0)*+{P(n) \times X^n}="-10";
            (0,0)*+{\coeq{P}{X}{\Lambda}{n}}="00";
            (30,0)*+{\underline{P}(X)}="10";
            (30,-15)*+{X}="11";
            {\ar "-10" ; "00"};
            {\ar^{\alpha_n} "00" ; "11"};
            {\ar@{^{(}->} "00" ; "10"};
            {\ar^{\exists ! \alpha} "10" ; "11"};
            {\ar_{\tilde{\alpha}_n} "-10" ; "11"};
        \endxy
    \]
\noindent This can be induced using either $\alpha_n$ or $\tilde{\alpha}_n$, each giving the same functor $\alpha$ by uniqueness. The components of the isomorphism $\Phi \colon \alpha \circ \underline{P}(\alpha) \Rightarrow \alpha \circ \mu_X$ can be given as follows. Let $\left|\underline{x}_i\right|$ denote the number of objects in the list $\underline{x}_i$. Then define the component of $\Phi$ at the object
    \[
        \big[p;\left[q_1;\underline{x}_1\right],\ldots,\left[q_n;\underline{x}_n\right]\big]
    \]
to be the component of $\phi_{\left|\underline{x}_1\right|, \ldots, |\underline{x}_n|}$ at the same object. 
Define the isomorphism $\Phi_{\eta}$ to be $\phi_\eta$.

Now given a $1$-cell $f$ with structure $2$-cells $\overline{f}_n$ we define a $1$-cell $(F,\overline{F})$ with underlying $1$-cell $f$ and structure $2$-cell $\overline{F}$ with components
    \[
        \overline{F}_{[p;x_1, \ldots, x_n]} := \left(\overline{f}_{n}\right)_{(p;x_1,\ldots,x_n)}.
    \]
The mapping for $2$-cells sends $\gamma$ to $\gamma$ as before. 
It is now easy to verify that $S$ is an inverse for $R$, completing the proof of the isomorphism.
\end{proof}

\begin{rem}\label{rem:lop-alg-E}
Every category $C$ determines an endomorphism operad $\mathcal{E}_C$ in $\mb{Cat}$ by defining
\[
\mathcal{E}_C(n) = [C^n, C],
\]
where the square brackets indicate the functor category.
While $\mathcal{E}_C$ is naturally a symmetric operad, it can be given the structure of a $\Lambda$-operad for any action operad $(\Lambda, \pi)$ using $\pi^*$ from \cref{thm:pbaop}.
The reader can verify that strict $P$-algebra structures are in bijection with strict maps of $\Lambda$-operads $P \to \pi^*\mathcal{E}_C$, and pseudo-$P$-algebra structures are in bijection with pseudomorphisms of $\Lambda$-operads $P \to \pi^*\mathcal{E}_C$.
It is possible to develop analogues of \cref{lem:alg=map} and \cref{cor:pi-star}, but we do not pursue this line of research here.
\end{rem}

We finish this section by studying a special case of algebras over a $\Lambda$-operad in $\mb{Cat}$ that we call $\Lambda$-monoidal categories.
These generalize the various kinds of monoidal categories (plain, symmetric, and braided) to any action operad $\Lambda$.
In order to define $\Lambda$-monoidal categories, we must first construct the operads for which they will be algebras.

\begin{Defi}[($E$ and $B$)]\label{Defi:e_b}
We define the constructions $E$ and $B$ as follows.
  \begin{enumerate}
    \item Let $X$ be a set. We define the \textit{translation category} $EX$ to have objects the elements of $X$ and morphisms consisting of a unique isomorphism between any two objects.
    \item Let $G$ be a group. The category $BG$ has a single object $*$, and hom-set $BG(*,*) = G$ with composition and identity given by multiplication and the unit element in the group, respectively.
  \end{enumerate}
\end{Defi}

The following lemma is straightforward to verify.

\begin{lem}\label{lem:symmoncor}
The functor $E \colon \mb{Sets} \rightarrow \mb{Cat}$ is right adjoint to the set of objects functor. Therefore $E$ preserves all limits, and in particular is a symmetric monoidal functor when both categories are equipped with their cartesian monoidal structures.
\end{lem}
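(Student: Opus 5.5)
The plan is to exhibit the adjunction $\mathrm{ob} \dashv E$ by an explicit natural bijection of hom-sets, and then obtain both remaining claims formally from general facts about right adjoints.

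Let $C$ be a small category and $X$ a set. We will show that functors $F \colon C \to EX$ correspond bijectively to functions $f \colon \mathrm{ob}\,C \to X$, naturally in $C$ and $X$. Given $F$, restricting to objects gives $f = \mathrm{ob}\,F$. Conversely, given $f$, define $F$ on objects by $f$. A morphism $p \colon c \to d$ is sent to the unique morphism $f(c) \to f(d)$ in $EX$. Functoriality holds automatically, because each hom-set of $EX$ is a singleton, so any two parallel composites in $EX$ agree.

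The two constructions are inverse to each other. This again uses that a functor into $EX$ is determined by its action on objects, since each hom-set of $EX$ has exactly one element. Naturality in $C$ amounts to compatibility with precomposition by a functor $C' \to C$, and naturality in $X$ amounts to compatibility with postcomposition by $E(g)$ for a function $g \colon X \to Y$. Here $E(g)$ is defined as $g$ on objects and is forced on morphisms. Both naturality conditions are immediate, as everything is determined on objects. (Equivalently, one could write down the unit $C \to E(\mathrm{ob}\,C)$ and observe that the counit $\mathrm{ob}\,EX \to X$ is the identity, then check the triangle identities; this route is equally routine.)

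With the adjunction in hand, $E$ is a right adjoint and therefore preserves all limits that exist, in particular all small limits. Finally, consider the cartesian monoidal structures on $\mb{Sets}$ and $\mb{Cat}$. A functor that preserves finite products, including the terminal object, is strong monoidal for these structures: the comparison maps $EX \times EY \to E(X \times Y)$ and $1 \to E(1)$ are the canonical isomorphisms. The monoidal coherence axioms and compatibility with the symmetries follow from the universal property of products, since all the maps involved are induced maps into products. Hence $E$ is a symmetric monoidal functor.

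There is no real obstacle here. The only point needing care is the observation that a functor into $EX$ is determined by its object function, which is what drives every step.
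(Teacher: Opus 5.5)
Your proposal is correct and is exactly the routine verification the paper has in mind; the paper gives no argument beyond calling the lemma straightforward. Your key observation is that a functor into the indiscrete category $EX$ is determined by its object function, which gives $\mathrm{ob} \dashv E$; limit preservation and the strong symmetric monoidal structure then follow formally, because $E$ is a right adjoint and so preserves finite products.
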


\begin{cor}\label{cor:elambda_lambdaop}
Let $\Lambda$ be an action operad. Then $\EL = \{ E\Lambda(n) \}_{n \in \mathbb{N}}$ is a $\Lambda$-operad in $\mb{Cat}$.
\end{cor}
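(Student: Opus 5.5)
The plan is to transport the $\Lambda$-operad structure of $\Lambda$ on itself, given by \cref{prop:gisgop}, along the functor $E$, using \cref{lem:symmoncor}. There are two things to produce: the underlying operad structure and the group actions. Both come from applying $E$ to structure maps of sets, with the axioms following from functoriality of $E$ together with the fact that $E$ preserves products.

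For the operad structure, recall that $E$ is a right adjoint, so \cref{lem:symmoncor} gives canonical isomorphisms
\[
E\Lambda(n) \times E\Lambda(k_1) \times \cdots \times E\Lambda(k_n) \cong E\big(\Lambda(n) \times \Lambda(k_1) \times \cdots \times \Lambda(k_n)\big),
\]
and $E(1) \cong 1$. We then define the operadic multiplication on $\EL$ as the composite of this isomorphism with $E\mu$, and the unit $\id \in E\Lambda(1)$ as the object $e_1$. Concretely, on objects this is just $\mu$. On morphisms, the unique arrow from $(g;\underline{f})$ to $(g';\underline{f'})$ is sent to the unique arrow from $\mu(g;\underline{f})$ to $\mu(g';\underline{f'})$, so it is automatically a functor. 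The unit and associativity axioms are equalities of functors into translation categories. Any two functors into $EX$ that agree on objects are equal, since morphisms are uniquely determined by their endpoints. The axioms therefore reduce to the corresponding axioms for $\Lambda$ on objects, which hold because $\Lambda$ is an operad. Alternatively, they follow formally from $E$ being a (strong) symmetric monoidal functor, which carries operads to operads.

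For the actions, the right action $\Lambda(n) \times \Lambda(n) \to \Lambda(n)$ by group multiplication is a map of sets. Using the discrete-category convention \cref{conv:set-disc}, we let $\Lambda(n)$ act on $E\Lambda(n)$ by $x \cdot g = xg$ on objects, and by sending the unique arrow $x \to y$ to the unique arrow $xg \to yg$. This is a strict functorial action in the sense of \cref{conv:group-act-cat}: each $-\cdot g$ is a functor because $E$ is a functor, and the relations $x\cdot(gh) = (x\cdot g)\cdot h$ and $x\cdot e = x$ hold on objects and hence on morphisms by uniqueness. Equivalently, the action is $E$ applied to the multiplication map $\Lambda(n) \times \Lambda(n) \to \Lambda(n)$, precomposed with the inclusion $d\Lambda(n) \to E\Lambda(n)$ on the second factor.

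It remains to check the two equivariance axioms of \cref{Defi:lamop}. Both sides of each axiom are functors landing in the translation category $E\Lambda(k_1 + \cdots + k_n)$, so it suffices to check agreement on objects. On objects, the axioms are exactly the equivariance axioms for $\Lambda$ as a $\Lambda$-operad, and these hold by \cref{prop:gisgop}; equivalently, they are special cases of the action operad axiom \cref{eqn:ao_axiom}. The only step needing any care is the observation that functors into $EX$ are determined by their object functions. That observation is immediate from the definition of $EX$, so I expect no real obstacle: the proof is essentially a formal consequence of \cref{lem:symmoncor} and \cref{prop:gisgop}.
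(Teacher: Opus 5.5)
Your proposal is correct and follows the paper's proof: both define the action by right multiplication on objects (extended uniquely to morphisms), take $e_1$ as the unit, and obtain the operadic multiplication from that of $\Lambda$ via the adjunction $\mathrm{ob} \dashv E$ of \cref{lem:symmoncor}. The axioms then reduce to \cref{prop:gisgop} on objects. Your observation that functors into a translation category are determined by their object functions is exactly the content the paper extracts from that adjunction.
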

\begin{proof}
We have already defined the categories $E\Lambda(n)$, and the right $\Lambda(n)$-action on $E\Lambda(n)$ is given by multiplication in the group $\Lambda(n)$ on objects and then uniquely determined on morphisms. The object $\id \in E\Lambda(1)$ is $e_1 \in \Lambda(1)$. The operadic multiplication
\[
\mu \colon E\Lambda(n) \times E\Lambda(k_{1}) \times \cdots \times E\Lambda(k_{n}) \rightarrow E\Lambda(k_{1} + \cdots + k_{n})
\]
corresponds by adjointness to a function
\[
\mu' \colon \textrm{ob}\big( E\Lambda(n) \times E\Lambda(k_{1}) \times \cdots \times E\Lambda(k_{n}) \big) \to \Lambda(k_{1} + \cdots + k_{n}).
\]
Since the set of objects functor itself preserves products, and we have an equality $\textrm{ob}ES = S$, we define $\mu'$ to be the operadic multiplication for $\Lambda$. The axioms then all follow from the \cref{prop:gisgop}.
\end{proof}

\begin{Defi}[($\Lambda$-monoidal categories, functors, and transformations)]\label{Defi:lmc}
Let $\Lambda$ be an action operad.
\begin{itemize}
\item A \emph{$\Lambda$-monoidal category} is a strict algebra for the $\Lambda$-operad $\EL$. 
\item A \emph{$\Lambda$-monoidal functor} is a strict morphism for the $\Lambda$-operad $\EL$. 
\item A \emph{$\Lambda$-transformation} is an $\EL$-transformation.
\end{itemize}
\end{Defi}

\begin{rem}[($\EL$-algebras are $\underline{\EL}$-algebras)]\label{rem:elambda=el}
In each of the items above, we could have expressed the same concept using the $2$-monad $\underline{\EL}$ instead of the $\Lambda$-operad $\EL$ by \cref{thm:2monad=op}.
The same substitution can be made throughout without changing any of the results.
We have just chosen to state definitions and results in terms of operads rather than $2$-monads.
\end{rem}

\begin{rem}[(Strictness of $\Lambda$-monoidal categories)]\label{rem:strictness-lmc}
Note that our definition of a $\Lambda$-monoidal category involves a strict underlying monoidal structure.
We will briefly explore a version suitable for general monoidal categories in \cref{sec:coherence}, and prove a strictification result in \cref{thm:wlmc-to-lmc}.
\end{rem}

\begin{Defi}[(The 2-category of $\Lambda$-monoidal categories)]
The $2$-category $\lmc$ is the $2$-category $\EL\mbox{-}\mb{Alg}_{s}$ of strict algebras, strict morphisms, and algebra $2$-cells for $\EL$.
\end{Defi}

We end this section with a computation of the free $\Lambda$-monoidal category generated by a category $X$, the free algebra $\EL(X)$.
We will eventually show in \cref{thm:pres1,ex:S-moncats} that $\Lambda$-monoidal categories can be given in more familiar terms, as in Chapter 19 of \cite{yau_infinity_2021}.

\begin{rem}\label{rem:operadcoeq-as-quotient}
Recall that any right action $\mu \colon C \times G \to C$ can be viewed as a left action $\mu' \colon G \times C \to C$ via
\[
\mu'(g,c) = \mu(c, g^{-1}).
\]
Suppose that $P$ is a $\Lambda$-operad in $\mb{Cat}$ such that the action of $\Lambda(n)$ on $P(n) \times X^n$, given by
\[
\lambda \cdot (p, \underline{x_i}) = (p \cdot \lambda^{-1}, \underline{x_{\lambda^{-1}(i)}}),
\]
 is free for every category $X$; this hypothesis is easily verified in the case that the action of $\Lambda(n)$ on $P(n)$ is itself free, such as when $P = \EL$.
 Then the coequalizer $\coeqb{P(n)}{X^n}{\Lambda(n)}$ coincides with the one in the second part of \cref{lem:coeq-lem}, and can therefore be computed as $\left( P(n) \times X^n \right)/\Lambda(n)$.
 \end{rem}

\begin{prop}\label{prop:hom-set-lemma}
Let $\Lambda$ be an action operad and $X$ be a category. The free $\Lambda$-monoidal category generated by $X$, $\EL(X)$, is isomorphic to a category with
\begin{itemize}
\item object set $\coprod_{n \in \mathbb{N}} (\textrm{ob} X)^n$ and
\item morphism sets
\[
\EL(X)\left( (x_1, \ldots, x_m), (y_1, \ldots, y_n) \right) = \left\{ \begin{array}{cr} \emptyset, & m \neq n \\
\coprod_{g \in \Lambda(n)} \prod_{i=1}^{n} X\left(x_i, y_{g(i)}\right), & m=n. \end{array} \right.
\]
\end{itemize}
\end{prop}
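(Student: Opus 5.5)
The free $\Lambda$-monoidal category on $X$ is the free strict algebra for the 2-monad $\underline{\EL}$ of \cref{Defi:und-P} and \cref{thm:2monad=op}. It is therefore the category
\[
\underline{\EL}(X) = \coprod_{n} \coeqb{E\Lambda(n)}{X^n}{\Lambda(n)}.
\]
The plan is to compute each summand explicitly using \cref{rem:operadcoeq-as-quotient}. The right action of $\Lambda(n)$ on $E\Lambda(n)$ is right multiplication on objects, so it is free on objects. By \cref{lem:free-on-obj} it is then free on all of $E\Lambda(n)$, and hence the induced action on $E\Lambda(n) \times X^n$ is also free. Consequently, by \cref{lem:coeq-lem}, the summand is the orbit category $\left(E\Lambda(n) \times X^n\right)/\Lambda(n)$.

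Next I choose orbit representatives. Each object orbit $[g; x_1, \ldots, x_n]$ contains a unique element with $g = e_n$, namely $[e_n; x_{g^{-1}(1)}, \ldots, x_{g^{-1}(n)}]$ (up to the indexing convention of \cref{rem:aop-alg-pre}). This identifies the objects with $\coprod_n (\textrm{ob}X)^n$.

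For morphisms, take representatives $[e;\underline{x}]$ and $[e;\underline{y}]$. By the orbit description in \cref{lem:coeq-lem}, a morphism between them is the orbit of a morphism of $E\Lambda(n) \times X^n$ with source $(e,\underline{x})$ and target in the orbit of $(e,\underline{y})$. Freeness guarantees that each such orbit has exactly one representative with source exactly $(e,\underline{x})$. So a morphism is a pair consisting of
\begin{itemize}
\item the unique arrow $e \to h$ in $E\Lambda(n)$, for some $h \in \Lambda(n)$, and
\item morphisms $x_i \to (h\cdot\underline{y})_i$ in $X$, where $(h,h\cdot \underline{y})$ is the element of the orbit of $(e,\underline{y})$ with first coordinate $h$.
\end{itemize}
Unwinding the action formula gives, for each $h$, a product of hom-sets of the form $\prod_i X(x_i, y_{g(i)})$ with $g$ equal to $h$ or $h^{-1}$ according to convention. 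Reindexing over $g \in \Lambda(n)$ yields exactly the stated hom-set formula. There are no morphisms between different arities because the summands are disjoint.

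I would also record composition, since the statement only claims an isomorphism of categories and the description should be complete: composition is group multiplication in $\Lambda$ together with componentwise composition in $X$ after permuting. This follows from the composition rule $[q]\circ[p] = [g\cdot q \circ p]$ of \cref{lem:coeq-lem}.

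The main obstacle is purely bookkeeping. One must get the inverse versus non-inverse conventions in the action on tuples right, so that the final index is $y_{g(i)}$ rather than $y_{g^{-1}(i)}$. That requires consistent use of \cref{rem:Sn-tuples} and of the left action defined in \cref{rem:operadcoeq-as-quotient}. Everything else follows directly from freeness and the orbit-category description.
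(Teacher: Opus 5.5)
Your proposal is correct and follows the same route as the paper. Freeness of the $\Lambda(n)$-action lets you compute each summand of $\underline{E\Lambda}(X)$ as the orbit category of \cref{lem:coeq-lem} via \cref{rem:operadcoeq-as-quotient}, normalize object orbits to representatives with group element $e_n$, and read off the hom-sets from the orbit description of morphisms. Your extra detail on morphisms makes explicit what the paper compresses into ``reduces to the one above,'' and it also settles your convention worry: under the left action $\lambda\cdot(p,\underline{x}) = (p\lambda^{-1},\lambda\cdot\underline{x})$, the element of the orbit of $(e_n,\underline{y})$ with first coordinate $h$ is $(h, h^{-1}\cdot\underline{y})$ (not $(h, h\cdot\underline{y})$ as you wrote), whose $i$th entry is $y_{h(i)}$, so $g=h$ with no reindexing needed.
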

\begin{proof}
The $2$-monad $\underline{\EL}$ has underlying 2-functor given by
  \[
    X \mapsto \EL(X) = \coprod_{n \geq 0} \coeq{\EL}{X}{\Lambda}{n}.
  \]
The coequalizer $\coeq{\EL}{X}{\Lambda}{n}$ can be computed as the quotient $\left( \EL(n) \times X^n \right)/\Lambda(n)$ from \cref{lem:coeq-lem} using the method from \cref{rem:operadcoeq-as-quotient}.
Therefore the set of objects of $\coeq{\EL}{X}{\Lambda}{n}$ is in bijection with the set of orbits of the $\Lambda(n)$-action on $\EL(n) \times X^n$. We have the equality of orbits
\[
[g; x_{1}, \ldots, x_n] = [e_n; x_{g^{-1}(1)}, \ldots, x_{g^{-1}(n)}]
\]
for any $g \in \Lambda(n)$. Moreover, since the action is free, there is an equality of orbits
\[
[e_n; x_1, \ldots, x_n] = [e_n; y_1, \ldots, y_n]
\]
if and only if $x_i = y_i$ for all $i$. Thus the function assigning to each orbit the unique representative with group element the identity $e_n$ is an isomorphism from the set of objects of $\coeq{\EL}{X}{\Lambda}{n}$ to the set of objects of $X^n$. The formula for the morphisms given in \cref{lem:coeq-lem} then reduces to the one above.
\end{proof}

\begin{cor}\label{cor:el1=bl}
Let $\Lambda$ be an action operad. The free $\Lambda$-monoidal category on one object, $\EL(1)$, has its underlying strict monoidal category given by $B\Lambda$ with the monoidal structure from \cref{prop:Gmonoidal}.
\end{cor}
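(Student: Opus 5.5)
The plan is to specialize \cref{prop:hom-set-lemma} to the terminal category $X = 1$ and then identify the resulting strict monoidal structure with the one on $B\Lambda$ from \cref{prop:Gmonoidal}.

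\textbf{Identifying the underlying category.} Taking $X = 1$, the object set $\coprod_n (\textrm{ob}\,1)^n$ is canonically $\N$, since each $n$-tuple is the unique tuple $(*, \ldots, *)$. The hom-sets are empty when $m \neq n$. For $m = n$ they are
\[
\coprod_{g \in \Lambda(n)} \prod_{i=1}^n 1(*,*) \cong \Lambda(n).
\]
Composition must also be checked, and I would do this from the composition rule in \cref{lem:coeq-lem}. Representatives with first coordinate $e_n$ correspond to morphisms of $\EL(n)$ of the form $e_n \to g$. Composing $[e_n \to g]$ with $[e_n \to h]$ requires translating the second representative by $g$, which gives $g \to hg$ or $g \to gh$ depending on the action convention. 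In either case composition is group multiplication, up to the harmless isomorphism $g \mapsto g^{-1}$ if needed. So the underlying category is $B\Lambda$.

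\textbf{Identifying the monoidal structure.} The strict monoidal structure on the free algebra is the action of the binary operation $e_2 \in \EL(2)$. On objects, $[e_2; [e_m; \underline{*}], [e_n; \underline{*}]]$ maps under $\mu_X$ to $[\mu(e_2; e_m, e_n); \underline{*}] = [e_{m+n}; \underline{*}]$ by \cref{lem:calclem}, so the tensor is $m + n$. On morphisms, tensoring $e_m \to g$ with $e_n \to h$ and applying $\mu^{\EL}$ gives the morphism $e_{m+n} \to \mu(e_2; g, h) = \beta(g,h)$. This is exactly $g + h$ in \cref{prop:Gmonoidal}, and the unit object is $0$ in both.

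The main obstacle is keeping track of the left/right action conventions in the orbit description, so that composition really is multiplication in the correct order rather than the opposite group. If the orders come out reversed, I would fix this by composing the identification with inversion $g \mapsto g^{-1}$. Inversion is an isomorphism $B\Lambda \cong B\Lambda^{\textrm{op}}$ that is compatible with $\beta$, because $\beta$ is a homomorphism.
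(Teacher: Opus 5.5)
Your proposal is correct and follows the paper's route. The paper states this corollary without proof, as an immediate consequence of \cref{prop:hom-set-lemma} at $X = 1$, and your checks of composition and of the tensor $\mu(e_2; g, h) = \beta(g,h)$ fill in what the paper leaves implicit. No inversion is needed: translating $e_n \to h$ so that its source becomes $g$ gives $g \to hg$ under right multiplication, so composition is $hg$, in the same order as in $B\Lambda$.
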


\section{Coherence}\label{sec:coherence}

This section addresses questions of coherence for $2$-monads induced by $\Lambda$-operads in $\mb{Cat}$. Coherence theorems take various forms, and we will primarily be concerned with strictification-style coherence theorems. The prototypical example here is the coherence theorem for monoidal categories. In a monoidal category we require associator isomorphisms
    \[
        \left( A \otimes B \right) \otimes C \cong A \otimes \left( B \otimes C \right)
    \]
for all objects in the category. The coherence theorem tells us that, for any monoidal category $M$, there exists a strict monoidal category that is equivalent to $M$. In other words, we can treat the associators in $M$ as identities, and similarly for the unit isomorphisms.

By \cref{thm:2monad=op}, we can study the algebras for a $\Lambda$-operad $P$ directly, or do so by studying the algebras for the corresponding $2$-monad $\underline{P}$. We first note that the $2$-monads induced by $\Lambda$-operads are finitary, using standard arguments. Second, we show that the Lack's generalised version \cite[Theorem 4.10]{lack-cod} of Power's coherence theorem \cite[Theorem 3.4]{power-gen} applies to all such $2$-monads and allows us to show that each pseudo-$\underline{P}$-algebra is equivalent to a strict $\underline{P}$-algebra.

\begin{prop}
Let $P$ be a $\Lambda$-operad. Then $\underline{P}$ is finitary.
\end{prop}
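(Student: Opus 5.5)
We have to show that the 2-functor $\underline{P}\colon \mb{Cat}\to\mb{Cat}$, $X\mapsto \coprod_n \coeq{P}{X}{\Lambda}{n}$, preserves filtered colimits. Since $\underline{P}$ is assembled from basic constructions, the plan is to check that each piece preserves filtered colimits in $X$ and then compose.

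The structure of $\underline{P}$ is:
\begin{enumerate}
\item the functor $X\mapsto X^n$;
\item then $P(n)\times -$;
\item then a coequalizer of two maps $P(n)\times\Lambda(n)\times X^n \rightrightarrows P(n)\times X^n$;
\item then a coproduct over $n$.
\end{enumerate}

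First, the functor $X\mapsto X^n$ preserves filtered colimits in $\mb{Cat}$. This is the key input. $\mb{Cat}$ is locally finitely presentable, and in any such category finite limits commute with filtered colimits, so finite products do. Concretely, for categories one can also see it directly. Objects and morphisms of a filtered colimit of categories are computed as filtered colimits of the underlying sets of objects and morphisms, since the nerve is finitary. Finite products of sets commute with filtered colimits.

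Second, $P(n)\times -$ preserves all colimits, because $\mb{Cat}$ is cartesian closed. Hence $X\mapsto P(n)\times X^n$ and $X\mapsto P(n)\times \Lambda(n)\times X^n$ both preserve filtered colimits. Here $\Lambda(n)$ is regarded as a discrete category, per \cref{conv:set-disc}.

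Third, the two parallel maps are natural in $X$: one is the right action on $P(n)$, the other is the left action on $X^n$ via $\pi_n$. So $\coeq{P}{X}{\Lambda}{n}$ is a colimit of a diagram of functors of $X$, each of which preserves filtered colimits. Colimits commute with colimits, so $X\mapsto \coeq{P}{X}{\Lambda}{n}$ preserves filtered colimits. The coproduct over $n$ preserves them for the same reason.

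Finally, by \cref{rem:lim-v-2lim}, preservation of conical filtered colimits suffices for the 2-dimensional (enriched) notion of finitary. This is because $\mb{Cat}$-enriched filtered colimits are just the conical ones with their automatic 2-dimensional universal property.

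The only step needing care is the first: commutation of finite products with filtered colimits in $\mb{Cat}$. I would cite the lfp structure of $\mb{Cat}$ \cite{ar}, or give the object/morphism-level argument above, and not verify it by hand.
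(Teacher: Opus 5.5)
Your argument is correct and is essentially the paper's: the paper just defers to Lack's argument for braided operads \cite{lack-cod}, which is this same decomposition of $\underline{P}$. There, finite powers commute with filtered colimits in $\mathbf{Cat}$, and the product with a fixed category, the coequalizer, and the coproduct are colimits. Your closing appeal to \cref{rem:lim-v-2lim} correctly covers the 2-dimensional meaning of finitary.
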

\begin{proof}
The argument is identical to that for braided operads in Section 4.1 of \cite{lack-cod}.
\end{proof}

We now give an abstract coherence theorem for algebras over a $\Lambda$-operad $P$ in $\mb{Cat}$ following the method of John Power \cite{power-gen} and generalized by Lack \cite{lack-cod}. In order to do so, we recall the notion of an enhanced factorization system and Power's coherence result.

\begin{Defi}[(Enhanced factorization system)]\label{Defi:efs}
Let $K$ be a 2-category. An \emph{enhanced factorization system} on $K$ consists of two classes of 1-cells $\mathcal{L},\mathcal{R}$ satisfying the following properties.
\begin{enumerate}
\item Given a commutative square of 1-cells
     \[
        \xy
            (0,0)*+{A}="00";
            (15,0)*+{C}="10";
            (0,-15)*+{B}="01";
            (15,-15)*+{D}="11";
            {\ar^{f} "00" ; "10"};
            {\ar^{r} "10" ; "11"};
            {\ar_{l} "00" ; "01"};
            {\ar_{g} "01" ; "11"};
        \endxy
     \]
where $l \in \m{L}$ and $r \in {R}$, there exists a unique 1-cell $m \colon B \rightarrow C$ such that $rm = g$ and $ml = f$.
\item Given two commuting squares  of 1-cells as above, $rf_1 = g_1l$ and $rf_2 = g_2l$ where $l \in \m{L}$ and $r \in {R}$ , along with $2$-cells $\delta \colon f_1 \Rightarrow f_2$ and $\gamma \colon g_1 \Rightarrow g_2$ for which $\gamma \ast l = r \ast \delta$, there exists a unique $2$-cell $\mu \colon m_1 \Rightarrow m_2$, where $m_1$ and $m_2$ are induced by the $1$-cell lifting property, satisfying $\mu \ast l = \delta$ and $r \ast \mu = \gamma$.
\item Given maps $l \in \m{L}$, $r \in \m{R}$ and an invertible $2$-cell $\alpha \colon rf \Rightarrow gl$
    \[
        \xy
            (0,0)*+{A}="00";
            (15,0)*+{C}="10";
            (0,-15)*+{B}="01";
            (15,-15)*+{D}="11";
            {\ar^{f} "00" ; "10"};
            {\ar^{r} "10" ; "11"};
            {\ar_{l} "00" ; "01"};
            {\ar_{g} "01" ; "11"};
            {\ar@{=>}^{\alpha} (9.375,-5.625) ; (5.625,-9.375)};
            (22.5,-7.5)*+{=};
            (30,0)*+{A}="20";
            (45,0)*+{C}="30";
            (30,-15)*+{B}="21";
            (45,-15)*+{D}="31";
            {\ar^{f} "20" ; "30"};
            {\ar^{r} "30" ; "31"};
            {\ar_{l} "20" ; "21"};
            {\ar_{g} "21" ; "31"};
            {\ar^{m} "21" ; "30"};
            {\ar@{=>}^{\beta} (41,-8) ; (38,-12)};
        \endxy
    \]
there exists a unique pair $(m,\beta)$ where $m \colon B \rightarrow C$ is a $1$-cell and $\beta \colon rm \Rightarrow g$ is an invertible $2$-cell such that $ml = f$ and $\beta \ast l = \alpha$.
\end{enumerate}
\end{Defi}

\begin{thm}[Theorem~3.4 \cite{power-gen},~Theorem~4.6 \cite{lack-cod}]\label{thm:power}
Let $K$ be a 2-category, and $T$ be a $2$-monad on $K$. If $K$ has an enhanced factorization system $\mathcal{L}, \mathcal{R}$ such that 
\begin{enumerate}
\item for every $r \colon C \to D$ in $\mathcal{R}$, 1-cell $s \colon D \to C$, and isomorphism $\alpha \colon rs \cong 1_{D}$, there exists an isomorphism $\beta \colon sr \cong 1_C$; and
\item for every $l \in \mathcal{L}$, the 1-cell $Tl$ is also in $\mathcal{L}$;
\end{enumerate}
then the inclusion 2-functor  \[
        U \colon T\mbox{-}\mb{Alg}_s \rightarrow \mb{Ps}\mbox{-}T\mbox{-}\mb{Alg}
    \]
has a left 2-adjoint, and the components of the unit of the adjunction are equivalences in $\mb{Ps}\mbox{-}T\mbox{-}\mb{Alg}$. In particular, every pseudo-$T$-algebra is equivalent to a strict one.
\end{thm}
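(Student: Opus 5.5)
The plan is Power's argument: factor the structure map of a pseudoalgebra using the enhanced factorization system, and use the lifting properties of \cref{Defi:efs} to transport a \emph{strict} algebra structure onto the intermediate object.

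\textbf{Step 1: the strict algebra $B$.} Let $(A,a,\Phi,\Phi_\eta)$ be a pseudo-$T$-algebra. Factor $a\colon TA\to A$ as $TA\xrightarrow{l}B\xrightarrow{r}A$ with $l\in\mathcal{L}$ and $r\in\mathcal{R}$. (I take as part of an enhanced factorization system that every 1-cell factors this way.) By hypothesis (2), $Tl$ and $T^2l$ both lie in $\mathcal{L}$. Now consider the square with left side $Tl\colon T^2A\to TB$, top $l\mu_A\colon T^2A\to B$, right side $r$, and bottom $a\circ Tr\colon TB\to A$. The invertible 2-cell
\[
r l\mu_A = a\mu_A \cong a\,Ta = a\,Tr\,Tl
\]
is $\Phi^{-1}$. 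Property (3) of \cref{Defi:efs} then gives a unique pair consisting of $b\colon TB\to B$ with $b\,Tl=l\mu_A$ and an invertible $\beta\colon rb\cong a\,Tr$ restricting to $\Phi^{-1}$.

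\textbf{Step 2: the strict axioms for $b$.} The strict associativity $b\,Tb=b\,\mu_B$ and unit $b\,\eta_B=1_B$ will follow from the uniqueness clauses of properties (1)–(3). For associativity, precompose with $T^2l\in\mathcal{L}$. Both sides restrict to $l\mu_A\mu_{TA}$, by naturality of $\mu$ and the monad axioms. After postcomposing with $r$, the two induced 2-cells agree precisely by the pseudoalgebra coherence axiom for $\Phi$. Uniqueness in property (3) then forces the two 1-cells to be equal. The unit is treated the same way, using $\Phi_\eta$ and its axiom.

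\textbf{Step 3: the unit is an equivalence.} The pair $(r,\beta)$ is a pseudomorphism $(B,b)\to(A,a)$; its axioms restrict along $l$ to the pseudoalgebra axioms. Set $s=l\,\eta_A$. Then $rs=a\eta_A\cong 1_A$ via $\Phi_\eta$. Hypothesis (1) upgrades this to $sr\cong1_B$, so $r$ is an equivalence in $K$. A pseudomorphism whose underlying 1-cell is an equivalence is an equivalence in $\mb{Ps}\mbox{-}T\mbox{-}\mb{Alg}$, by the standard doctrinal-adjunction argument.

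\textbf{Step 4: the left 2-adjoint.} Define $L(A)=(B,b)$. The universal property to verify is that every pseudomorphism $(f,\bar f)\colon(A,a)\to U(C,c)$ with $(C,c)$ strict corresponds to a unique strict morphism $g\colon B\to C$. The plan is to construct $g$ by lifting once more: the 1-cell $c\circ Tf\colon TA\to C$ is defined on the $\mathcal{L}$-side, and the 2-cell $\bar f$ supplies the comparison with $f\circ a=f\,r\,l$. I expect to need to run this through the orthogonality of $\mathcal{L}$ against the appropriate $\mathcal{R}$-maps, or equivalently through Lack's formulation via a left adjoint to $U$. The 2-dimensional part of the universal property, correspondence of $T$-transformations, should come from property (2).

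\textbf{Main obstacle.} I expect Step 4 to be the hardest: the universal property, and in particular its 2-dimensional part and the bookkeeping needed to show that $g$ is strict. Steps 1–3 are mechanical uses of the uniqueness clauses. For Step 4 I would first try to follow Lack's reformulation, in which the left adjoint is obtained from the factorization of the counit-type map $TA\to A$ applied to pseudomorphism classifiers.
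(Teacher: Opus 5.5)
The paper gives no proof of its own here: the theorem is quoted from Power and Lack, and only its last sentence is used, in \cref{cor:coherence-for-ulP}. Your Steps 1--3 are Power's original argument and they are sound, so they do give that last sentence. Two small points:
\begin{itemize}
\item The unit check in Step 2 uses the coherence $\Phi\eta_{TA}\circ\Phi_\eta a=1$. That is the axiom the paper drops as redundant in \cref{rem:third-axiom}, not one of the two it states.
\item Like the factorization of 1-cells you flagged, you will need $1_C\in\mathcal{R}$. This is part of the usual definition and holds for fully faithful functors, but it is not in \cref{Defi:efs}.
\end{itemize}

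The genuine gap is Step 4, which carries everything in the statement beyond its last sentence and which you leave as a plan. You never say what the unit $(A,a)\to(B,b)$ is, so Step 3 is not yet connected to it, and your lifting problem has no specified $\mathcal{R}$-map. The missing idea is that this $\mathcal{R}$-map is simply $1_C$.

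\emph{The 1-cell $g$.} Apply property (3) to the square with left side $l$, top $c\,Tf$, right side $1_C$, bottom $fr$, and 2-cell $\bar f\colon c\,Tf\Rightarrow fa=frl$. This gives a unique $g\colon B\to C$ with $gl=c\,Tf$, together with an isomorphism $\theta\colon g\cong fr$ satisfying $\theta l=\bar f$.

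\emph{Strictness of $g$.} You get $gb=c\,Tg$ from uniqueness in (3) along $Tl$. Both sides restrict to $c\,Tf\,\mu_A$. The evident isomorphisms from both sides to $fa\,Tr$ (built from $\theta$, $\beta$, $\bar f$) restrict to 2-cells that the multiplication axiom for $(f,\bar f)$ identifies.

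\emph{The unit.} Take $s=l\eta_A$. Since $r$ is an equivalence, it is representably fully faithful. Hence there are unique invertible $\bar s\colon l=b\,Ts\Rightarrow sa$ and $\epsilon\colon 1_B\Rightarrow sr$ with $r\bar s=\Phi_\eta a$ and $r\epsilon=\Phi_\eta r$. It follows that $\epsilon l=\bar s$. Checking after applying $r$, $(s,\bar s)$ is a pseudomorphism, and it is an equivalence in $\mb{Ps}\mbox{-}T\mbox{-}\mb{Alg}$ by the doctrinal-adjunction fact you quote.

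\emph{The 1-dimensional universal property.} First, $gs=c\,Tf\,\eta_A=f$, and $g\bar s=\bar f$ by interchange with $\theta$ and the unit axiom $\bar f\eta_A=f\Phi_\eta$. Conversely, let $g'$ be strict with $(g',1)\circ(s,\bar s)=(f,\bar f)$. Then $g'l=g'b\,Ts=c\,T(g's)=c\,Tf$, and $g'\epsilon\colon g'\cong fr$ restricts to $g'\bar s=\bar f$. So $g'=g$ by uniqueness in (3).

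\emph{The 2-dimensional part.} This needs no further lifting; property (2) is not what does it. Precomposition with the equivalence $(s,\bar s)$ is fully faithful on hom-categories, and $T\mbox{-}\mb{Alg}_s\to\mb{Ps}\mbox{-}T\mbox{-}\mb{Alg}$ is locally fully faithful. So the bijection on 1-cells is an isomorphism of hom-categories.

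By contrast, the routes you suggest at the end do not fit the hypotheses. Lack's construction via codescent objects, or via pseudomorphism classifiers, needs colimits in $K$ that are not assumed. Appealing to ``a left adjoint to $U$'' is circular.
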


\begin{lem}[Lemma~3.3 \cite{power-gen}]\label{lem:efs-cat}
The 2-category $\mb{Cat}$ has an enhanced factorization system in which the class $\mathcal{L}$ consists of the functors that are bijective on objects and the class $\mathcal{R}$ consists of the functors that are full and faithful.
\end{lem}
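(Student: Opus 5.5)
We must show that bijective-on-objects functors $\mathcal{L}$ and fully faithful functors $\mathcal{R}$ satisfy the three conditions of \cref{Defi:efs}. The plan is to build everything from the standard (bijective on objects, fully faithful) factorization of a functor, and at each step to write down the lift explicitly on objects and on morphisms.

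For the first condition, take a commutative square $rf = gl$ with $l \colon A \to B$ bijective on objects and $r \colon C \to D$ fully faithful. Define $m \colon B \to C$ on objects by $m(b) = f(l^{-1}b)$, which is forced by $ml = f$. On a morphism $\varphi \colon b \to b'$ of $B$, we have $g\varphi \colon g(b) \to g(b')$ in $D$. Since $rf = gl$, these objects equal $r(m b)$ and $r(m b')$. Full faithfulness of $r$ then gives a unique $m\varphi$ with $r(m\varphi) = g\varphi$. Functoriality of $m$ follows from faithfulness of $r$. The equation $rm = g$ holds by construction. For $ml = f$, note that $r(ml\psi) = gl\psi = rf\psi$, and faithfulness of $r$ gives the equality on morphisms; on objects it holds by definition. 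Uniqueness holds because objects are forced by $ml = f$ and morphisms by $rm = g$ together with faithfulness.

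For the second condition, the components of the desired $\mu \colon m_1 \Rightarrow m_2$ at $b = l(a)$ are forced to be $\delta_a$. Define $\mu_b = \delta_{l^{-1}b}$. Then check that $r\mu_b = \gamma_b$, using $\gamma \ast l = r \ast \delta$. Naturality of $\mu$ with respect to $\varphi \colon b \to b'$ is checked after applying $r$: the image square is the naturality square of $\gamma$, and faithfulness of $r$ lifts it. Uniqueness again follows from $\mu \ast l = \delta$ and bijectivity on objects.

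The third condition is the main obstacle, since the lift only commutes up to the invertible $\alpha \colon rf \Rightarrow gl$. Define $m$ on objects by $m(b) = f(l^{-1}b)$, forced by $ml = f$. Define $\beta_b = \alpha_{l^{-1}b} \colon r m b \to g b$, which is forced by $\beta \ast l = \alpha$ and is invertible. On morphisms $\varphi \colon b \to b'$, set $m\varphi$ to be the unique preimage under $r$ of $\beta_{b'}^{-1} \circ g\varphi \circ \beta_b$. This choice is exactly what naturality of $\beta$ requires, so both $m$ and $\beta$ are uniquely determined. Functoriality of $m$ holds by faithfulness of $r$. To get $ml = f$ on morphisms, note that $r(f\psi) = \alpha^{-1} \circ gl\psi \circ \alpha$ by naturality of $\alpha$, and faithfulness of $r$ finishes it. The care needed is in tracking which components are forced, so as to justify uniqueness of the pair $(m, \beta)$.
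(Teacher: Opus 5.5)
Your verification is correct and complete. You check all three conditions componentwise: objects are forced by bijectivity of $l$, and morphisms (and naturality squares) are both forced and detected by full faithfulness of $r$. Your uniqueness arguments, including the one for the pair $(m,\beta)$, are sound.

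The paper gives no proof of its own and simply cites Power's Lemma~3.3; your direct check is the routine argument behind that citation. One small remark: you never actually use the bijective-on-objects/fully-faithful factorization you mention at the outset. Nothing is lost by this, since the paper's definition of an enhanced factorization system asks only for the three lifting properties.
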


\begin{prop}\label{prop:P-boo}
For any $\Lambda$-operad $P$, the $2$-monad $\underline{P}$ preserves bijective-on-objects functors.
\end{prop}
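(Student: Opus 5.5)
The plan is to reduce to the object functor. Write $\textrm{ob} \colon \mb{Cat} \to \mb{Sets}$; a functor $F \colon A \to B$ is bijective on objects exactly when $\textrm{ob}(F)$ is a bijection. Since
\[
\underline{P}(A) = \coprod_n \coeqb{P(n)}{A^n}{\Lambda(n)},
\]
it suffices to compute $\textrm{ob}\,\underline{P}(A)$ functorially in $A$, and to show that it depends only on $\textrm{ob}A$, naturally in $F$.

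First, $\textrm{ob}$ is a left adjoint (its right adjoint is the functor $E$ of \cref{lem:symmoncor}), so it preserves coproducts and coequalizers. It also preserves finite products, because objects of a product category are tuples of objects. Hence
\[
\textrm{ob}\,\underline{P}(A) \cong \coprod_n \coeqb{\textrm{ob}P(n)}{(\textrm{ob}A)^n}{\Lambda(n)},
\]
the set of $\Lambda(n)$-orbits of $\textrm{ob}P(n)\times(\textrm{ob}A)^n$. This is the observation already recorded in \cref{rem:obj-coeq}, and it needs no freeness hypothesis on the action.

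Second, the isomorphism is natural in $A$. Indeed, $\underline{P}(F)$ is induced on each summand by $1\times F^n$, and applying $\textrm{ob}$ gives the map of orbit sets induced by $1\times(\textrm{ob}F)^n$. So $\textrm{ob}\,\underline{P}(F)=\underline{\textrm{ob}P}(\textrm{ob}F)$, where $\underline{\textrm{ob}P}$ is the endofunctor of $\mb{Sets}$ attached to the $\Lambda$-operad $\textrm{ob}P$ in $\mb{Sets}$, as in \cref{Defi:und-P}. Any functor sends bijections to bijections, since it preserves isomorphisms. Therefore if $\textrm{ob}F$ is a bijection, so is $\textrm{ob}\,\underline{P}(F)$, which means $\underline{P}(F)$ is bijective on objects.

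The only point needing care is the first step, namely that the coequalizer in $\mb{Cat}$ has object set equal to the coequalizer of object sets even when the action is not free. This is exactly where the left-adjointness of $\textrm{ob}$ is used; the $\textrm{ob}$-preserves-products fact handles the $A^n$ and the $P(n)\times -$ factors. Naturality then follows from the universal properties, so I expect no real obstacle.
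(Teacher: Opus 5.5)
Your argument is correct, but it takes a different route from the paper. The paper's proof is a one-line appeal to the explicit computation of $\EL(X)$ in \cref{prop:hom-set-lemma}. There the object set is $\coprod_n(\textrm{ob}X)^n$, which visibly depends only on $\textrm{ob}X$, so a bijective-on-objects functor is sent to one. That computation is specific to $P=\EL$, however. It relies on the freeness of the $\Lambda(n)$-action (via \cref{lem:coeq-lem} and \cref{rem:operadcoeq-as-quotient}) to identify the coequalizer with an honest quotient category. Taken literally, it therefore covers only $\EL$, or operads with free actions, whereas the statement is for arbitrary $P$.

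You instead use the formal fact that $\textrm{ob}$ is a left adjoint (to $E$) which also preserves finite products. This gives a natural isomorphism $\textrm{ob}\circ\underline{P}\cong\underline{\textrm{ob}P}\circ\textrm{ob}$ with no freeness hypothesis; in effect it is \cref{rem:obj-coeq} made functorial. You then conclude because any functor preserves bijections.

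What your route buys is that the proof genuinely applies to every $\Lambda$-operad, as the proposition and its use in \cref{cor:coherence-for-ulP} require. What it gives up is the concrete description of the morphisms of $\underline{P}(X)$, which the paper's route supplies but which this statement does not need.
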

\begin{proof}
This follows immediately from the description of $\EL$ in \cref{prop:hom-set-lemma}.
\end{proof}

\begin{cor}\label{cor:coherence-for-ulP}
Every pseudo-$\underline{P}$-algebra is equivalent to a strict $\underline{P}$-algebra.
\end{cor}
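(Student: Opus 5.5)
The plan is to apply \cref{thm:power} with $K = \mb{Cat}$ and $T = \underline{P}$, using the enhanced factorization system of \cref{lem:efs-cat}. Here $\mathcal{L}$ is the class of bijective-on-objects functors and $\mathcal{R}$ the class of fully faithful functors. Once both hypotheses of \cref{thm:power} are checked, the theorem gives a left 2-adjoint to $U \colon \underline{P}\mbox{-}\mb{Alg}_s \to \mb{Ps}\mbox{-}\underline{P}\mbox{-}\mb{Alg}$ whose unit components are equivalences. This yields the statement for $\underline{P}$-algebras directly. Through \cref{thm:2monad=op} it also yields the operadic version: every $P$-pseudoalgebra is equivalent to a strict $P$-algebra.

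The first hypothesis concerns only $\mb{Cat}$. Let $r \colon C \to D$ be fully faithful, let $s \colon D \to C$ be any functor, and let $\alpha \colon rs \cong 1_D$ be an isomorphism. For each object $c$ of $C$ we have the isomorphism $\alpha_{rc} \colon rsrc \to rc$. Since $r$ is full and faithful, this lifts to a unique isomorphism $\beta_c \colon src \to c$ with $r\beta_c = \alpha_{rc}$. Naturality of $\beta$ follows from naturality of $\alpha$ together with faithfulness of $r$.

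The second hypothesis is \cref{prop:P-boo}, which is stated for arbitrary $P$. It is worth seeing why it holds in general, since the cited proof refers only to $\EL$. By \cref{rem:obj-coeq}, the objects of $\coeq{P}{X}{\Lambda}{n}$ are the orbits of $\textrm{ob}P(n) \times (\textrm{ob}X)^n$ under $\Lambda(n)$, whether or not the action is free, because $\textrm{ob}$ is a left adjoint and so preserves coproducts and coequalizers. Let $l \colon X \to Y$ be bijective on objects. Then $1 \times l^n$ induces a $\Lambda(n)$-equivariant bijection $\textrm{ob}P(n) \times (\textrm{ob}X)^n \to \textrm{ob}P(n) \times (\textrm{ob}Y)^n$, hence a bijection on orbits. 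Taking the coproduct over $n$ shows that $\underline{P}l$ is bijective on objects.

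The main obstacle is this second step. It requires computing objects of the coequalizer correctly without assuming freeness, since \cref{lem:coeq-lem} describes morphisms only in the free case. The argument avoids morphisms altogether by working only with $\textrm{ob}$, which should be enough. Everything else is a formal application of \cref{thm:power}.
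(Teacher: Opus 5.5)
Your proposal is correct and follows the paper's proof. Both apply \cref{thm:power} to the bijective-on-objects/fully-faithful enhanced factorization system on $\mb{Cat}$ and verify hypothesis (2) via \cref{prop:P-boo}; for hypothesis (1) you lift $\alpha_{rc}$ through $r$ directly, whereas the paper observes that $r$ is then an equivalence. Your orbit-level argument for \cref{prop:P-boo} is a genuine improvement, since it works for arbitrary $P$ without freeness, while the paper's proof of that proposition only points to the description of $\EL$.
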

\begin{proof}
We use the enhanced factorization system from \cref{lem:efs-cat}, and check the hypotheses of \cref{thm:power}. If a functor $r$ is full and faithful, and there exists a functor $s$ together with an isomorphism $\alpha \colon rs \cong 1$, then the components of $\alpha$ exhibit $r$ as essentially surjective. Thus $r$ is an equivalence of categories, and so there exists an isomorphism $\beta \colon sr \cong 1$ as required. For any bijective on objects functor $l$, \cref{prop:P-boo} shows that $\underline{P}l$ is also bijective on objects. Thus both of the hypotheses of \cref{thm:power} are satisfied, completing the proof.
\end{proof}

\begin{rem}[(Pseudo-$\EL$-algebras are weak, unbiased)]\label{rem:ps-P-alg-unpack}
The translation between pseudoalgebras and traditional notions of non-strict monoidal categories is not that of a direct correspondence. The pseudo-$\EL$-algebras are a \emph{weak} and \emph{unbiased} form of $\Lambda$-monoidal categories.
\begin{itemize}
\item Here \emph{weak} means equational axioms at the level of objects are replaced by coherent isomorphisms. On its own, the reader might expect such a claim to mean that pseudo-$\EL$-algebras have an underlying monoidal, rather than strict monoidal, structure. This is not the case.
\item These pseudoalgebras are also \emph{unbiased}, meaning they have prescribed $n$-ary tensor product operations $\otimes_n \colon X^n \to X$ for every $n \in \mathbb{N}$, and these are related by the isomorphisms in the previous point using operadic composition. Thus instead of an associativity isomorphism $(x \otimes y) \otimes z \cong x \otimes (y \otimes z)$, there is an isomorphism
\[
\otimes_2\left( \otimes_2(x,y), z\right) \cong \otimes_3(x,y,z).
\]
\end{itemize}
We refer the reader to Section 3.1 of \cite{leinster} for a further discussion of the relationship between strict structures and unbiased, weak ones.
\end{rem}

We end this section by exploring a variant of $\Lambda$-monoidal categories in which the underlying monoidal structure is weak, but the tensor product is not unbiased as above.

\begin{nota}[(Standard association)]\label{nota:standard-assoc}
Let $(M, \otimes, I, a, l, r)$ be a monoidal category. The \emph{standard association} of a tuple $x_1, \ldots, x_n$ of objects is defined inductively as follows.
\begin{enumerate}
\item The standard association of the empty tuple, written $\underline{\emptyset}$, is the unit object $I$.
\item The standard association of a single object $x$, written $\underline{x}$, is $x$ itself.
\item Assume that the standard association of $n$ objects $x_1, \ldots, x_n$ has been given as $\underline{x_1 \cdots x_n}$. The standard association of $n+1$ objects $x_1, \ldots, x_{n+1}$ is defined by the formula
\[
\underline{x_1 \cdots x_{n+1}} = x_1 \otimes \underline{x_2 \cdots x_{n+1}}.
\]
\end{enumerate}
\end{nota}

\begin{Defi}[(Weak $\Lambda$-monoidal categories)]\label{Defi:wk-lmc}
A \emph{weak $\Lambda$-monoidal category} consists of
\begin{itemize}
\item a monoidal category $(M, \otimes, I, a, l, r)$ and
\item a natural isomorphism
\[
[g] \colon \underline{x_1 \dots x_n} \cong \underline{x_{g^{-1}(1)} \cdots x_{g^{-1}(n)} }
\]
for each $g \in \Lambda(n)$
\end{itemize}
satisfying the following three axioms.
\begin{enumerate}
\item Let $g, h \in \Lambda(n)$. The composite $[h] \circ [g]$ shown below
\[
\underline{x_1 \dots x_n} \stackrel{[g]}{\to} \underline{x_{g^{-1}(1)} \cdots x_{g^{-1}(n)} } \stackrel{[h]}{\to} \underline{x_{g^{-1}(h^{-1}(1))} \cdots x_{g^{-1}(h^{-1}(n))} } 
\]
equals $[hg]$, where $hg \in \Lambda(n)$ is given by multiplication using the group structure.
\item Let $h_i \in \Lambda(k_i)$ for $i=1, \ldots, n$, and let $x_{ij}$ be objects of $M$ for $i = 1, \ldots, n$ and double indices $ij$ such that $1 \leq j \leq k_i$. Then the isomorphism
\[
[\beta(h_1, \ldots, h_n)] \colon \underline{x_{ij}} \to \underline{x_{ih_i^{-1}(j)}}
\]
is equal to the composite
\[
\underline{x_{ij}} \cong \underline{\underline{x_{1j}} \cdots \underline{x_{nj}}} 
\stackrel{\underline{[h_i]}}{\to} 
\underline{\underline{x_{1h_1^{-1}(j)}} \cdots \underline{x_{nh_n^{-1}(j)}}} \cong
\underline{x_{ih_i^{-1}(j)}},
\]
where the two unlabeled isomorphisms are the unique reassociations given by coherence for monoidal categories.
\item Let $g \in \Lambda(n)$, and let $x_{ij}$ be objects of $M$ for $i = 1, \ldots, n$ and double indices $ij$ such that $1 \leq j \leq k_i$. Then the isomorphism
\[
[\delta_{n; k_1, \ldots, k_n}(g)] \colon \underline{x_{ij}} \to \underline{x_{g^{-1}(1)1} x_{g^{-1}(1)2}\cdots x_{g^{-1}(1)k_{g^{-1}(1)}} \cdots x_{g^{-1}(n)k_{g^{-1}(n)}}}
\]
is equal to the composite
\[
\underline{x_{ij}} \cong \underline{y_i} \stackrel{[g]}{\to} \underline{y_{g^{-1}(i)}} \cong \underline{x_{g^{-1}(1)1} x_{g^{-1}(1)2}\cdots x_{g^{-1}(1)k_{g^{-1}(1)}} \cdots x_{g^{-1}(n)k_{g^{-1}(n)}}},
\]
where $y_i = \underline{x_{i1} \cdots x_{ik_i}}$ and the two unlabeled isomorphism are the unique reassociations given by coherence for monoidal categories.
\end{enumerate}
\end{Defi}

\begin{nota}
By coherence for monoidal functors in the form \cite{gj-pseudo}, every monoidal functor $(F, F_2, F_0)$ induces a unique isomorphism
\[
F_n \colon \underline{Fx_1 \cdots Fx_n} \cong F(\underline{x_1 \cdots x_n}).
\]
In the case that $n=0, 2$, these isomorphisms agree with the isomorphisms $F_0, F_2$ in the data defining $F$ as a monoidal functor.
\end{nota}

\begin{Defi}[(Weak $\Lambda$-monoidal functors)]\label{Defi:wk-lmf}
Let $M, N$ be weak $\Lambda$-monoidal categories. A \emph{weak $\Lambda$-monoidal functor} $F \colon M \to N$ consists of a monoidal functor $(F, F_0, F_2) \colon M \to N$ of the underlying monoidal categories such that for all $g \in \Lambda(n)$ and all tuples of objects $x_1, \ldots, x_n \in M$, the following diagram commutes.
     \[
        \xy
            (0,0)*+{\underline{Fx_1 \cdots Fx_n}}="00";
            (50,0)*+{\underline{Fx_{g^{-1}(1)} \cdots Fx_{g^{-1}(n)}}}="10";
            (0,-15)*+{F\underline{x_1 \cdots x_n}}="01";
            (50,-15)*+{F\underline{x_{g^{-1}(1)} \cdots x_{g^{-1}(n)}}}="11";
            {\ar^{[g]} "00" ; "10"};
            {\ar^{F_n} "10" ; "11"};
            {\ar_{F_n} "00" ; "01"};
            {\ar_{F[g]} "01" ; "11"};
        \endxy
     \]
\end{Defi}

We leave the proof of the following proposition to the reader, as the details are simple to fill in and mimic similar proofs for braided or symmetric monoidal categories.

\begin{prop}\label{prop:2cat-of-weaklambda}
There is a 2-category with 
\begin{itemize}
\item objects the weak $\Lambda$-monoidal category, 
\item 1-cells the weak $\Lambda$-monoidal functors,
\item 2-cells the monoidal transformations,
\item 1-cell identities $1_M \colon M \to M$ given by the identity functor equipped with $F_0 = \id_I$ and $(F_2)_{x,y} = \id_{x \otimes y}$,
and
\item composition of 1-cells given by composition of the underlying monoidal functors.
\end{itemize}
\end{prop}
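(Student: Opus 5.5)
The plan is to build the 2-category as a locally full sub-2-category of the 2-category $\mb{MonCat}$ of monoidal categories, (strong) monoidal functors, and monoidal transformations. We take that ambient 2-category, together with its standard 1-cell composition and identities, as known. Since 2-cells are simply monoidal transformations between the underlying monoidal functors, with no extra condition, the 2-categorical structure is inherited: vertical and horizontal composition of 2-cells, whiskering, and the interchange and unit/associativity laws all hold because they hold in $\mb{MonCat}$. What remains is a closure argument: the identity monoidal functor on a weak $\Lambda$-monoidal category is weak $\Lambda$-monoidal, and the composite of weak $\Lambda$-monoidal functors is again weak $\Lambda$-monoidal.

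For identities, take $F = 1_M$ with $F_0 = \id_I$ and $F_2 = \id$. By the uniqueness statement in the notation preceding \cref{Defi:wk-lmf} (coherence for monoidal functors \cite{gj-pseudo}), the induced isomorphisms $F_n$ are identities. The square in \cref{Defi:wk-lmf} therefore reduces to $[g] = [g]$.

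For composition, let $F \colon M \to N$ and $G \colon N \to L$ be weak $\Lambda$-monoidal functors, and let $GF$ carry its usual monoidal structure, $(GF)_2 = G(F_2) \circ G_2$ and $(GF)_0 = G(F_0) \circ G_0$. The key step is to identify the $n$-ary comparison: we will show $(GF)_n = G(F_n) \circ G_n$, where $G_n$ is evaluated at the objects $Fx_i$. This holds because both sides are isomorphisms $\underline{GFx_1 \cdots GFx_n} \cong GF(\underline{x_1 \cdots x_n})$ built from the structure maps of the composite monoidal functor, so coherence for monoidal functors forces them to agree. Granting this, the required square for $GF$ splits into two squares. The first is the square for $G$ at the objects $Fx_1, \ldots, Fx_n$, which relates $[g]$ in $L$ to $G[g]$ in $N$ via $G_n$. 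The second is $G$ applied to the square for $F$, which relates $G[g]$ in $N$ to $GF[g]$ via $G(F_n)$. The first commutes because $G$ is weak $\Lambda$-monoidal, and the second because $G$ is a functor and $F$ is weak $\Lambda$-monoidal. Pasting them gives the condition for $GF$.

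The main obstacle is the identification $(GF)_n = G(F_n) \circ G_n$, which needs some care. I would prove it by induction on $n$ using the inductive definition of standard association in \cref{nota:standard-assoc}. The cases $n = 0, 1, 2$ hold by definition, and the inductive step uses naturality of $G_2$ together with the recursive description $F_{n+1} = F_2 \circ (1 \otimes F_n)$, which itself follows from the same uniqueness. Once this identity is established, associativity and unitality of 1-cell composition are inherited from $\mb{MonCat}$, since composition of weak $\Lambda$-monoidal functors is literally composition of the underlying monoidal functors. This completes the construction.
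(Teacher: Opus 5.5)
Your proposal is correct and is the argument the paper has in mind when it leaves this to the reader as an imitation of the braided and symmetric cases: a locally full sub-2-category of monoidal categories, closed under identities and composites, where the only new ingredient is the identity $(GF)_n = G(F_n)\circ G_n$. Rely on your induction (via $F_{n+1} = F_2\circ(1\otimes F_n)$ and naturality of $G_2$) to prove that identity rather than on your one-line appeal to coherence. The coherence appeal is not quite enough, because $G(F_n)\circ G_n$ is assembled from the structure maps of $G$ and of $F$ separately, not from those of $GF$.
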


\begin{nota}[(2-category of weak $\Lambda$-monoidal categories)]\label{nota:2cat-of-weaklambda}
The 2-category in \cref{prop:2cat-of-weaklambda} is called the \emph{2-category of weak $\Lambda$-monoidal categories}, and is denoted $\wlmc$.
\end{nota}

\begin{rem}\label{rem:eq-in-wlmc}
The internal equivalences in $\wlmc$ are, by definition, those weak $\Lambda$-monoidal functors $F \colon M \to N$ for which there exists a weak $\Lambda$-monoidal functor $G \colon N \to M$ and invertible monoidal transformations $GF \cong 1_M$, $FG \cong 1_N$. By a generalization of the standard argument for plain monoidal functors a weak $\Lambda$-monoidal functor $F$ is an internal equivalence if and only if the underlying functor of $F$ is an equivalence of categories, see \cite[Proposition 3.4]{lcmv-psmonad} or \cite[Corollary 7.4.2]{jy-2dim}, for example.
\end{rem}

\begin{thm}\label{thm:wlmc-to-lmc}
Let $\Lambda$ be an action operad.
\begin{enumerate}
\item There is an inclusion 2-functor
\[
i \colon \lmc \to \wlmc,
\]
the image of which consists of those weak $\Lambda$-monoidal categories for which the underlying monoidal category is strict.
\item Every weak $\Lambda$-monoidal category is equivalent, in $\wlmc$, to one in the image of $i$.
\end{enumerate}
\end{thm}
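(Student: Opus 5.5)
For the first part, I will describe a $\Lambda$-monoidal category, a strict $\EL$-algebra $(X,\alpha_n)$, in elementary terms and then define $i$ on each kind of cell. The $n$-ary tensor is $\otimes_n = \tilde\alpha_n(e_n;-)\colon X^n\to X$. Operadic associativity of $\tilde\alpha$, together with \cref{lem:calclem} and \cref{lem:e0-unit}, shows that $\otimes_2$ and $I=\otimes_0$ make $X$ a strict monoidal category with $\otimes_n$ its $n$-fold tensor. For $g\in\Lambda(n)$, the unique morphism $e_n\to g$ in $E\Lambda(n)$ gives a morphism $\tilde\alpha_n(e_n;\underline{x})\to\tilde\alpha_n(g;\underline{x})$. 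Equivariance identifies the target with $\tilde\alpha_n(e_n; x_{g^{-1}(1)},\ldots,x_{g^{-1}(n)})$, and this morphism is the isomorphism $[g]$. I then check the axioms of \cref{Defi:wk-lmc}:
\begin{itemize}
\item Axiom 1 holds because morphisms in $E\Lambda(n)$ compose uniquely, so $(e\to h)\cdot g$ composed with $(e\to g)$ equals $(e\to hg)$.
\item Axioms 2 and 3 follow from functoriality of $\mu^{\EL}$ applied to the unique morphisms. By \cref{thm:charAOp}, $\mu(e;g_1,\ldots,g_n)=\beta(\underline{g_i})$ and $\mu(g;\underline{e})=\delta(g)$. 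Since the monoidal structure is strict, the reassociations are identities.
\end{itemize}
A strict $\EL$-morphism becomes a strict monoidal functor commuting with the $[g]$, and an $\EL$-transformation becomes a monoidal transformation. Functoriality is clear. For the image, I reverse the construction: given a weak $\Lambda$-monoidal category with strict underlying monoidal structure, set $\tilde\alpha_n(g;\underline{x})=x_{g^{-1}(1)}\otimes\cdots\otimes x_{g^{-1}(n)}$. On the morphism $g\to h$ of $E\Lambda(n)$ paired with morphisms of $X^n$, use $[hg^{-1}]$ composed with the tensor of the maps. Axioms 1--3 give functoriality and associativity, and the two constructions are mutually inverse. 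So $i$ is injective on objects, and in fact locally an isomorphism, with the claimed image.

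For the second part, I will not apply \cref{cor:coherence-for-ulP} directly. Relating biased weak structures to pseudo-$\EL$-algebras is itself a nontrivial coherence problem, so I will strictify by hand, following the standard proof for braided and symmetric categories. Given $M$, let $\mathrm{st}(M)$ be the usual strictification. Its objects are finite words $(x_1,\ldots,x_n)$ of objects of $M$, and
\[
\mathrm{st}(M)(\underline{x},\underline{y}) = M(\underline{x_1\cdots x_n},\underline{y_1\cdots y_m}),
\]
with concatenation as a strict tensor. The evaluation functor $\underline{x}\mapsto\underline{x_1\cdots x_n}$ is a monoidal equivalence $\mathrm{st}(M)\to M$ whose structure maps are the canonical reassociations. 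To make $\mathrm{st}(M)$ a $\Lambda$-monoidal category, I transport $[g]$. For a tuple of words $w_1,\ldots,w_n$, define $[g]_{w_1,\ldots,w_n}$ as the composite
\[
\underline{w_1\cdots w_n}\cong\underline{\underline{w_1}\cdots\underline{w_n}}\xrightarrow{[g]}\underline{\underline{w_{g^{-1}(1)}}\cdots}\cong\underline{w_{g^{-1}(1)}\cdots w_{g^{-1}(n)}}.
\]

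The main obstacle is verifying Axioms 2 and 3 of \cref{Defi:wk-lmc} for these transported isomorphisms. The difficulty is that Axiom 3 for $\mathrm{st}(M)$, with word arguments, must be reduced to Axioms 2 and 3 of $M$ combined with naturality of $[g]$ and Mac Lane coherence. I would handle this by first proving a lemma: for words, $[g]_{w_1,\ldots,w_n}$ equals $[\delta_{n;|w_1|,\ldots,|w_n|}(g)]$ on the flattened letters, up to reassociation. This is exactly Axiom 3 in $M$. After the lemma, every required equation becomes an equation between $[\,\cdot\,]$'s at flattened tuples. These follow from Axiom 1 in $M$ together with axioms \eqref{eq7} and \eqref{eq9} of \cref{thm:charAOp}; for example, $\delta(\delta(g))=\delta(g)$ handles nested words.

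Finally, the evaluation functor commutes with the $[g]$ by construction, so it is a weak $\Lambda$-monoidal functor. Its underlying functor is an equivalence, so by \cref{rem:eq-in-wlmc} it is an equivalence in $\wlmc$. Since $\mathrm{st}(M)$ is strict, it lies in the image of $i$ by the first part.
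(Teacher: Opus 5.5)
Your proposal is correct and follows the paper's proof essentially step for step. In the first part you build the strict monoidal structure and obtain $[g]$ by applying $\tilde\alpha_n$ to the unique morphism $e_n \to g$ in $E\Lambda(n)$. In the second part you use the strictification $\mathrm{st}(M)$ with transported isomorphisms, which by Axiom 3 of \cref{Defi:wk-lmc} coincide with the paper's choice $[\delta_{n;|w_1|,\ldots,|w_n|}(g)]$. These are checked via \eqref{eq6} (or directly from Axiom 1), \eqref{eq9} and \eqref{eq7}, and you finish with \cref{rem:eq-in-wlmc}.

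The one thing to fix is your aside that $i$ is ``locally an isomorphism''. The 1-cells of $\wlmc$ are strong, not necessarily strict, monoidal functors, so $i$ is locally faithful but not locally full on 1-cells. The statement does not need this claim, so simply drop it.
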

\begin{proof}
Let $X$ be an $\EL$-algebra given by functors $\mu_n \colon \coeq{\EL}{X}{\Lambda}{n} \to X$, or equivalently a single functor $\mu \colon \underline{\EL}(X) \to X$. 
We will equip $X$ with a strict monoidal structure, and then extend that to a weak $\Lambda$-monoidal category structure.
Let $T$ be the trivial action operad, and let $T \to \Lambda$ be the unique map of action operads (see \cref{example:aop-triv}). Then $\underline{ET}$ is easily seen to the free monoid 2-monad on $\mb{Cat}$, and the description of $\EL$ in \cref{prop:hom-set-lemma} shows that $\underline{ET}(X)$ embeds as the subcategory of $\underline{\EL}(X)$ consisting of all the objects and but only the morphisms in each summand corresponding to $e_n \in \Lambda(n)$. Thus $X$ obtains an $ET$-algebra structure via the composite
\[
\underline{ET}(X) \hookrightarrow \underline{\EL}(X) \stackrel{\mu}{\to} X.
\]
This $ET$-algebra structure is the desired strict monoidal structure.

Let $g \in \Lambda(n)$ and $x_1, \ldots, x_n$ be objects of $X$. There is a unique isomorphism $\tilde{g} \colon e_n \cong g$ in $\EL(n)$, and applying $\mu_n$ to 
\[
[e_n; x_1, \ldots, x_n] \stackrel{[\tilde{g}; \id, \ldots, \id]}{\longrightarrow} [g; x_1, \ldots, x_n] = [e_n; x_{g^{-1}(1)}, \ldots, x_{g^{-1}(n)}]
\]
produces an isomorphism 
\[
[\tilde{g}; \underline{\id}] \colon x_1 \otimes \cdots \otimes x_n \cong x_{g^{-1}(1)} \otimes \ldots \otimes x_{g^{-1}(n)}.
\]
Define the isomorphism $[g]$ in \cref{Defi:wk-lmc} to be $[\tilde{g}; \underline{\id}]$. The three axioms in \cref{Defi:wk-lmc} follow immediately from the action operad axioms, completing the construction of the 2-functor $i$ on objects. Similar arguments apply to strict $\EL$-morphisms and $\EL$-transformations, and we leave them to the reader. The arguments above together with \cref{prop:hom-set-lemma} show that the resulting 2-functor $i$ has image those weak $\Lambda$-monoidal category with strict underlying monoidal structure, finishing the proof of the first claim.

Now let $M$ be a weak $\Lambda$-monoidal category, and let $M_u$ denote its underlying monoidal category. By coherence for monoidal categories \cite{js,coh3d}, there is a strict monoidal category $\textrm{st} M_u$ and a monoidal equivalence $e \colon \textrm{st} M_u \to M_u$ given as follows.
\begin{itemize}
\item The objects of $\textrm{st} M_u$ consist of a natural number $n$ and then an ordered list $x_1, \ldots, x_n$ of objects of $M_u$. There is a unique such object when $n=0$.
\item The functor $e$ maps $(n; x_1, \ldots, x_n)$ to the standard association (\cref{nota:standard-assoc}) $\underline{x_1 \cdots x_n}$.
\item The set of morphisms from $(m; x_1, \ldots, x_m)$ to $(n; y_1, \ldots, y_n)$ in $\textrm{st} M_u$ is defined to be the set of morphism from $\underline{x_1 \cdots x_m}$ to $\underline{y_1 \cdots y_n}$ in $M_u$.
\item The monoidal structure is given on objects by the sum of natural numbers and the concatenation of lists, and on morphisms is given by
\[
\underline{x_1 \cdots x_m y_1 \cdots y_n} \cong \underline{x_1 \cdots x_m} \otimes \underline{y_1 \cdots y_n} \stackrel{f \otimes g}{\to} \underline{u_1 \cdots u_k} \otimes \underline{v_1 \cdots v_j} \cong \underline{u_1 \cdots u_kv_1 \cdots v_j}.
\]
\end{itemize}
We will equip $\textrm{st} M_u$ with the structure of a $\Lambda$-monoidal category in such a way that $e$ induces an equivalence between it and $M$.

Let $y_i = (n_i; x_{i1}, \ldots, x_{in_i})$ be objects of $\textrm{st} M_u$ for $i=1, \ldots, m$. For an element $g \in \Lambda(k)$, define $[g]$ to be the isomorphism $[\delta_{m; k_1, \ldots, k_m}(g)]$ from \cref{Defi:wk-lmc}.
We must now verify the three axioms from \cref{Defi:wk-lmc} using this definition of $[g]$ in $\textrm{st} M_u$.
Each axiom follows from the characterization of action operads in \cref{thm:charAOp}.
The first axiom is a consequence of Axiom \eqref{eq6}, the second axiom is a consequence of Axiom \eqref{eq9}, and the third axiom is a consequence of Axiom \eqref{eq7}. We write $\textrm{st}^{\Lambda} M$ for this $\Lambda$-monoidal structure on $\textrm{st} M_u$. Since $\textrm{st} M_u$ is strict monoidal by construction and $e \colon \textrm{st} M_u \to M$ is known to be a monoidal equivalence, we can prove that $e$ is actually a $\Lambda$-monoidal equivalence $\textrm{st}^{\Lambda} M \to M$ by showing that $e$ is a $\Lambda$-monoidal functor. The single axiom in \cref{Defi:wk-lmf} requires the commutativity of the diagram below,
 \[
        \xy
            (0,0)*+{\underline{ey_1 \cdots ey_m}}="00";
            (50,0)*+{\underline{ey_{g^{-1}(1)} \cdots ey_{g^{-1}(m)}}}="10";
            (0,-15)*+{e\big(\underline{y_1 \cdots y_m}\big)}="01";
            (50,-15)*+{e\big(\underline{y_{g^{-1}(1)} \cdots y_{g^{-1}(m)}}\big)}="11";
            {\ar^{[g]} "00" ; "10"};
            {\ar^{e_m} "10" ; "11"};
            {\ar_{e_m} "00" ; "01"};
            {\ar_{e[g]} "01" ; "11"};
        \endxy
     \]
and that follows immediately from the third axiom in \cref{Defi:wk-lmc}. By \cref{rem:eq-in-wlmc}, this observation completes the proof that $e$ is an equivalence in the 2-category of weak $\Lambda$-monoidal categories.
\end{proof}

\begin{rem}[(Pseudo-$\EL$-algebras versus weak $\Lambda$-monoidal categories)]\label{rem:psELalg-vs-wlmc}
While \cref{cor:coherence-for-ulP} is satisfying in its brevity, one would expect it to be less useful in practice than the second part of \cref{thm:wlmc-to-lmc}, as that is the case for plain monoidal categories.
\end{rem}

\section{Group Actions and Cartesian $2$-monads}\label{sec:cart}

This is the first of two sections to investigate the interaction between operads and pullbacks. The monads arising from a non-symmetric operad are always cartesian, as described in \cite[Appendix C]{leinster}. The monads that arise from symmetric operads, however, are not always cartesian and an example of where this fails is the symmetric operad for which the algebras are commutative monoids. 
Moving to the context of $2$-monads on a 2-category, we can consider those that are 2-cartesian (\cref{Defi:2cart-monad}).
The goal of this section is to characterize those $\Lambda$-operads $P$ for which the induced 2-monad $\underline{P}$ is 2-cartesian. 
We prove in \cref{cor:cart_cor,thm:cart_thm} that $P$ being 2-cartesian is equivalent to either free group actions, in the symmetric case, or a slight weakening of free group actions, in the general $\Lambda$-operad case.

\begin{Defi}[(2-cartesian $2$-monad)]\label{Defi:2cart-monad}
A $2$-monad $T \colon \mathcal{K} \rightarrow \mathcal{K}$ is said to be \textit{$2$-cartesian} if
    \begin{itemize}
        \item the $2$-category $\mathcal{K}$ has $2$-pullbacks,
        \item the functor $T$ preserves $2$-pullbacks (up to isomorphism), and
        \item the naturality squares for the unit and multiplication of the $2$-monad are $2$-pullbacks.
    \end{itemize}
\end{Defi}

\begin{rem}
As discussed in \cref{rem:lim-v-2lim}, the  $2$-pullback of a diagram is actually the same as the ordinary pullback in $\mb{Cat}$. We will therefore drop the prefix 2-, and interpret cartesian to mean 2-cartesian.
\end{rem}

We begin our study of the cartesian property in the context of symmetric operads.
We will individually examine when the unit for $\underline{P}$ is cartesian, when the multiplication for $\underline{P}$ is cartesian, and when $\underline{P}$ is a cartesian 2-functor.

\begin{prop}\label{prop:cart_unit}
Let $P$ be a symmetric operad. Then the unit $\eta \colon \id \Rightarrow \underline{P}$ for the associated monad is a cartesian transformation.
\end{prop}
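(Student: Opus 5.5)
We need to show that for every functor $f \colon X \to Y$ the naturality square
\[
\xy
(0,0)*+{X}="00";
(25,0)*+{\underline{P}(X)}="10";
(0,-15)*+{Y}="01";
(25,-15)*+{\underline{P}(Y)}="11";
{\ar^{\eta_X} "00" ; "10"};
{\ar^{\underline{P}f} "10" ; "11"};
{\ar_{f} "00" ; "01"};
{\ar_{\eta_Y} "01" ; "11"};
\endxy
\]
is a pullback in $\mb{Cat}$. By \cref{rem:lim-v-2lim} this is an ordinary pullback, computed separately on objects and on morphisms. Since $\underline{P}(X) = \coprod_n \coeq{P}{X}{\Sigma}{n}$, and both $\eta_X$ and $\underline{P}f$ respect this coproduct decomposition, the plan is to reduce the whole question to the $n=1$ summand.

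First, $\eta_X$ is the composite $X \cong 1 \times X \to P(1) \times X \to \coeq{P}{X}{\Sigma}{1} \hookrightarrow \underline{P}(X)$, where the middle map sends $x$ to $(\id, x)$. Because $\Sigma_1$ is trivial, the coequalizer at $n=1$ is just $P(1) \times X$. Moreover $\underline{P}f$ maps the $n$th summand of $\underline{P}(X)$ to the $n$th summand of $\underline{P}(Y)$. Coproducts in $\mb{Cat}$ are disjoint and stable under pullback, so the pullback of $\eta_Y$ along $\underline{P}f$ equals the pullback computed inside the $n=1$ summand. That is, we need to show the square with $X \to P(1) \times X$, $x \mapsto (\id,x)$, on top, $1 \times f$ on the right, and $Y \to P(1) \times Y$ on the bottom is a pullback.

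For this square, we compute the pullback of $P(1)\times X \xrightarrow{1\times f} P(1)\times Y \leftarrow Y$ directly. An object of the pullback is a pair $((p,x), y)$ with $(p, f x) = (\id, y)$, which forces $p = \id$ and $y = fx$. A morphism is a pair $((\phi,u), v)$ with $(\phi, fu) = (1_{\id}, v)$, which forces $\phi = 1_{\id}$ and $v = fu$. In both cases the data reduce exactly to an object or morphism of $X$, and the comparison functor from $X$ is therefore an isomorphism. Equivalently, $1\times f$ is the product of $1_{P(1)}$ with $f$, and pulling back a product map along $\{\id\}\times Y$ simply returns $\{\id\}\times X$.

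The only step needing care is the reduction to the $n=1$ summand: we must justify disjointness and pullback-stability of coproducts in $\mb{Cat}$, and check that $\underline{P}f$ genuinely preserves summands. Both follow because $\textrm{ob}$ and $\textrm{mor}$ are computed componentwise and $\underline{P}f$ is induced summandwise. Notably, the argument does not use freeness of the action at all, since $\Sigma_1$ is trivial; this is why the unit is cartesian for every symmetric operad.
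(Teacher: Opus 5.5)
Your proposal is correct and follows the paper's proof: split $\underline{P}$ into its arity summands, note that the pullback is the coproduct of the summandwise pullbacks (the $n \neq 1$ ones being empty, since $\eta_Y$ lands in the $n=1$ summand), and compute the $n=1$ pullback directly, which is $X$ because $\Sigma_1$ is trivial. Your appeal to disjoint, pullback-stable coproducts in $\mb{Cat}$ and your explicit object/morphism check simply fill in what the paper leaves as ``easy to see.''
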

\begin{proof}
In order to show that $\eta$ is cartesian, we must prove that for a functor $f \colon X \rightarrow Y$, the pullback of the diagram below is the category $X$.
    \[
        \xy
            (40,0)*+{Y}="10";
            (0,-15)*+{\coprod \coeqsig{P}{X}{n}}="01";
            (40,-15)*+{\coprod \coeqsig{P}{Y}{n}}="11";
            {\ar^{\eta_Y} "10" ; "11"};
            {\ar_{\underline{P}(f)} "01" ; "11"};
        \endxy
    \]
The pullback of this diagram is isomorphic to the coproduct of the pullbacks of diagrams of the following form, where on the left we note that no coequalizer is needed because $\Sigma_1$ is the trivial group.
\[
        \xy
            (30,0)*+{Y}="10";
            (0,-15)*+{P(1) \times X}="01";
            (30,-15)*+{P(1) \times Y}="11";
            {\ar^{} "10" ; "11"};
            {\ar_{1 \times f} "01" ; "11"};
            (90,0)*+{\emptyset}="60";
            (60,-15)*+{\coeqsig{P}{X}{n}}="51";
            (90,-15)*+{\coeqsig{P}{Y}{n}}="61";
            {\ar^{} "60" ; "61"};
            {\ar_{1 \times f^n} "51" ; "61"};
            (75,-21)*{n \neq 1}
        \endxy
    \]
It is easy then to see that $X$ is the pullback of the $n=1$ cospan, and that the empty category is the pullback of each of the other cospans, making $X$ the pullback of the original diagram and verifying that $\eta$ is cartesian.
\end{proof}

\begin{prop}\label{prop:mu-2cart}
Let $P$ be a symmetric operad. If the $\Sigma_n$-actions are all free, then the multiplication $\mu \colon  \underline{P}^{2} \Rightarrow \underline{P}$ of the associated monad is a cartesian transformation.
\end{prop}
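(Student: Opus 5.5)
We need to show that for every functor $f \colon X \to Y$ the naturality square
\[
\xy
(0,0)*+{\underline{P}^2 X}="00";
(30,0)*+{\underline{P} X}="10";
(0,-15)*+{\underline{P}^2 Y}="01";
(30,-15)*+{\underline{P} Y}="11";
{\ar^{\mu_X} "00";"10"};
{\ar^{\underline{P}f} "10";"11"};
{\ar_{\underline{P}^2 f} "00";"01"};
{\ar_{\mu_Y} "01";"11"};
\endxy
\]
is a pullback in $\mb{Cat}$. Since the actions are free, \cref{lem:coeq-lem} and \cref{rem:operadcoeq-as-quotient} give explicit descriptions of all categories involved. We have $\coeqsig{P}{X}{n} \cong (P(n)\times X^n)/\Sigma_n$. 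Likewise $\underline{P}^2X$ decomposes as a coproduct over $n$ and $k_1,\dots,k_n$, with summands of the form
\[
\Big(P(n)\times \textstyle\prod_i P(k_i)\times X^{\sum k_i}\Big)/\big(\Sigma_n \ltimes \textstyle\prod_i\Sigma_{k_i}\big)
\]
(after reindexing). This wreath-type group acts freely because $\Sigma_n$ acts freely on $P(n)$ and each $\Sigma_{k_i}$ acts freely on $P(k_i)$. Both $\mu$ and $\underline{P}$ respect the coproduct decomposition by arity, and pullbacks in $\mb{Cat}$ commute with coproducts. So it suffices to check the square one summand at a time: fix the total arity $K=\sum k_i$ on the right, and on the left take the disjoint union over all $(n;k_1,\dots,k_n)$ with that sum.

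The comparison functor goes from $\underline{P}^2X$ to the pullback $\underline{P}^2Y\times_{\underline{P}Y}\underline{P}X$. I would prove it is an isomorphism by checking bijectivity separately on objects and on morphisms; this suffices because $\textrm{ob}$ and $\textrm{mor}$ preserve pullbacks, so the pullback in $\mb{Cat}$ is computed levelwise on objects and morphisms.

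\textbf{Surjectivity.} An element of the pullback is a pair consisting of an orbit
\[
[p;[q_1;\underline{y}_1],\dots,[q_n;\underline{y}_n]]
\]
in $\underline{P}^2Y$ and an orbit $[r;\underline{x}]$ in $\underline{P}X$, subject to two conditions:
\[
[\mu(p;\underline{q});\underline{y}] = [r;f\underline{x}].
\]
Choose a representative of the first orbit. By freeness of the $\Sigma_K$-action, there is a \emph{unique} $\sigma\in\Sigma_K$ with $r\cdot\sigma^{-1}=\mu(p;\underline q)$ (up to the identification of orbits), and then $f(\sigma\cdot\underline x)=\underline y$. So after moving the representative of $[r;\underline x]$ by $\sigma$, we may assume $r=\mu(p;\underline q)$ and $f\underline x=\underline y$ on the nose. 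The object
\[
[p;[q_1;\underline{x}_1],\dots,[q_n;\underline{x}_n]],
\]
where $\underline{x}$ is split into the blocks $\underline{x}_i$, then maps to the given pair.

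\textbf{Injectivity.} Suppose two elements of $\underline{P}^2X$ have the same image. Equality of their images in $\underline{P}^2Y$ gives a group element $(\tau;\tau_i)$ relating their $P$-coordinates. Equality of their images in $\underline{P}X$ gives $\rho\in\Sigma_K$ relating $\mu(p;\underline q)$ to $\mu(p';\underline q')$. By the equivariance axioms, $(\tau;\tau_i)$ induces the element $\delta(\tau)\beta(\tau_i)$ acting on $\mu(p;\underline q)$ in the same way. Freeness of $\Sigma_K$ on $P(K)$ then forces $\rho=\delta(\tau)\beta(\tau_i)$, so the $X$-coordinates are also related by $(\tau;\tau_i)$, and the two elements coincide.

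The same argument runs verbatim for morphisms, with orbits of morphisms; here freeness on morphisms follows from \cref{lem:free-on-obj}.

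\textbf{Main obstacle.} The delicate step is the bookkeeping in the injectivity argument. One has to establish that the action of $\Sigma_n\ltimes\prod\Sigma_{k_i}$ on $\underline{P}^2$ is compatible with $\Sigma_K$ under $\mu$ via the map $(\tau;\tau_i)\mapsto \mu(\tau;\tau_i)$ of \cref{ex:Sigma}. One also needs freeness on $P(K)$, rather than only on $P(n)$ and the $P(k_i)$, to pin down the permutation. I would handle this by using the single-equation form of equivariance in \cref{rem:single-axiom}, with $\Lambda=\Sigma$.
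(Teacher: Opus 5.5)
Your proof is correct and turns on the same key step as the paper's: freeness of $\Sigma_K$ on $P(K)$ gives a unique permutation carrying $r$ to $\mu(p;\underline{q})$, after which you reindex $\underline{x}$ and split it into blocks. The difference is only organizational. The paper first uses pullback cancellation to reduce to the squares over the terminal category, where the $Y$-coordinates disappear. You instead verify the square directly for an arbitrary $f \colon X \to Y$, and you also spell out the injectivity (well-definedness) argument that the paper leaves to the reader.
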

\begin{proof}
Note that if all of the diagrams
    \[
        \xy
            (0,0)*+{\underline{P}^2(X)}="00";
            (20,0)*+{\underline{P}^2(1)}="10";
            (0,-15)*+{\underline{P}(X)}="01";
            (20,-15)*+{\underline{P}(1)}="11";
            {\ar^{\underline{P}^2(!)} "00" ; "10"};
            {\ar^{\mu_1} "10" ; "11"};
            {\ar_{\mu_X} "00" ; "01"};
            {\ar_{\underline{P}(!)} "01" ; "11"};
        \endxy
    \]
are pullbacks then the outside of the diagram
    \[
        \xy
            (0,0)*+{\underline{P}^2(X)}="00";
            (20,0)*+{\underline{P}^2(Y)}="10";
            (40,0)*+{\underline{P}^2(1)}="20";
            (0,-15)*+{\underline{P}(X)}="01";
            (20,-15)*+{\underline{P}(Y)}="11";
            (40,-15)*+{\underline{P}(1)}="21";
            {\ar^{\underline{P}^2(f)} "00" ; "10"};
            {\ar^{\underline{P}^2(!)} "10" ; "20"};
            {\ar^{\mu_{1}} "20" ; "21"};
            {\ar_{\mu_X} "00" ; "01"};
            {\ar_{\underline{P}(f)} "01" ; "11"};
            {\ar_{\underline{P}(!)} "11" ; "21"};
            {\ar_{\mu_Y} "10" ; "11"};
        \endxy
    \]
is also a pullback and so each of the naturality squares for $\mu$ must therefore be a pullback. 

Now we can split up the square above, much like we did for $\eta$, and prove that each of the squares below is a pullback.
    \[
        \xy
            (0,0)*+{\coprod_{m} P(m) \otimes_{\Sigma_m} \prod_{k_1+\cdots+k_m=n} \left(\coeqsig{P}{X}{{k_i}}\right)}="00";
            (60,0)*+{\coprod P(m) \otimes_{\Sigma_m} \prod_i \left(P(k_i) / \Sigma_{k_i}\right)}="10";
            (0,-20)*+{\coeqsig{P}{X}{n}}="01";
            (60,-20)*+{P(n) / \Sigma_{n}}="11";
            {\ar "00" ; "10"};
            {\ar "00" ; "01"};
            {\ar "01" ; "11"};
            {\ar "10" ; "11"};
        \endxy
    \]
The map along the bottom is the obvious one, sending $[p; x_1, \ldots, x_n]$ simply to the equivalence class $[p]$. The map along the right hand side is induced by operadic composition, and sends $[q;[p_1],\ldots,[p_m]]$ to $[\mu^P(q;p_1,\ldots,p_n)]$. The pullback of these maps would be the category consisting of pairs
    \[
        \left([p;x_1,\ldots,x_{n}],[q;[p_1],\ldots,[p_m]]\right),
    \]
where $q \in P(m)$, $p_i \in P(k_i)$, $k_1 + \cdots + k_m = n$, $p \in P(n)$, and for which $[p] = [\mu^P(q;p_1,\ldots,p_n)]$; we will denote the pullback by $U$. The upper left category in the diagram, which we denote by $Q$, has objects
    \[
        \left[q;\left[p_1;\underline{x}_1\right],\ldots,\left[p_m;\underline{x}_m\right]\right].
    \]

The uniquely induced functor $F \colon Q \to U$ is defined on objects by the formula
    \[
F\Big(\big[q;\left[p_1;\underline{x}_1\right],\ldots,\left[p_m;\underline{x}_m\right]\big] \Big) =          \left(\left[\mu^P(q;p_1,\ldots,p_m);\underline{x}\right], \big[q;[p_1],\ldots,[p_m]\big]\right),
    \]
where the list $\underline{x}$ is the concatenation $\underline{x}_1, \ldots, \underline{x}_m$.
We define an inverse $G \colon U \to Q$ as follows.
Let 
    \[
        \left([p;x_1,\ldots,x_{n}],[q;[p_1],\ldots,[p_m]]\right)
    \]
be an object of $U$, with $p_i \in P(k_i)$ as above. 
Since the action of $\Sigma_n$ on $P(n)$ is free, there is a unique $g \in \Sigma_n$ such that $p  = \mu^P(q;p_1,\ldots,p_m) \cdot g$. Then
\begin{align*}
[p; x_1, \ldots, x_n] & = \big[ \mu^P(q;p_1,\ldots,p_m) \cdot g; x_1, \ldots, x_n \big] \\
& = \big[ \mu^P(q;p_1,\ldots,p_m); x_{g^{-1}(1)}, \ldots, x_{g^{-1}(n)} \big],
\end{align*}
so by reindexing the $x_i$'s if necessary we can assume that $p =  \mu^P(q;p_1,\ldots,p_m)$.
Then define
\[
G\Big( [p;x_1,\ldots,x_{n}],[q;[p_1],\ldots,[p_m]] \Big) = \big[ q; [p_1; \underline{y}_1], \ldots, [p_m; \underline{y}_m] \big],
\]
where 
\[
\underline{y_j} = x_{k_1 + \cdots + k_{j-1} + 1}, \ldots, x_{k_1 + \cdots + k_{j-1} + k_j}.
\]
The reader can check that this is well-defined, and an inverse to $F$ on objects. A similar formula holds for morphisms. The functor $G$ is an inverse to $F$, so the desired square is a pullback, completing the proof.
\end{proof}

\begin{rem}[(Pullback cancellation)]\label{rem:pb-cancel}
We call the technique in the first paragraph of the previous proof \emph{pullback cancellation}.
\end{rem}

\begin{prop}\label{prop:P-2cart}
Let $P$ be a symmetric operad. Then the $2$-monad $\underline{P}$ preserves pullbacks if and only if $\Sigma_{n}$ acts freely on $P(n)$ for all $n$.
\end{prop}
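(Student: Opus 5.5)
We split $\underline{P}$ into its summands. Since $\underline{P}(X) = \coprod_n \coeqsig{P}{X}{n}$ and coproducts in $\mb{Cat}$ are disjoint and stable under pullback, a cospan $X \to Z \leftarrow Y$ has its image under $\underline{P}$ decomposing arity by arity. So $\underline{P}$ preserves pullbacks if and only if each functor $X \mapsto \coeqsig{P}{X}{n}$ does. The functor $X \mapsto P(n) \times X^n$ preserves pullbacks, being a product of pullback-preserving functors. The question is therefore whether passing to the $\Sigma_n$-coequalizer commutes with pullbacks.

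For sufficiency, assume $\Sigma_n$ acts freely on $P(n)$. Then $\Sigma_n$ acts freely on $P(n) \times X^n$ for every $X$, as in \cref{rem:operadcoeq-as-quotient}. By \cref{lem:coeq-lem}, $\coeqsig{P}{X}{n} \cong (P(n) \times X^n)/\Sigma_n$, whose objects and morphisms are orbits. Given $f \colon X \to Z$ and $g \colon Y \to Z$, we compare $(P(n) \times (X \times_Z Y)^n)/\Sigma_n$ with the pullback of the orbit categories; the canonical comparison functor sends $[p; (x_i,y_i)]$ to the pair $([p;\underline{x}],[p;\underline{y}])$.
\begin{itemize}
\item For surjectivity on objects, take a pair $([p;\underline{x}],[p';\underline{y}])$ with the same image in $\coeqsig{P}{Z}{n}$. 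Then there is $\sigma$ with $p' = p\sigma$, after which $[p';\underline{y}] = [p; \sigma\cdot\underline{y}]$ and the coordinates match in $Z$.
\item For injectivity on objects, use freeness: the $\sigma$ above is unique, so the representative is determined.
\item Morphisms are handled identically, since \cref{lem:free-on-obj} makes the action on morphisms free as well.
\end{itemize}
This mirrors the construction of $G$ in \cref{prop:mu-2cart}.

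For necessity, suppose some $p \in P(n)$ has a nontrivial stabilizer element $\sigma \neq e$, so $p\sigma = p$. We look for a pullback that $\underline{P}$ fails to preserve; the natural candidate is the product of discrete sets over the terminal category, using $\underline{P}(1) \times \underline{P}(1)$ versus $\underline{P}(1\times 1)$, or more flexibly a pullback of discrete categories $X \to 1 \leftarrow X$. Take $X = \{a,b\}$ discrete, and choose $i$ with $\sigma(i) \neq i$. Build the $n$-tuples $\underline{x}$, equal to $a$ at position $i$ and $b$ elsewhere, and its permutation $\sigma\cdot \underline{x}$; these are distinct as tuples. In $\coeqsig{P}{X}{n}$ we have $[p;\underline{x}] = [p\sigma; \underline{x}] = [p;\sigma\cdot\underline{x}]$. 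Now form the pullback $X \times X$ and consider the pair of objects $[p;\underline{x}]$ and $[p;\underline{x}']$, where $\underline{x}'$ is constant $b$ except at position $i$; this pair lies in $\underline{P}(X)\times_{\underline{P}(1)}\underline{P}(X)$. Because the stabilizer element can move one coordinate independently of the other, the comparison map from $\underline{P}(X\times X)$ fails to be injective: the classes $[p; (x_j, x'_j)]$ and $[p; (x_j, (\sigma\cdot\underline{x}')_j)]$ map to the same pair but are distinct orbits. For these to be distinct orbits we need the tuple to be chosen so that no element of $\Sigma_n$ fixing $p$ relates them simultaneously; choosing $X$ with $n$ distinct elements and using the identity on one side with $\underline{x}$ having distinct entries makes this work.

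The main obstacle is this counterexample bookkeeping: choosing tuples so that the two pairs are genuinely distinct orbits in $P(n)\times (X\times X)^n$ while agreeing after projection. The cleanest choice is $X$ discrete with $n$ elements, the pair $(\underline{x}, \underline{x})$ versus $(\underline{x}, \sigma\cdot\underline{x})$ with $\underline{x} = (1,\dots,n)$. The two pairs have the same image, yet they are different orbits, since an element relating them would have to fix $\underline{x}$ and hence be trivial, while it must act as $\sigma$ on the second coordinate.
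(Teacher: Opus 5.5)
Your proposal is correct and follows essentially the paper's route. The sufficiency direction uses the same summand-by-summand reduction, with freeness and \cref{lem:coeq-lem} supplying the pullback property; you check that the comparison functor is bijective on objects and morphisms, where the paper verifies the universal property by constructing $J$. The necessity direction uses the same kind of counterexample, namely two distinct orbits in $\underline{P}$ of a product of discrete categories that have identical projections. Your final choice, $(\underline{x},\underline{x})$ versus $(\underline{x},\sigma\cdot\underline{x})$ with $\underline{x}=(1,\ldots,n)$, is the one to keep, since the earlier two-element attempt with $\underline{x}'$ is stated ambiguously; it works just as well as the paper's two-element-set example.
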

\begin{proof}
Consider the following pullback of discrete categories.
    \[
        \xy
            (0,0)*+{\lbrace (x,y), (x,y'), (x',y), (x',y') \rbrace}="00";
            (40,0)*+{\lbrace y,y' \rbrace}="10";
            (0,-15)*+{\lbrace x, x' \rbrace}="01";
            (40,-15)*+{\lbrace z \rbrace}="11";
            {\ar "00" ; "10"};
            {\ar "10" ; "11"};
            {\ar "00" ; "01"};
            {\ar "01" ; "11"};
        \endxy
    \]
Letting $\mathbf{4}$ denote the pullback and similarly writing $\mathbf{2}_X = \{ x, x' \}$ and $\mathbf{2}_Y = \{y, y'\}$, the following diagram results as the image of this pullback square under $\underline{P}$.
    \[
        \xy
            (0,0)*+{\coprod \coeqsig{P}{\mathbf{4}}{n}}="00";
            (40,0)*+{\coprod \coeqsig{P}{\mathbf{2}_Y}{n}}="10";
            (0,-15)*+{\coprod \coeqsig{P}{\mathbf{2}_X}{n}}="01";
            (40,-15)*+{\coprod P(n)/\Sigma_n}="11";
            {\ar "00" ; "10"};
            {\ar "10" ; "11"};
            {\ar "00" ; "01"};
            {\ar "01" ; "11"}:
        \endxy
    \]
The projection map $\pi_Y \colon \underline{P}(\mb{4}) \rightarrow \underline{P}(\mb{2}_Y)$ is defined by
    \[
        \pi_Y\big(\ [p;(x_1,y_1), \ldots, (x_n,y_n)]\ \big) = [p;y_1,\ldots,y_n],
    \]
and likewise for the projection $\pi_X$ to $\underline{P}(\mb{2}_X)$.

Now assume that, for some $n$, the action of $\Sigma_n$ on $P(n)$ is not free. Then there exist $p \in P(n)$ and a non-identity $g \in \Sigma_n$ such that $p \cdot g = p$. We will show that the existence of $g$ proves that $\underline{P}$ is not cartesian.
Since $g \neq e$, so there exists an $i \in \{1, \ldots, n\}$ such that $g(i) \neq i$; without loss of generality, we may take $i=1$ and assume $g(1)=2$. Consider the two distinct elements
    \[
      a_1=  \left[p;(x',y),(x,y'),(x,y),\ldots,(x,y)\right]
    \]
and
    \[
     a_2=   \left[p;(x,y),(x',y'),(x,y),\ldots,(x,y)\right]
    \]
in $\underline{P}(\mb{4})$, where all the elements of these lists are given by $(x,y)$ unless otherwise indicated. Both of these elements are mapped to the same elements in $\underline{P}(\mb{2}_X)$:
    \begin{align*}
           \pi_X(a_1) & = \left[p; x', x, \ldots, x\right] \\
           &= \left[p \cdot g; x', x, \ldots, x\right]\\
          &= \left[p;g\cdot (x', x, \ldots, x)\right]\\
          &= \left[p;x,x',x,\ldots,x\right] \\
          & = \pi_X(a_2).
    \end{align*}
Similarly, 
    \[
        \pi_Y(a_1) = \left[p;y,y',y, \ldots, y\right] = \pi_Y(a_2).
    \]
The pullback of this diagram, however, has a unique element which is projected to the ones we have considered, so $\underline{P}(\mb{4})$ is not a pullback of the square displayed above. This completes the proof that $\underline{P}$ does not preserve pullbacks if for some $n$ the action of $\Sigma_n$ on $P(n)$ is not free.

Now assume that each $\Sigma_n$ acts freely on $P(n)$. Given a pullback
    \[
        \xy
            (0,0)*+{A}="00";
            (15,0)*+{B}="10";
            (0,-15)*+{C}="01";
            (15,-15)*+{D}="11";
            {\ar^{F} "00" ; "10"};
            {\ar^{S} "10" ; "11"};
            {\ar_{R} "00" ; "01"};
            {\ar_{H} "01" ; "11"};
        \endxy
    \]
we must show that the image of the diagram under $\underline{P}$ is also a pullback. Now this will be true if and only if each individual diagram
        \[
            \xy
                (0,0)*+{\coeqsig{P}{A}{n}}="00";
                (30,0)*+{\coeqsig{P}{B}{n}}="10";
                (0,-15)*+{\coeqsig{P}{C}{n}}="01";
                (30,-15)*+{\coeqsig{P}{D}{n}}="11";
                {\ar^{\coeqb{1}{F^n}{\Sigma_{n}}} "00" ; "10"};
                {\ar^{\coeqb{1}{S^n}{\Sigma_{n}}} "10" ; "11"};
                {\ar_{\coeqb{1}{R^n}{\Sigma_{n}}} "00" ; "01"};
                {\ar_{\coeqb{1}{H^n}{\Sigma_{n}}} "01" ; "11"}:
            \endxy
    \]
is also a pullback. 

Suppose that 
        \[
            \xy
                (0,0)*+{X}="00";
                (30,0)*+{\coeqsig{P}{B}{n}}="10";
                (0,-15)*+{\coeqsig{P}{C}{n}}="01";
                (30,-15)*+{\coeqsig{P}{D}{n}}="11";
                {\ar^{K} "00" ; "10"};
                {\ar^{\coeqb{1}{S^n}{\Sigma_{n}}} "10" ; "11"};
                {\ar_{L} "00" ; "01"};
                {\ar_{\coeqb{1}{H^n}{\Sigma_{n}}} "01" ; "11"}:
            \endxy
    \]
commutes. For an object $x \in X$, write 
\begin{align*}
K(x) & = [u; x_1, \ldots, x_n],\\
L(x) & = [v; x_1', \ldots, x_n'].
\end{align*}
Since the action of $\Sigma_n$ is free on $P(n)$, \cref{lem:coeq-lem} and \cref{rem:operadcoeq-as-quotient} imply that the equation $\coeqb{1}{S^n}{\Sigma_{n}}\circ K(x) = \coeqb{1}{H^n}{\Sigma_{n}}\circ L(x)$ is equivalent to the existence of a unique $g \in \Sigma_n$ such that
\begin{itemize}
\item $u \cdot g = v$ and
\item for each $i$, $Sx_i = Hx'_{g^{-1}(i)}$.
\end{itemize}
Since $A$ is the pullback of the original square, there exists a unique $a_i$ such that $F(a_i) = x_i$ and $R(a_i) = x'_{g^{-1}(i)}$. Define $J \colon X \to \coeqsig{P}{A}{n}$ on objects by 
\[
J(x) = [u; a_1, \ldots, a_n].
\]
We compute
\begin{align*}
\coeqb{1}{F^n}{\Sigma_{n}}\circ J(x) & = \coeqb{1}{F^n}{\Sigma_{n}}[u; a_1, \ldots, a_n] \\
& = [u; Fa_1, \ldots, Fa_n] \\
& = [u; x_1, \ldots, x_n]\\
& = K(x), \\
\coeqb{1}{R^n}{\Sigma_{n}}\circ J(x) & = \coeqb{1}{R^n}{\Sigma_{n}}[u; a_1, \ldots, a_n] \\
& = [u; Ra_1, \ldots, Ra_n] \\
& = [u; x'_{g^{-1}(1)}, \ldots, x'_{g^{-1}(n)}]\\
& = [u; g \cdot(x'_1, \ldots, x'_n)] \\
& = [u \cdot g; x'_1, \ldots, x'_n] \\
& = [v; x'_1, \ldots, x'_n] \\
& = L(x).
\end{align*}
The same uniqueness arguments, using \cref{lem:coeq-lem} and \cref{rem:operadcoeq-as-quotient}, show that this is the unique assignment on objects making both $\coeqb{1}{F^n}{\Sigma_{n}}\circ J = K$ and $\coeqb{1}{R^n}{\Sigma_{n}}\circ J = L$ hold at the level of objects. The argument above applies equally to morphisms, and it is simple to show that the resulting $J$ is the unique functor $X \to \coeqsig{P}{A}{n}$ satisfying $\coeqb{1}{F^n}{\Sigma_{n}}\circ J = K$ and $\coeqb{1}{R^n}{\Sigma_{n}}\circ J = L$. Therefore $\coeqsig{P}{A}{n}$ is the pullback as required, completing the proof.
\end{proof}

Collecting \cref{prop:cart_unit,prop:P-2cart,prop:mu-2cart} together gives the following corollary.

\begin{cor}\label{cor:cart_cor}
The $2$-monad associated to a symmetric operad $P$ is cartesian if and only if the action of $\Sigma_n$ is free on each $P(n)$.
\end{cor}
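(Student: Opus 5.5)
The plan is to assemble \cref{cor:cart_cor} directly from the three preceding propositions. By \cref{Defi:2cart-monad}, and using the remark that 2-pullbacks in $\mb{Cat}$ are ordinary pullbacks, being cartesian amounts to three conditions. First, $\mb{Cat}$ has pullbacks. Second, $\underline{P}$ preserves pullbacks. Third, the naturality squares of $\eta$ and $\mu$ are pullbacks. The first condition is automatic.

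For the forward direction, assume $\underline{P}$ is cartesian. Then in particular $\underline{P}$ preserves pullbacks. The forward direction of \cref{prop:P-2cart} (proved via the contrapositive with the discrete square on $\mathbf{4}$) then forces each $\Sigma_n$ to act freely on $P(n)$. Nothing further is needed.

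For the converse, assume every action of $\Sigma_n$ on $P(n)$ is free. We then check the three conditions in turn. By \cref{prop:cart_unit}, $\eta$ is cartesian, and this needs no hypothesis. By \cref{prop:mu-2cart}, $\mu$ is cartesian, and this uses freeness. By the reverse direction of \cref{prop:P-2cart}, $\underline{P}$ preserves pullbacks.

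The one point I would double-check is the passage from pullback-preservation of the underlying 1-functor to preservation of 2-pullbacks. This is handled by \cref{rem:lim-v-2lim}: the 2-dimensional part of the universal property comes for free in $\mb{Cat}$, so there is no real obstacle and the corollary is purely bookkeeping. If anything deserves care, it is confirming that the proof of \cref{prop:mu-2cart} via pullback cancellation (\cref{rem:pb-cancel}) really covers all naturality squares rather than only those over the terminal category. That argument composes the square at $f$ with the square at $!\colon Y\to 1$, and then applies cancellation using that the square at $!\colon Y \to 1$ is a pullback.
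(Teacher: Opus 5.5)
Your proposal is correct and matches the paper's argument: the paper obtains the corollary exactly by collecting \cref{prop:cart_unit,prop:mu-2cart,prop:P-2cart}. As you say, the forward direction needs only that a cartesian 2-monad preserves pullbacks, and your description of how the proof of \cref{prop:mu-2cart} uses pullback cancellation to reduce all naturality squares to those over the terminal category is accurate.
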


We require one simple technical lemma before giving a complete characterization of $\Lambda$-operads that induce cartesian $2$-monads.

\begin{lem}\label{lem:kernel_lem}
Let $C$ be a category with a right action of some group $\Lambda$ via $\mu \colon C \times \Lambda \to C$, and let $\pi \colon  \Lambda \rightarrow \Sigma$ be a group homomorphism to any other group $\Sigma$. Then the right $\Sigma$-action on the category $\coeqb{C}{\Sigma}{\Lambda}$, defined as the coequalizer below
    \[
        \xy
            (0,0)*+{C \times \Lambda \times \Sigma}="00";
            (30,0)*+{C \times \Sigma}="10";
            (60,0)*+{\coeqb{C}{\Sigma}{\Lambda}}="20";
            {\ar@<1ex>^{m \circ 1 \times \pi} "00" ; "10"};
            {\ar@<-1ex>_{\mu \times 1} "00" ; "10"};
            {\ar^{\varepsilon} "10" ; "20"};
        \endxy
    \]
where $m \colon \Sigma \times \Sigma \to \Sigma$ is the group multiplication,
is free if and only if the kernel of $\pi$ contains all the elements of $\Lambda$ that fix any object of $C$.
\end{lem}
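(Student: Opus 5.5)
We first reduce the freeness question to objects. By \cref{lem:free-on-obj}, the right $\Sigma$-action on $\coeqb{C}{\Sigma}{\Lambda}$ is free if and only if it is free on the set of objects. Since $\textrm{ob} \colon \mb{Cat} \to \mb{Set}$ is a left adjoint (\cref{rem:obj-coeq}), the objects of $\coeqb{C}{\Sigma}{\Lambda}$ form the set-level coequalizer. Concretely, these are equivalence classes $[c,\tau]$ with $c \in \textrm{ob}\,C$ and $\tau \in \Sigma$, where the relation is generated by $[c\cdot\lambda, \tau] = [c, \pi(\lambda)\tau]$. The $\Sigma$-action is $[c,\tau]\cdot\sigma = [c,\tau\sigma]$.

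The key step is to identify this generated relation exactly, since the action of $\Lambda$ on $C \times \Sigma$ given by $(c,\tau)\mapsto (c\cdot\lambda, \pi(\lambda^{-1})\tau)$ is a group action. Because it is a group action, the generated equivalence relation is just "lying in the same orbit". Hence $[c,\tau] = [c',\tau']$ if and only if there is a $\lambda \in \Lambda$ with $c' = c\cdot\lambda$ and $\tau' = \pi(\lambda)^{-1}\tau$.

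For the direction ($\Leftarrow$), assume every $\lambda$ fixing some object lies in $\ker\pi$. Suppose $[c,\tau]\cdot\sigma = [c,\tau]$, that is, $[c,\tau\sigma] = [c,\tau]$. Then some $\lambda$ satisfies $c\cdot\lambda = c$ and $\tau\sigma = \pi(\lambda)^{-1}\tau$. By hypothesis $\pi(\lambda) = e$, so $\tau\sigma = \tau$ and therefore $\sigma = e$. Thus the action is free.

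For the direction ($\Rightarrow$), suppose $\lambda$ fixes an object $c$. Then
\[
[c,e]\cdot \pi(\lambda)^{-1} = [c,\pi(\lambda)^{-1}] = [c\cdot\lambda^{-1}... ]
\]
More directly, $[c,\pi(\lambda)^{-1}]$ equals $[c\cdot\lambda^{-1}, e]$ by the generating relation read appropriately, and this is $[c,e]$ since $c\cdot\lambda^{-1} = c$. Freeness then forces $\pi(\lambda)^{-1} = e$, so $\lambda \in \ker\pi$.

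The main obstacle is getting the orbit description of the coequalizer right, and in particular keeping track of the side and inverse conventions for $\pi(\lambda)$. Once that description is in place, both directions are the one-line checks above.
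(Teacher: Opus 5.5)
Your proof is correct and follows essentially the paper's argument: reduce to objects via \cref{lem:free-on-obj}, describe the objects of the coequalizer as $\Lambda$-orbits, and check each direction in one line. You also justify that the generated relation is exactly the orbit relation, and your convention $\tau' = \pi(\lambda)^{-1}\tau$ is the right one, whereas the paper's $\sigma_1 = \pi(r^{-1})\sigma_2$ is off by a harmless inverse. Delete the unfinished display in your ($\Rightarrow$) step, since the sentence after it, $[c,\pi(\lambda)^{-1}] = [c\cdot\lambda^{-1},e] = [c,e]$, is already the complete argument.
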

\begin{proof}
By \cref{lem:free-on-obj}, we only need to check that the action is free on objects.
Since the set of objects functor preserves colimits, the objects of $\coeqb{C}{\Sigma}{\Lambda}$ are equivalence classes $[c;g]$ where $c \in C$ and $g \in \Sigma$, with $[c\cdot r;g] = [c; \pi(r)g]$. 
These classes can also be described as: $[c_1; \sigma_1] = [c_2; \sigma_2]$ if and only if there exists an $r \in \Lambda$ such that $ c_1 \cdot r = c_2$ and $\sigma_1 = \pi(r^{-1})\sigma_2$.

First assume the $\Sigma$-action is free. Then noting that $[c;e]\cdot g =[c;g]$, we have if $[c;g] = [c;e]$ then $g=e$. Let $r \in \Lambda$ be an element such that $c\cdot r = c$. Then
  \[
    [c;e] = [c\cdot r; e] = [c; \pi(r)],
  \]
so $\pi(r) = e$.

Now assume that every element of $\Lambda$ fixing an object lies in the kernel of $\pi$. Let $\tau \in \Sigma$, and assume it fixes $[p; \sigma]$, so that $[p; \sigma] = [p; \sigma \tau]$.
Then there exists an element $r \in \Lambda$ such that $p\cdot r = p$ and $\sigma = \pi(r^{-1})\sigma\tau$. 
Then $r$ fixes $p$, so lies in the kernel of $\pi$, and the second equation reduces to $\sigma = \sigma \tau$ which immediately implies that $\tau = e$. Therefore the action of $\Sigma$ is free on $\coeqb{C}{\Sigma}{\Lambda}$.
\end{proof}

\begin{thm}\label{thm:cart_thm}
The $2$-monad $\underline{P}$ associated to a $\Lambda$-operad $P$ is cartesian if and only if whenever $p \cdot g = p$ for an object $p \in P(n)$, $g \in \textrm{Ker} \, \pi (n)$.
\end{thm}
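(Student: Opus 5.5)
The plan is to reduce to the symmetric case via symmetrization, combining \cref{cor:sym-preserve-alg}, \cref{lem:kernel_lem} and \cref{cor:cart_cor}. By \cref{cor:sym-preserve-alg} there is a natural isomorphism of monads $\underline{P} \cong \underline{\pi_{!}P}$. In $\mb{Cat}$ the comparison is induced by universal properties of coequalizers, so it is also 2-natural. Being cartesian is invariant under isomorphism of 2-monads: preservation of pullbacks transports along a natural isomorphism, and so does the property that the naturality squares of the unit and multiplication are pullbacks. Hence $\underline{P}$ is cartesian if and only if $\underline{\pi_{!}P}$ is cartesian. Here $\pi_{!}P$ is a symmetric operad in $\mb{Cat}$, and the constructions of \cref{thm:pbaop} go through verbatim in $\mb{Cat}$ since they only use coequalizers and products.

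By \cref{cor:cart_cor}, $\underline{\pi_{!}P}$ is cartesian exactly when $\Sigma_n$ acts freely on $(\pi_{!}P)(n) = \coeqb{P(n)}{\Sigma_n}{\Lambda(n)}$ for every $n$. The proofs of \cref{prop:cart_unit,prop:mu-2cart,prop:P-2cart} use only elementwise arguments together with \cref{lem:coeq-lem}, so they apply to operads in $\mb{Cat}$ without change. I then apply \cref{lem:kernel_lem} with $C = P(n)$, $\Lambda = \Lambda(n)$ and the homomorphism $\pi_n \colon \Lambda(n)\to\Sigma_n$. That lemma says the $\Sigma_n$-action on this coequalizer is free if and only if every element of $\Lambda(n)$ fixing some object of $P(n)$ lies in $\mathrm{Ker}\,\pi(n)$. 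Ranging over all $n$, this is precisely the stated condition, which proves both directions at once.

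The main obstacle I expect is justifying the first step carefully. \cref{cor:sym-preserve-alg} was stated for operads in $\mb{Sets}$ and gives only an isomorphism of ordinary monads. I must check that the isomorphism $\coeq{P}{X}{\Lambda}{n} \cong \big(\coequ{P}{\Sigma}{\Lambda}{n}\big)\otimes_{\Sigma_n} X^n$ holds in $\mb{Cat}$. It does: both sides are colimits, $-\times X^n$ preserves colimits because $\mb{Cat}$ is cartesian closed, and colimits commute with colimits. By \cref{rem:lim-v-2lim} these colimits carry their 2-dimensional universal property, so the isomorphism is 2-natural.

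A secondary point is to confirm that the action of $\Lambda(n)$ on the relevant products need not be free for \cref{lem:kernel_lem} to apply. This is fine, because that lemma only uses the object-level description of the coequalizer and \cref{lem:free-on-obj}.
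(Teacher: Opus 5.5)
Your proposal is correct and follows the paper's proof exactly: you transfer along the monad isomorphism $\underline{P}\cong\underline{\pi_{!}P}$ of \cref{cor:sym-preserve-alg}, then combine \cref{cor:cart_cor} with \cref{lem:kernel_lem} applied to $C=P(n)$ and $\pi_n$. Your extra checks are points the paper leaves implicit, namely that the symmetrization isomorphism holds $2$-naturally in $\mb{Cat}$ and that cartesianness is invariant under isomorphism of 2-monads. Note only that \cref{prop:cart_unit,prop:mu-2cart,prop:P-2cart} are already stated for operads in $\mb{Cat}$, so those results need no transfer.
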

\begin{proof}
By \cref{cor:sym-preserve-alg}, the monad $\underline{P}$ is isomorphic to $\underline{\pi_{!}P}$, we need only verify when $\underline{\pi_{!}P}$ is $2$-cartesian. Thus the theorem is a direct consequence of \cref{lem:kernel_lem} and \cref{cor:cart_cor}.
\end{proof}

\begin{cor}\label{cor:EL-2cart}
Let $\Lambda$ be an action operad in $\mb{Sets}$. Then the $2$-monad $\underline{\EL}$ is cartesian.
\end{cor}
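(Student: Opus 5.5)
This corollary should follow directly from \cref{thm:cart_thm}. That theorem says $\underline{P}$ is cartesian if and only if every $g \in \Lambda(n)$ fixing some object $p \in P(n)$ lies in $\textrm{Ker}\,\pi(n)$. I will take $P = \EL$, which is a $\Lambda$-operad by \cref{cor:elambda_lambdaop}, and check this hypothesis. In fact I expect to show something stronger: the action of $\Lambda(n)$ on $\EL(n)$ is free.

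The key step is to identify the action. By the proof of \cref{cor:elambda_lambdaop}, the right action of $\Lambda(n)$ on $\EL(n)$ is given on objects by right multiplication in the group $\Lambda(n)$, and on morphisms it is uniquely determined. Suppose $h \cdot g = h$ for an object $h \in \Lambda(n)$. Cancellation in the group $\Lambda(n)$ gives $g = e_n$. Hence the only element fixing any object is the identity, which lies in $\textrm{Ker}\,\pi(n)$ because $\pi_n$ is a homomorphism. By \cref{lem:free-on-obj}, the action is then free on all of $\EL(n)$, although \cref{thm:cart_thm} only needs the condition on objects.

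Applying \cref{thm:cart_thm} completes the argument. I do not expect a real obstacle. The one point to confirm is that \cref{thm:cart_thm} quantifies only over objects $p$, so checking objects suffices; this matches the statement as given. The same freeness is also what \cref{rem:operadcoeq-as-quotient} relies on.
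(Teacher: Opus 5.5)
Your proposal is correct and matches the paper's proof. The paper likewise observes that $\Lambda(n)$ acts freely on $\EL(n)$ for every $n$, so the hypothesis of \cref{thm:cart_thm} holds.
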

\begin{proof}
The action of $\Lambda(n)$ on $\EL(n)$ is free for all $n$, so in particular satisfies the conditions in \cref{thm:cart_thm}.
\end{proof}

\section{Action Operads as Clubs}\label{sec:club}

This is the second section to investigate the interaction of action operads and pullbacks using the clubs of Kelly \cite{kelly_club1, kelly_club0, kelly_club2}. Kelly's theory of clubs  was designed to simplify and explain how coherence results for a $2$-monad $T$ can often be extracted from information about the 
specific free object $T1$, where $1$ denotes the terminal category. 
We shall see that the $\Lambda$-monoidal structure on $\EL(1)$ recovers the entire action operad structure on $\Lambda$. Furthermore, the presentations of action operads from \cref{sec:pres-aop} match up with presentations of clubs from \cite[Section 3]{kelly_club1}. This fact gives a conceptual explanation for the calculations in \cref{ex:sigma-pres}.

We begin by reminding the reader of the notion of a club, or more specifically what Kelly \cite{kelly_club1,kelly_club2} calls a club over $\mb{P}$. We will only be interested in clubs over $\mb{P}$, and thusly shorten the terminology to club from this point onward. We define clubs succinctly using Leinster's terminology of generalized operads \cite{leinster}.

\begin{Defi}[($T$-Collections, $T$-operads)]\label{Defi:Tcoll-op}
Let $C$ be a category with finite limits.
\begin{enumerate}
\item A monad $T \colon C \rightarrow C$ is \textit{cartesian} if the functor $T$ preserves pullbacks, and the naturality squares for the unit $\eta$ and the multiplication $\mu$ for $T$ are all pullbacks.
\item The category of \textit{$T$-collections}, $T\mbox{-}\mb{Coll}$, is the slice category $C/T1$, where $1$ denotes the terminal object.
\item Given a pair of $T$-collections $X \stackrel{x}{\rightarrow} T1, Y \stackrel{y}{\rightarrow} T1$, their \textit{composition product} $X \circ Y$ is given by the pullback below together with the morphism along the top.
  \[
    \xy
      (0,0)*+{X \circ Y} ="00";
      (15,0)*+{TY} ="10";
      (30,0)*+{T^{2}1} ="20";
      (45,0)*+{T1} ="30";
      (0,-10)*+{X} ="01";
      (15,-10)*+{T1} ="11";
      {\ar^{} "00" ; "10"};
      {\ar^{Ty} "10" ; "20"};
      {\ar^{\mu} "20" ; "30"};
      {\ar^{T!} "10" ; "11"};
      {\ar_{x} "01" ; "11"};
      {\ar^{} "00" ; "01"};
      (3,-3)*{\lrcorner};
    \endxy
  \]
\item The composition product, along with the unit of the adjunction $\eta \colon 1 \rightarrow T1$, give $T\mbox{-}\mb{Coll}$ a monoidal structure. A \textit{$T$-operad} is a monoid in $T\mbox{-}\mb{Coll}$.
\end{enumerate}
\end{Defi}

\begin{rem}
Everything in the above definition can be $\mb{Cat}$-enriched without any substantial modifications. Thus we require our ground $2$-category to have finite limits in the enriched sense, and the slice and pullbacks are the $2$-categorical (and not bicategorical) versions. If we take this $2$-category to be $\mb{Cat}$, then in each case the underlying category of the $2$-categorical construction is given by the corresponding $1$-categorical version. From this point, we will not distinguish between the $1$-dimensional and $2$-dimensional theory. 
\end{rem}

Let $\Sigma$ be the operad of symmetric groups. This is the terminal object of the category of action operads, with each $\pi_{n}$ the identity map. Then $\underline{E\Sigma}$ is a $2$-monad on $\mb{Cat}$, and by \cref{cor:EL-2cart} it is cartesian.

\begin{Defi}[(Club)]\label{Defi:club}
A \textit{club} is a $T$-operad in $\mb{Cat}$ for $T = \underline{E\Sigma}$.
\end{Defi}

\begin{rem}[($\mb{P} = B\Sigma = \underline{E\Sigma}(1)$)]
The category $\mb{P}$ in Kelly's notation (defined in \cite[Section 2]{kelly_club0}) is the category $B\Sigma$ of \cref{Defi:BLambda}, or equivalently
 the result of applying $\underline{E\Sigma}$ to $1$ by \cref{cor:el1=bl}.
\end{rem}

\begin{rem}[(Explicit description of clubs)]\label{rem:exp-club}
It is useful to break down the definition of a club. A club consists of
\begin{enumerate}
\item a category $K$ together with a functor $k \colon K \rightarrow B \Sigma$,
\item a multiplication map $K \circ K \rightarrow K$, and
\item a unit map $1 \rightarrow K$,
\end{enumerate}
satisfying the axioms to be a monoid in the monoidal category of $E\Sigma$-collections. The objects of $K \circ K$  are tuples of objects of $K$, written $(x; y_{1}, \ldots, y_{n})$, where $k(x) = n$. In order to describe the morphisms of $K \circ K$, recall the description of the hom-sets of $E\Sigma(K)$ from \cref{prop:hom-set-lemma}.
A morphism
  \[
    (x; y_{1}, \ldots, y_{n}) \rightarrow (z; w_{1}, \ldots, w_{m})
  \]
exists only when $n=m$ (since $B\Sigma$ only has endomorphisms) and then consists of a morphism $f \colon x \rightarrow z$ in $K$ together with morphisms $g_{i} \colon y_{i} \rightarrow w_{f(i)}$ in $K$; here we have written $f(i)$ for the permutation $k(f)$ applied to the element $i$, following \cref{nota:perm_shorthand}.
\end{rem}

\begin{nota}\label{nota:clubmult}
For a club $K$ and a morphism $(f; g_{1}, \ldots, g_{n})$ in $K \circ K$, we write $f(g_{1}, \ldots, g_{n})$ for the image of the morphism under the functor $K \circ K \rightarrow K$.
\end{nota}

We will usually just refer to a club by its underlying category $K$.

\begin{Defi}\label{Defi:2monad-from-club}
Let $K$ be a club. The 2-monad $K^{\textrm{m}}$ on $\mb{Cat}$ is defined as follows.
\begin{itemize}
\item The underlying 2-functor of $K^{\textrm{m}}$ is given by $K^{\textrm{m}}(X) = K \circ X$, where the category $X$ is equipped with the $\underline{E\Sigma}$-collection structure $X \stackrel{!}{\to} 1 \stackrel{\eta}{\to} E\Sigma(1)$.
\item The multiplication and unit are induced from $K$, using its multiplication and unit as a club.
\end{itemize}
\end{Defi}

\begin{thm}\label{thm:pi-club}
Let $\Lambda$ be an action operad. Then the map of operads $\pi \colon \Lambda \rightarrow \Sigma$ gives the category $B\Lambda = \coprod B\Lambda(n)$ from \cref{Defi:BLambda} the structure of a club.
\end{thm}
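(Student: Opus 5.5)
Since $\pi$ is a map of action operads, it is in particular an action operad map, and $B\Lambda$ with the functor $B\pi \colon B\Lambda \to B\Sigma$ is an $\underline{E\Sigma}$-collection. I would make this precise by noting $B\Lambda \cong \underline{\EL}(1)$ by \cref{cor:el1=bl}, and that $\pi$ induces a functor $\underline{E\pi}(1) \colon \underline{\EL}(1) \to \underline{E\Sigma}(1) = B\Sigma$, which on the object $n$ is the identity and on morphisms $g \in \Lambda(n)$ is $\pi_n(g)$. So the first datum of \cref{rem:exp-club} is $k = B\pi$.

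Next I would compute $B\Lambda \circ B\Lambda$ using \cref{rem:exp-club}. Its objects are tuples $(n; k_1, \ldots, k_n)$ of natural numbers. A morphism $(n;k_1,\ldots,k_n) \to (n; j_1, \ldots, j_n)$ is a pair consisting of $g \in \Lambda(n)$ and elements $h_i \in \Lambda(k_i)$, where the codomain requires $k_i = j_{g(i)}$, with $g(i)$ meaning $\pi(g)(i)$. Composition is $(g';h'_i)\circ(g;h_i) = (g'g; h'_{g(i)}h_i)$, read off from the composition in $\underline{E\Sigma}(K)$ given by \cref{prop:hom-set-lemma} and \cref{lem:coeq-lem}.

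I would define the multiplication $B\Lambda \circ B\Lambda \to B\Lambda$ on objects by $(n;k_1,\ldots,k_n) \mapsto \sum k_i$ and on morphisms by $(g; h_1, \ldots, h_n) \mapsto \mu(g; h_1, \ldots, h_n)$. The unit $1 \to B\Lambda$ picks out the object $1$. The checks needed are the following.
\begin{itemize}
\item \emph{Functoriality of the multiplication.} Preservation of composition is exactly the action operad axiom \cref{eqn:ao_axiom}, after matching the reindexing convention. Identities are preserved by \cref{lem:calclem}.
\item \emph{Map over $B\Sigma$.} The multiplication must commute with the structure maps to $B\Sigma$, i.e.\ $B\Lambda \circ B\Lambda \to \underline{E\Sigma}^2(1) \to B\Sigma$. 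This holds because $\pi$ is an operad map, so $\pi\mu(g;h_i) = \mu^\Sigma(\pi g; \pi h_i)$, and the top map into $B\Sigma$ is exactly $\mu^\Sigma$ via the multiplication of $\underline{E\Sigma}$.
\item \emph{Monoid axioms.} Associativity and the unit laws in the monoidal category of collections reduce, object- and morphism-wise, to operadic associativity and unit for $\Lambda$, together with $\id = e_1$.
\end{itemize}

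The main obstacle is the bookkeeping for the first two checks. One has to be sure that the description of $B\Lambda \circ B\Lambda$ as a pullback of $B\Lambda \to B\Sigma \leftarrow \underline{E\Sigma}(B\Lambda)$ really has composition $(g'g; h'_{g(i)}h_i)$, with indices arranged so that \cref{eqn:ao_axiom} applies verbatim. One also has to identify the composite $\underline{E\Sigma}(B\Lambda) \to \underline{E\Sigma}^2(1) \to \underline{E\Sigma}(1)$ with $\mu^\Sigma$ applied to underlying permutations. I would settle both using the explicit hom-set formula in \cref{prop:hom-set-lemma} for $\underline{E\Sigma}(B\Lambda)$. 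The remaining associativity checks are routine.
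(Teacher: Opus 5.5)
Your proposal is correct, but it argues differently from the paper. You verify the definition of a club directly. You compute $B\Lambda \circ B\Lambda$ as the pullback of $B\pi$ against $\underline{E\Sigma}(!)$ and get morphisms $(g;h_1,\ldots,h_n)$ with composition $(g'g;h'_{g(i)}h_i)$. You then define the multiplication by $\mu^{\Lambda}$, so that:
\begin{itemize}
\item functoriality is literally \cref{eqn:ao_axiom};
\item lying over $B\Sigma$ is the statement that $\pi$ is an operad map, once one notes that the multiplication of $\underline{E\Sigma}$ at $1$ acts as $\mu^{\Sigma}$ on morphisms;
\item the monoid laws are operadic associativity and unitality together with $\id = e_1$.
\end{itemize}

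The paper's proof never computes $B\Lambda\circ B\Lambda$ or the multiplication explicitly. Instead it invokes Leinster's correspondence between $T$-operads and cartesian monads equipped with a cartesian monad map to $T$. It then checks three things. First, $\underline{\EL}$ is cartesian (\cref{cor:EL-2cart}). Second, $\tilde{\pi}\colon \underline{\EL}\Rightarrow\underline{E\Sigma}$ is a monad map because $\pi$ is an operad map. Third, $\tilde{\pi}$ is cartesian, which follows by pullback cancellation (\cref{rem:pb-cancel}) from the square $\EL(X)\to E\Sigma(X)$ over $B\Lambda\to B\Sigma$, read off from \cref{prop:hom-set-lemma}.

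The paper's route avoids all associator bookkeeping in the monoidal category of $\underline{E\Sigma}$-collections. More importantly, it shows at the same time that the induced 2-monad $B\Lambda^{\textrm{m}}$ of \cref{Defi:2monad-from-club} is $\underline{\EL}$. That identification is used immediately afterwards and in \cref{thm:club=operad} and \cref{thm:pres1}, and your argument does not provide it without that extra pullback computation.

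Your route, in turn, is elementary and self-contained. It needs no cartesianness of $\underline{\EL}$ or of $\tilde{\pi}$, only that of $\underline{E\Sigma}$, which is built into the definition of a club. It also makes explicit that the club multiplication on morphisms is $\mu^{\Lambda}$. This is exactly the description that the proof of \cref{thm:club=operad} runs in reverse when it recovers $\beta$ and $\delta$ from a club.
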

\begin{proof}
To give the functor $B\pi \colon B\Lambda \rightarrow B \Sigma$ the structure of a club it suffices (see \cite[Section 6.2]{leinster}) to show that
\begin{itemize}
\item the induced monad, which we will show to be $\underline{\EL}$, is a cartesian monad on $\mb{Cat}$,
\item the transformation $\tilde{\pi} \colon \underline{\EL} \Rightarrow \underline{E\Sigma}$ induced by the functor $E\pi$ is cartesian, and
\item $\tilde{\pi}$ commutes with the monad structures.
\end{itemize}
The monad $\underline{\EL}$ is always cartesian by \cref{cor:EL-2cart}. The transformation $\tilde{\pi}$ is the coproduct of the maps $\tilde{\pi}_{n}$ that are induced by the universal property of the coequalizer as shown below.
  \[
    \xy
      (0,0)*+{\scriptstyle \ELn \times \Lambda(n) \times X^n} ="00";
      (0,-15)*+{\scriptstyle E\Sigma_{n} \times \Sigma_{n} \times X^n} ="01";
      (30,0)*+{\scriptstyle \ELn \times X^n} ="10";
      (30,-15)*+{\scriptstyle E\Sigma_{n} \times X^n} ="11";
      (60,0)*+{\scriptstyle \ELn \otimes_{\Lambda(n)} X^n} ="20";
      (60,-15)*+{\scriptstyle E\Sigma_{n} \otimes_{\Sigma_{n}}  X^n} ="21";
      {\ar (11,1)*{}; (22,1)*{} };
      {\ar (11,-1)*{}; (22,-1)*{} };
      {\ar_{E\pi \times \pi \times 1} "00" ; "01"};
      {\ar (10,-14)*{}; (23,-14)*{} };
      {\ar (10,-16)*{}; (23,-16)*{} };
      {\ar_{E\pi \times 1} "10" ; "11"};
      {\ar@{.>}^{\tilde{\pi}_{n}} "20" ; "21"};
      {\ar "10" ; "20"};
      {\ar "11" ; "21"};
    \endxy
  \]
Naturality is immediate, and since $\pi$ is a map of operads $\tilde{\pi}$ also commutes with the monad structures.

It only remains to show that $\tilde{\pi}$ is cartesian and that the induced monad is actually $\underline{\EL}$. 
By pullback cancellation (\cref{rem:pb-cancel}), these will both follow if we prove that $\EL(X) \cong B\Lambda \circ X$, or equivalently if
 \begin{equation}\label{eqn:pb-club}
    \xy
      (0,0)*+{E\Lambda(X)} ="00";
      (0,-10)*+{B\Lambda} ="01";
      (35,0)*+{E\Sigma(X)} ="10";
      (35,-10)*+{B\Sigma} ="11";
      {\ar^{} "00" ; "10"};
      {\ar^{} "10" ; "11"};
      {\ar^{} "00" ; "01"};
      {\ar^{} "01" ; "11"};
    \endxy
  \end{equation}
  is a pullback. This fact follows immediately from the description of the free objects in \cref{prop:hom-set-lemma}.
\end{proof}

Let $(\Lambda, \pi)$ be an action operad, and $B\Lambda$ the club from \cref{thm:pi-club}. The proof above shows that the 2-monad $\underline{\EL}$ is (isomorphic to) the 2-monad $B\Lambda^{\textrm{m}}$ from \cref{Defi:2monad-from-club}.
The club $B\Lambda$ has the following properties. First, the category $B\Lambda$ is a groupoid. Second, the functor $B\pi \colon B\Lambda \rightarrow B\Sigma$ is  bijective on objects. We claim that these properties characterize those clubs that arise from action operads. Thus the clubs arising from action operads are a special class of PROPs \cite{mac_prop, markl_prop}.

\begin{thm}\label{thm:club=operad}
Let $(K, k)$ be a club such that
\begin{itemize}
\item the map $k \colon K \rightarrow B\Sigma$ is bijective on objects and
\item $K$ is a groupoid.
\end{itemize}
Then $(K,k) \cong (B\Lambda, B\pi)$ for some action operad $\Lambda$. The assignment $\Lambda \mapsto (B\Lambda, B\pi)$ is a full and faithful embedding of the category of action operads $\mb{AOp}$ into the category of clubs.
\end{thm}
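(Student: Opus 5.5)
Since $k$ is bijective on objects and $B\Sigma$ has object set $\mathbb{N}$, we identify the objects of $K$ with $\mathbb{N}$. Because $B\Sigma$ has only endomorphisms and $k$ is a functor, $K$ has morphisms only from $n$ to $n$. As $K$ is a groupoid, $\Lambda(n) := K(n,n)$ is a group, and $\pi_n := k \colon \Lambda(n) \to \Sigma_n$ is a homomorphism. The operad structure comes from the club multiplication $K \circ K \to K$ and unit $1 \to K$, using the explicit description in \cref{rem:exp-club}. Objects of $K \circ K$ are tuples $(n; k_1, \ldots, k_n)$. A morphism $(n;k_1,\ldots,k_n) \to (n; k_{g^{-1}(1)},\ldots)$ is a tuple $(g; f_1, \ldots, f_n)$ with $g \in \Lambda(n)$ and $f_i \colon k_i \to k_{g(i)}$. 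Such an $f_i$ exists only when $k_{g(i)} = k_i$, so it lies in $\Lambda(k_i)$. On objects the multiplication functor must send $(n;\underline{k})$ to $\sum k_i$, because it lies over $B\Sigma$ and $k$ is bijective on objects. We then define $\mu(g; f_1,\ldots,f_n) := g(f_1,\ldots,f_n)$ in the notation of \cref{nota:clubmult}. The unit $1 \to K$ picks out the object $1$, and its image under $\mu$ should give $\id = e_1$.

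The axioms are checked as follows. Operad associativity and unitality of $\mu$ on morphisms come from the monoid axioms for $K$ in $\underline{E\Sigma}\mbox{-}\mb{Coll}$. The map $\pi$ is an operad map because the multiplication is a morphism of collections over $B\Sigma$, so it is compatible with the club structure of $B\Sigma$, which is operadic composition in $\Sigma$. The action operad axiom \cref{eqn:ao_axiom} is exactly functoriality of the multiplication $K\circ K \to K$. The composite in $K\circ K$ of $(g;f_i)$ followed by $(g'; f'_j)$ is $(g'g; f'_{g(i)}f_i)$, and preservation of composition gives \cref{eqn:ao_axiom}. Hence $\Lambda$ is an action operad. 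Then $(B\Lambda, B\pi)$, with the club structure of \cref{thm:pi-club}, agrees with $(K,k)$ on underlying categories. One checks that the two multiplications coincide, since both are determined by the same values on morphisms. The club structure of \cref{thm:pi-club} is the one induced from $\mu^\Lambda$ through the identification \cref{eqn:pb-club}.

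For the embedding, a map of clubs $F \colon B\Lambda \to B\Lambda'$ over $B\Sigma$ is identity on objects and gives group homomorphisms $F_n \colon \Lambda(n) \to \Lambda'(n)$ commuting with $\pi$. Compatibility with the multiplication and unit is equivalent to $F$ being a map of operads, so $F$ is a map of action operads. Conversely, every map of action operads yields such a functor. This gives a bijection on hom-sets, and faithfulness is clear because the functor is determined by its components on morphisms.

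I expect the main obstacle to be bookkeeping rather than anything conceptual. We need to verify that morphisms of $K\circ K$, and their composition, are precisely as described, in particular the index convention $f_i \colon y_i \to w_{f(i)}$ versus the $g^{-1}$ in \cref{eqn:ao_axiom}. We also need to confirm that the identification of the club structure on $B\Lambda$ in \cref{thm:pi-club} with $\mu^\Lambda$ on morphisms is literal, so that the isomorphism $(K,k) \cong (B\Lambda,B\pi)$ is one of clubs and not just of categories over $B\Sigma$.
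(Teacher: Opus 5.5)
Your proposal is correct and is essentially the paper's proof. The only difference is that you define $\mu(g;f_1,\ldots,f_n)=g(f_1,\ldots,f_n)$ directly and verify \cref{Defi:aop} itself, reading \cref{eqn:ao_axiom} off functoriality of $K\circ K\to K$. The paper instead extracts $\beta(\underline{g})=e_n(\underline{g})$ and $\delta(f)=f(e_{k_1},\ldots,e_{k_n})$ and checks the nine axioms of \cref{thm:charAOp}, using exactly the same facts (functoriality, associativity, unitality, lying over $B\Sigma$).

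One slip to fix: the target of $(g;f_1,\ldots,f_n)$ is $(n;k_{g^{-1}(1)},\ldots,k_{g^{-1}(n)})$, so each $f_i$ goes $k_i\to k_{g^{-1}(g(i))}=k_i$. There is therefore no condition $k_{g(i)}=k_i$, and every tuple in $\Lambda(n)\times\prod_i\Lambda(k_i)$ is a morphism of $K\circ K$, as your composition formula $(g'g;f'_{g(i)}f_i)$ already presupposes.
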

\begin{proof}
Let $(K,k)$ be such a club. Our hypotheses immediately imply that $K$ is a groupoid with objects in bijection with the natural numbers; we will now assume the functor $K \rightarrow B\Sigma$ is the identity on objects. Furthermore, we can conclude that there is an isomorphism $m \cong n$ in $K$ if and only if $m = n$. Let $\Lambda(n) = K(n,n)$, so that $K = \coprod B\Lambda(n)$ as groupoids. Define $B\pi = k$, or equivalently define the homomorphism $\pi_{n} \colon \Lambda(n) \rightarrow \Sigma_{n}$ to the morphism part of the composite
\[
B\Lambda(n) \hookrightarrow K \stackrel{k}{\to} B\Sigma.
\]
 We claim that the club structure on $K$ makes the collection of groups $\{ \Lambda(n) \}$ an action operad. In order to do so, we will employ \cref{thm:charAOp}.

First, we give the group homomorphism $\beta$ using \cref{rem:exp-club,nota:clubmult}. Define
  \[
    \beta(g_{1}, \ldots, g_{n}) = e_{n}(g_{1}, \ldots, g_{n})
  \]
where $e_{n}$ is the identity morphism $n \rightarrow n$ in $K(n,n)$. Functoriality of the club multiplication map immediately implies that this is a group homomorphism. Second, we define the function $\delta$ in a similar fashion:
  \[
    \delta_{n; k_{1}, \ldots, k_{n}}(f) = f(e_{k_1}, \ldots, e_{k_n}),
  \]
where here $e_{k_i}$ is the identity morphism of $k_{i}$ in $K$.

There are now nine axioms to verify in \cref{thm:charAOp}. The club multiplication functor is a map of collections, so a map over $B\Sigma$; this fact immediately implies that Axioms \eqref{eq1} (using morphisms in $K \circ K$ with only $g_{i}$ parts) and \eqref{eq4} (using morphisms in $K \circ K$ with only $f$ parts) hold. The mere fact that multiplication is a functor also implies Axioms \eqref{eq6} (once again using morphisms with only $f$ parts) and \eqref{eq8} (by considering the composite of a morphism with only an $f$ with a morphism with only $g_{i}$'s). Axiom \eqref{eq2} is the equation $e_{1}(g) = g$ which is a direct consequence of the unit axiom for the club $K$; the same is true of Axiom \eqref{eq5}. Axioms \eqref{eq3}, \eqref{eq7},  and \eqref{eq9} all follow from the associativity of the club multiplication. Thus $(\Lambda, \pi)$ is an action operad, and using the pullback square \cref{eqn:pb-club} we see that $(K,k) \cong (B\Lambda, B\pi)$ as clubs.

Finally, we will show that the construction above gives a full and faithful embedding
    \begin{equation}\label{eqn:B-aop}
        B \colon \mb{AOp} \rightarrow \mb{Club}
    \end{equation}
of the category of action operads into the category of clubs. Let $f, f' \colon \Lambda \rightarrow \Lambda'$ be maps between action operads. Then if $Bf = Bf'$ as maps between clubs, then they must be equal as functors $B\Lambda \rightarrow B\Lambda'$. But these functors are nothing more than the coproducts of the functors
  \[
    B(f_{n}), B(f_{n}') \colon B\Lambda(n) \rightarrow B\Lambda'(n),
  \]
and the functor $B$ from groups to categories is faithful, so \cref{eqn:B-aop} is also faithful. Now let $f \colon B\Lambda \rightarrow B\Lambda'$ be a maps of clubs. We clearly get group homomorphisms $f_{n} \colon \Lambda(n) \rightarrow \Lambda'(n)$ such that $\pi^{\Lambda}_{n} = \pi^{\Lambda'}_{n} f_{n}$, so we must only show that the $f_{n}$ also constitute an operad map. Using the description of the club structure above in terms of the maps $\beta, \delta$, we conclude that commuting with the club multiplication implies commuting with both of these, which in turn is equivalent to commuting with operad multiplication. Thus \cref{eqn:B-aop} is full as well.
\end{proof}

\begin{rem}[(Relaxing the hypotheses in \cref{thm:club=operad})]
First, one should note that if $K$ is a club over $B\Sigma$, then every $K$-algebra has an underlying strict monoidal structure. Second, requiring that $K \rightarrow B\Sigma$ be bijective on objects ensures that $K$ does not have  operations other than $\otimes$, such as duals or internal hom-objects, from which to build new types of objects. Finally, $K$ being a groupoid ensures that all of the ``constraint morphisms'' that exist in algebras for $K$ are invertible.

These hypotheses could be relaxed somewhat. Instead of having a club over $B\Sigma$, we could have a club over the free symmetric monoidal category on one object (note that the free symmetric monoidal category monad on $\mb{Cat}$ is still cartesian). This would produce $K$-algebras with underlying monoidal structures that are not necessarily strict. This change should have relatively little impact on how the theory is developed. Changing $K$ to be a category instead of a groupoid would likely have a larger impact, as the resulting action operads would have monoids instead of groups at each level. We have made repeated use of inverses throughout the proofs in the basic theory of action operads, and these would have to be revisited if groups were replaced by monoids in the definition of action operads.
\end{rem}

In \cite[Section 3]{kelly_club1}, Kelly discusses clubs given by generators and relations of the type that are relevant to our study of action operads; we call particular attention to his Theorem 3.1. His generators include functorial operations more general than what we are interested in here, and the natural transformations are not required to be invertible. In our case, the only generating operations we require are those of a unit and tensor product, as the algebras for $\EL$ are always strict monoidal categories with additional structure. 

\begin{nota}\label{nota:tensor-n}
Suppose that $M$ is a monoidal category via $\otimes, I$. Write $\otimes_n$ for the functor $M^n \to M$ given inductively by
\begin{itemize}
\item $\otimes_0$ is the functor $1 \to M$ sending the unique object of the terminal category 1 to the unit $I \in M$, and
\item given $\otimes_k$, define 
\[
\otimes_{k+1}(m_1, \ldots, m_k, m_{k+1}) = \otimes_2\big( m_1, \otimes_k(m_2, \ldots, m_{k+1}) \big).
\]
\end{itemize}
We note that since $\mb{Cat}$ is a symmetric monoidal category under the cartesian product, for any $\sigma \in \Sigma_n$ there is an isomorphism $\sigma \colon M^n \to M^n$ given explicitly on objects by
\[
\sigma(m_1, \ldots, m_n) = \big( m_{\sigma^{-1}(1)}, \ldots, m_{\sigma^{-1}(n)} \big). 
\]
\end{nota}

Tracing through Kelly's discussion of generators and relations for a club gives the following theorem.

\begin{thm}\label{thm:pres1}
Let $\Lambda$ be an action operad with presentation given by $(\mathbf{g},\mathbf{r}, s_{i}, p)$. Then the club $\EL$ is generated by
\begin{itemize}
  \item functors giving the unit object and tensor product, and
  \item natural transformations given by the set $\mathbf{g}$:  each element $x$ of $\mathbf{g}$ with $\pi(x) = \sigma_{x} \in \Sigma_{|x|}$ gives a natural transformation $\alpha_x \colon \otimes_{|x|} \Rightarrow \otimes_{|x|} \circ \sigma$,
\end{itemize}
subject to relations enforcing the following axioms.
\begin{itemize}
  \item The monoidal structure given by the unit and tensor product is strict.
  \item The transformations given by the elements of $\mathbf{g}$ are all natural isomorphisms.
  \item For each element $y \in \mathbf{r}$, the equation $s_{1}(y) = s_{2}(y)$ holds.
\end{itemize}
\end{thm}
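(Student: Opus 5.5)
The plan is to compare two clubs by their universal properties. Let $K$ be the club that Kelly's Theorem 3.1 of \cite{kelly_club1} produces from the stated generators and relations. Its algebras are categories with a unit and tensor product forming a strict monoidal structure, together with natural isomorphisms $\alpha_x \colon \otimes_{|x|} \Rightarrow \otimes_{|x|}\circ\sigma_x$ for $x \in \mathbf{g}$, such that each relation $s_1(y)=s_2(y)$ holds once it is interpreted by pasting. I will show that $K$-algebras and $\EL$-algebras coincide, with the identification natural in the algebra. Applying this to free algebras, or equivalently comparing $K$ and $B\Lambda = \EL(1)$ via \cref{cor:el1=bl} and \cref{thm:pi-club}, will then give $K \cong B\Lambda$ as clubs. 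It is convenient to check first that $K$ satisfies the hypotheses of \cref{thm:club=operad}. The generators only add invertible morphisms between objects $n$ of the free strict monoidal category, so $K \to B\Sigma$ is bijective on objects and $K$ is a groupoid. Consequently $K = B\Lambda_K$ for some action operad $\Lambda_K$, and it suffices to produce an isomorphism $\Lambda_K \cong \Lambda$ of action operads.

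To build a map $\Lambda_K \to \Lambda$, I will use that $\EL$-algebras are $\Lambda$-monoidal categories. By the proof of \cref{thm:wlmc-to-lmc}, each element $g\in\Lambda(n)$ acts by a natural isomorphism $[g]\colon \otimes_n \Rightarrow \otimes_n\circ \pi(g)$, and these isomorphisms are compatible with group multiplication, $\beta$, and $\delta$. Setting $\alpha_x = [p(x)]$ makes every $\EL$-algebra into a $K$-algebra. The relations hold because $p s_1 = p s_2$ and because the assignment $g\mapsto[g]$ respects the action operad structure; the latter is exactly the content of \cref{thm:charAOp} as used there. The free $K$-algebra on $1$ is $K$ itself, so this structure on $B\Lambda$ gives a club map $K \to B\Lambda$. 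By \cref{thm:club=operad}, that club map is the same thing as a map of action operads $\Lambda_K \to \Lambda$.

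For the reverse map, the elements $\alpha_x$ define a map of collections $\mathbf{g} \to U\Lambda_K$, and by \cref{cor:free-aop-fun} this extends to an action operad map $F\mathbf{g}\to\Lambda_K$. The relations imposed on $K$ mean that this map coequalizes $s_1$ and $s_2$. Here I use that composite morphisms of $K$, formed by club multiplication, correspond exactly to operadic composition in $\Lambda_K$ through the $\beta,\delta$ description in \cref{thm:club=operad}. Since $p$ is a coequalizer, we obtain a map $\Lambda\to\Lambda_K$. Both composites $\Lambda \to \Lambda_K \to \Lambda$ and $\Lambda_K \to \Lambda \to \Lambda_K$ fix the generators. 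The first is then the identity by the uniqueness part of the coequalizer property. The second is the identity because $K$ is generated by the $\alpha_x$ under composition, tensoring and inversion, and these correspond to group multiplication, $\mu$, and inverses.

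The main obstacle is the translation step: a relation of Kelly's type is an equation between pasted composites of the generating transformations, and I must show it corresponds precisely to an equation between elements of the free action operad $F\mathbf{g}$, that is, between formal composites built from group multiplication, inverses, and $\mu$. I expect to handle this with one concrete dictionary. Group multiplication in $\Lambda(n)$ corresponds to vertical composition, and $\mu(g;h_1,\ldots,h_n) = \delta(g)\beta(\underline{h})$ corresponds to whiskering $\alpha_g$ by tensor products of the $\alpha_{h_i}$, which is the horizontal composite. Strictness of the monoidal structure ensures that no associators intervene. Given this dictionary, the remaining verifications reduce to functoriality and associativity of club multiplication, which are routine.
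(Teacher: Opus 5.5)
Your argument is correct in outline, but it does not follow the paper. The paper gives no real proof: it only says the statement follows by tracing through Kelly's discussion of generators and relations for clubs (Theorem~3.1 of \cite{kelly_club1}).

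You use Kelly only to produce the presented club $K$. You then observe that $K$ is a groupoid lying bijectively on objects over $B\Sigma$, and let \cref{thm:club=operad} do the work. Since $B\colon \mb{AOp}\to\mb{Club}$ is fully faithful and $K \cong B\Lambda_K$, the claim reduces to $\Lambda_K$ and $\Lambda$ having the same universal property. Put differently, $B$ carries the coequalizer presentation $p$ in $\mb{AOp}$ to Kelly's presentation in $\mb{Club}$.

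This gives an argument that can actually be checked. It also isolates the one nontrivial input: the dictionary between formal terms in $F\mathbf{g}$ and derived composites of the $\alpha_x$. That is where $\mu(g;\underline{h}) = \delta(g)\beta(\underline{h})$ from \cref{thm:charAOp} and strictness of the monoidal structure enter. The paper's one-sentence appeal hides all of this inside Kelly's construction.

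One step should be tightened. A $K$-algebra structure on the single category $B\Lambda$ only yields a $K$-algebra map $K = K\circ 1 \to B\Lambda$. That map is a club map only if it lies over $B\Sigma$ and the $K$-structure on $B\Lambda$ is the restriction of $B\Lambda$'s own club structure along it. There are two ways to fix this.
\begin{itemize}
\item Use the naturality in the algebra that you mention, which gives a monad map $K^{\textrm{m}} \Rightarrow \underline{\EL}$ over $\underline{E\Sigma}$.
\item More simply, define $K \to B\Lambda$ directly from the universal property of the presented club by sending $\alpha_x$ to $p(x)$. The relations hold because $p s_1 = p s_2$ and the club structure of $B\Lambda$ is the action operad structure of $\Lambda$.
\end{itemize}
The second option makes the detour through $\EL$-algebras and \cref{thm:wlmc-to-lmc} unnecessary.
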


\begin{example}\label{ex:S-moncats}
We can use \cref{thm:pres1} to express a given type of monoidal structure by constructing a presentation of the corresponding action operad.
\begin{enumerate}
\item Strict monoidal categories are the algebras for the club with $\mathbf{g} = \emptyset$. Thus they are the algebras for $\EL$ for $\Lambda$ the initial action operad. The category of action operads is pointed, so $\Lambda = T$.
\item Symmetric strict monoidal categories are the algebras for the club with a single generating natural isomorphism $\beta$ with $\pi(\beta) = (1 \ 2) \in \Sigma_2$. The only axioms needed are
\begin{itemize}
\item $\beta_{y,x} \circ \beta_{x,y} = 1_{x \otimes y}$,
\item $\beta_{x,yz} = (1_y \otimes \beta_{x,z}) \circ (\beta_{x,y} \otimes 1_z)$,
\end{itemize}
recovering the presentation for $\Sigma$ as an action operad from \cref{ex:sigma-pres}.
\item Braided strict monoidal categories are the algebras for the club with a single generating natural isomorphism $\beta$ with $\pi(\beta) = (1 \ 2) \in \Sigma_2$. The only axioms needed are
\begin{itemize}
\item $\beta_{x,yz} = (1_y \otimes \beta_{x,z}) \circ (\beta_{x,y} \otimes 1_z)$,
\item $\beta_{xy,z} = (\beta_{x,z} \otimes 1_y) \circ (1_{x} \otimes \beta_{y,z})$.
\end{itemize}
These axioms give a presentation for the braid action operad $B$.
\end{enumerate}
\end{example}

\begin{rem}[(Group presentation versus club presentation)]\label{rem:two-presentations}
The examples above demonstrate that the presentations of well-known action operads are often quite compact, but the computations in \cref{ex:sigma-pres} show that it is a nontrivial problem to translate between a sequence of presentations for the individual groups $\Lambda(n)$ and a single presentation for $\Lambda$ as an action operad.
\end{rem}

\section{Extended Example: Coboundary Categories}\label{sec:exex-cactus}

We now turn to an example that is not as widely known in the categorical literature, that of coboundary categories \cite{drin-quasihopf}. These arise in the representation theory of quantum groups and in the theory of crystals \cite{hk-cobound, hk-quantum}. Our goal here is to refine the relationship between coboundary categories and the operad of $n$-fruit cactus groups in \cite{hk-cobound} by using presentations of action operads. We begin by recalling the definition of a coboundary category.

\begin{Defi}\label{def:cobcat}
A \textit{coboundary category} is a monoidal category $C$ equipped with a natural isomorphism $\sigma_{x,y} \colon x \otimes y \rightarrow y \otimes x$ (called the \textit{commutor}) such that
\begin{itemize}
\item $\sigma_{y,x} \circ \sigma_{x,y} = 1_{x \otimes y}$ and
\item the diagram below, called the \emph{cactus relation}, commutes (in which the unlabeled morphisms are an associator and an inverse associator).
  \[
    \xy
      (0,0)*+{(x \otimes y) \otimes z} ="00";
      (35,0)*+{x \otimes (y \otimes z)} ="10";
      (70,0)*+{x \otimes (z \otimes y)} ="20";
      (0,-15)*+{(y \otimes x) \otimes z} ="01";
      (35,-15)*+{z \otimes (y \otimes x)} ="11";
      (70,-15)*+{(z \otimes y )\otimes x} ="21";
      {\ar "00"; "10" };
      {\ar^{1 \sigma_{y,z}} "10"; "20" };
      {\ar^{\sigma_{x,zy}} "20"; "21" };
      {\ar_{\sigma_{x,y}1} "00"; "01" };
      {\ar_{\sigma_{yx,z}} "01"; "11" };
      {\ar "11"; "21" };
    \endxy
  \]
\end{itemize}
\end{Defi}

\begin{example}\label{ex:cobcats}
\begin{enumerate}
\item As noted by Savage \cite{savage-braidcob}, any braiding automatically satisfies the cactus relation (the diagram in \cref{def:cobcat}). However, since braidings need not be involutions this does not mean that any braided monoidal category is a coboundary category. However, it should then be clear that any symmetric monoidal category is also a coboundary category.
\item The name coboundary category comes from the original work of Drinfeld \cite{drin-quasihopf} in which he shows that the category of representations of a coboundary Hopf algebra has the structure of coboundary category.
\item Henriques and Kamnitzer \cite{hk-cobound} show that the category of crystals for a finite dimensional complex reductive Lie algebra has the structure of a coboundary category. 
\end{enumerate}
\end{example}

\begin{rem}
By \cref{thm:wlmc-to-lmc}, we restrict ourself to the case in which the underlying monoidal structure is strict.
\end{rem}

We now turn to the operadic description of coboundary categories.

\begin{Defi}[(Contains, disjoint)]
Fix $n>1$, and let $1 \leq p < q \leq n$, $1 \leq k < l \leq n$.
\begin{enumerate}
\item $p<q$ \textit{contains} $k<l$ if $p \leq k < l \leq q$.
\item $p<q$ is \textit{disjoint} from $k<l$ if $q<k$ or $l<p$.
\end{enumerate}
\end{Defi}

\begin{Defi}
Let $1 \leq p < q \leq n$, and define $\hat{s}_{p,q} \in \Sigma_{n}$ to be the permutation defined below.
  \[
    \begin{array}{r|ccccccccccccc}
      i & 1 & 2 & \cdots & p-1 & p & p+1 & p+2 & \cdots & q-1 & q & q+1 & \cdots & n \\
      \hat{s}_{p,q}(i) & 1 & 2 & \cdots & p-1 & q & q-1 & q-2 & \cdots & p+1 & p & q+1 & \cdots & n
    \end{array}
  \]
\end{Defi}

The $n$-fruit cactus group is then defined as follows.

\begin{Defi}\label{Defi:defcactus}
Let $J_{n}$ be the group generated by symbols $s_{p,q}$ for $1 \leq p < q \leq n$ subject to the following relations.
  \begin{enumerate}
    \item For all $p < q$, $s_{p,q}^{2}=e$.
    \item If $p<q$ is disjoint from $k<l$, then $s_{p,q}s_{k,l} = s_{k,l}s_{p,q}$.
    \item If $p<q$ contains $k<l$, then $s_{p,q}s_{k,l} = s_{m,n}s_{p,q}$ where
      \begin{itemize}
        \item $m = \hat{s}_{p,q}(l)$ and
        \item $n = \hat{s}_{p,q}(k)$.
      \end{itemize}
  \end{enumerate}
\end{Defi}

It is easy to check that the elements $\hat{s}_{p,q} \in \Sigma_{n}$ satisfy the three relations in \cref{Defi:defcactus}, so $s_{p,q} \mapsto \hat{s}_{p,q}$ extends to a group homomorphism $\pi_{n} \colon J_{n} \rightarrow \Sigma_{n}$. This is the first step in proving the following.

\begin{thm}\label{thm:J_aop}
The collection of groups $J = \{ J_{n} \}$ form an action operad.
\end{thm}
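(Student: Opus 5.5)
We will use \cref{thm:charAOp}: it suffices to give the homomorphisms $\pi_n$ (already constructed), block sums $\beta$, and duplications $\delta$ on the groups $J_n$, and then verify the nine axioms. Since each $J_n$ is presented by generators and relations, every structure map that is a homomorphism will be defined on the generators $s_{p,q}$. We then check that the relations of \cref{Defi:defcactus} are respected, and verify the axioms on generators.

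For $\beta\colon J_{k_1}\times\cdots\times J_{k_n}\to J_K$ we send $s_{p,q}$ in the $i$th factor to $s_{p+K_{i-1},\,q+K_{i-1}}$, where $K_{i-1}=k_1+\cdots+k_{i-1}$. This respects relations 1–3 because shifting preserves containment and disjointness, and $\hat s$ is compatible with the shift. Generators from different factors act on disjoint intervals, so they commute by relation 2. Hence $\beta$ is a well-defined homomorphism from the product. Axioms \eqref{eq1}, \eqref{eq2} and \eqref{eq3} are then immediate on generators.

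For $\delta_{n;k_1,\ldots,k_n}$, which is only a function, we use a crossed-homomorphism rule. Geometrically $s_{p,q}$ reverses the block $p,\ldots,q$. Duplicating it should reverse the corresponding block of $k_p+\cdots+k_q$ strands and then re-reverse each sub-block internally. So we set
\[
\delta_{n;\underline k}(s_{p,q}) = \beta\big(e,\ldots,e,\,s_{1,k_q},\ldots,s_{1,k_p},\,e,\ldots,e\big)\, s_{P,Q},
\]
where $P,Q$ are the endpoints of the merged block, terms $s_{1,k}$ with $k\le 1$ are read as identities, and the correct ordering of the inner factors is fixed by comparing underlying permutations. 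We extend by $\delta_{\underline j}(gh)=\delta_{\underline k}(g)\delta_{\underline j}(h)$ with $k_i=j_{h^{-1}(i)}$, as forced by Axiom \eqref{eq6}. To see that this is well defined, we regard $\delta$ as a homomorphism from $J_n$ into the wreath-like group of pairs $(\underline k, x)$, or equivalently as a $J_n$-indexed functor on the action groupoid of $J_n$ on $\N^n$ via $\pi_n$. Well-definedness then amounts to checking that the three defining relations map to identities. Relation 1 uses $s^2=e$ together with $\beta$. Relation 2 uses commuting disjoint blocks. Relation 3 is the substantive one: it says that conjugating a sub-block reversal by an outer reversal gives the mirrored reversal, and it is checked using relation 3 in $J_K$ for the big blocks together with relation 2 for the inner corrections.

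The remaining axioms are handled one at a time. Axiom \eqref{eq4} holds by construction on generators, since $\pi$ of the formula equals the duplicated $\hat s_{p,q}$. Axiom \eqref{eq5} holds because $\delta_{n;1,\ldots,1}(s_{p,q})=s_{p,q}$ and $\delta(e)=e$. For Axioms \eqref{eq7}, \eqref{eq8} and \eqref{eq9} it suffices, by the cocycle rule and the homomorphism property of $\beta$, to check them on generators $g=s_{p,q}$ (and $h_i$ generators for \eqref{eq8}). Each case reduces to relations 2 and 3 in $J_K$ together with functoriality of $\beta$: \eqref{eq8} says that reversing a block carries the block sums across in reversed order, and \eqref{eq7} and \eqref{eq9} are nested-block computations.

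The main obstacle is making $\delta$ well defined, namely relation 3 under duplication, and verifying Axiom \eqref{eq7}, because both require careful bookkeeping of the inner sub-block reversals. Our first approach is to prove the general identity $s_{P,Q}\,\beta(\ldots,x,\ldots)=\beta(\ldots,x,\ldots)^{\mathrm{mirror}}\,s_{P,Q}$ for $x$ supported inside $[P,Q]$. This lemma follows from relation 3 by induction on generators, and we would use it uniformly.
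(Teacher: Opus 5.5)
Your proposal is correct and is essentially the paper's proof. Both arguments go through \cref{thm:charAOp}, define $\beta$ by shifting generators, and define $\delta(s_{p,q})$ as the reversal $s_{P,Q}$ of the merged block corrected by reversals of the sub-blocks; your formula is the paper's, rewritten via relation 3 with the corrections on the left. Both then extend $\delta$ by the cocycle rule of Axiom \eqref{eq6} after checking the three cactus relations, and reduce the remaining axioms to generators. The differences are minor: your action-groupoid phrasing of well-definedness is a tidier substitute for the paper's ``straightforward induction'' on words, and the paper obtains Axiom \eqref{eq7} by reducing to $t_n = s_{1,n}$ and invoking Axioms \eqref{eq6} and \eqref{eq9}, rather than by a direct nested-block computation.
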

\begin{proof}
We will use \cref{thm:charAOp} to determine the rest of the action operad structure. Thus we must give, for any collection of natural numbers $n, k_{1}, \ldots, k_{n}$ and $K = \sum k_{i}$, group homomorphisms $\beta \colon J_{k_{1}} \times \cdots \times J_{k_{n}} \rightarrow J_{K}$ and functions $\delta \colon J_{n} \rightarrow J_{K}$ satisfying nine axioms. We define both of these on generators, starting with $\beta$.

Let $s_{p_{i}, q_{i}} \in J_{k_{i}}$. Let $r_{i} = k_{1} + k_{2} + \cdots + k_{i-1}$ for $i > 1$. Define $\beta$ by
  \[
    \beta(s_{p_{1}, q_{1}}, \ldots, s_{p_{n}, q_{n}}) = s_{p_{1}, q_{1}} s_{p_{2}+r_{2}, q_{2}+r_{2}} \cdots s_{p_{n}+r_{n}, q_{n}+r_{n}}.
  \]
Note that $s_{p_{i}+r_{i}, q_{i}+r_{i}}$ and $s_{p_{j}+r_{j}, q_{j}+r_{j}}$ are disjoint when $i \neq j$.
It is easy to check that this disjointness property ensures that $\beta$ gives a well-defined group homomorphism
  \[
    J_{k_{1}} \times \cdots \times J_{k_{n}} \rightarrow J_{K}.
  \]

To define $\delta \colon J_{n} \rightarrow J_{K}$ for natural numbers $n, k_{1}, \ldots, k_{n}$ and $K = \sum k_{i}$, let $t_{k} = s_{1,k} \in J_{k}$. Then we start by defining
  \[
    \delta(t_{n}) = t_{K} \cdot \beta(t_{k_{1}}, t_{k_{2}}, \ldots, t_{k_{n}}).
  \]
By the third axiom in \cref{Defi:defcactus}, this is equal to
  \[
    \beta(t_{k_{n}}, t_{k_{n-1}}, \ldots, t_{k_{1}}) \cdot t_{K}.
  \]
Now $s_{p,q} \in J_{n}$ is equal to $\beta(e_{p-1}, t_{q-p+1}, e_{n-q})$ (here $e_{i}$ is the identity element in $J_{i}$) by definition of the $t_{i}$ and $\beta$, so we can define $\delta$ on any generator $s_{p,q}$ by
  \[
    \delta(s_{p,q}) = \beta ( e_{A}, M, e_{B} )
  \]
with
  \begin{itemize}
    \item $A = k_{1} + k_{2} + \cdots + k_{p-1}$,
    \item $M = t_{k_{p}+ \cdots +k_{q}} \cdot \beta(t_{k_{p}}, t_{k_{p+1}}, \ldots, t_{k_{q}})$, and
    \item $B = k_{q+1} + k_{q+2} + \cdots + k_{n}$.
  \end{itemize}
Unpacking this yields the following formula:
  \[
  \delta(s_{p,q}) = s_{k_{1}+\cdots+k_{p-1}+1, k_{1}+\cdots+k_{q}} \cdot \beta(e_{k_{1}+\cdots+k_{p-1}}, t_{k_{p}}, \ldots, t_{k_{q}}, e_{k_{q+1}+\cdots+k_{n}}).
  \]

We extend $\delta$ to arbitrary elements of $J_n$ using \cref{thm:charAOp}. 
Now $\delta$ is not a group homomorphism, but it does satisfy a twisted version in Axiom \eqref{eq6}.
Define
  \[
    \delta_{n; j_1,\ldots,j_n}(gh) = \delta_{n; k_1,\ldots,k_n}(g)\delta_{n; j_1,\ldots,j_n}(h)
  \]
where $k_{i} = j_{h^{-1}(i)}$. There are three relations we must verify for compatibility.
\begin{itemize}
\item We must show that $\delta_{n; j_1,\ldots,j_n}\left(s_{p,q}^{2}\right) = e$. By definition, we have
  \[
    \delta_{n; j_1,\ldots,j_n}\left(s_{p,q}^{2}\right) = \delta_{n; k_1,\ldots,k_n}\left(s_{p,q}\right)\delta_{n; j_1,\ldots,j_n}\left(s_{p,q}\right)
  \]
which is
  \[
    t_{j_1 + \cdots + j_n}\beta(t_{j_{n}}, \ldots, t_{j_{1}}) t_{\underline{j}} \beta(t_{j_{1}}, \ldots, t_{j_{n}}).
  \]
By the definition of $\delta$ and the fact that $s_{p,q}^{2}=e$, the element above is easily seen to be the identity.
\item We must show that $\delta(s_{p,q}s_{k,l}) = \delta(s_{k,l}s_{p,q})$ when $(p,q)$ is disjoint from $(k,l)$. This is another simple calculation using the definition of $\delta$ and the disjointness of the terms involved.
\item We must show that $\delta(s_{p,q}s_{k,l}) = \delta(s_{a,b}s_{p,q})$,  where $a = \hat{s}_{p,q}(l), b = \hat{s}_{p,q}(k)$, if $p < k < l < q$. In this case, we use all of the relations in the cactus groups to show that each side is equal to
  \[
    \beta\left(\underline{e}, t_{j_{p}+\cdots + j_{q}} \cdot \beta \left(t_{j_{p}}, \ldots t_{j_{k-1}}, t_{j_{k}+ \cdots +j_{l}}, t_{j_{l+1}}, \ldots, t_{j_{q}}\right), t_{j_{q+1}}, \ldots, t_{j_{n}}\right)
  \]
where $\underline{e} = e_{j_{1}}, \ldots, e_{j_{p-1}}$.
\end{itemize}
In order to show that this gives a well-defined function on products of three or more generators, a straightforward induction argument shows that $\delta\left((fg)h\right) = \delta\left(f(gh)\right)$ using the formula above. This concludes the definition of the family of functions $\delta_{n; j_{i}}$.

There are now nine axioms to check in \cref{thm:charAOp}. Axioms \eqref{eq1} - \eqref{eq3} all concern $\beta$, and are immediate from the defining formula. Axiom \eqref{eq4} is obvious for the elements $t_{k}$, from which it follows in general by the formulas defining $\delta$. For Axiom \eqref{eq5}, one can check easily that
  \[
    \delta_{n; 1, \ldots, 1}(t_{n}) = t_{n}, \quad \delta_{n;k_1,\ldots,k_n}(e_n) = e_{k_1 + \cdots + k_n}
  \]
following the description of $\delta$ above and once again the general case follows from these. Axiom \eqref{eq6} holds by the construction of $\delta$. Axiom \eqref{eq8} can be verified with only one $h_{i}$ nontrivial at a time, and then it is a simple consequence of the second and third relations for $J_{n}$.

Axiom \eqref{eq9} is straightforward to check when only a single $g_{i}$ is a generator and the rest are identities using the defining formulas, and the general case then follows using Axiom \eqref{eq6}. Using Axiom \eqref{eq9}, we can then prove Axiom \eqref{eq7} as follows; we suppress the subscripts on different $\delta$'s for clarity. We must show
  \[
    \delta_{m_1 + \cdots + m_n; p_{11}, \ldots, p_{1m_{1}}, p_{21}, \ldots, p_{nm_{m}}}\left( \delta_{n; m_{1}, \ldots, m_{n}}(f) \right) = \delta_{n; P_{1}, \ldots, P_{n}}(f),
  \]
and we do so on $t_{n}$. By definition, we have
  \[
    \delta \left( \delta(t_{n}) \right) = \delta \left( t_{K} \beta(t_{k_{1}}, \ldots, t_{k_{n}}) \right),
  \]
which by Axiom \eqref{eq6} is equal to
  \[
    t_{P_{1} + \cdots + P_{n}} \cdot \beta(t_{p_{11}}, \ldots, t_{p_{n,m_{n}}}) \cdot \delta\left( \beta(t_{k_{1}}, \ldots, t_{k_{n}}) \right).
  \]
Now this last term is equal to $\beta \left( \delta(t_{k_{1}}), \ldots, \delta(t_{k_{n}}) \right)$ by Axiom \eqref{eq9}, which is then equal to
  \[
    \beta \left( t_{P_{1}}\cdot \beta(t_{p_{11}}, \ldots, t_{p_{1,m_{1}}}), \ldots,  t_{P_{n}}\cdot \beta(t_{p_{n1}}, \ldots, t_{p_{1,m_{n}}}) \right).
  \]
Taken all together, the left hand side of Axiom \eqref{eq9} is then
  \[
    t_{P_{1} + \cdots + P_{n}} \cdot \beta(t_{p_{11}}, \ldots, t_{p_{n,m_{n}}}) \cdot \beta \left( t_{P_{1}}\cdot \beta(\underline{t_{p_{1}}}), \ldots,  t_{P_{n}}\cdot \beta(\underline{t_{p_{n}}}) \right).
  \]
where $\underline{t_{p_{i}}} = t_{p_{i,1}}, \ldots, t_{i,m_{i}}$
All of the terms coming from an $t_{p_{ij}}$ can be collected together, and since $s_{p,q}^{2} = e$ for all $p,q$, these cancel. This leaves
  \[
    t_{P_{1} + \cdots + P_{n}} \cdot \beta \left( t_{P_{1}}, \ldots,  t_{P_{n}} \right)
  \]
which is the right hand side of Axiom \eqref{eq9} as desired.
\end{proof}

\begin{lem}
The $2$-monad $C$ for strict coboundary categories is a club.
\end{lem}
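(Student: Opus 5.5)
The plan is to realize the 2-monad $C$ for strict coboundary categories as $\underline{EJ}$ for the cactus action operad $J$ of \cref{thm:J_aop}, and then invoke \cref{thm:pi-club}. That theorem says $BJ$, equipped with $B\pi \colon BJ \to B\Sigma$, is a club whose associated 2-monad $(BJ)^{\textrm{m}}$ is isomorphic to $\underline{EJ}$. So it suffices to show that strict coboundary categories, coboundary functors and monoidal transformations are exactly the $\underline{EJ}$-algebras, strict morphisms and transformations. Equivalently, $C \cong \underline{EJ}$ as 2-monads. Both 2-monads are finitary and are determined by their algebras as the free-algebra monads on $\mb{Cat}$.

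The identification goes through \cref{thm:pres1}. The first step is to give a presentation of $J$ as an action operad. For generators, take $\mathbf{g} = \{\sigma\}$ with $\pi(\sigma) = (1\ 2)$, sent to $t_2 = s_{1,2} \in J_2$. For relations, take $\mathbf{r} = \{\rho_1, \rho_2\}$ with
\begin{align*}
s_1(\rho_1) &= \sigma^2, & s_2(\rho_1) &= e_2,\\
s_1(\rho_2) &= \mu(\sigma; e_2, e_1)\,\mu(e_2; \sigma, e_1), & s_2(\rho_2) &= \mu(\sigma; e_1, e_2)\,\mu(e_2; e_1, \sigma).
\end{align*}
Both composites of $\rho_2$ have underlying permutation $\hat{s}_{1,3}$; they are exactly the two sides of the cactus relation in \cref{def:cobcat}. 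Once this presentation is in hand, \cref{thm:pres1} says the club $EJ$ is generated by a strict unit and tensor together with a natural isomorphism $\sigma_{x,y} \colon x\otimes y \to y \otimes x$, subject to the involution and cactus equations. Its algebras are therefore precisely strict coboundary categories, and the morphism-level statements follow in the same way.

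To prove the presentation correct, imitate \cref{ex:sigma-pres}. Let $t \colon F\mathbf{g} \to J$ be the map sending $\sigma \mapsto s_{1,2}$. First, $t$ coequalizes $s_1, s_2$: involutivity is relation (1) of \cref{Defi:defcactus}, and the cactus relation follows from $\delta(t_2) = t_K\beta(t_{k_1},t_{k_2})$ together with relation (3). Second, $t$ is surjective as an action operad map. Using the formula $\delta_{2;1,k-1}(s_{1,2}) = t_k\beta(e_1,t_{k-1})$, one shows by induction on $k$ that every $t_k = s_{1,k}$ lies in the sub-action operad generated by $s_{1,2}$. Every $s_{p,q}$ is then of the form $\beta(e,t_{q-p+1},e)$, so it lies there too.

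The main obstacle is the universal property. Given $f \colon F\mathbf{g} \to \Lambda$ coequalizing $s_1, s_2$, we must define $\tilde f_n \colon J_n \to \Lambda(n)$ on generators by the same inductive formula, starting from $f(\sigma)$. We then have to check all three families of cactus relations in $\Lambda(n)$, and afterwards check compatibility with $\beta$ and $\delta$, as was done for $\Sigma$. I would first derive the identity $\delta_{2;i,j}(f(\sigma)) = \delta_{3;\ldots}(\ldots)$ analogous to the inductive computation in \cref{ex:sigma-pres}, using only the two relations. Relations (1) and (2) are then quick, and relation (3) reduces, via Axiom \eqref{eq8}, to the already-established formula for $\delta(t_n)$.
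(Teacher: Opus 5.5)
Your core step is the paper's proof. You encode the commutor as one generator $\sigma$ over $(1\ 2)$ and the involution and cactus axioms as relations $\rho_1,\rho_2$. Your $s_1,s_2$ are correct, and both composites for $\rho_2$ do lie over $\hat{s}_{1,3}$. You then read off from \cref{thm:pres1} that the resulting $E\Lambda$-algebras are the strict coboundary categories, so $C \cong \underline{E\Lambda}$ is a club by \cref{thm:pi-club}.

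The problem is that you insist $\Lambda = J$, and that claim is both unnecessary and, as sketched, unproved. It is unnecessary because $\mb{AOp}$ is cocomplete (\cref{thm:aop-lfp}). Let $\Lambda_0$ be the coequalizer of $s_1, s_2 \colon F\mathbf{r} \to F\mathbf{g}$; by definition it is presented by your data. Applying \cref{thm:pres1} to $\Lambda_0$ already gives $C \cong \underline{E\Lambda_0}$, with no input about cactus groups. In the paper, the identification $\Lambda_0 \cong J$ comes only afterwards. It follows from computing the free strict coboundary category $C1 \cong BJ$, with Theorem 7 of \cite{hk-cobound} supplying the $J_n$-actions, and then applying \cref{thm:club=operad}; see \cref{cor:J=coboundary}.

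It is unproved because the universal-property check you defer is essentially a re-proof of that theorem of Henriques--Kamnitzer, and your outline underestimates it. Relation (1) for $\tilde f(s_{p,q}) = \beta(e, \tilde f(t_k), e)$ reduces to $\tilde f(t_k)^2 = e$, where
\[
\tilde f(t_k) = \delta_{2;1,k-1}\big(f(\sigma)\big)\,\beta\big(e_1, \tilde f(t_{k-1})\big).
\]
This square does not simplify directly via Axioms \eqref{eq6} and \eqref{eq8}, because the block sizes do not match. You first need to show by induction that $\tilde f(t_k)$ also equals $\delta_{2;k-1,1}\big(f(\sigma)\big)\,\beta\big(\tilde f(t_{k-1}), e_1\big)$; the case $k=3$ is the cactus relation. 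Only then does the square collapse to $\delta_{2;k-1,1}\big(f(\sigma)^2\big)\,\beta\big(\tilde f(t_{k-1})^2, e_1\big) = e$.

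The same issue arises with the ``already-established formula for $\delta(t_n)$''. It holds in $J$ by construction. For the elements $\tilde f(t_k) \in \Lambda(k)$, however, it would have to be proved directly in $\Lambda$ before $\tilde f$ is even known to be well defined.

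So, as written, your proof of the lemma leans on an unverified step. Replacing $J$ by $\Lambda_0$ removes that step and leaves exactly the paper's argument.
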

\begin{proof}
This is obvious by \cref{thm:pres1}.
\end{proof}

\begin{thm}
The free coboundary category on one element, $C1$, is isomorphic to $BJ = \coprod BJ_{n}$.
\end{thm}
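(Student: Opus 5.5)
The plan is to realise the club $C$ as $\EL$ for a suitable action operad and then apply \cref{cor:el1=bl}. By \cref{thm:pres1}, every action operad presentation yields a club presentation and conversely. So I will first extract from \cref{def:cobcat} a presentation of an action operad $\Lambda_C$. It has a single generator $\sigma \in \mathbf{g}$ with $\pi(\sigma) = (1\ 2) \in \Sigma_2$. It has two relations: the involution relation $\sigma^2 = e_2$, and the cactus relation. In the strict setting the latter is the equation
\[
\mu(\sigma; e_1, e_2)\,\mu(e_2; e_1, \sigma) = \mu(\sigma; e_2, e_1)\,\mu(e_2; \sigma, e_1)
\]
in $\Lambda_C(3)$; both sides have underlying permutation $(1\ 3)$. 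Then $C \cong \underline{E\Lambda_C}$ as clubs, so $C1 \cong E\Lambda_C(1) = B\Lambda_C$ by \cref{cor:el1=bl}. It therefore suffices to produce an isomorphism of action operads $\Lambda_C \cong J$, with $J$ the action operad of \cref{thm:J_aop}.

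To get a map $\Lambda_C \to J$, I will use the coequalizer property from \cref{Defi:pres-aop}. Map the free action operad $F\mathbf{g} \to J$ by $\sigma \mapsto s_{1,2} = t_2$, and check that it coequalizes $s_1, s_2$. The first relation is $t_2^2 = e$. For the second, I will compute both sides in $J_3$ using the $\beta,\delta$ formulas from the proof of \cref{thm:J_aop}. This gives $\delta_{2;1,2}(t_2) = t_3\,\beta(e_1,t_2)$ and $\delta_{2;2,1}(t_2) = t_3\,\beta(t_2,e_1)$. Hence the left side is $t_3\,\beta(e_1,t_2)\,\beta(e_1,t_2) = t_3$, and the right side is $t_3\,\beta(t_2,e_1)\,\beta(t_2,e_1) = t_3$. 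So both equal $s_{1,3}$, and the relations hold.

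For the inverse, I will construct group homomorphisms $J_n \to \Lambda_C(n)$ on generators, imitating the argument of \cref{ex:sigma-pres}. Inductively set $\tau_2 = \sigma$ and $\tau_{k+1} = \mu(\tau_k; e_1,\dots,e_1, e_2)\,\beta(e_1,\dots,e_1,\tau_2)$, or any equivalent expression realising ``reverse the whole block''. Send $s_{p,q} \mapsto \beta(e_{p-1}, \tau_{q-p+1}, e_{n-q})$. The three cactus relations of \cref{Defi:defcactus} must then be verified in $\Lambda_C$. The disjointness relation follows since $\beta$ is a homomorphism together with \cref{lem:calclem}. The containment relation reduces to Axiom \eqref{eq8} together with the identification $\delta(\tau_m) = \tau_K\beta(\tau_{k_1},\dots,\tau_{k_m})$, which I would prove by induction from the cactus relation, as in the third step of \cref{ex:sigma-pres}. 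The involution $\tau_k^2 = e$ also follows by induction from $\sigma^2=e_2$ and the cactus relation.

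Next I will check that these homomorphisms commute with $\beta$ and $\delta$ and with $\pi$, reducing to generators via Axioms \eqref{eq6} and \eqref{eq9}. The two composites are the identity on generators, namely $\sigma$ and the $s_{p,q}$, so they are inverse isomorphisms. The main obstacle is the inductive identity $\delta_{m;k_1,\dots,k_m}(\tau_m) = \tau_K\beta(\tau_{k_1},\dots,\tau_{k_m})$ in $\Lambda_C$, from which the containment relation follows. I would first prove the case $m=2$ by induction on $k_1+k_2$, using the cactus relation repeatedly in the style of the sixteen-step computation in \cref{ex:sigma-pres}, and then deduce the general case with Axiom \eqref{eq7}.
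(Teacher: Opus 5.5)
\emph{Verdict: there is a genuine gap.} Write $\phi\colon\Lambda_C\to J$ and $\psi\colon J\to\Lambda_C$ for your two maps. Your route differs from the paper's. The paper puts a coboundary structure on $BJ$ and gets the universal property by citing Theorem~7 of Henriques--Kamnitzer; you rederive that theorem inside the presented action operad $\Lambda_C$. But $\Lambda_C\cong J$ is false as you have set it up, because of arity-$0$ inputs.

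Let $\Lambda'(n)=\Sigma_n\times\mathbb{Z}/2$, with the product group structure, $\pi$ the projection, and $\mu\big((\rho,a);(\rho_1,b_1),\ldots,(\rho_n,b_n)\big)=\big(\mu^{\Sigma}(\rho;\rho_1,\ldots,\rho_n),\,a+b_1+\cdots+b_n\big)$. This is an action operad. The assignment $\sigma\mapsto((1\ 2),1)$ respects both of your relations, since in each relation both sides contain an even number of $\sigma$'s. Under this map $\mu(\sigma;e_0,e_0)\mapsto(e_0,1)$, which is not the identity. So $\Lambda_C(0)\neq 1=J_0$, and for every $n$ the element $\mu(e_2;\mu(\sigma;e_0,e_0),e_n)$ lies in $\ker\phi_n$ but is nontrivial.

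This is exactly where your plan breaks. The claim that $\psi$ commutes with $\delta$ fails at $\delta_{2;0,0}(\sigma)\neq e_0=\psi(\delta_{2;0,0}(t_2))$. Hence $\psi$ is not a map of action operads and $\psi\phi\neq 1$. Your key identity fails whenever some $k_i=0$. The induction ``in the style of \cref{ex:sigma-pres}'' cannot supply this base case. There, $\delta_{2;0,0}(\sigma)=e_0$ came from the symmetric relation, whose two sides contain different numbers of $\sigma$'s. Applying $\delta_{3;0,0,0}$ to the cactus relation only gives a tautology.

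Categorically, take the one-object strict monoidal category $B(\mathbb{Z}/2)$ with $\sigma_{I,I}$ the non-identity element. It satisfies both axioms of \cref{def:cobcat}. So in the free strict coboundary category $\sigma_{I,I}\neq 1_I$, whereas $\mathrm{Aut}(0)=J_0$ is trivial in $BJ$.

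The missing ingredient is a unit axiom $\sigma_{x,I}=1_x=\sigma_{I,x}$, i.e.\ the extra relation $\mu(\sigma;e_0,e_0)=e_0$ in your presentation. Applying $\delta_{3;1,0,0}$ and $\delta_{3;0,0,1}$ to the cactus relation gives $\delta_{2;1,0}(\sigma)=\beta(e_1,\delta_{2;0,0}(\sigma))$ and $\delta_{2;0,1}(\sigma)=\beta(\delta_{2;0,0}(\sigma),e_1)$. Axiom~\eqref{eq7} then gives $\delta_{2;i,0}(\sigma)=e_i=\delta_{2;0,i}(\sigma)$. With this relation, the zero-size cases of your key identity hold (taking $\tau_0=e_0$, $\tau_1=e_1$). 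The rest of your plan then becomes a self-contained substitute for the appeal to Henriques--Kamnitzer; you rightly flag its positive-arity induction as the main work.

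The paper's proof has the same hole. It only checks that $\overline{x}$ sends $\sigma_{1,1}$ to $\sigma_{x,x}$. A strict coboundary functor out of $BJ$ must also send $\sigma_{0,0}=e_0$ to $\sigma_{I,I}$, and this fails for the $B(\mathbb{Z}/2)$ example. So the statement itself needs the unit axiom.
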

\begin{proof}
We must give $BJ$ the structure of a strict coboundary category and then prove that, for any strict coboundary category $X$, there is a natural isomorphism between strict coboundary functors $F \colon BJ \to X$ and objects of $X$. We note here that a strict coboundary functor is a strict monoidal functor mapping the commutor of its source to the commutor of its target.

The category $BJ$ has natural numbers as objects, and addition as its tensor product. The tensor product of two morphisms is given by $\beta$ as in \cref{thm:J_aop}, and it is simple to check that this is a strict monoidal structure. The commutor $\sigma_{m,n}$ is defined to be the product $s_{1, m+n}s_{1,m}s_{m+1,m+n}$. Using the relations in $J_{n}$, it is clear that $\sigma_{m,n}\sigma_{n,m}$ is the identity, so we only have one more axiom to verify in order to give a coboundary structure. By definition, this axiom is equivalent to the equation
  \[
    \sigma_{m, p+n}\cdot \beta(e_{m}, \sigma_{n,p}) = \sigma_{n+m,p}\cdot \beta(\sigma_{m,n},e_{p})
  \]
holding for all $m,n,p$. Each side has six terms when written out using the definitions of $\sigma$ and $\beta$, two terms on each side cancel using $s_{p,q}^{2} = e$ and the disjointness relation, and the other four terms match after using the disjointness relation. This establishes the coboundary structure on $BJ$; note that $\sigma_{1,1} = s_{1,2}$, the nontrivial element of $J(2)$.

Every strict coboundary functor $F \colon BJ \rightarrow X$ determines an object of $X$ by evaluation at $1$. Conversely, given an object $x$ of a strict coboundary category $X$, there is a group homomorphism of $J_{n} \to X(x^{n},x^{n})$ by Theorem 7 of \cite{hk-cobound}. The proof in \cite{hk-cobound} shows that these group homomorphisms are compatible with the homomorphisms $\beta \colon J_n \times J_m \to J_{n+m}$, and so define a strict monoidal functor $\overline{x} \colon BJ \rightarrow X$ with $\overline{x}(1) = x$. By construction, this strict monoidal functor is in fact a strict coboundary functor since it sends the commutor $\sigma_{1,1}$ in $BJ$ to $\sigma_{x,x}$ in $X$. In fact, the calculations in \cite{hk-cobound} leading up to Theorem 7 show that every element of $J_{n}$ is given as an operadic composition of $\sigma$'s, so requiring $\overline{x}$ to be a strict coboundary functor with $\overline{x}(1) = x$ determines the rest of the functor uniquely. This observation establishes the bijection between strict coboundary functors $F \colon BJ \rightarrow X$ and objects of $X$. Naturality is immediate from the construction, so $BJ$ is the free strict coboundary category on one object.
\end{proof}

\begin{cor}\label{cor:J=coboundary}
The $2$-monad $C$ for coboundary categories corresponds, using  \cref{thm:club=operad}, to the action operad $J$.
\end{cor}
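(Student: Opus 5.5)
We will deduce the corollary by combining the preceding lemma and theorem with the characterization in \cref{thm:club=operad}. The preceding lemma shows that $C$ is a club, i.e.\ $C \cong K^{\textrm{m}}$ for the club $K = C1$ with its structure map $C1 \to C(\text{pt}) \to B\Sigma$ induced by the unique map $\Lambda \to \Sigma$ of action operads; concretely, $k \colon C1 \to B\Sigma$ comes from forgetting the coboundary structure to the symmetric one. More precisely, one can obtain it from \cref{thm:pres1}: the generating commutor $\sigma$ has underlying permutation $(1\ 2)$. The preceding theorem identifies $C1 \cong BJ = \coprod BJ_n$ as categories. The first step is therefore to check that this isomorphism is compatible with the maps to $B\Sigma$, namely that under $C1 \cong BJ$ the functor $k$ corresponds to $B\pi$ with $\pi_n \colon J_n \to \Sigma_n$ sending $s_{p,q} \mapsto \hat{s}_{p,q}$. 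This should reduce to checking generators: $\sigma_{1,1} = s_{1,2}$ maps to $(1\ 2)$, and every element of $J_n$ is an operadic composite of $\sigma$'s, as used in the proof of the theorem, so compatibility with $\beta$ and $\delta$ propagates.

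Next we verify the hypotheses of \cref{thm:club=operad} for $(C1,k)$. The category $BJ$ is a groupoid, and it is bijective on objects over $B\Sigma$, since both have object set $\mathbb{N}$ and $k$ is the identity on objects. Hence $(C1,k) \cong (B\Lambda, B\pi)$ for the action operad $\Lambda$ with $\Lambda(n) = C1(n,n) \cong J_n$, equipped with $\beta(g_1,\ldots,g_n) = e_n(g_1,\ldots,g_n)$ and $\delta(f) = f(e_{k_1},\ldots,e_{k_n})$ from the club multiplication.

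The main obstacle is to show that this recovered action operad structure agrees with the one in \cref{thm:J_aop}, not merely that the groups agree. For $\beta$ this holds because the club multiplication with first entry $e_n$ is the tensor product of morphisms in $C1$, which in $BJ$ was defined to be the $\beta$ of \cref{thm:J_aop}. For $\delta$, we would compute $\delta(t_n)$ in $C1$ as the image of $t_n$ under substituting objects $k_1,\ldots,k_n$, using naturality of the commutors. Since $t_n$ is built from $\sigma$'s, this amounts to the cabling behaviour of the commutor, which gives $t_K\,\beta(t_{k_1},\ldots,t_{k_n})$ as in the definition. Because both structures are determined by their values on the $t_k$ via Axioms \eqref{eq6} and \eqref{eq9}, they coincide.

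Finally, full faithfulness in \cref{thm:club=operad} shows that $J$ is the unique action operad, up to isomorphism, whose club is $C1$. Combined with $C \cong (C1)^{\textrm{m}} \cong (BJ)^{\textrm{m}} \cong \underline{EJ}$ by the remark after \cref{thm:pi-club}, this gives the correspondence.
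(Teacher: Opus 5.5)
Your proposal is correct and follows the route the paper intends; the paper gives no separate proof of the corollary and relies on the lemma that $C$ is a club, the identification of $C1$ with $BJ$ as the free strict coboundary category on one object, and \cref{thm:club=operad}. Your checks that this identification respects the maps to $B\Sigma$ and the club multiplication (hence $\beta$ and $\delta$) supply details the paper leaves implicit.

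One small slip concerns where $k$ comes from. It arises from the monad map $C \Rightarrow \underline{E\Sigma}$, that is, from regarding symmetric strict monoidal categories as coboundary categories. It does not come from forgetting coboundary structure to a symmetric one, nor from a map $\Lambda \to \Sigma$, which presupposes the action operad you are constructing. Your concrete description, in which the commutor has underlying permutation $(1\ 2)$, is the right one.
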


\begin{rem}[(Comparision of presentations)]
As with the symmetric groups, we have two different presentations: presentations for each individual group given separately but in a uniform fashion, and a single presentation for the entire action operad.
The calculations in \cref{ex:sigma-pres} unify those two presentations for $\Sigma$, and those in \cref{thm:J_aop} and \cite{hk-cobound} combine via \cref{cor:J=coboundary} and \cref{thm:club=operad} to do the same for $J$.
\end{rem}

\section{Pseudo-commutativity}\label{sec:pscomm}

This section gives conditions sufficient to equip the $2$-monad $\underline{P}$ induced by a $\Lambda$-operad $P$ in $\mb{Cat}$ with a pseudo-commutative structure in the sense of \cite{HP}. Such a pseudo-commutativity will then give the $2$-category $\mb{Ps}\mbox{-}\underline{P}\mbox{-}\mb{Alg}$ a closed monoidal structure, as well as construct a two-dimensional analogue of a multicategory for which $\mb{Ps}\mbox{-}\underline{P}\mbox{-}\mb{Alg}$ is the underlying 2-category.
The 1-dimensional version of this theory is that of commutative monads, as developed by Kock \cite{kock-closed, kock-monads, kock-strong}.

\begin{Defi}\label{Defi:strengths}
A \textit{left strength} for an endo-$2$-functor $T \colon \m{K} \rightarrow \m{K}$ on a $2$-category with products and terminal object $1$ consists of a $2$-natural transformation $d$ with components
    \[
        d_{A,B} \colon A \times TB \rightarrow T(A \times B)
    \]
satisfying the following unit and associativity axioms \cite{kock-monads}.
  \[
    \xy
    (0,0)*+{1 \times TA}="ul1";
    (30,0)*+{T(1 \times A)}="ur1";
    (30,-13)*+{TA}="br1";
    (50,0)*+{A \times B}="ul2";
    (80,0)*+{A \times TB}="ur2";
    (80,-13)*+{T(A \times B)}="br2";
    {\ar^{d_{1,A}} "ul1"; "ur1"};
    {\ar^{\cong} "ur1"; "br1"};
    {\ar_{\cong} "ul1"; "br1"};
    {\ar^{1 \times \eta} "ul2"; "ur2"};
    {\ar^{d_{A,B}} "ur2"; "br2"};
    {\ar_{\eta} "ul2"; "br2"};
    \endxy
  \]
  \[
    \xy
    (0,0)*+{(A \times B) \times TC}="ul";
    (70,0)*+{T \left((A \times B) \times C \right)}="ur";
    (0,-15)*+{A \times (B \times TC)}="ll";
    (35,-15)*+{A \times T(B \times C)}="m";
    (70,-15)*+{ T \left(A \times (B \times C) \right)}="lr";
    {\ar^{d_{AB,C}} "ul"; "ur"};
    {\ar^{Ta} "ur"; "lr"};
    {\ar_{a} "ul"; "ll"};
    {\ar_{1 \times d_{B,C}} "ll"; "m"};
    {\ar_{d_{A,BC}} "m"; "lr"};
    \endxy
  \]
  \[
    \xy
    (0,0)*+{A \times T^{2}B}="ul";
    (60,0)*+{T^{2}(A \times B)}="ur";
    (0,-15)*+{A \times TB}="ll";
    (30,0)*+{T(A \times TB)}="m";
    (60,-15)*+{ T(A \times B)}="lr";
    {\ar^{d_{A,TB}} "ul"; "m"};
    {\ar^{Td_{A,B}} "m"; "ur"};
    {\ar^{\mu} "ur"; "lr"};
    {\ar_{1 \times \mu} "ul"; "ll"};
    {\ar_{d_{A,B}} "ll"; "lr"};
    \endxy
  \]
Similarly, a \emph{right strength} for $T$ consists of a $2$-natural transformation $d^{\ast}$ with components
  \[
      d^{\ast}_{A,B} \colon TA \times B \rightarrow T(A \times B)
  \]
again satisfying unit and associativity axioms.
\end{Defi}
The strengths for the associated $2$-monad $\underline{P}$ are quite simple to define. We define the left strength $d$ for $\underline{P}$ as follows. The component $d_{A,B}$ is a functor
    \[
        d_{A,B} \colon A \times \left(\amalg P(n) \times_{\Lambda(n)} B^n\right) \rightarrow \amalg P(n) \times_{\Lambda(n)} \left(A \times B \right)^n
    \]
which sends an object $(a, [p;b_1,\ldots,b_n])$ to the object $[p;(a,b_1),\ldots,(a,b_n)]$. We also define the right strength similarly, sending an object $([p;a_1,\ldots,a_n],b)$ to the object which is an equivalence class $[p;(a_1,b), \ldots, (a_n, b)]$. Both the left and the right strengths are defined in the obvious way on morphisms.

\begin{remark}[(Change of terminology: costrengths)]\label{rem:strength}
In a minor change of terminology, what we refer to as a \emph{right} strength in \cref{Defi:strengths} is in \cite[Section 3.1]{HP} simply a strength, while our \emph{left} strength corresponds to a costrength. We stress this difference to match more contemporary usage \cite[Definition 3.3]{mu-strong}, avoiding the confusion that a prefix of `co-' generally means a reversal of directions.
\end{remark}

\begin{rem}[(Strengths arise non-equivariantly)]\label{rem:strength-nonsym}
It is crucial to note that the left strength $d$ and the right strength $d^{*}$ do not depend on the $\Lambda$-actions in the following sense. The $\Lambda$-operad $P$ has an underlying non-symmetric operad that we also denote $P$, and it has a left strength
  \[
    d_{A,B} \colon A \times \left(\amalg P(n) \times B^n\right) \rightarrow \amalg P(n) \times \left(A \times B \right)^n
  \]
given by essentially the same formula:
  \[
    \left( a; (p; b_{1}, \ldots, b_{n}) \right) \mapsto \left(p; (a,b_{1}), \ldots, (a, b_{n})\right).
  \]
The left strength for the $\Lambda$-equivariant $P$ is just the induced functor between coequalizers.
\end{rem}

\begin{Defi}[(Pseudo-commutative structure)]\label{Defi:pscommute}
    Given a $2$-monad $T \colon \m{K} \rightarrow \m{K}$ with left strength $d$ and right strength $d^{\ast}$, a \textit{pseudo-commutative structure} consists of an invertible modification $\gamma$ with components
      \[
        \xy
            (0,0)*+{TA \times TB}="00";
            (30,0)*+{T(A \times TB)}="10";
            (60,0)*+{T^2(A \times B)}="20";
            (0,-15)*+{T(TA \times B)}="01";
            (30,-15)*+{T^2(A \times B)}="11";
            (60,-15)*+{T(A \times B)}="21";
            {\ar^{d^{\ast}_{A,TB}} "00" ; "10"};
            {\ar^{Td_{A,B}} "10" ; "20"};
            {\ar^{\mu_{A \times B}} "20" ; "21"};
            {\ar_{d_{TA,B}} "00" ; "01"};
            {\ar_{Td^{\ast}_{A,B}} "01" ; "11"};
            {\ar_{\mu_{A \times B}} "11" ; "21"};
            {\ar@{=>}^{\gamma_{A,B}} (30,-5.5) ; (30,-9.5)};
        \endxy
      \]
satisfying the following three strength axioms, two unit (or $\eta$) axioms, and two multiplication (or $\mu$) axioms for all $A$, $B$, and $C$.
    \begin{enumerate}
        \item\label{axiom:ps_comm_str_1} $\gamma_{A \times B,C} * (d_{A,B} \times 1_{TC}) = d_{A,B \times C} * (1_A \times \gamma_{B,C})$.
        \item\label{axiom:ps_comm_str_2} $\gamma_{A,B \times C} * (1_{TA} \times d_{B,C}) = \gamma_{A \times B, C} * (d^{\ast}_{A,B} \times 1_{TC})$.
        \item\label{axiom:ps_comm_str_3} $\gamma_{A,B \times C} * (1_{TA} \times d^{\ast}_{B,C}) = d^{\ast}_{A \times B,C} * (\gamma_{A,B} \times 1_{C})$.
        \item\label{axiom:ps_comm_unit_1} $\gamma_{A,B} * (\eta_A \times 1_{TB})$  is the identity on $d$.
        \item\label{axiom:ps_comm_unit_2} $\gamma_{A,B} * (1_{TA} \times \eta_B)$ is the identity on $d^{*}$.
        \item\label{axiom:ps_comm_mult_1} $\gamma_{A,B} * (\mu_A \times 1_{TB})$ is equal to the pasting below.
          \[
            \xy
                (0,0)*+{\scriptstyle T^2A \times TB}="00";
                (30,0)*+{\scriptstyle T(TA \times TB)}="10";
                (60,0)*+{\scriptstyle T^2(A \times TB)}="20";
                (90,0)*+{\scriptstyle T^3(A \times B)}="30";
                (0,-15)*+{\scriptstyle T(T^2A \times B)}="01";
                (30,-15)*+{\scriptstyle T^2(TA \times B)}="11";
                (60,-15)*+{\scriptstyle T^3(A \times B)}="21";
                (90,-15)*+{\scriptstyle T^2(A \times B)}="31";
                (0,-30)*+{\scriptstyle T^2(TA \times B)}="02";
                (30,-30)*+{\scriptstyle T(TA \times B)}="12";
                (60,-30)*+{\scriptstyle T^2(A \times B)}="22";
                (90,-30)*+{\scriptstyle T(A \times B)}="32";
                {\ar^{d^{\ast}_{TA,TB}} "00" ; "10"};
                {\ar^{Td^{\ast}_{A,TB}} "10" ; "20"};
                {\ar^{T^2 d_{A,B}} "20" ; "30"};
                {\ar_{d_{T^2A,B}} "00" ; "01"};
                {\ar_{Td_{TA,B}} "10" ; "11"};
                {\ar^{T\mu_{A \times B}} "30" ; "31"};
                {\ar_{T^2 d^{\ast}_{A,B}} "11" ; "21"};
                {\ar_{T\mu_{A \times B}} "21" ; "31"};
                {\ar_{Td^{\ast}_{TA,B}} "01" ; "02"};
                {\ar_{\mu_{TA \times B}} "11" ; "12"};
                {\ar_{\mu_{T(A \times B)}} "21" ; "22"};
                {\ar^{\mu_{A \times B}} "31" ; "32"};
                {\ar_{\mu_{TA \times B}} "02" ; "12"};
                {\ar_{Td^{\ast}_{A,B}} "12" ; "22"};
                {\ar_{\mu_{A \times B}} "22" ; "32"};
                {\ar@{=>}^{T\gamma_{A,B}} (60,-5.5) ; (60,-9.5)};
                {\ar@{=>}^{\gamma_{TA,B}} (12.5,-13) ; (12.5,-17)};
            \endxy
          \]
        \item\label{axiom:ps_comm_mult_2} $\gamma_{A,B} * (1_{TA} \times \mu_B)$ is equal to the pasting below.
          \[
            \xy
                (0,0)*+{\scriptstyle TA \times T^2B}="00";
                (30,0)*+{\scriptstyle T(A \times T^2B)}="10";
                (60,0)*+{\scriptstyle T^2(A \times TB)}="20";
                (0,-15)*+{\scriptstyle T(TA \times TB)}="01";
                (30,-15)*+{\scriptstyle T^2(A \times TB)}="11";
                (60,-15)*+{\scriptstyle T(A \times TB)}="21";
                (0,-30)*+{\scriptstyle T^2(TA \times B)}="02";
                (30,-30)*+{\scriptstyle T^3(A \times B)}="12";
                (60,-30)*+{\scriptstyle T^2(A \times B)}="22";
                (0,-45)*+{\scriptstyle T^3(A \times B)}="03";
                (30,-45)*+{\scriptstyle T^2(A \times B)}="13";
                (60,-45)*+{\scriptstyle T(A \times B)}="23";
                {\ar^{d^{\ast}_{A,T^2B}} "00" ; "10"};
                {\ar^{Td_{A,TB}} "10" ; "20"};
                {\ar_{d_{TA,TB}} "00" ; "01"};
                {\ar^{\mu_{A \times TB}} "20" ; "21"};
                {\ar_{Td^{\ast}_{A,TB}} "01" ; "11"};
                {\ar_{\mu_{A \times TB}} "11" ; "21"};
                {\ar_{Td_{TA,B}} "01" ; "02"};
                {\ar^{T^2 d_{A,B}} "11" ; "12"};
                {\ar^{Td_{A,B}} "21" ; "22"};
                {\ar^{\mu_{T(A \times B)}} "12" ; "22"};
                {\ar_{T^2 d^{\ast}_{A,B}} "02" ; "03"};
                {\ar^{T\mu_{A \times B}} "12" ; "13"};
                {\ar^{\mu_{A \times B}} "22" ; "23"};
                {\ar_{T\mu_{A \times B}} "03" ; "13"};
                {\ar_{\mu_{A \times B}} "13" ; "23"};
                {\ar@{=>}^{T\gamma_{A,B}} (13,-28) ; (13,-32)};
                {\ar@{=>}^{\gamma_{A,TB}} (30,-5.5) ; (30,-9.5)};
            \endxy
          \]
    \end{enumerate}
\end{Defi}

\begin{rem}[(Redundant axioms)]
    It is noted in \cite[Proposition 1]{HP} that this definition has some redundancy and therein it is claimed that any two of the strength axioms (Axioms 1 - 3) immediately implies the third. Furthermore, the three strength axioms are equivalent when the $\eta$ and $\mu$ axioms hold (Axioms 4-6) as well as the following associativity axiom:
        \[
            \gamma_{A,B \times C} \circ (1_{TA} \times \gamma_{B,C}) = \gamma_{A \times B,C} \times (\gamma_{A,B} \times 1_{TC}).
        \]
\end{rem}

We need some further notation before stating our main theorem. 

\begin{nota}[(Lexicographic and colexicographic orderings)]\label{nota:double-underlines}
Suppose we are given two finite ordered lists, $\underline{a} = a_{1}, \ldots , a_{m}$ and $\underline{b} = b_{1}, \ldots, b_{n}$. We use the following notation for the lexicographic and colexicographic orderings on the set $\underline{a} \times \underline{b} = \{ (a_{i}, b_{j})\}$. 
\begin{enumerate}
\item The \emph{lexicographic ordering} is denoted $\underline{(a, \underline{b})}$, and has the order given by
  \[
    (a_{p}, b_{q}) < (a_{r}, b_{s}) \textrm{ if } \left\{ \begin{array}{l} p < r, \textrm{ or } \\ p=r \textrm{ and } q < s. \end{array} \right.
  \]
\item The \emph{colexicographic ordering} is denoted $\underline{(\underline{a}, b)}$, and has the order given by
\[
    (a_{p}, b_{q}) < (a_{r}, b_{s}) \textrm{ if } \left\{ \begin{array}{l} q < s, \textrm{ or } \\ q=s \textrm{ and } p < r. \end{array} \right.
  \]
\end{enumerate}
\end{nota}

\begin{rem}[(Intuition for underlining convention)]
The notation $(a, \underline{b})$ is meant to indicate that there is a single $a$ but a list of $b$'s, so then $\underline{(a, \underline{b})}$ would represent a list which itself consists of lists of that form. 
\end{rem}

\begin{nota}[(Constant lists)]\label{nota:constant-list}
When $x$ is a single element, and $n$ is a given natural number, we write $\underline{x}$ for the list $x, x, \ldots, x$ of length $n$.
\end{nota}

\begin{Defi}[(The transposition permutation, $\tau$)]\label{Defi:tau}
Let $\underline{a} = a_{1}, \ldots , a_{m}$ and $\underline{b} = b_{1}, \ldots, b_{n}$ be two ordered finite lists. 
The permutation $\tau_{m,n} \in \Sigma_{mn}$ is defined uniquely by the property that $\tau_{m,n}(i) = j$ if the $i$th element of the ordered set $\underline{(a, \underline{b})}$ is equal to the $j$th element of the ordered set $\underline{(\underline{a}, b)}$.
\end{Defi}

We illustrate these permutations with a couple of examples.
    \[
        \xy
            {\ar@{-} (0,0) ; (0,-10)};
            {\ar@{-} (5,0) ; (10,-10)};
            {\ar@{-} (10,0) ; (20,-10)};
            {\ar@{-} (15,0) ; (5,-10)};
            {\ar@{-} (20,0) ; (15,-10)};
            {\ar@{-} (25,0) ; (25,-10)};
            (12.5,-13)*{\tau_{2,3}};
            {\ar@{-} (45,0) ; (45,-10)};
            {\ar@{-} (50,0) ; (65,-10)};
            {\ar@{-} (55,0) ; (50,-10)};
            {\ar@{-} (60,0) ; (70,-10)};
            {\ar@{-} (65,0) ; (55,-10)};
            {\ar@{-} (70,0) ; (75,-10)};
            {\ar@{-} (75,0) ; (60,-10)};
            {\ar@{-} (80,0) ; (80,-10)};
            (62.5,-13)*{\tau_{4,2}}
        \endxy
    \]

\begin{rem}
We make the following elementary remarks about the transposition permutations $\tau_{m,n}$.
\begin{itemize}
\item By construction, we have $\tau_{m,n} = \tau_{n,m}^{-1}$.  
\item We call these transposition permutations as $\tau_{m,n}$ is the permutation given by taking the transpose of the $m \times n$ matrix with entries $(a_{i}, b_{j})$, where the entries are ordered lexicographically. In other words, the permutation $\tau_{m,n}$ has the effect of rearranging $m$ groups of $n$ things into $n$ groups of $m$ things.
\item The transposition permutations $\tau_{m,n}$ satisfy an additional naturality-type relation. For $\alpha \in \Sigma_n$ and $\beta \in \Sigma_m$, we have the equality
\[
\mu(\alpha; \underline{\beta}) \tau_{m,n} = \tau_{m,n} \mu(\beta; \underline{\alpha}).
\]
This equation is 3.9 in \cite{guillou_multiplicative}.
\end{itemize}
\end{rem}

\begin{nota}
Let $\mathbb{N}_{+}$ denote the set of positive integers.
\end{nota}

\begin{Defi}[(Pseudo-commutative structure for operads)]\label{Defi:ps-comm_operad}
Let $P$ be a $\Lambda$-operad in $\mb{Cat}$. A \emph{pseudo-commutative structure} on $P$ consists of the following data.
    \begin{itemize}
        \item For each pair $(m,n) \in \mathbb{N}_{+}^2$, an element $t_{m,n} \in \Lambda(mn)$ such that  $\pi(t_{m,n}) = \tau_{m,n}$.     
        \item For each object $p \in P(n)$ and object $q \in P(m)$, a natural isomorphism
            \[
                \lambda_{p,q} \colon \mu(p;q,\ldots,q) \cdot t_{m,n} \cong \mu(q;p,\ldots,p).
            \]
            Naturality of $\lambda_{p,q}$ means that for all $f \colon p \to p'$ in $P(n)$ and $g \colon q \to q'$ in $P(m)$, the following square commutes.
             \[
    \xy
      (0,0)*+{\mu(p;q,\ldots,q) \cdot t_{m,n}} ="00";
      (0,-10)*+{\mu(p';q',\ldots,q') \cdot t_{m,n}} ="01";
      (40,0)*+{\mu(q;p,\ldots,p)} ="10";
      (40,-10)*+{\mu(q';p',\ldots,p')} ="11";
      {\ar^{\lambda_{p,q}} "00" ; "10"};
      {\ar^{\mu(g; f, \ldots, f)} "10" ; "11"};
      {\ar_{\mu(f; g, \ldots, g) \cdot t_{m,n}} "00" ; "01"};
      {\ar_{\lambda_{p',q'}} "01" ; "11"};
    \endxy
  \]
Using \cref{nota:constant-list}, we write this as $\lambda_{p,q}\colon \mu(p; \underline{q}) \cdot t_{m,n} \cong \mu(q; \underline{p})$.
    \end{itemize}
These data are required to satisfy the following axioms.  
    \begin{enumerate}
        \item\label{axiom:t_id} For all $m,n \in \mathbb{N}_+$,
            \[
                t_{1,n} = e_n = t_{m,1}.
            \]
             For all $p \in P(n)$, the isomorphism $\lambda_{p, \id}\colon p \cdot t_{1,n} \cong p$ is the identity map.
             For all $q \in P(m)$, the isomorphism $\lambda_{\id, q}\colon q \cdot t_{m,1} \cong q$ is the identity map.
             \item\label{axiom:t_nat} For all $g \in \Lambda(n)$ and $h \in \Lambda(m)$, the equality
        \[
        \mu^{\Lambda}(g; \underline{h}) t_{m,n} = t_{m,n} \mu^{\Lambda}(h; \underline{g})
        \]
        holds in $\Lambda(mn)$.
        \item\label{axiom:t_equiv} For all $p \in P(n)$, $g \in \Lambda(n)$, $q \in P(m)$, and $h \in \Lambda(m)$, the equality of morphisms
        \[
        \lambda_{p,q} \cdot \mu^{\Lambda}(h; \underline{g}) = \lambda_{p\cdot g, q \cdot h}
        \]
        holds. The source of the left morphism is $\mu(p; \underline{q}) \cdot t_{m,n} \cdot \mu^{\Lambda}(h; \underline{g})$ and the source of the right morphism is $\mu(p \cdot g; \underline{q \cdot h}) \cdot t_{m,n}$, and these are equal by Axiom \eqref{axiom:t_nat} and the $\Lambda$-operad axioms; the target of the left morphism is $\mu(q; \underline{p}) \cdot \mu^{\Lambda}(h; \underline{g})$ and the target of the right morphism is $\mu(q \cdot h; \underline{p \cdot g})$, and these are equal by the $\Lambda$-operad axioms.
        \item\label{axiom:t_sumR} For all $l, m_1, \ldots, m_l, n \in \mathbb{N}_+$, with $M = \sum m_i$,
            \[
              \beta(t_{n,m_1},\ldots,t_{n,m_l}) \cdot \delta_{M,\ldots,M}(t_{n,l}) = t_{n,M}.
            \]
        \item\label{axiom:t_sumL} For all $l, m, n_1,\ldots, n_m \in \mathbb{N}_+$, with $N = \sum n_i$,
            \[
              \delta_{\underline{n_1},\ldots,\underline{n_m}}(t_{m,l}) \cdot \beta(t_{n_1,l},\ldots,t_{n_m,l}) = t_{N,l}.
            \]
            Here $\underline{n_{i}}$ indicates that each subscript $n_{i}$ is repeated $l$ times.
        \item\label{axiom:t_diagR} For any $l, m_i, n \in \mathbb{N}_+$, with $1 \leq i \leq n$, and $p \in P(l)$, $q_i \in P(m_i)$ and $r \in P(n)$, the following diagram commutes. (Note that we maintain the convention that anything underlined indicates a list, and double underlining indicates a list of lists. Each instance should have an obvious meaning from context and the equations appearing above.)
          \[
            \xy
                (0,0)*+{\mu\left(p;\underline{\mu(q_i;\underline{r})}\right) \cdot \mu(e_l;\underline{t_{n,m_i}})\mu(t_{n,l};\underline{\underline{e_{m_i}}})}="00";
                (60,0)*+{\mu\left(p;\underline{\mu(q_i;\underline{r})}\right) \cdot t_{n,M}}="10";
                (0,-15)*+{\mu\left(p;\underline{\mu(q_i;\underline{r})\cdot t_{n,m_i}}\right) \cdot \mu(t_{n,l};\underline{e_{m_1},\ldots,e_{m_l}})}="01";
                (60,-20)*+{\mu\left(\mu(p;q_1,\ldots,q_n);\underline{\underline{r}}\right)\cdot t_{n,M}}="11";
                (0,-30)*+{\mu\left(p;\underline{\mu(r;\underline{q_i})}\right) \cdot \mu(t_{n,l};\underline{e_{m_1},\ldots,e_{m_l}})}="02";
                (60,-40)*+{\mu\left(\mu(p;q_1,\ldots,q_n);\underline{\underline{r}}\right)}="12";
                (0,-45)*+{\mu\left(\mu(p;\underline{r}) \cdot t_{n,l} ; \underline{q_1,\ldots,q_n}\right)}="03";
                (60,-60)*+{\mu\left(r;\underline{\mu(p;q_1,\ldots,q_n)}\right)}="13";
                (0,-60)*+{\mu\left(\mu(r;\underline{p});\underline{q_1,\ldots,q_n}\right)}="04";
                {\ar@{=} "00" ; "10"};
                {\ar@{=} "00" ; "01"};
                {\ar@{=} "10" ; "11"};
                {\ar_{\mu(1;\underline{\lambda_{q_i,r}}) \cdot 1} "01" ; "02"};
                {\ar@{=} "02" ; "03"};
                {\ar@{=} "04" ; "13"};
                {\ar_{\mu(\lambda_{p,r};1)} "03" ; "04"};
                {\ar^{\lambda_{\mu(p;q_1,\ldots,q_n),r}} "11" ; "12"};
                {\ar@{=} "12" ; "13"};
            \endxy
          \]
        \item\label{axiom:t_diagL} For any $l,m, n_i \in \mathbb{N}_+$, with $1 \leq i \leq m$, and $p \in P(l)$, $q \in P(m)$ and $r_i \in P(n_i)$, the following diagram commutes.
          \[
            \xy
                (0,0)*+{\mu\left(\mu(p;\underline{q}) \cdot t_{m,l} ; \underline{\underline{r_i}}\right) \cdot \mu(e_m;\underline{t_{n_i,l}})}="00";
                (60,0)*+{\mu\left(\mu(p;\underline{q});\underline{\underline{r_i}}\right) \cdot \mu(t_{m,l};\underline{\underline{e_{n_i}}})\mu(e_{m};\underline{t_{n_i,l}})}="10";
                (60,-15)*+{\mu\left(p;\underline{\mu(q;\underline{r_i})}\right) \cdot \mu(t_{m,l};\underline{\underline{e_{n_i}}})\mu(e_{m};\underline{t_{n_i,l}})}="11";
                (0,-20)*+{\mu\left(\mu(q;\underline{p}); \underline{r_1},\ldots,\underline{r_m}\right) \cdot \mu(e_m;\underline{t_{n_i,l}})}="01";
                (0,-40)*+{\mu\left(q;\underline{\underline{\mu(p;r_i)}}\right) \cdot \mu(e_m;\underline{t_{n_i,l}})}="02";
                (0,-60)*+{\mu\left(q;\underline{\mu(p;\underline{r_i}) \cdot t_{n_i,l}}\right)}="03";
                (60,-30)*+{\mu\left(p;\underline{\mu(q;r_1,\ldots,r_m)}\right) \cdot t_{N,l}}="12";
                (60,-45)*+{\mu\left(\mu(q;r_1,\ldots,r_m); \underline{\underline{p}}\right)}="13";
                (60,-60)*+{\mu\left(q;\underline{\mu(r_i;\underline{p})}\right)}="14";
                {\ar@{=} "00" ; "10"};
                {\ar@{=} "10" ; "11"};
                {\ar@{=} "11" ; "12"};
                {\ar^{\lambda_{p,\mu(q;r_1,\ldots,r_m)}} "12" ; "13"};
                {\ar@{=} "13" ; "14"};
                {\ar_{\mu(\lambda_{p,q};1) \cdot 1} "00" ; "01"};
                {\ar@{=} "01" ; "02"};
                {\ar@{=} "02" ; "03"};
                {\ar_{\mu(1;\underline{\lambda_{p,r_i}})} "03" ; "14"};
            \endxy
          \]
    \end{enumerate}
\end{Defi}

\begin{rem}[(Updated axioms)]\label{rem:updated}
Axioms \eqref{axiom:t_nat} and \eqref{axiom:t_equiv} were absent from the original definition we gave in the preprint \cite{cg-preprint}. The need for Axiom \eqref{axiom:t_nat} was noted in \cite{guillou_symmetric} and appears as \cite[Axiom (iii) of 11.1]{guillou_multiplicative}. Since the authors of \cite{guillou_multiplicative} only worked with contractible operads, they did not include Axiom \eqref{axiom:t_equiv}.
\end{rem}

\begin{thm}\label{thm:pscomm}
Let $P$ be a $\Lambda$-operad in $\mb{Cat}$ equipped with a pseudo-commutative structure. Then the 2-monad $\underline{P}$ has a pseudo-commutative structure.
\end{thm}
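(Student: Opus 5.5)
The plan is to construct the modification $\gamma$ componentwise and then reduce each of the seven axioms of \cref{Defi:pscommute} to one of the axioms of \cref{Defi:ps-comm_operad}.

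\textbf{Computing the two composites.} First compute both composites in the square defining $\gamma_{A,B}$ on a generic object $\big([p;a_1,\ldots,a_n],[q;b_1,\ldots,b_m]\big)$ of $\underline{P}A \times \underline{P}B$.

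The top composite $\mu\circ Td\circ d^{*}$ first sends this object to $[p;(a_1,[q;\underline{b}]),\ldots]$. It then applies $d$ in each slot and multiplies, giving
\[
[\mu(p;\underline{q}); \underline{(a,\underline{b})}],
\]
where the lexicographically ordered list is as in \cref{nota:double-underlines}. The bottom composite gives $[\mu(q;\underline{p}); \underline{(\underline{a},b)}]$, with the colexicographic list.

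Since $\pi(t_{m,n})=\tau_{m,n}$, the equivariance relation in the coequalizer gives
\[
[\mu(p;\underline{q})\cdot t_{m,n}; \underline{(\underline{a},b)}] = [\mu(p;\underline{q}); \underline{(a,\underline{b})}].
\]
This requires a careful index check that $\tau_{m,n}$ acts in the right direction, using \cref{rem:Sn-tuples}. I therefore define
\[
\gamma_{A,B} = [\lambda_{p,q}; 1,\ldots,1] \colon [\mu(p;\underline{q})\cdot t_{m,n};\underline{(\underline{a},b)}] \to [\mu(q;\underline{p});\underline{(\underline{a},b)}].
\]
On morphisms, $\gamma$ is determined by naturality of $\lambda$.

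\textbf{Well-definedness.} The key point is that $\gamma$ is well defined on the coequalizer, that is, independent of the representatives. Changing representatives replaces $p$ by $p\cdot g$ and $q$ by $q\cdot h$, with the lists permuted accordingly. The source objects then agree by Axiom \eqref{axiom:t_nat} together with \cref{rem:single-axiom}. The chosen morphisms agree by Axiom \eqref{axiom:t_equiv}, which says precisely $\lambda_{p\cdot g,q\cdot h}=\lambda_{p,q}\cdot\mu(h;\underline g)$.

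This representative-independence must hold on morphisms of the coequalizer as well. Since the actions need not be free (\cref{rem:obj-coeq}), I would define $\tilde\gamma$ on $P(n)\times A^n\times P(m)\times B^m$ as a natural transformation between the lifted composites. I would then invoke the 2-dimensional universal property of the coequalizer (\cref{rem:lim-v-2lim}) to descend it, checking that $\tilde\gamma$ is invariant under whiskering by the group actions. Invertibility of $\gamma$ and its 2-naturality in $A$ and $B$ are then immediate, because the modification is built from $\lambda$ with identity components in the variables.

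\textbf{The axioms.} I would check each axiom on representatives.
\begin{itemize}
\item The three strength axioms hold because $d$ and $d^{*}$ only duplicate variables (\cref{rem:strength-nonsym}), so both sides are the same $\lambda_{p,q}$ with relabelled variables.
\item The unit axioms follow from Axiom \eqref{axiom:t_id}: $\eta$ inserts $\id\in P(1)$, and $\lambda_{\id,q}$ and $\lambda_{p,\id}$ are identities with $t_{1,n}=e=t_{m,1}$.
\item The multiplication axioms are the substantive part. Precomposing with $\mu_A\times 1$ gives an object $[\mu(p;\underline{p_i});\ldots]$, and $\gamma$ there is $\lambda_{\mu(p;\underline{p_i}),q}$. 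The pasting on the right-hand side is $\gamma_{TA,B}$ followed by $T\gamma_{A,B}$, which on representatives is $\mu(\lambda_{p,q};1)$ followed by $\mu(1;\underline{\lambda_{p_i,q}})$. Their equality, with the group elements matched by Axiom \eqref{axiom:t_sumL} (respectively \eqref{axiom:t_sumR}), is exactly the diagram of Axiom \eqref{axiom:t_diagL} (respectively \eqref{axiom:t_diagR}).
\end{itemize}

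\textbf{Main obstacle.} I expect the hardest step to be the bookkeeping in the multiplication axioms. There one must verify that the composite of $T\gamma$ and $\gamma$, after passing through the strengths and $\mu$, lands on representatives whose group labels are precisely $\mu(e;\underline{t})\mu(t;\underline{\underline e})$, so that the operad axioms apply verbatim. I would handle this by first carrying out the whole computation for the underlying non-symmetric operad, and only then descending to the coequalizers.
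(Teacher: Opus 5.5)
Your proposal is correct and follows the paper's proof essentially step for step. Your $[\lambda_{p,q};1]$ is the same morphism as the paper's $[\lambda_{p,q}t_{m,n}^{-1},1]$ once $t_{m,n}$ is moved across the coequalizer. Well-definedness comes from Axioms \eqref{axiom:t_nat} and \eqref{axiom:t_equiv}, the strength axioms are immediate, the unit axioms follow from Axiom \eqref{axiom:t_id}, and the multiplication axioms reduce to the sum and diagram axioms. Your explicit descent of $\tilde\gamma$ through the coequalizer is slightly more careful than the paper.

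There is one bookkeeping slip in the multiplication axioms. The $\mu_A\times 1$ axiom is governed by Axioms \eqref{axiom:t_sumR} and \eqref{axiom:t_diagR}, not the L versions; your own description, with $\lambda_{\mu(p;\underline{p_i}),q}$ and $\mu(1;\underline{\lambda_{p_i,q}})$, is exactly \eqref{axiom:t_diagR}. In that pasting the whiskered $T\gamma_{A,B}$, i.e.\ $\mu(1;\underline{\lambda_{p_i,q}})$, comes first, and $\gamma_{TA,B}$, i.e.\ $\mu(\lambda_{p,q};1)$, comes second.
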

\begin{proof}
We refer to the Axioms in \cref{Defi:ps-comm_operad} throughout. We begin the proof by defining an invertible modification $\gamma$ for the pseudo-commutativity for which the components are natural transformations $\gamma_{A,B}$. Let $[p;a_1,\ldots,a_n]$ be an object of $\coeq{P}{A}{\Lambda}{n}$ and $[q;b_1,\ldots,b_m]$ be an object of $\coeq{P}{B}{\Lambda}{m}$.
The required transformation $\gamma_{A,B}$ has a component at the pair $\big([p;a_1,\ldots,a_n], [q;b_1,\ldots,b_m] \big)$ with source
  \[
    \left[\mu\left(p; \underline{q}\right); \underline{(a, \underline{b})}\right]
  \]
and target
  \[
    \left[\mu\left(q; \underline{p}\right); \underline{(\underline{a},b)}\right].
  \]
Now $ \lambda_{p,q} \colon \mu(p;q,\ldots,q) \cdot t_{m,n} \cong \mu(q;p,\ldots,p)$ gives rise to another map by multiplication on the right by $t_{m,n}^{-1}$,
  \[
    \lambda_{p,q}\cdot t_{m,n}^{-1} \colon \mu(p;q,\ldots,q) \cong \mu(q;p,\ldots,p) \cdot t_{m,n}^{-1},
  \]
so we define $(\gamma_{A,B})_{[p;a_1,\ldots,a_n],[q;b_1,\ldots,b_m]}$ to be the morphism which is the image of $(\lambda_{p,q}\cdot t_{m,n}^{-1}, 1)$ under the map
  \[
   P(nm) \times (A \times B)^{nm} \rightarrow  \coeq{P}{(A \times B)}{\Lambda}{nm}.
  \]
  We will write this morphism as $[\lambda_{p,q}t_{m,n}^{-1}, 1]$.
  
We must first verify that $[\lambda_{p,q}t_{m,n}^{-1}, 1]$ is well-defined. Let $g \in \Lambda(n)$ and $h \in \Lambda(m)$, and consider the objects $[p \cdot g; \underline{a}] = [p; g \cdot \underline{a}]$ in $\coeq{P}{A}{\Lambda}{n}$ and $[q \cdot h; \underline{b}] = [q; h \cdot \underline{b}]$ in $\coeq{P}{B}{\Lambda}{m}$ (see \cref{rem:obj-coeq}). We will verify that the morphisms
\[
[\lambda_{p \cdot g, q \cdot h} \cdot t_{m,n}^{-1}, 1] \colon   \left[\mu\left(p \cdot g; \underline{q \cdot h}\right); \underline{(a, \underline{b})}\right] \to  \left[\mu\left(q \cdot h; \underline{p \cdot g}\right); \underline{(\underline{a},b)}\right]
\]
and
\[
[\lambda_{p, q} \cdot t_{m,n}^{-1}, 1] \colon   \left[\mu\left(p; \underline{q}\right); g \cdot \underline{(a, h \cdot \underline{ b})}\right] \to  \left[\mu\left(q; \underline{p}\right); h \cdot \underline{(g \cdot \underline{a},b)}\right]
\]
are equal.
It is a straightforward calculation to show that these have the same source and target, using Axiom \eqref{axiom:t_nat}. By Axiom \eqref{axiom:t_equiv} followed by Axiom \eqref{axiom:t_nat}, we obtain
\begin{align*}
\lambda_{p \cdot g, q \cdot h} \cdot t_{m,n}^{-1} & = \lambda_{p,q} \cdot \mu^{\Lambda}(h; \underline{g}) \cdot t_{m,n}^{-1} \\
& = \lambda_{p,q} \cdot t_{m,n}^{-1} \cdot \mu^{\Lambda}(g; \underline{h}).
\end{align*}
Thus we conclude that the morphism $[\lambda_{p \cdot g, q \cdot h} \cdot t_{m,n}^{-1}, 1]$ above is equal to $
[\lambda_{p,q} \cdot t_{m,n}^{-1} \cdot \mu^{\Lambda}(g; \underline{h}), 1]$, and using the equality $[f \cdot \sigma, g] = [f, \sigma \cdot g]$ in $\coeq{P}{(A \times B)}{\Lambda}{nm}$ it is therefore equal to
\begin{align*}
\left[\mu\left(p; \underline{q}\right); \mu^{\Lambda}(g; \underline{h}) \cdot \underline{(a, \underline{b})}\right] & \stackrel{[\lambda_{p,q} \cdot t_{m,n}^{-1}, 1]}{\longrightarrow}  \left[\mu\left(q; \underline{p}\right) \cdot t_{m,n}^{-1}; \mu^{\Lambda}(g; \underline{h})\cdot \underline{(\underline{a},b)}\right] \\
& = \left[\mu\left(q; \underline{p}\right); t_{m,n}^{-1} \cdot \mu^{\Lambda}(g; \underline{h})\cdot \underline{(\underline{a},b)}\right] \\
& =\left[\mu\left(q; \underline{p}\right); \mu^{\Lambda}(h; \underline{g})\cdot \underline{(a,\underline{b})}\right].
\end{align*}
The reader can verify that the sources and targets in this calculation match those of $[\lambda_{p, q} \cdot t_{m,n}^{-1}, 1]$, proving the desired equality. Thus the components of $\gamma_{A,B}$ are well-defined.
Naturality of the components of $\gamma_{A,B}$ in the objects $[p;a_1,\ldots,a_n],[q;b_1,\ldots,b_m]$ follows from that of each $\lambda_{p,q}$.  

We show that this is a modification by noting that it does not rely on objects in the lists $a_1, \ldots, a_n$ or $b_1, \ldots, b_m$, only on their lengths and the operations $p$ and $q$. As a result, if there are functors $f \colon A \rightarrow A'$ and $g \colon B \rightarrow B'$, then it is clear that
    \[
        (\underline{P}(f\times g) \circ \gamma_{A,B})_{\left[p;\underline{a}\right],\left[q;\underline{b}\right]} = [\lambda_{p,q},\underline{1}] = (\gamma_{A',B'} \circ (\underline{P}f\times \underline{P}g))_{\left[p;\underline{a}\right],\left[q;\underline{b}\right]}.
    \]
As such we can simply write $(\gamma_{A,B})_{[p;\underline{a}],[q;\underline{b}]}$ in shorthand as $\gamma_{p,q}$.

There are now seven axioms to check for a pseudo-commutativity: three strength axioms, two unit axioms, and two multiplication axioms. For the first strength axiom, we must verify that two different $2$-cells of shape
  \[
    \xy
      (0,0)*+{A \times TB \times TC}="0";
      (50,0)*+{T(A \times B \times C)}="1";
      {\ar@/^1pc/ "0"; "1"};
      {\ar@/_1pc/ "0"; "1"};
      (25,0)*{\Downarrow}
    \endxy
  \]
are equal. The first of these is $\gamma$ precomposed with $d \times 1$, and so is the component of $\gamma$ at an object
  \[
    \left( [p;(a,b_1),\ldots,(a,b_n)], [q; c_{1}, \ldots, c_{m}] \right).
  \]
The second of these is $d$ applied to the component of $1 \times \gamma$ at
  \[
    \left(a, ([p;b_1,\ldots,b_n], [q; c_{1}, \ldots, c_{m}]) \right).
  \]
It is straightforward to compute that each of these maps is the image of $\left(\lambda_{p,q}\cdot t_{m,n}^{-1},1\right)$ under the functor
  \[
    \coprod P(n) \times (A \times B)^{n} \rightarrow \coprod \coeq{P}{A \times B}{\Lambda}{n}.
  \]
The other two strength axioms follow by analogous calculations for other whiskerings of $\gamma$ with $d$ or $d^{*}$.

For the unit axioms, we must compute the components of $\gamma$ precomposed with $\eta \times 1$ for the first axiom and $1 \times \eta$ for the second. Thus for the first unit axiom, we must compute the component of $\gamma$ at $\left( [\id;a], [q; b_{1}, \ldots, b_{m}] \right)$. By definition, this is the image of $(\lambda_{\id,q}\cdot t^{-1}_{m,1}, 1)$, and by Axiom \eqref{axiom:t_id} of \cref{Defi:ps-comm_operad} this is the identity. The second unit axiom follows similarly, using that $\lambda_{p, \id}$ and $t^{-1}_{1,n}$ are both the identity.

For the multiplication axioms, first note that Axiom \eqref{axiom:t_sumR} is necessary in order to ensure the existence of the top horizontal equality in the diagram of Axiom \eqref{axiom:t_diagR} for the pseudo-commutative structure; the same goes for Axioms \eqref{axiom:t_sumL} and \eqref{axiom:t_diagL}. We now explain how Axioms \eqref{axiom:t_sumR} and \eqref{axiom:t_diagR} for the pseudo-commutative structure ensure that the first multiplication axiom holds, with the same reasoning showing that Axioms \eqref{axiom:t_sumL} and \eqref{axiom:t_diagL} imply the second multiplication axiom.

We begin by studying the pasting diagram in the first multiplication axiom, but computing its values using the strengths for the non-symmetric operad underlying $P$; this means that we evaluate on objects of the form $(p; a_{1}, \ldots, a_{n})$ rather than on their equivalence classes. Let $p \in P(l), q_{i} \in P(m_{i})$ for $1 \leq i \leq l$, and $r \in P(n)$. Computing the top and right leg around the pasting diagram gives the function on objects which sends
  \[
    \left( (p; (q_{1}; \un{a_{1}}), \ldots, (q_{l}; \un{a_{l}})), (r; \un{b}) \right)
  \]
to
  \[
    \left( \mu(p; \mu(q_{1}; \un{r}), \ldots, \mu(q_{l}; \un{r})); (\un{(a_{1\bullet}, \un{b})}), \ldots, (\un{(a_{l\bullet}, \un{b})}) \right),
  \]
where $(\un{(a_{i\bullet}, \un{b})})$ is the list of pairs
  \[
    (a_{i1}, b_{1}), \ldots, (a_{i1}, b_{m}), (a_{i2}, b_{1}), \ldots, (a_{in_{i}}, b_{m}).
  \]
Then $\un{P}\gamma$ is the image of the morphism which is the identity on the $(a_{ij}, b_{k})$'s, and is the morphism
  \[
    \mu\left(1;\lambda_{q_1,r}t^{-1}_{n,m_1},\ldots,\lambda_{q_l,r}t^{-1}_{n,m_l}\right)
  \]
on the first component with domain and codomain shown below.
  \[
    \mu\left(p;\mu\left(q_1;\un{r}\right),\ldots,\mu\left(q_n;\un{r}\right)\right) \longrightarrow \mu\left(p;\mu\left(r;\un{q_1}\right)t^{-1}_{n,m_1},\ldots,\mu\left(r;\un{q_l}\right)t^{-1}_{n,m_l}\right)
  \]
By the $\Lambda$-operad axioms, the target of this morphism is equal to
  \[
    \mu\left(p; \mu\left(r; \un{q_{1}}\right), \ldots, \mu\left(r; \un{q_{l}}\right) \right)\mu\left(e_{l}; t^{-1}_{n,m_{1}}, \ldots, t^{-1}_{n,m_{l}}\right).
  \]
Note that this is not the same object as one obtains by computing $T\mu \circ T^{2}d^{*} \circ Td \circ d^{*}$ using the underlying non-symmetric operad of $P$ as we are required to use the $\Lambda$-equivariance to ensure that the target of $\gamma$ is the correct one.

Next we compute the source of $(\mu \circ Td^{*})*\gamma$, the other $2$-cell in the pasting appearing in the first multiplication axiom. We compute this once again using the strengths for the underlying non-symmetric operad, and note once again that this will not match our previous calculations precisely, but only up to an application of $\Lambda$-equivariance. This functor has its map on objects given by
  \[
    \left( (p; (q_{1}; \un{a_{1}}), \ldots, (q_{l}; \un{a_{l}})), (r; \un{b}) \right) \mapsto \left(\mu(\mu(p; \un{r}); \un{q_{1}}, \ldots, \un{q_{l}}); \un{(\un{a_{1}}, b_{\bullet})}, \ldots, \un{(\un{a_{l}}, b_{\bullet})} \right).
  \]
  Note that if we apply $\Lambda$-equivariance, this matches the target computed above. Once again the component of $\gamma$ is the image of a morphism which is the identity on the $(a_{ij}, b_{k})$'s, and its first component is
  \[
    \xy
      {\ar^{\mu\left(\lambda_{p,r} \cdot t^{-1}_{n,l}; 1, \ldots, 1\right)} (0,0)*+{\mu\left(\mu(p; \un{r}); \un{q_{1}}, \ldots, \un{q_{l}}\right)}; (60,0)*+{\mu\left(\mu(r; \un{p})\cdot t^{-1}_{n,l}; \un{q_{1}}, \ldots, \un{q_{l}}\right).} }
    \endxy
  \]

We cannot compose these morphisms in $\coprod P(n) \times (A \times B)^{n}$ as they do not have matching source and target, but we can in $\coprod P(n) \otimes_{\Lambda} (A \times B)^{n}$. The resulting morphism has first component given by the image of
  \[
    \xy
      {\ar^{\scriptstyle \mu\left(1; \lambda_{q_{1}, r} t^{-1}_{n,m_{1}}, \ldots, \lambda_{q_{1}, r} t^{-1}_{n,m_{l}}\right)} (0,0)*+{\scriptstyle \mu\left(p; \mu\left(q_{1}; \un{r}\right), \ldots, \mu\left(q_{n}; \un{r}\right)\right)}; (75,0)*+{\scriptstyle \mu\left(p; \mu\left(r; \un{q_{1}}\right) t^{-1}_{n,m_{1}}, \ldots, \mu\left(r; \un{q_{l}}\right) t^{-1}_{n,m_{l}} \right)} };
      {\ar^<<<<<<<<<<<<<<<<<<<<<<{\scriptstyle \mu\left(\lambda_{p,r} \cdot t^{-1}_{n,l}; 1, \ldots, 1\right)\cdot \mu\left(e_{l}; t^{-1}_{n,m_{1}}, \ldots, t^{-1}_{n,m_{l}}\right)} (0,-10)*+{}; (75,-10)*+{\scriptstyle \mu\left(\mu\left(r; \un{p}\right)\cdot t^{-1}_{n,l}; \un{q_{1}}, \ldots, \un{q_{l}}\right)\cdot \mu\left(e_{l}; t^{-1}_{n,m_{1}}, \ldots, t^{-1}_{n,m_{l}}\right),} }
    \endxy
  \]
where we have made use of the operad axioms in identifying the target of the first map with the source of the second. Using the $\Lambda$-operad axioms again on the target, we find that
  \[
    \mu\left(\mu(r; \un{p})\cdot t^{-1}_{n,l}; \un{q_{1}}, \ldots, \un{q_{l}}\right)\cdot \mu(e_{l}; t^{-1}_{n,m_{1}}, \ldots, t^{-1}_{n,m_{l}})
  \]
is equal to
  \[
    \mu\left(\mu(r; \un{p}); \un{q_{1}, \ldots, q_{l}}\right) \cdot \mu(t^{-1}_{n,l}; \un{e}) \cdot \mu(e_{l}; t^{-1}_{n,m_{1}}, \ldots, t^{-1}_{n,m_{l}}).
  \]
This composite of two morphisms, together with the necessary identities coming from operad axioms, is precisely the left and bottom leg of the diagram in Axiom \eqref{axiom:t_diagR}. Using the same method, one then verifies that $\gamma * (\mu \times 1)$ has its first component the image of the morphism appearing along the top and right leg of the diagram in Axiom \eqref{axiom:t_diagR}. The second component of these morphisms are all identities arising from $\Lambda$-equivariance, so the first multiplication axiom is a consequence of Axioms \eqref{axiom:t_sumR} and \eqref{axiom:t_diagR} for the pseudo-commutative structure. We leave the calculations for the second multiplication axiom to the reader as they are of the same nature, using Axioms \eqref{axiom:t_sumL} and \eqref{axiom:t_diagL}.
\end{proof}

\begin{cor}\label{cor:not-pc}
Let $P$ be a non-symmetric operad, ie, a $\Lambda$-operad over the terminal action operad $T$. Then the induced monad $\underline{P}$ is never pseudo-commutative.
\end{cor}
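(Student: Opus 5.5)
The plan is to show directly that there is no invertible $2$-cell $\gamma_{A,B}$ of the required shape for suitably chosen discrete categories $A,B$. Because $T(n)$ is trivial, the coequalizer $\coeq{P}{X}{T}{n}$ is just $P(n)\times X^n$ (as in \cref{ex:lop-alg-exs}). Hence $\underline{P}(X)=\coprod_n P(n)\times X^n$, and a morphism in $\underline{P}(X)$ lives in a single summand. Such a morphism consists of a morphism in $P(n)$ together with an $n$-tuple of morphisms of $X$ \emph{in the same positions}: no reindexing of the list entries is possible, since no group acts. When $X$ is discrete, it follows that two objects $(p;\underline{x})$ and $(p';\underline{x}')$ of $\underline{P}(X)$ can be isomorphic only if $\underline{x}=\underline{x}'$ as ordered lists.

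Next, I compute the source and target of a putative component of $\gamma_{A,B}$ using the strengths described after \cref{Defi:strengths}, in the non-equivariant form of \cref{rem:strength-nonsym}. Let $p\in P(n)$, $q\in P(m)$, $\underline{a}\in A^n$ and $\underline{b}\in B^m$. The upper composite $\mu\circ Td\circ d^*$ sends $\big((p;\underline{a}),(q;\underline{b})\big)$ to $\big(\mu(p;\underline{q});\underline{(a,\underline{b})}\big)$, and the lower composite $\mu\circ Td^*\circ d$ sends it to $\big(\mu(q;\underline{p});\underline{(\underline{a},b)}\big)$. These are exactly the two objects that appear in the proof of \cref{thm:pscomm}, but now no $t_{m,n}$ can intervene. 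So $\gamma_{A,B}$ would have to supply an isomorphism in $P(nm)\times(A\times B)^{nm}$ between the lexicographically and colexicographically ordered lists of pairs.

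Now choose $A=\{a_1,a_2\}$ and $B=\{b_1,b_2\}$ discrete, take $n=m=2$ with $p,q\in P(2)$, and use the lists $(a_1,a_2)$ and $(b_1,b_2)$. The lexicographic list is $(a_1,b_1),(a_1,b_2),(a_2,b_1),(a_2,b_2)$ and the colexicographic list is $(a_1,b_1),(a_2,b_1),(a_1,b_2),(a_2,b_2)$. These differ in the second entry, and $A\times B$ is discrete, so by the first paragraph no morphism at all exists between the two objects. Therefore no component of $\gamma_{A,B}$ can exist, and $\underline{P}$ admits no pseudo-commutative structure. More generally, any $n,m\ge 2$ with $\tau_{m,n}\neq e$ would work equally well.

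The main obstacle is the hypothesis that $P(n)$ is nonempty for some $n\geq 2$. If every $P(n)$ with $n\ge 2$ is empty, then every list has length at most $1$, lexicographic and colexicographic orderings agree, and the argument above gives nothing; in that case the statement likely fails; e.g.\ for $P$ concentrated in arity $1$ with $P(1)$ the discrete category on a commutative monoid, taking $\gamma$ to be the identity should satisfy the axioms. So I would first fix the reading of ``never'' to mean that some $P(n)$ with $n\ge 2$ is nonempty. If only $P(k)$ is known to be nonempty for some large $k\ge 2$, I would take $p=q\in P(k)$ and use length-$2$ prefixes of the lists of $a$'s and $b$'s, padding the remaining entries with $a_1$ and $b_1$. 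Then $\tau_{k,k}$ still swaps the pairs $(a_1,b_2)$ and $(a_2,b_1)$, so the lexicographic and colexicographic lists again differ, and I would verify this explicitly.
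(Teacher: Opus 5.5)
Your argument is correct and is the same as the paper's: with trivial groups there is no coequalizer, so for discrete $A,B$ a component of $\gamma_{A,B}$ would have to be an isomorphism between the lexicographically and colexicographically ordered lists of pairs, and these lists differ once arities $\geq 2$ are involved. Your caveat is also well taken. The paper's proof tacitly assumes some $P(k)$ with $k\geq 2$ is nonempty, and without that assumption the literal claim fails; for example, the initial operad induces the identity 2-monad, which is trivially pseudo-commutative.
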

\begin{proof}
In the non-symmetric case, the $2$-monad is just given using coproducts and products, i.e., there is no coequalizer. In order to define $\gamma$, we then need an isomorphism
  \[
    \left(\mu(p; \underline{q}); \underline{(a, \underline{b})}\right) \cong \left(\mu(q; \underline{p}); \underline{(\underline{a},b)}\right).
  \]
When $A,B$ are discrete, there is no isomorphism $\underline{\left(a,\underline{b}\right)} \cong \underline{\left(\underline{a},b\right)}$, and therefore no such $\gamma$ can exist.
\end{proof}

Hyland and Power also define a symmetry for a pseudo-commutative structure on a $2$-monad $T$. This symmetry is then reflected in the monoidal structure on the $2$-category of algebras, which will then also have a symmetric tensor product (in a suitable, $2$-categorical sense) \cite[Theorem 13]{HP}.

\begin{Defi}
Let $T \colon \m{K} \rightarrow \m{K}$ be a $2$-monad on a symmetric monoidal $2$-category $\m{K}$ with symmetry $c$. We then say that a pseudo-commutativity $\gamma$ for $T$ is \textit{symmetric} when the following is satisfied for all $A$, $B \in \m{K}$:
    \[
        Tc_{A,B} \circ \gamma_{A,B} \circ c_{TB, TA} = \gamma_{B,A}.
    \]
\end{Defi}

With the notion of symmetry at hand we are able to extend the above theorem, stating when $\underline{P}$ is symmetric.
\begin{thm}
The pseudo-commutativity of $\underline{P}$ given by \cref{thm:pscomm}  is symmetric if for all $m,n \in \mathbb{N}_+$ the two conditions below hold.
    \begin{enumerate}
        \item $t_{m,n} = t_{n,m}^{-1}$.
        \item The following diagram commutes:
          \[
              \xy
                (0,0)*+{\mu\left(p;\underline{q}\right) \cdot t_{m,n}t_{n,m}}="00";
                (30,0)*+{\mu\left(p;\underline{q}\right) \cdot e_{mn}}="10";
                (0,-15)*+{\mu\left(q;\underline{p}\right) \cdot t_{n,m}}="01";
                (30,-15)*+{\mu\left(p;\underline{q}\right)}="11";
                {\ar@{=} "00" ; "10"};
                {\ar_{\lambda_{p,q} \cdot 1} "00" ; "01"};
                {\ar@{=} "10" ; "11"};
                {\ar_{\lambda_{q,p}} "01" ; "11"};
              \endxy
          \]
    \end{enumerate}
\end{thm}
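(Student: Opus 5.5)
We must show $\underline{P}c_{A,B}\circ\gamma_{A,B}\circ c_{\underline{P}B,\underline{P}A}=\gamma_{B,A}$, which is an equality of natural transformations between functors $\underline{P}B\times\underline{P}A\to\underline{P}(B\times A)$. So the plan is to compare components at an arbitrary object $([q;\underline{b}],[p;\underline{a}])$ with $p\in P(n)$ and $q\in P(m)$. Every morphism involved is the image of a morphism in $P(nm)\times(B\times A)^{nm}$ under the coequalizer map, so it suffices to exhibit representatives and identify them using the relation $[f\cdot\sigma,g]=[f,\sigma\cdot g]$, exactly as in the well-definedness check in the proof of \cref{thm:pscomm}.

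First compute the right side. By definition, $\gamma_{B,A}$ at this object is $[\lambda_{q,p}t_{n,m}^{-1},1]$, a map
\[
\left[\mu(q;\underline{p});\underline{(b,\underline{a})}\right]\to\left[\mu(p;\underline{q})\cdot t_{n,m}^{-1};\underline{(\underline{b},a)}\right].
\]

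Next compute the left side. Precomposing with $c$ swaps the arguments, so we evaluate $\gamma_{A,B}$ at $([p;\underline{a}],[q;\underline{b}])$. This gives $[\lambda_{p,q}t_{m,n}^{-1},1]$, with source $[\mu(p;\underline{q});\underline{(a,\underline{b})}]$. Applying $\underline{P}c$ only swaps the entries of each pair, because $c$ is the identity on the operad coordinate. Hence the left side is $[\lambda_{p,q}t_{m,n}^{-1},1]$ with the list $\underline{(a,\underline{b})}$ rewritten as pairs $(b,a)$ in the original order.

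The key bookkeeping step is to match these two representatives. We move group elements between the operad coordinate and the list coordinate, using the fact that $\tau_{m,n}$ reindexes lexicographic order into colexicographic order (\cref{Defi:tau}), together with condition 1, $t_{m,n}=t_{n,m}^{-1}$. This should let us rewrite $[\lambda_{p,q}t_{m,n}^{-1},1]$ as $[\lambda_{p,q}\cdot t_{n,m},1]$ with the lists of both representatives aligned.

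Equality of the two components then reduces to the equation $\lambda_{p,q}\cdot t_{n,m}=\lambda_{q,p}^{-1}\cdot t_{n,m}^{-1}$ in $P(nm)$, which is the same as $\lambda_{q,p}\circ(\lambda_{p,q}\cdot t_{n,m})=1$. This is exactly condition 2. Equivalently, we can insert the right action of $t_{n,m}^{-1}$ on morphisms and invoke functoriality of the action from \cref{conv:group-act-cat}.

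Naturality and the modification property are inherited from \cref{thm:pscomm}, since both sides are already modifications. So componentwise equality suffices.

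The main obstacle is the index bookkeeping. We must check that, after applying condition 1, the $\Lambda$-elements moved across the coequalizer produce the same list ordering on both sides, with $\tau_{m,n}=\tau_{n,m}^{-1}$ used on underlying permutations. We must also check that the sources and targets match on the nose and not merely up to a further equivariance. If a mismatch appears, the first thing to try is using Axiom \eqref{axiom:t_equiv} of \cref{Defi:ps-comm_operad} to absorb an extra $\mu^\Lambda(h;\underline{g})$.
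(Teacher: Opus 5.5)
Your strategy is the same as the paper's one-line argument: compare components via representatives in $P(nm)\times(B\times A)^{nm}$, move group elements across the coequalizer using condition~1, and finish with condition~2. But the step you call the key bookkeeping cannot succeed as described, because the two sides you want to equate are not parallel.

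Take the object $([q;\underline{b}],[p;\underline{a}])$ with $p\in P(n)$ and $q\in P(m)$.
\begin{itemize}
\item The left side is represented by $(\lambda_{p,q}t_{m,n}^{-1},1)$ and runs from $[\mu(p;\underline{q});\underline{(\underline{b},a)}]$ to $[\mu(q;\underline{p});\underline{(b,\underline{a})}]$.
\item $\gamma_{B,A}$ runs from $[\mu(q;\underline{p});\underline{(b,\underline{a})}]$ to $[\mu(p;\underline{q});\underline{(\underline{b},a)}]$.
\end{itemize}
Conjugating by the symmetry interchanges strength and costrength, and hence interchanges source and target. These objects differ in general, already for $P=E\Sigma$ with $p=e_n$, $q=e_m$, $n,m\geq 2$ and distinct entries. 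So no translation by $\Lambda(nm)$ will align them, and \cref{axiom:t_equiv} does not help either.

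The symmetry condition must be read as in \cite{HP}: the conjugate of $\gamma_{A,B}$ equals $\gamma_{B,A}^{-1}$, i.e.\ $\gamma_{B,A}\circ(\underline{P}c_{A,B}\circ\gamma_{A,B}\circ c_{\underline{P}B,\underline{P}A})=1$. The paper's displayed definition is loose on this point, which is why its proof speaks of a diagram commuting.

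Your intermediate equation $\lambda_{p,q}\cdot t_{n,m}=\lambda_{q,p}^{-1}\cdot t_{n,m}^{-1}$ ``in $P(nm)$'' shows the problem. The appearance of $\lambda_{q,p}^{-1}$ means you have silently switched to comparing with $\gamma_{B,A}^{-1}$. Moreover, its two sides have sources $\mu(p;\underline{q})$ and $\mu(p;\underline{q})\cdot t_{n,m}^{-1}$, so the equation is not well typed.

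The repair is short and ends at your final equation.
\begin{enumerate}
\item The coequalizer functor identifies a morphism with its translates, so replace the representative of $\gamma_{B,A}$ by its translate under $t_{m,n}$, namely $(\lambda_{q,p}t_{n,m}^{-1}t_{m,n}^{-1},1)$.
\item Since $\pi(t_{m,n})=\tau_{m,n}$ sends $\underline{(b,\underline{a})}$ to $\underline{(\underline{b},a)}$, the source of this translate is $(\mu(q;\underline{p})\cdot t_{m,n}^{-1},\underline{(\underline{b},a)})$. This is exactly the target of $(\lambda_{p,q}t_{m,n}^{-1},1)$, so the two representatives compose.
\item Condition~1 gives $t_{n,m}^{-1}t_{m,n}^{-1}=e$ and $t_{m,n}^{-1}=t_{n,m}$. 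Hence the composite is represented by $(\lambda_{q,p}\circ(\lambda_{p,q}\cdot t_{n,m}),1)$.
\item By condition~2 this is the identity of $(\mu(p;\underline{q}),\underline{(\underline{b},a)})$.
\end{enumerate}
Since $\gamma_{B,A}$ is invertible, this proves the correctly stated symmetry axiom. Only functoriality of the coequalizer map is used, not freeness of the action.
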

\begin{proof}
The commutativity of the diagram above ensures that the first component of the symmetry axiom commutes in $P(n)$ before taking equivalence classes in the coequalizer, just as in the proof of \cref{thm:pscomm}.
\end{proof}

\begin{Defi}
Let $P$ be a $\Lambda$-operad in $\mb{Cat}$. We say that $P$ is \textit{contractible} if each category $P(n)$ is equivalent to the terminal category.
\end{Defi}

\begin{cor}\label{cor:contract-to-psc}
If $P$ is contractible and there exist $t_{m,n}$ as in \cref{Defi:ps-comm_operad}, then $P$ acquires a pseudo-commutative structure. Furthermore, it is symmetric if $t_{n,m} = t_{m,n}^{-1}$.
\end{cor}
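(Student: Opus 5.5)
The plan is to build the data $\lambda_{p,q}$ of \cref{Defi:ps-comm_operad} from contractibility, and then check that the axioms are automatic apart from the ones that concern only the elements $t_{m,n}$.

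Since each $P(k)$ is equivalent to the terminal category, between any two objects of $P(k)$ there is exactly one morphism, and it is an isomorphism. So we define $\lambda_{p,q}$ to be the unique morphism $\mu(p;\underline{q})\cdot t_{m,n} \to \mu(q;\underline{p})$ in $P(mn)$. With this choice:
\begin{itemize}
\item Naturality of $\lambda$ holds because any two parallel morphisms in $P(mn)$ are equal.
\item Axiom \eqref{axiom:t_equiv} holds for the same reason: once the sources and targets are identified via Axiom \eqref{axiom:t_nat} and the $\Lambda$-operad axioms, the two morphisms are parallel.
\item The requirements in Axiom \eqref{axiom:t_id} that $\lambda_{p,\id}$ and $\lambda_{\id,q}$ be identities hold as soon as $t_{1,n}=e_n=t_{m,1}$, since the unique endomorphism of an object is its identity.
\item The diagrams in Axioms \eqref{axiom:t_diagR} and \eqref{axiom:t_diagL} commute, since all their vertices lie in a single $P(N)$ and parallel morphisms there coincide.
\end{itemize}

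The remaining work is on the group-element conditions. The phrase ``$t_{m,n}$ as in \cref{Defi:ps-comm_operad}'' must be read as saying that the $t_{m,n}$ satisfy $\pi(t_{m,n})=\tau_{m,n}$ together with the purely $\Lambda$-level Axioms \eqref{axiom:t_id} (the $t$ part), \eqref{axiom:t_nat}, \eqref{axiom:t_sumR} and \eqref{axiom:t_sumL}. These are conditions on $\Lambda$ alone, and contractibility of $P$ cannot produce them, so they have to be part of the hypothesis. We should also check that every equality labelled ``$=$'' in the axiom diagrams is an honest equality of objects. Each such equality comes from the $\Lambda$-operad axioms (in the form of \cref{rem:single-axiom}) together with Axioms \eqref{axiom:t_sumR} and \eqref{axiom:t_sumL}. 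Once this is done, \cref{thm:pscomm} gives the pseudo-commutative structure on $\underline{P}$.

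For symmetry, we apply the preceding theorem. Its condition (1) is exactly the hypothesis $t_{n,m}=t_{m,n}^{-1}$. Its condition (2) asks that a square of morphisms in $P(mn)$ commute, and this again follows from uniqueness of parallel morphisms.

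I expect the only delicate step to be the bookkeeping showing that the object equalities in the diagrams of Axioms \eqref{axiom:t_diagR} and \eqref{axiom:t_diagL} really hold given the hypotheses on $t$. Everything else is forced by contractibility.
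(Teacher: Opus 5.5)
Your proposal is correct and is essentially the paper's argument: $\lambda_{p,q}$ is forced to be the unique isomorphism between its source and target in the contractible category $P(mn)$. Every remaining condition involving $\lambda$ then holds because parallel morphisms coincide; this covers naturality, Axiom \eqref{axiom:t_equiv}, the $\lambda$-parts of Axiom \eqref{axiom:t_id}, Axioms \eqref{axiom:t_diagR} and \eqref{axiom:t_diagL}, and condition (2) of the symmetry theorem. You read the purely $\Lambda$-level conditions on the $t_{m,n}$ as part of the hypothesis, and the paper's proof implicitly treats them the same way when it says the only thing left to define is $\lambda_{p,q}$.
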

\begin{proof}
The only thing left to define is the collection of natural isomorphisms $\lambda_{p,q}$. But since each $P(n)$ is contractible, $\lambda_{p,q}$ must be the unique isomorphism between its source and target, and furthermore the last two axioms hold since any pair of parallel arrows are equal in a contractible category.
\end{proof}

\begin{cor}\label{cor:contractplussym-to-psc}
If $P$ is a contractible symmetric operad then the operad $P$ has a unique pseudo-commutative structure. The 2-monad $\underline{P}$ then obtains a symmetric pseudo-commutativity.
\end{cor}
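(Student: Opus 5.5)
The plan is to reduce to \cref{cor:contract-to-psc} applied with $\Lambda = \Sigma$. For this we must supply elements $t_{m,n} \in \Sigma_{mn}$ with $\pi(t_{m,n}) = \tau_{m,n}$. Since $\pi$ is the identity for the action operad $\Sigma$, the only choice is $t_{m,n} = \tau_{m,n}$, and we will check the axioms of \cref{Defi:ps-comm_operad} that involve only the $t_{m,n}$. This forced choice of the $t_{m,n}$ also gives the uniqueness half of the statement.

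The axioms involving only the $t$'s are handled as follows.
\begin{itemize}
\item Axiom \eqref{axiom:t_id} says $\tau_{1,n} = e_n = \tau_{m,1}$, which is immediate from \cref{Defi:tau}, because a $1 \times n$ or $m \times 1$ matrix has the same lexicographic and colexicographic orderings.
\item Axiom \eqref{axiom:t_nat} is exactly the naturality relation $\mu(\alpha;\underline{\beta})\tau_{m,n} = \tau_{m,n}\mu(\beta;\underline{\alpha})$ recorded in the remark after \cref{Defi:tau}.
\item Axioms \eqref{axiom:t_sumR} and \eqref{axiom:t_sumL} are identities between permutations of a set of pairs. I would verify them by tracking where a pair goes. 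Take the index set as $\{1,\dots,n\}\times\coprod_i\{1,\dots,m_i\}$, ordered lexicographically on one side and colexicographically on the other. The permutation $\delta_{M,\ldots,M}(\tau_{n,l})$ first transposes the $n \times l$ block structure, keeping each block of size $m_i$ intact. Then $\beta(\tau_{n,m_1},\ldots,\tau_{n,m_l})$ transposes inside each block. The composite sends the lex position of $(a,(i,j))$ to its colex position, which is the defining property of $\tau_{n,M}$. Axiom \eqref{axiom:t_sumL} is the mirror argument.
\end{itemize}
This bookkeeping, together with matching the paper's order-of-composition conventions, is the main obstacle. I would settle the conventions on a small case such as $n=l=2$ first.

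Once the $t_{m,n}$ are in place, \cref{cor:contract-to-psc} supplies the $\lambda_{p,q}$. Each is the unique isomorphism in the contractible category $P(mn)$, so the structure is unique. Uniqueness also yields naturality and Axiom \eqref{axiom:t_equiv}, since any two parallel morphisms in $P(mn)$ agree.

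For symmetry, the remark after \cref{Defi:tau} gives $\tau_{m,n} = \tau_{n,m}^{-1}$, so the second clause of \cref{cor:contract-to-psc} applies. \cref{thm:pscomm} then gives the symmetric pseudo-commutativity on $\underline{P}$.
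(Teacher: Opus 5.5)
Your proposal is correct and follows the paper's route. The paper's one-line proof notes that $\pi=\mathrm{id}$ forces $t_{m,n}=\tau_{m,n}$ and then relies on \cref{cor:contract-to-psc}, with symmetry coming from $\tau_{m,n}=\tau_{n,m}^{-1}$; it leaves implicit the checks of Axioms \eqref{axiom:t_id}, \eqref{axiom:t_nat}, \eqref{axiom:t_sumR} and \eqref{axiom:t_sumL} for the $\tau_{m,n}$, which you actually carry out. Your block-tracking argument for Axiom \eqref{axiom:t_sumR} is right provided the subscripts of $\delta$ are read as $m_1,\ldots,m_l$ repeated $n$ times, as in the top row of Axiom \eqref{axiom:t_diagR}, and this is what the paper's $\delta_{M,\ldots,M}$ has to mean for the arities to match.
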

\begin{proof}
The only possible choice of the elements $t_{m,n}$ is $t_{m,n} = \tau_{m,n}$.
\end{proof}

\begin{rem}[(Symmetrization and contractibility)]\label{rem:symm-and-contract}
If a $\Lambda$-operad $P$ is contractible, it is not the case that its symmetrization $\pi_{!}P$ (see \cref{Defi:symmetrization}) will also be contractible. For example, consider the braid operad $B$ and the corresponding $B$-operad $EB$ in $\mb{Cat}$. Then $\pi_{!}EB(2)$ has two objects $[0,\id], [0,\sigma]$ corresponding to the two elements of the symmetric group $\Sigma_2$ by considering the quotient $(\mathbb{Z} \times \Sigma_2)/\mathbb{Z}$ as in \cref{lem:coeq-lem}. The object $[0,\id]$ has as its automorphism group the subgroup $PB_2 = \textrm{ker}(\pi) \leq B_2$ of pure braids as follows. The group $B_2$ is isomorphic to the integers, so $EB_2$ has an object for every integer and a unique isomorphism $k \cong j$ for every pair $k, j \in \mathbb{Z}$. In particular, for every $k \in \mathbb{Z}$ there is a unique isomorphism $0 \cong 2k$. Using the description in \cref{lem:coeq-lem}, we can verify that $[0 \cong 2k, 1_{\id}]$ and $[0 \cong 2j, 1_{\id}]$ are not in the same orbit unless $k=j$, so give distinct isomorphisms $[0, \id] \cong [2k, \id] = [0, \id]$, $[0, \id] \cong [2j, \id] = [0, \id]$.
Thus we see that a given $\Lambda$-operad $P$ might satisfy the hypotheses of \cref{cor:contract-to-psc} without its symmetrization $S(P)$ satisfying the hypotheses of \cref{cor:contractplussym-to-psc}.
\end{rem}

\begin{rem}\label{rem:EB-fail}
An earlier version of this article \cite{cg-preprint} constructed a pseudo-commutative structure for $EB$ as a $B$-operad, but contained an error and has been removed.
\end{rem}